\DeclareMathOperator*{\supess}{ess\,sup}
\numberwithin{equation}{section}
\numberwithin{figure}{section}
\theoremstyle{plain}
\newtheorem{thm}{\protect\theoremname}[section]
\theoremstyle{remark}
\newtheorem{rem}[thm]{\protect\remarkname}
\theoremstyle{definition}
\theoremstyle{plain}
\newtheorem{lem}[thm]{\protect\lemmaname}
\theoremstyle{plain}
\newtheorem{cor}[thm]{\protect\corollaryname}
\theoremstyle{plain}
\newtheorem{prop}[thm]{\protect\propositionname}
\providecommand{\corollaryname}{Corollary}
\providecommand{\definitionname}{Definition}
\providecommand{\lemmaname}{Lemma}
\providecommand{\remarkname}{Remark}
\providecommand{\theoremname}{Theorem}
\providecommand{\propositionname}{Proposition}
\newcommand{\abbr}[1]{{\sc\lowercase{#1}}}
\begin{document}
\global\long\def\Crt{{\rm Crt}}%
\global\long\def\dist{{\rm \mathsf{dist}}}%
\global\long\def\Es{E_{\star}}%
\global\long\def\GS{\mathsf{GS}}%
\global\long\def\Rs{R_{\star}}%
\global\long\def\hRs{R_{\star}}%
\global\long\def\V{{\rm Vol}}%
\global\long\def\bs{\boldsymbol{\sigma}}%
\global\long\def\bsigma{\boldsymbol{\sigma}}%
\global\long\def\bx{\mathbf{x}}%
\global\long\def\BB{\mathbf{B}}%
\global\long\def\be{\mathbf{e}}%
\global\long\def\bxs{\mathbf{x}_{\star}}%
\global\long\def\qs{q_{\star}}%
\global\long\def\bz{\mathbf{z}}%
\global\long\def\by{\mathbf{y}}%
\global\long\def\sfv{{\mathsf{v}}}%
\global\long\def\sfu{{\mathsf{u}}}%
\global\long\def\bv{\mathbf{v}}%
\global\long\def\bu{\mathbf{u}}%
\global\long\def\bw{\mathbf{w}}%
\global\long\def\bn{\mathbf{n}}%
\global\long\def\bM{\mathbf{M}}%
\global\long\def\bJ{\mathbf{J}}%
\global\long\def\bG{\mathbf{G}}%
\global\long\def\indic{\mathds{1}}%
\global\long\def\diag{{\rm diag}}%
\global\long\def\sol{\mathsf{Sol}}%
\global\long\def\SN{\mathbb{S}^{N-1}}%
\global\long\def\BN{\mathbb{B}^{N}}%
\global\long\def\SNt{\mathbb{S}^{N-2}}%
\global\long\def\sp{{\rm sp}}%
\global\long\def\AA{\mathbb{A}}%
\global\long\def\E{\mathbb{E}}%
\global\long\def\P{\mathbb{P}}%
\global\long\def\R{\mathbb{R}}%
\global\long\def\V{\mathbb{V}}%
\global\long\def\T{\mathbb{T}}%
$ $
\global\long\def\BN{\mathbb{B}^{N}}%
\global\long\def\N{\mathsf{N}}%
\global\long\def\cpt{{\rm \mathsf{CP}}}%
\global\long\def\Cs{\mathscr{C}}%
\global\long\def\DD{\mathscr{D}}
\global\long\def\calE{\mathcal{E}}%
\global\long\def\oN{{\bar N}}%
 
\global\long\def\epsilon{\varepsilon} 
\global\long\def\b{\beta}%
\global\long\def\BJ{\mathbf{J}}%
\global\long\def\ba{\mathbf{a}}%
\global\long\def\bb{\mathbf{b}}%
\global\long\def\bc{\mathbf{c}}%
\global\long\def\bcs{\mathbf{c}_{\star}}%
\global\long\def\grad{\nabla_{\mathrm{sp}}}%
\global\long\def\gradt{\bar \nabla}%
\global\long\def\Hess{\nabla_{\mathrm{sp}}^{2}}%
\global\long\def\Hesst{\bar\nabla^{2}}%
\global\long\def\HessT{\nabla_{\scriptscriptstyle T}^{2}}%
\global\long\def\cF{\mathcal{F}}%
\global\long\def\xim{\xi_{(m)}}
\global\long\def\xin{\xi_{(n)}}
\global\long\def\bxim{\bar{\xi}_{(m)}}
\global\long\def\bxik{\bar{\xi}_{(k)}}

\global\long\def\Dp{D'}

\title{
Disordered Gibbs measures and Gaussian conditioning}
\author{Amir Dembo}\thanks{Email: adembo@stanford.edu} 
\author{Eliran Subag}\thanks{Email: eliran.subag@weizmann.ac.il (corresponding author)}
\begin{abstract}
We study the law of a random field $f_N(\bs)$ evaluated at a random sample from the Gibbs measure associated to 
a Gaussian field $H_N(\bs)$.  In the high-temperature regime, we show that bounds on the probability that $f_N(\bs)\in A$ for $\bs$ randomly sampled from the Gibbs measure can be deduced from similar bounds for deterministic $\bs$ under the conditional Gaussian law given that $H_N(\bs)/N=E$ for $E$ close to 
the derivative $F'(\beta)$ of the free energy (which is 
the typical value of $H_N(\bs)/N$ under the Gibbs measure).  
In the more challenging low-temperature regime we restrict to \abbr{$k$-RSB} spherical spin glasses,  
proving a similar result,  now
with a more elaborate conditioning.  Namely,  with $q_i$ denoting the locations 
of the non-zero atoms of the Parisi measure,  
in addition to specifying that
$H_N(\bs)/N=E$, 
here 
one needs to also condition on the energy and its gradient at
points  $\bx_1,\ldots,\bx_k$ such that
$\langle \bx_i,\bx_j\rangle/N=q_{i\wedge j}$ and $\langle \bx_i,\bs\rangle/N\approx q_{i}$.  Like in
the high-temperature phase,   the energy and gradient values on which one conditions are 
also specified by the model's Parisi measure.  We apply our general results to two important problems from
statistical physics.  
That is,  computing the Franz-Parisi potential at any temperature and,  reducing
certain asymptotics of Langevin dynamics with initial conditions distributed according to the Gibbs measure,  
to the more manageable problem of studying dynamics with non-random initial conditions 
and conditional disorder.
\end{abstract}

\maketitle

\section{Introduction and main results}

For any $N\geq1$, let $(\Sigma_{N},\cF_{N},\mu_{N})$ be a probability (configuration)
space,  with a random \emph{Hamiltonian} $H_{N}:\Sigma_{N}\to\R$ which is a centered 
Gaussian field 
on an auxiliary probability space,  denoted hereafter by $\P$
(our standing assumption is that $\P$-a.s.\  any function on $\Sigma_{N}$
we encounter,  is $\cF_{N}$-measurable).  
For any inverse-temperature $\beta\geq0$,  define the partition function and free
energy by
\begin{equation*}
\label{eq:FNbeta}
Z_{N,\beta}:=\int_{\Sigma_{N}}e^{\beta H_{N}(\bs)}d\mu_{N}(\bs),\qquad F_{N,\beta}:=\frac{1}{N}\log Z_{N,\beta}
\end{equation*}
and the (random) Gibbs measure by its density
\[
\frac{dG_{N,\beta}}{d\mu_{N}}(\bs)=\frac{e^{\beta H_{N}(\bs)}}{Z_{N,\beta}}.
\]

Suppose $f_{N}:\Sigma_{N}\to\R$ is another random field,  possibly depending on $H_{N}$ in a complicated way. 
In this work we study the asymptotic 
behavior as $N \to \infty$ of such $f_{N}(\bs)$ evaluated at a random sample $\bs$ from the
Gibbs measure $G_{N,\beta}$.  Precisely,  for Borel $A\subset\R$ we wish to understand
random variables of the form 
\begin{equation}
G_{N,\beta}\left(f_{N}(\bs)\in A\right)=\frac{\int_{\{\bs:\,f_{N}(\bs)\in A\}}e^{\beta H_{N}(\bs)}d\mu_{N}(\bs)}{\int_{\Sigma_{N}}e^{\beta H_{N}(\bs)}d\mu_{N}(\bs)}.\label{eq:G(finA)}
\end{equation}

One often expects $f_{N}(\bs)$ to asymptotically concentrate under the
Gibbs measure around a specific value (i.e.\ its median). Namely,
that for some value $E\in\R$ and any $\epsilon>0$,
\begin{equation}\label{eq:observable_concentration}
\lim_{N\to\infty}\E G_{N,\beta}\Big(
\big|f_N(\bs)-E\big|>\epsilon
\Big)=0.
\end{equation}
The classical example for this
is the normalized ``energy'' $f_{N}=\frac{1}{N}H_{N}$,  whose average under the Gibbs measure is the derivative of the free energy,
\begin{equation}\label{eq:ddbetaF}
\frac{d}{d\beta}F_{N,\beta} = \frac{\int_{\Sigma_{N}}\frac1NH_N(\bs) e^{\beta H_{N}(\bs)}d\mu_{N}(\bs)}{\int_{\Sigma_{N}}e^{\beta H_{N}(\bs)}d\mu_{N}(\bs)}.
\end{equation}
Respectively, in the large $N$ limit, if
for some deterministic,  differentiable function $F$ and any $\beta$,
\begin{equation}
\lim_{N\to\infty}\E F_{N,\beta}=F(\beta),\label{eq:Fconvergence}
\end{equation}
then under the Gibbs measure $\frac{1}{N}H_{N}(\bs)$ 
concentrates around $E=F'(\beta)$ in the sense of \eqref{eq:observable_concentration},  see \cite{AuffingerChenConcentration}. 

What about more general functions $f_{N}$,  possibly depending on $H_{N}$
in a complicated way? From a statistical physics perspective, this question is fundamental: 	we wish to understand the behavior of a general ``observable'' $f_N$ at a configuration $\bs$ drawn from the thermal equilibrium distribution. Indeed, below we discuss two very well-known objects from spin glass theory that fit into this general framework. On the other hand, mathematically this question is closely related to understanding how the random function $H_N$ looks like around a point $\bs$ sampled from a given height. Indeed,  the latter amounts to understanding $f_N(\bs)=\bar f_N(\bs, H_N(\cdot))$ for deterministic functionals $\bar f$, while sampling $\bs$ from the Gibbs measure approximates the uniform measure on the set of points with energy around a given level thanks to concentration of $\frac1N H_N(\bs)$.

A naive approach may suggest to modify the definition of the free energy by replacing $\beta H_N(\bs)$ by $\beta H_N(\bs)+\alpha N f_N(\bs)$, so that similarly to \eqref{eq:ddbetaF} the $\frac{d}{d\alpha}|_{\alpha=0}$ derivative is the average of  $f_N(\bs)$ under $G_{N,\beta}$.  One then hopes to show that $f_N(\bs)$ concentrates around the derivative of the $N\to\infty$ limit of this modified free energy,  as in the case of $\frac1NH_N(\bs)$.  Even if we ignore the issue of concentration,  for many choices of $f_N$ of interest,  it is impossible to analytically compute
such modified free energy.  In contrast, our main results provide a general tool for arbitrary $f_N$ (modulo regularity conditions) to deduce concentration under the Gibbs measure around a specific value $E$, from analogous statements about deterministic $\bs$, under a modified Gaussian disorder arising from a suitable conditioning. Excluding specific examples like $f_N=\frac1N H_N$ above, these results are the first to provide a general principle for concentration under the Gibbs measure for the low temperature phase of spin glasses.

We shall consider two settings.
The first 
allows for a very general choice of the
configuration space and Hamiltonian,  at the price of restricting to
the easier regime of small $\beta$.  We shall see that the behavior
under the Gibbs measure can be described by the conditional law of $f_{N}(\bs)$
given that the energy is equal to its typical value $\frac{1}{N}H_{N}(\bs)=F'(\beta)$. The proof 
exploits the fact that in this regime $\E F_{N,\beta}=\frac1N \log \E Z_{N,\beta}+o(1)$. The argument we 
employ is related to a technique used in \cite{AchlioptasCoja-Oghlan} for random constraint satisfaction problems.  We present the result in Section \ref{subsec:hightemp} and its short proof in Section \ref{sec:HighTempPf},  setting the stage for our main theorem 
(for large $\beta$) by demonstrating how the conditional Gaussian field naturally arises.

In the second setting we consider the more challenging problem of large values of $\beta$. 
Here we shall restrict
to spherical spin glass models with $k$-step replica symmetry breaking (\abbr{RSB}),  for some $k$ finite.  As in the case of small $\beta$, we 
translate questions about the behavior under the Gibbs measure
to ones about $f_{N}(\bs)$ for deterministic $\bs$,  under an appropriate
conditional Gaussian law.  However,  large $\beta$ requires a more elaborate
conditioning that involves additional points $\bx_1,\ldots,\bx_k$ with a specific geometry.  Such $\bx_i$ are conditioned to be critical points on appropriate spheres and further to specifying the values of $H_N$ at $\bs$ we 
specify its values and `radial' derivatives at all $\bx_1,\ldots,\bx_k$.  We present this result in Section \ref{subsec:LowTemp}. 

For mean-field spin glasses, the Gibbs measure $G_{N,\beta}$ is well understood in the large $N$ limit. Namely, its asymptotic structure is explicitly characterized by Ruelle probability cascades, see \cite{PanchenkoBook}. By itself, however, the latter is insufficient to analyze $f_N(\bs)$ under $G_{N,\beta}$. Indeed, thanks to the concentration of the normalized energy $\frac1N H_N(\bs)$ mentioned above, $G_{N,\beta}$ essentially only depends on the structure of the Hamiltonian $H_N$ at energy levels close to $F'(\beta)$,  while the value of $f_N(\bs)$, even for $\bs$ sampled from  $G_{N,\beta}$, may depend (and does in interesting examples) in a complicated way on the structure of $H_N$ outside of such regions.  Our proof for the low-temperature phase of large $\beta\geq0$ exploits the pure states decompositions of Talagrand \cite{TalagrandPstates} and Jagannath \cite{JagannathApxUlt} to relate the Gibbs measure to the aforementioned critical points $\bx_1,\ldots,\bx_k$.   Thanks to the ultrametricity property proved by Panchenko \cite{ultramet}, the pure states are organized in a tree structure, which can be given a geometric interpretation by an embedding in the interior of the sphere in $\R^N$.  In \cite{FElandscape} it was shown that each vertex $\bx$ in the tree roughly maximizes the energy over the sphere of radius $\|\bx\|$ and our Proposition \ref{prop:PS}, 
which is of independent interest,  shows that for
$k$-\abbr{RSB} models,  there exists such a decomposition where the vertices are also critical points in an appropriate sense  (see Section \ref{sec-pf-prop:PS} for its proof). This allows us to 
relate averages over 
the Gibbs measure to ones over
the relevant critical points 
and express the latter  
in terms of the conditional Gaussian law by combining a novel,  multi-level
 Kac-Rice formula (see Proposition \ref{prop:multilvlKR}),
	with the available information on the complexity of such critical points
	from a recent work of Huang and Sellke \cite{HuangSellke2023} (which relates the annealed 
	complexity to the Parisi measure).

It is plausible to carry out 
such a program also for other disordered mean-field models,  involving multi-species,  
structured external fields (signals),  or Ising spins (after improving our 
understanding of the complexity considerations associated with the pure-state decomposition).

Franz and Parisi introduced in \cite{FranzParisiPotential} a certain ``potential'' as a tool to relate dynamical 
properties to the statics of the spherical spin glass model.  The idea here is to sample a point $\bs$ 
from the Gibbs measure and calculate,  at an exponential scale,  the Gibbs probability of sampling another 
point $\bs'$ whose overlap with $\bs$ is approximately $r$.   In Section  \ref{subsec:FPpotential} we 
compute the Franz-Parisi potential 
at any temperature,  by applying both our result for the high temperature phase and 
for the low temperature phase.  To the best of our knowledge,  this is {\em the first
rigorous computation of the Franz-Parisi potential at low temperature,} for any spherical model. 
Restricting for simplicity the low temperature phase to 1-\abbr{rsb} models,  our formula \eqref{eq:Lambda} 
involves a maximization over the possible values of the overlap $\rho$ between $\bs'$ and $\bx_1$.  
We thus expect the latter parameter,  and in particular the continuity in $r$ of the argmax in
\eqref{eq:Lambda},  or the lack thereof, to be highly relevant for the asymptotic behavior of certain natural dynamics 
which are reversible \abbr{wrt} $G_{N,\b}$.  Similarly,  the formula for the Franz-Parisi 
potential of $k$-\abbr{rsb} models involves the maximization over the overlaps between
$\bs'$ and $\bx_i$, $1 \le i \le k$.

Another important application of this work is to the study
of the Langevin dynamics $(\bx_t, t \ge 0)$ at inverse-temperature $\widetilde\beta$
for spherical mixed $p$-spin Hamiltonians $H_N$.  That is,
\begin{equation}\label{diffusion}
 \bx_t=\bx_0-\int_0^t b_\ell'(\vert\vert \bx_u\vert\vert^2/N)\bx_u du  + \frac12\widetilde\beta \int_0^t \nabla H_N(\bx_u) du + \BB_t,
 \end{equation}
where $\BB_t$ is a standard $N$-dimensional Brownian motion and the soft constraint potentials $b_\ell(\cdot)$ concentrate at one
as $\ell \to \infty$,  thereby converting the $\R^N$-valued diffusion \eqref{diffusion} to the spherical Langevin dynamic,
reversible \abbr{wrt} the random Gibbs measure 
$G_{N,\widetilde\beta}$.   Such disordered Langevin dynamics
often exhibit very different behavior at various time-scales,  including {\em aging} behavior 
(where the older it gets,  the longer the system takes to forget its current state),  and there is 
much interest in these dynamics in out-of-equilibrium statistical physics (cf.  \cite{BKM,Crisanti1993,Cugliandolo1993,LesHouches} and the references therein).  In 
particular,  for Gibbs initial conditions,  namely,  with $\bx_0 =\bs$ drawn from $G_{N,\beta}$,  possibly at $\beta \ne \widetilde\beta$.
For $\bx_0$ uniformly distributed on the sphere,  
the thermodynamic 
limit as $N \to \infty$ (and then $\ell \to \infty$),  while $t,s$ are fixed, 
of the empirical covariance, the integrated response functions and the normalized potential,  namely,
\begin{equation}\label{eq:cov}
C_N(s,t) := \frac{1}{N} \langle \bx_s,\bx_t\rangle,   \quad 
\chi_N(s,t):=\frac{1}{N} \langle \bx_s, \BB_t \rangle \quad {\rm and} \quad \widetilde{H}_N(s):=\frac{1}{N} H_N(\bx_s),
\end{equation}
respectively,  satisfy the 
\abbr{CKCHS}-equations of Cugliandolo-Kurchan \cite{Cugliandolo1993}  
and Crisanti-Horner-Sommers \cite{Crisanti1993}.   The \abbr{CKCHS}-equations have 
been rigorously derived in \cite{BDG1},  but prior to our work
there has been no such result for \eqref{diffusion} with Gibbs initial conditions.  It was
\emph{not even known what the thermodynamic limit 
might then be},  except for 
pure $p$-spins at the replica symmetric high temperature 
($\beta < \beta_c$),  where using the non-rigorous replica method, 
\cite{BBM,BF} predict such limit equations for $(C_N,\chi_N,\widetilde{H}_N)$.  The challenge to 
merely predicting what the limit ought to be at low temperatures,  stems from the lack of any representation 
of $(C_N,\chi_N,\widetilde{H}_N)$ (or their expected value),  as explicit functionals
of the random Hamiltonian.  We show in Section \ref{subsec:Langevin} how our results reduce
having Gibbs initial conditions within the finite \abbr{rsb} phase,  to the much easier study of \eqref{diffusion} 
for non-random $\bx_0$ with suitable linear constraints imposed on the Hamiltonian.  Indeed,  taking 
advantage of this reduction allows us to 
rigorously derive in \cite{DS2} the thermodynamic limit of $(C_N,\chi_N,\widetilde{H}_N)$ 
for any Gibbsian initial conditions within the $1$-\abbr{rsb} phase (the same 
applies for the $k$-\abbr{rsb} phase,  apart from having then a more complex limiting system of 
equations and a technically more involved derivation).

\subsection{\label{subsec:hightemp}High temperature phase}

Assume that the normalized variance $\frac{1}{N}\E\big[H_{N}(\bs)^{2}\big]$ is bounded uniformly
in $N\geq1$ and $\bs\in\Sigma_{N}$. For simplicity, we further assume that it is essentially
constant over space,  in the sense that for any $\epsilon>0$,
\begin{equation}
\varlimsup_{N\to\infty}\frac{1}{N}\log\mu_{N}\Big(\bs:\,\Big|\frac{1}{N}\E\big[H_{N}(\bs)^{2}\big]-1\Big|>\epsilon\Big)=-\infty.\label{eq:variance-bd}
\end{equation}
We consider small values of $\beta$, for
which the analysis is simpler. 
Namely,  we work here in the setting where \abbr{whp}, 
\[
\frac{1}{N}\log Z_{N,\beta}=\frac{1}{N}\log\E Z_{N,\beta}+o(1).
\]
Of course,
\[
\E Z_{N,\beta}=\int_{\Sigma_{N}}\E \big[e^{\beta H_{N}(\bs)}\big] d\mu_{N}(\bs)=\int_{\Sigma_{N}}e^{\frac{1}{2}\beta^{2}\E\big[H_{N}(\bs)^{2}\big]}d\mu_{N}(\bs)
\]
and (\ref{eq:variance-bd}) thus implies that, for all $\beta$, 
\begin{equation*}
\lim_{N\to\infty}\frac{1}{N}\log\E Z_{N,\beta}=\frac{1}{2}\beta^{2}.\label{eq:EZconvergence}
\end{equation*}
\begin{rem}\label{rem:conditional}
We define the critical inverse-temperature $\beta_{c}$ as the unique value
such that\footnote{Let $\beta'<\beta$ and $\beta''=\sqrt{\beta^{2}-\beta'^{2}}$.  Note
that $\beta H_{N}(\bs)$ has the same law as $\beta'H^{(1)}_{N}(\bs)+\beta''H^{(2)}_{N}(\bs)$
for two  independent copies $H_N^{(i)}(\cdot)$, $i=1,2$ of $H_N(\cdot)$. Hence,  from Jensen's inequality
and \eqref{eq:variance-bd},
\[
\E F_{N,\beta}\leq\E F_{N,\beta'}+\frac{1}{N}\log\E Z_{N,\beta''} +o(1).
\]
It follows that if $\beta$ satisfies the equality in (\ref{eq:bcdef}),  then so
does $\beta'$.  By H\"older's inequality,  $F_{N,\beta}$ is convex
and therefore equality as in (\ref{eq:bcdef}) holds also 
at the critical $\beta=\beta_c$. }
\begin{equation}
\lim_{N\to\infty}\E F_{N,\beta}=\frac{1}{2}\beta^{2}\iff\beta\leq\beta_{c}.\label{eq:bcdef}
\end{equation}
For any fixed $\bs \in \Sigma_N$ and Borel $A \subset \R$,  the sub-probability measure on $\R$
\[
\P\big(f_{N}(\bs)\in A,\,\frac{1}{N}H_{N}(\bs)\in\cdot\big)
\]
is absolutely continuous \abbr{wrt} the Lebesgue measure allowing us to define the conditional probability
\begin{equation}\label{eq:reg-cond}
p_N(\bs,E) := \P\big(f_{N}(\bs)\in A\,\big|\,\frac{1}{N}H_{N}(\bs)=E \big)
\end{equation}
as the corresponding Radon-Nikodym derivative,   normalized by the positive
Gaussian density of $\frac{1}{N}H_{N}(\bs)$ at $E$.  We  assume hereafter that 
$p_N$ of \eqref{eq:reg-cond} are measurable on $\Sigma_N \times \R$ equipped with
the product measure of $\mu_N$ and Lebesgue.
\end{rem}
\begin{thm}
\label{thm:high-temp} Assume (\ref{eq:variance-bd}) and let $\beta\leq\beta_{c}$.
Suppose the $\P$-a.s.\ $\cF_{N}$-measurable random field $f_{N}(\bs)$ and
Borel $A\subset\R$ are such that $p_N(\bs,E)$ of \eqref{eq:reg-cond} is measurable on
$\Sigma_N \times \R$.  If 
\begin{equation}
\lim_{\epsilon\to0}\varlimsup_{N\to\infty}\frac{1}{N}\log\int_{\Sigma_{N}}\int_{\beta-\epsilon}^{\beta+\epsilon}
\P\big(f_{N}(\bs)\in A\,\big|\,\frac{1}{N}H_{N}(\bs)=E \big) dE d\mu_{N}(\bs)<0,\label{eq:bd_Gaussian_average}
\end{equation}
then 
\begin{equation}\label{eq:ht-result}
\varlimsup_{N\to\infty}\frac{1}{N}\log\E \Big[ G_{N,\beta}\left(f_{N}(\bs)\in A\right) \Big] <0.
\end{equation}
\end{thm}

Of course, (\ref{eq:bd_Gaussian_average}) follows whenever 
\begin{equation}\label{eq:HighTempCondUnif}
\lim_{\epsilon\to0}\varlimsup_{N\to\infty}\frac{1}{N}\log 
\supess_{\bs\in\Sigma_{N},\,|E-\beta|<\epsilon}\P\Big(f_{N}(\bs)\in A\,\Big|\,\frac{1}{N}H_{N}(\bs)=E\Big)<0\,.
\end{equation}

\begin{rem} Often the probabilities in \eqref{eq:bd_Gaussian_average} do not depend on 
$\bs\in\Sigma_{N}$ so it suffices to verify the relevant exponential decay only
at one fixed,  convenient choice of $\bs$. For example, 
for the mixed $p$-spin models that we discuss in the sequel,  with
either Ising or spherical spins,  the variance $\E[H_{N}(\bs)^{2}]$ is constant on $\Sigma_{N}$ 
and the free energy converges \cite{GuerraToninelli,FreeEnergyConvergence}.  
In the absence of an external field,  $\beta_{c}>0$ is the inverse-temperature where
replica symmetry is broken (excluding the special case of the spherical $2$-spin),
and $H_{N}(\bs)$ is invariant under rotations.  Thus,  if $f_{N}(\bs)$ is such that $\bs\mapsto(H_{N}(\bs),f_{N}(\bs))$
is also invariant,  for example as for the Franz-Parisi potential we
consider in Section \ref{subsec:FPpotential},  then the
conditional probabilities in (\ref{eq:bd_Gaussian_average}) are indeed constant
over $\Sigma_{N}$.
\end{rem}

The proof of Theorem \ref{thm:high-temp} in Section \ref{sec:HighTempPf},  is  
under the above general setting.  However,  understanding the Gibbs measure for large $\beta$
is an extremely challenging problem even for specific models,  so in the rest of the 
paper we always assume that $G_{N,\beta}$ corresponds to some spherical spin glass model.

\subsection{\label{subsec:LowTemp}Low temperature phase: spherical spin glasses}

As mentioned above,  we now restrict our attention to the spherical mixed $p$-spin
models, for which we are able to exploit results relating
the Gibbs measure to critical points \cite{ABA2,A-BA-C,geometryMixed,2nd,
geometryGibbs,pspinext,2ndarbitraryenergy}
and the generalized Thouless-Anderson-Palmer 
(\abbr{TAP}) approach \cite{TAPChenPanchenkoSubag,TAPIIChenPanchenkoSubag,FElandscape,TAP}.
Specifically,  we take the configuration space 
\begin{equation*}
\Sigma_{N}=\SN:=\Big\{\bs=(\sigma_{1},\ldots,\sigma_{N})\in\R^{N}:\,\|\bs\|=\sqrt{N}\,\Big\}\label{eq:SN}
\end{equation*}
with its Borel $\sigma$-algebra $\cF_{N}$ and uniform measure
$\mu_{N}$.  For deterministic $\gamma_{p}\geq 0$,  we have the corresponding \emph{mixture}
\begin{equation}\label{def:xi}
\xi(t):=\sum_{p\geq 2} \gamma_{p}^{2}t^{p} \,,
\end{equation}
and we assume hereafter 
that 
$\xi(1+\epsilon)<\infty$ for some $\epsilon=\epsilon_\xi>0$. 
The model with $\xi(t)=t^p$ is called the pure $p$-spin model,  
whereas a mixture $\xi(\cdot)$ is called generic if
	\begin{equation*}
		\sum_{p\,\text{odd}}p^{-1}\indic\{\gamma_{p}>0\}=\sum_{p\,\text{even}}p^{-1}\indic\{\gamma_{p}>0\}=\infty.\label{eq:generic}
	\end{equation*}
The mixed $p$-spin Hamiltonian on $\SN$ corresponding to such $\xi$ is given by
\begin{equation}
H_{N}(\bs)=\sum_{p=2}^{\infty}\gamma_{p}N^{-\frac{p-1}{2}}\sum_{i_{1},\dots,i_{p}=1}^{N}J_{i_{1},\dots,i_{p}}\sigma_{i_{1}}\cdots\sigma_{i_{p}},\label{eq:Hamiltonian}
\end{equation}
where $J_{i_{1},\dots,i_{p}}$ are \abbr{iid} standard normal variables,  yielding the covariance function
\[
\E H_{N}(\bs)H_{N}(\bs')=N\xi(\langle\bs,\bs'\rangle/N),
\]
where $\frac{1}{N}\langle\bs,\bs'\rangle:=\frac{1}{N}\sum_{i\leq N}\sigma_{i}\sigma_{i}'$
is called the overlap of $\bs$ and $\bs'$.  

The free energy of such spherical models is given by the Parisi formula
\cite{ParisiFormula,Parisi},
\begin{equation*}
\lim_{N\to\infty}\E F_{N,\beta}=F(\beta)=\min_{x}\mathcal{P}_{\xi,\beta}(x)\label{eq:Parisi}
\end{equation*}
rigorously proved by Talagrand \cite{Talag}  and Chen \cite{Chen}. Here the minimum is taken
over all $[0,1]\to[0,1]$ distribution functions (i.e., non-decreasing and right-continuous) such that $x(\hat q)=1$ for some $\hat q<1$ and 
the Crisanti-Sommers \cite{Crisanti1992} functional $\mathcal{P}_{\xi,\beta}(x)$ is given by
\begin{equation}\label{eq:CrisantiSommers}
\mathcal{P}_{\xi,\beta}(x)=\frac12\Big(
\beta^2\int_0^1x(t)\xi'(t)dt+\int_0^{\hat q}\frac{dt}{\int_t^1 x(s)ds}+\log(1-\hat q)
\Big).
\end{equation}
The unique minimizer of the strictly convex functional
(\ref{eq:CrisantiSommers}) is called the Parisi distribution and denoted by $x_{P}=x_{\beta,P}$. The corresponding measure $\nu_{P}=\nu_{\beta,P}$ given by $\nu_{P}([0,q])=x_{P}(q)$ is called the Parisi measure.

A zero-temperature analogue of the Parisi formula expressing the ground state energy as
\begin{align}
	\lim_{N\to\infty}\frac{1}{N}\E\Big[ \sup_{\bx\in\SN}\big\{ H_{N}(\bx)\big\} \Big] = \min_{\alpha,c}\mathcal{P}_{\xi,\infty}(\alpha,c)
	\label{eq:ZTParisi}
\end{align}
was also proved in \cite{ChenSen,JagannathTobascoLowTemp}. Here the minimum is taken over all non-decreasing, right-continuous and integrable functions $\alpha:[0,1)\to[0,\infty)$ and real,  positive values $c>0$, 
where 
\[
\mathcal{P}_{\xi,\infty}(\alpha,c) := \frac12\Big[
\xi'(1)c + \int_0^1 \xi'(t)\alpha(t)dt + \int_0^1\frac{dt}{\int_t^1 \alpha(s)ds+c}\,
\Big]\,.
\]

We denote the ground state energy at radius $\sqrt{Nq}>0$ and (twice) its derivative by 
\begin{align}
\Es(q) & :=\lim_{N\to\infty}\frac{1}{N}\E\Big[ \sup_{\bx\in\sqrt{q}\SN}\big\{ H_{N}(\bx)\big\} \Big],\qquad \hRs(q):=2\frac{d}{dq}\Es(q).
\label{eq:GS}
\end{align}
Denoting by $(\alpha_q,c_q)$ the minimizer of \eqref{eq:ZTParisi} for the mixture $\hat\xi_q(t):= \xi(qt)$, we have that
\begin{equation}\label{eq:hRTformula}
\Es(q) = \mathcal{P}_{\hat\xi_q,\infty}(\alpha_q,c_q),\qquad \hRs(q) = \frac{c_q}{q}(\hat\xi_q''(1)+\hat\xi_q'(1)) +\frac1q \int_0^1(s\hat\xi_q''(s)+\hat\xi_q'(s))\alpha_q(s)ds.
\end{equation}
The first equality follows from \eqref{eq:ZTParisi},  since up to space scaling (by $\sqrt{q}$),
the restriction of $H_N(\bx)$ to $\sqrt{q}\SN$ is the Hamiltonian on $\SN$
for mixture $\hat\xi_q(t)$ (of the form $\sum_{p\geq2}\gamma_pq^{\frac p2}H_N^p(\bs)$ 
with independent pure $p$-spin Hamiltonians $H_N^p(\bs)$).
For the second equality,  fix $\lambda,q_1,q_2 \in [0,1]$ and note that since
$(\lambda q_1+(1-\lambda) q_2)^{\frac p2} \le \lambda q_1^{\frac p2}+(1-\lambda) q_2^{\frac p2}$
for all $p \ge 2$, it follows that
\begin{align*}
	\E \max_{\bs} \sum_{p\geq2}\gamma_p(\lambda q_1+(1-\lambda) q_2)^{\frac p2}H_N^p(\bs)
	& \leq
	\E \max_{\bs} \sum_{p\geq2}\gamma_p(\lambda q_1^{\frac p2}+(1-\lambda) q_2^{\frac p2})H_N^p(\bs)\\
	&\leq 
	\lambda \E \max_{\bs} \sum_{p\geq2}\gamma_p q_1^{\frac p2}H_N^p(\bs)+(1-\lambda)\E \max_{\bs} \sum_{p\geq2}\gamma_p q_2^{\frac p2}H_N^p(\bs),
\end{align*}
from which we deduce that $q \mapsto \Es(q)$ is convex.
Further,  comparing 
 \eqref{eq:ZTParisi} with the \abbr{lhs} of \eqref{eq:GS} we see that
 $\Es(q')\leq \mathcal{P}_{\hat\xi_{q'},\infty}(\alpha_q,c_q)$,  with equality if $q'=q$ (by the first equality
 of \eqref{eq:hRTformula}).
Thus,\footnote{Applying with $f(q')=\Es(q')$, $g(q')=\mathcal{P}_{\hat\xi_{q'},\infty}(\alpha_q,c_q)$ and $x_0=q$ the fact that if $f(x)\le g(x)$ are real functions with $f(x)$ convex, $g(x)$ differentiable at $x_0$ and $f(x_0)=g(x_0)$ then $f'(x_0)$ exists and is equal to $g'(x_0)$.}
 \[
 \hRs(q) = 2\frac{d}{dq}\Es(q)=2\Big[\frac{d}{dq}\mathcal{P}_{\hat\xi_q,\infty}(\alpha,c)\Big]\Big|_{(\alpha,c)=(\alpha_q,c_q)}
 \]
which coincides with the \abbr{rhs} of the second equality in \eqref{eq:hRTformula}.
Recall also that by \cite[Proposition 11]{FElandscape}, for any positive $q\in \mbox{supp}(\nu_{\beta,P})$,
\[
\alpha_q(t)=\beta x_P(qt)\quad\mbox{and}\quad c_q=\frac{\beta}{q}\int_q^1 x_P(s)ds
\]
with $\Es(q)$ and $\Rs(q)$ thus expressed directly in terms of the Parisi distribution 
$x_{P}$.


Given a probability measure $\nu$ on $[0,1]$ with cumulative distribution $x(q)=\nu([0,q])$ such that $x(\hat q)=1$ for some $\hat q<1$, define the functions
\begin{equation}\label{eq:phi}
	\begin{aligned}
		\Phi_{\nu}(t)&=\beta^2 \xi'(t)-\int_0^t\frac{ds}{(\int^1_s x(r)dr)^2},\\
		\phi_{\nu}(s)&=\int^s_0\Phi_{\nu}(t)dt.
	\end{aligned}
\end{equation}
Talagrand \cite[Proposition 2.1]{Talag} proved that the Parisi measure $\nu_{\beta,P}$ is the unique measure such that
\begin{equation}\label{eq:Parisi_characterization_finite_beta}
	\mbox{supp}(\nu)\subset \mathcal{S}_{\nu}:=\Big\{ s\in[0,1]: \phi_{\nu}(s)=\sup_{t\in[0,1]}\phi_{\nu}(t) \Big\}.
\end{equation}
For the Parisi measure,  we denote $\mathcal{S}_{P}:=\mathcal{S}_{\nu_{P}}$ and
by $q_P:=q_P(\beta)=\max\text{supp}(\nu_{P})<1$ the rightmost point in its support.  Note that, as can be verified
from the Parisi formula, $q_P>0$ if and only if $\beta>\beta_{c}$.   Recall that a model $\xi(\cdot)$ is 
$k$-\abbr{rsb} at $\b$ for some $k \ge 1$,  if the Parisi measure $\nu_P=\nu_{\b,P}$ is supported 
on $0$ and $k$ distinct positive points $q_i$.  Generalizing  \cite[Definition 4]{HuangSellke2023}
(beyond $k=1$ and zero-temperature),  we say that such $\xi(\cdot)$ is {\em strictly} $k$-\abbr{rsb} 
at $\b$ if there exist  $0=q_0 < q_1 <\cdots <q_k=q_P$ such that
\begin{equation}\label{eq:kRSB}
	\mathcal{S}_P\cap [0,q_P] = \mbox{supp}(\nu_P)=\{q_0,\ldots,q_k\} 
\end{equation}
(while we do not rely on it,  we note in passing that for generic mixtures
$\mathcal{S}_P$ is the set of all multi-samplable overlaps,  as shown in 
\cite[Theorem 10]{FElandscape}). 

The problem of computing the Parisi measure, and in particular identifying whether it is $k$-\abbr{RSB}, has been studied in \cite{AuffingerZhou25,JagannathTobascoBdsCplxSph,Zhou2025}.  If $\xi(\cdot)$ is known to be $k$-\abbr{rsb},  one can easily check whether it is also strictly $k$-\abbr{rsb} by using \eqref{eq:kRSB}. 
Generically,  one expects most $k$-\abbr{rsb} models to be strict,  but at phase transitions we 
can find $k$-\abbr{rsb} models which are not strictly $k$-\abbr{rsb}.  For example, if $\xi''(0)=0$ then at the critical $\beta_c$ the maximal $q \in \mathcal{S}_P$ is strictly positive while $\nu_P=\delta_0$; see \cite[Remark 13]{FElandscape} for a proof of this fact and a related discussion on the special case of the $2$-spin model.

Assume henceforth that our mixture $\xi(t)$ is $k$-\abbr{rsb} for some finite $k$. 
Denoting the Euclidean gradient of $H_{N}(\bx)$
in $\R^{N}$ by $\nabla H_{N}(\bx)$, we define directional derivatives by
\[
\partial_{\bu} H_{N}(\bx):=\left\langle \nabla H_{N}(\bx),\bu\right\rangle \,,
\]
and with an arbitrary matrix $M_i=M_i(\vec\bx)\in\R^{(N-i)\times N}$
whose rows form an orthonormal basis of $\big(\sp\{\bx_1,\ldots,\bx_i\}\big)^{\perp}$,
we define $\gradt H_{N}(\bx_i):=M_{i}\nabla H_{N}(\bx_i)$.  Note that
$\gradt H_{N}(\bx_i)$ depends also on the points $\bx_1,\ldots,\bx_{i-1}$ (whose choice will be clear from the context).  In particular,  denoting hereafter $a\wedge b=\min\{a,b\}$ and
$\{\be_i\}$ the standard basis of $\R^N$,  
in Theorem \ref{thm:low temp 1} we use the vectors $\vec\bx^\be_k=(\bx_1^{\be},\ldots,\bx_k^{\be})$ where
$\bx^{\be}_i := \sum_{j=1}^i \sqrt{N(q_j-q_{j-1})} \be_j$ are such that $\frac1N\langle\bx_i,\bx_j\rangle=q_{i\wedge j}$.

Given
two vectors $\vec E=(E_i)_{i=1}^k$ and $\vec R=(R_i)_{i=1}^k$ of real numbers, another real number $E$, a collection of points $\vec{\bx}=(\bx_i)_{i=1}^k$ in $\R^N$ such that\footnote{For such $\vec{\bx}$ to exist, we must have that $N\geq k$.   
	Henceforth, we always implicitly assume that indeed $N\geq k$.} $\frac1N\langle\bx_i,\bx_j\rangle=q_{i\wedge j}$  and a point $\bs\in\SN$, we set $\bx_0={\bf 0}$ and
let $\cpt(\bs,\vec{\bx},E,\vec E,\vec R)$ be
the event that 
\begin{equation}
	\forall i\leq k :\quad \Big(\frac{1}{N}H_{N}(\bs),\,\frac{1}{N}H_{N}(\bx_i),\,\frac{\partial_{\bx_i-\bx_{i-1}}H_{N}(\bx_i)}{\|\bx_i-\bx_{i-1}\|^2},\,\gradt H_{N}(\bx_i)\Big)=\big(E,E_i,R_i,{\bf 0}\big).  \label{eq:cpt}
\end{equation}

With $\BN\subset\R^N$ denoting the ball of radius $\sqrt {N r}$ for some $1 \le r < 1 + \epsilon_\xi$  
and $C^\infty(\BN)$ the class of real smooth functions on it,  the statistics we wish to 
study under the Gibbs measure  are of the form
$f_N(\bs)=\bar f_N(\bs,H_N(\cdot))$ for some deterministic 
$\bar f_N(\bs,\varphi):\SN\times C^\infty(\BN)\to\R$ which is 
rotationally invariant.   That is,
$\bar f_N(\bs,\varphi(\cdot)) = \bar f_N(O\bs,\varphi(O^{T}\cdot))$ for any orthogonal $O$. 
We further assume that
for any $\bs\in\SN$,  $\varphi,\psi_i\in C^\infty(\BN)$ and $k\geq1$,
\begin{equation}
	\lim_{t_{i}\to0}\bar f_{N}\Big(\bs,\varphi+\sum_{i\leq k}t_{i}\psi_{i}\Big)=\bar f_{N}(\bs,\varphi).\label{eq:flim}
\end{equation}
We note in passing that
$\partial_{\bx_i-\bx_{i-1}}H_{N}(\bx_i)$ is measurable \abbr{wrt}  $H_{N}(\bx_i)$  
iff $i=1$ and the model is pure $p$-spin for some $p$ (see the covariance calculations in \cite[Appendix A]{geometryMixed}),
in which case we omit the condition
$\frac{\partial_{\bx_1}H_{N}(\bx_1)}{\|\bx_1\|^2}=R_1$ from the event $\cpt(\bs,\vec{\bx},E,\vec E,\vec R)$.
Henceforth,  \abbr{wlog} the Gaussian vector which determines
the event $\cpt(\bs,\vec{\bx}^\be_k,E,\vec E,\vec R)$ has a strictly positive density.  We thus 
define,  analogously to \eqref{eq:reg-cond},  the conditional probability
\[
p_N(\bs,(E,\vec E,\vec R)) := \P\big(\, f_{N}(\bs)\in  A\,\Big|\,\cpt(\bs,\vec{\bx}_k^{\be},E,\vec E,\vec R)\,\big) \,,
\]
as the density with respect to the underlying Lebesgue measure of the corresponding sub-probability 
measure,  normalized by the relevant Gaussian vector density at $(E,\vec E,\vec R,\bf 0)$ and in particular 
to consider the (essential) supremum of $p_N(\cdot,\cdot)$ over
\begin{align}
\label{eq:Band}
B(\vec \bx,\delta)&:=\left\{ \bs\in\SN:\,|\langle\bs,\bx_i\rangle-\langle\bx_i,\bx_i\rangle|\leq N\delta,\,\;\; \forall 1\leq i\leq k \right\}\,,	\\
\label{eq:V}
V(\epsilon)& :=\left\{ (E,\vec E,\vec R):\,|E-F'(\beta)|,\,| E_i-\Es(q_i)|,\,|R_i-\Rs(q_i)|<\epsilon,\,\;\; \forall 1\leq i\leq k\right\} \,.
\end{align}

\begin{thm}
\label{thm:low temp 1} Consider the spherical mixed $p$-spin model
with a generic mixture $\xi(t)$ and $\beta>\beta_{c}$ at which $\xi(t)$ is strictly $k$-\abbr{rsb}.
Suppose that $f_N(\bs)=\bar f_N(\bs,H_N(\cdot))$ for some deterministic rotationally invariant $\bar f_N(\bs,\varphi)$ which satisfies the continuity property \eqref{eq:flim} and that $f_N(\bs)$ is $\P$-a.s.\  a measurable function on $\SN$.  If for some Borel 
$A\subset\R$
\begin{equation}
\lim_{\epsilon\to0}\varlimsup_{N\to\infty}\supess_{B(\vec\bx_k^{\be},\epsilon)\times V(\epsilon)}\frac{1}{N}\log\P\big(\, f_{N}(\bs)\in  A\,\Big|\,\cpt(\bs,\vec{\bx}_k^{\be},E,\vec E,\vec R)\,\big)<0
\label{eq:bd_Gaussian_average-11}
\end{equation}
then 
\begin{equation}
\lim_{N\to\infty}\E G_{N,\beta}\left(f_{N}(\bs)\in A\right)=0.\label{eq:GAverageLowT}
\end{equation}
\end{thm}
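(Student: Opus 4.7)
The plan is to combine the pure-states decomposition supplied by Proposition \ref{prop:PS} with a Kac-Rice style identity that rewrites disorder averages of Gibbs weights of neighborhoods of critical points in terms of conditional Gaussian laws of $H_N$ given $\cpt$. The argument is most naturally structured in four steps.

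\textbf{First}, I use Proposition \ref{prop:PS} and $k$-\abbr{RSB} ultrametricity to localize the Gibbs measure. With high probability under $\P$, up to arbitrarily small residual mass, the Gibbs sample $\bs$ lies in a band $B(\vec{\bx},\delta)$ for some $\vec{\bx}=(\bx_1,\ldots,\bx_k)$ drawn from a finite admissible set $\mathcal{C}_\epsilon$ of nested configurations satisfying $\langle\bx_i,\bx_j\rangle/N=q_{i\wedge j}$, $\gradt H_N(\bx_i)=0$, $|H_N(\bx_i)/N-\Es(q_i)|<\epsilon$ and $|\partial_{\bx_i-\bx_{i-1}}H_N(\bx_i)/\|\bx_i-\bx_{i-1}\|^2-\Rs(q_i)|<\epsilon$. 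Combining this with the high-probability lower bound $Z_{N,\beta}\geq e^{NF(\beta)-o(N)}$ yields
\[
\E G_{N,\beta}(f_N(\bs)\in A)\leq e^{-NF(\beta)+o(N)}\,\E\Big[\sum_{\vec{\bx}\in\mathcal{C}_\epsilon}\int_{B(\vec{\bx},\delta)}\mathds{1}\{f_N(\bs)\in A\}\,e^{\beta H_N(\bs)}\,d\mu_N(\bs)\Big]+o(1).
\]

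\textbf{Second}, I apply the Kac-Rice formula to the sum over critical configurations and use Gaussian regression to disintegrate according to the values of $H_N(\bs),H_N(\bx_i),\partial_{\bx_i-\bx_{i-1}}H_N(\bx_i)$. The right-hand side above becomes an integral over $\vec{\bx}$ (subject to the inner-product constraints), over $\bs\in B(\vec{\bx},\delta)$, and over $(E,\vec E,\vec R)\in V(\epsilon)$, of
\[
\P\big(f_N(\bs)\in A\,\big|\,\cpt(\bs,\vec{\bx},E,\vec E,\vec R)\big)\cdot e^{\beta EN}\cdot\E\big[|\det\HessT H_N(\bx_\cdot)|\,\big|\,\cpt\big]\cdot\varphi(E,\vec E,\vec R),
\]
where $\varphi$ is the joint Gaussian density of the conditioned-upon values; the continuity property \eqref{eq:flim} is used to show that the conditional probability of $\{f_N(\bs)\in A\}$ given the full $\cpt$ event coincides with the limit obtained from conditioning on increasingly many of its coordinates.

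\textbf{Third}, rotational invariance of the joint law of $(H_N,f_N)$ implies that this conditional probability depends only on the Gram matrix of $(\bs,\bx_1,\ldots,\bx_k)$, so it equals the corresponding probability computed at $\vec{\bx}=\vec{\bx}_k^\be$ with $\bs\in B(\vec{\bx}_k^\be,\delta)$. By hypothesis \eqref{eq:bd_Gaussian_average-11}, for $\delta,\epsilon$ small enough this factor is bounded above uniformly by $e^{-cN}$ for some $c>0$. Factoring it out of the Kac-Rice integral leaves precisely the annealed Kac-Rice expression for the sum over critical points of the restricted partition function $\int_{B(\vec{\bx},\delta)}e^{\beta H_N(\bs)}d\mu_N(\bs)$ with the prescribed energy/derivative windows.

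\textbf{Fourth}, I bound this residual integral by the Huang-Sellke complexity formula: its exponential growth rate equals the Parisi free energy $F(\beta)$, using the identification $\alpha_q(t)=\beta x_P(qt)$, $c_q=\frac{\beta}{q}\int_q^1 x_P(s)ds$ recalled in the text. Hence the prefactor $e^{-NF(\beta)+o(N)}$ cancels this contribution up to subexponential errors, leaving the uniform decay $e^{-cN}$ from Step 3, which proves \eqref{eq:GAverageLowT}.

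\textbf{Main obstacle.} The delicate point is the exponent-matching in Step 4: the annealed complexity of configurations critical in the sense of $\cpt$, weighted by $e^{\beta EN}$ and integrated over $V(\epsilon)$, must equal $F(\beta)+o_\epsilon(1)$ so that the conditional probability's exponential decay survives intact. This is precisely where the Huang-Sellke identification of the annealed complexity with the Parisi functional is essential, and ensuring that Kac-Rice applies simultaneously to a nested $k$-tuple of critical points while preserving the Hessian-determinant and density factors (which the generic mixture hypothesis permits) requires the Hessian conditioning at each level $i$ to be expressed in the subspace orthogonal to $\sp\{\bx_1,\ldots,\bx_i\}$, matching the definition of $\gradt$.
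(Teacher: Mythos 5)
Your general strategy — localize via Proposition \ref{prop:PS}, apply a multi-level Kac--Rice formula, factor out the conditional probability, and match exponents — is in the spirit of the paper, but there is a genuine gap at the heart of Steps 2--3, and the way you normalize in Step 1 sets you up for a hard exponent-matching in Step 4 that the paper deliberately avoids.

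The critical flaw is the claimed factorization of the Kac--Rice integrand. You assert that after disintegrating according to $(E,\vec E,\vec R)$, the expected Kac--Rice density factors as
\[
\P\big(f_N(\bs)\in A\,\big|\,\cpt\big)\cdot e^{\beta EN}\cdot\E\big[\,|\det\HessT H_N(\bx_\cdot)|\,\big|\,\cpt\big]\cdot\varphi(E,\vec E,\vec R).
\]
But $\cpt(\bs,\vec\bx,E,\vec E,\vec R)$ only fixes the values and first (radial/tangential) derivatives of $H_N$ at $\bs,\bx_1,\ldots,\bx_k$. Since $f_N(\bs)=\bar f_N(\bs,H_N(\cdot))$ depends on the entire field — in particular on the Hessians at each $\bx_i$ — the event $\{f_N(\bs)\in A\}$ and the determinants $|\det\HessT H_N(\bx_i)|$ remain correlated conditionally on $\cpt$. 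So $\E[\mathds{1}\{f_N\in A\}|\det\HessT|\,|\,\cpt]\neq\P(f_N\in A\,|\,\cpt)\cdot\E[|\det\HessT|\,|\,\cpt]$ in general, and the cancellation you carry out in Step 4 is built on an unjustified step. One could try Hölder to decouple, but that introduces a higher moment $\E[|\det\HessT|^q\,|\,\cpt]^{1/q}$ and a genuine exponential loss that would need to be controlled before Step 4 could close.

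The paper's proof circumvents exactly this obstruction by a different normalization: it does not divide by $Z_{N,\beta}$ using a whp lower bound, but instead conditions $G_{N,\beta}$ on each band, i.e., works with $\alpha_k(\vec\bx)=G_{N,\beta}(f_N\in A\,|\,B(\vec\bx,\epsilon))\in[0,1]$. This bounded ratio is precisely what Proposition \ref{prop:multilvlKR} (through Lemma \ref{lem:1lvlInductionIndic} and Corollary \ref{cor:1lvlKR_E}, which handle the determinant coupling via a truncation plus Cauchy--Schwarz, not a naive factorization) is designed for. After that, the residual conditional expectation $\E[\alpha_k(\vec\bx_k^\be)\,|\,\cpt]$ is analyzed not by exponent-matching against the Parisi free energy, but by recognizing that conditionally on $\cpt$, $H_N$ restricted to the band $B(\vec\bx_k^\be,\epsilon_N)$ is (up to small perturbations quantified in Lemma \ref{lem:Hhat}) a \emph{high-temperature} spherical model with mixture $\bar\xi_{q_k}$, so the uniform high-temperature argument of Theorem \ref{thm:high-temp} applies. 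Your Step 4 instead requires the annealed complexity of $\cpt$-configurations, weighted by $e^{\beta EN}$ and integrated over the band, to match $F(\beta)$ exactly at exponential scale; this is the TAP representation in disguise and is delicate precisely because it is an annealed-equals-quenched statement — a burden the paper's band-conditioning avoids from the start.

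To repair your argument you would need to (i) replace the factorization by a genuinely valid decoupling (e.g., truncate the Hessian determinant as in the paper and decouple the tail by Cauchy--Schwarz), and (ii) either carry out the TAP exponent-matching rigorously or, better, adopt the band-conditioning normalization and rerun the conclusion as a high-temperature problem.
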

Note that $\vec \bx_k^{\be}$,  the nominal values of  
$(E,\vec E,\vec R)$ in \eqref{eq:bd_Gaussian_average-11},  and the overlaps
$\{ \langle \bs,\bx^{\be}_i\rangle,  i \le k\}$ that determine the narrow band $B(\vec \bx_k^{\be},\epsilon)$
of \eqref{eq:Band},  are explicit functions of the Parisi measure $\nu_{\beta,P}$.

\begin{rem} It is necessary to have an extra conditioning in Theorem \ref{thm:low temp 1},
beyond conditioning only on  $\frac1NH_N(\bs)= E\approx F'(\beta)$,  as in Theorem \ref{thm:high-temp} 
for high temperature.  For instance  
consider the observable $f_N(\bs) = \|M_0 \nabla H_{N}\|/\sqrt{N}$ for $M_0=\mathbf{I}_N-\frac1N\bs\bs^T$ (projecting the gradient to the orthogonal space).  Conditional only on $\frac1NH_N(\bs)= E$, 
we have that $f_N(\bs)\overset{{\rm d}}{=} \sqrt{\xi'(1)} \|X\|/\sqrt{N}$ for 
standard Gaussian $X \in \R^{N-1}$,  hence 
$\P( |f_N(\bs) - \sqrt{\xi'(1)}| > \delta | \frac1NH_N(\bs)= E) = e^{-\Omega(N)}$.
On the other hand,  by a covariance computation similar to that of Section \ref{subsec-cond-law},  one can verify that 
for some $U_\beta\neq\sqrt{\xi'(1)}$ and almost every $\beta$,  our condition \eqref{eq:bd_Gaussian_average-11} holds for any open set $A$ not containing $U_\beta$.  Even without any calculation, the reader could easily convince themselves that for $\beta$ very large so that $F'(\beta)$ is close to $\Es(1)$,  such a constant $U_\beta$ must be close to zero.
\end{rem}
\begin{rem}
		The conclusion \eqref{eq:GAverageLowT} provides an $o(1)$ bound, weaker than the exponential decay \eqref{eq:ht-result} in the high-temperature case. The reason is that we rely on the pure state decomposition 
		\cite{	JagannathApxUlt,TalagrandPstates} that controls the Gibbs measure up only over a set of measure $1-o(1)$. Strengthening the latter decomposition to cover $1-e^{-\Theta(N)}$ of the measure would result in a similar strengthening for \eqref{eq:GAverageLowT}, by a straightforward modification of our proof.
	\end{rem}
\begin{rem}\label{rem:pure}
	The pure $p$-spin models with mixture $\xi(t)=t^p$,  $p\geq3$ are strictly $1$-\abbr{rsb}  
	whenever $\beta>\beta_c$.  While they are not generic,  one can derive
	Theorem \ref{thm:low temp 1} for such pure models at all large enough $\beta$.
	Indeed,  genericity is used in the proof of the theorem mainly to invoke the pure states decomposition of Proposition \ref{prop:PS},  but for
the pure models such a decomposition 
 follows at large $\beta$ 
	from the decomposition of \cite{geometryGibbs}.  
	In the generic 
	case the collection of Gaussian random variables in \eqref{eq:cpt} is non-degenerate 
	(namely,
	they are not linearly dependent).  In contrast,  for pure models
	the radial derivative $\partial_{\bx_1}H_{N}(\bx_1)/\|\bx_1\|^2$ is measurable \abbr{wrt} $H_{N}(\bx_1)$
	and,
	as noted after \eqref{eq:flim}, we omit the condition $\partial_{\bx_1}H_{N}(\bx_1)/\|\bx_1\|^2=R_1$ from the definition of $\cpt(\bs,\vec{\bx}^\be_1,E,\vec E,\vec R)$. This entails several technical modifications to the proof of Theorem \ref{thm:low temp 1}, which are straightforward and left to the interested reader. 
	In Theorem \ref{thm:low temp pure} below, we only state the analogous result for the pure $p$-spin models, denoting the corresponding event by $\cpt(\bs,\vec{\bx}_1^{\be},E,\vec E)$.
	Specifically,  in the pure case,  since $k=1$ the required complexity computations as in Section \ref{sec:complexity} can be taken directly from \cite{A-BA-C},
the multi-level ($m\geq1$ or $n\geq2$) Kac-Rice formulas of Section \ref{sec:KacRice} are 
not required,
while 
the conditional laws of  Lemma \ref{lem:conditional_law_on_band}
are required only for $m=1$ and thus derived
by a trivial modification of the proof for the generic case.
\end{rem}

\begin{thm}
	\label{thm:low temp pure} Consider the spherical pure $p$-spin model 
	with mixture $\xi(t)=t^p$ and suppose that $f_N(\bs)=\bar f_N(\bs,H_N(\cdot))$ is as in Theorem \ref{thm:low temp 1}.
	For any $p\ge3$ there exists $\beta_p$ such that if $\beta>\beta_p$ and for some Borel 
	$A\subset\R$
	\begin{equation*}
		\lim_{\epsilon\to0}\varlimsup_{N\to\infty}\supess_{B(\vec\bx_1^{\be},\epsilon)\times V(\epsilon)}\frac{1}{N}\log\P\big(\, f_{N}(\bs)\in  A\,\Big|\,\cpt(\bs,\vec{\bx}_1^{\be},E,\vec E)\,\big)<0\,,
	\end{equation*}
	then 
	\begin{equation*}
		\lim_{N\to\infty}\E G_{N,\beta}\left(f_{N}(\bs)\in A\right)=0.
	\end{equation*}
\end{thm}

In Section \ref{sec:appl} we illustrate the wide applicability of our main results
in the context of spherical mixed $p$-spin models,  by considering Langevin dynamics initialized
according to a Gibbs measure,  and by computing the Franz-Parisi potential
\cite{FranzParisiPotential},  including at the challenging low-temperature phase.
In Section \ref{sec:putestates} we state the pure states decomposition of \cite{JagannathApxUlt,TalagrandPstates} and our Proposition \ref{prop:PS}, by which its vertices can be chosen to be critical points (on appropriate spheres).  In Section \ref{sec:PfSketch} we outline  the proof of Theorem \ref{thm:low temp 1} about the low-temperature phase of spherical spin glasses. Section \ref{sec:HighTempPf} is devoted to the proof of Theorem \ref{thm:high-temp} about the high temperature phase. In Section \ref{sec:complexity} we collect several results from \cite{geometryMixed,HuangSellke2023,FElandscape} about the complexity of critical points and prove some consequences of theirs.  In Section \ref{sec:KacRice} we use the 
multi-level Kac-Rice formula to express averages over critical points by certain complexities and the corresponding conditional expectations and probabilities given $\cpt(\bs,\vec{\bx}_n^{\be},E,\vec E,\vec R)$.
In Section \ref{sec-pf-prop:PS} we prove the aforementioned Proposition \ref{prop:PS} on the pure states decomposition.  Finally,  in Section \ref{sec:LowTempPf} we combine the results of Sections \ref{sec:putestates}, \ref{sec:complexity} and \ref{sec:KacRice} to prove Theorem \ref{thm:low temp 1} about the low temperature phase
and in Appendix \ref{sec:Appendix} extend it to get Corollary \ref{cor:low temp PS} 
(which we use in our study of Langevin dynamics).

{\bf Acknowledgment}
We thank the anonymous referees for their comments, which significantly improved
the presentation of this paper.
This research was funded in part by \abbr{NSF} grant DMS-2348142 (A.D.),  
by \abbr{ISF} grant 2055/21 (E.S.), \abbr{ERC} grant PolySpin 101165541 (E.S.) and by a research grant from the Center for Scientific Excellence
at the Weizmann Institute of Science (E.S.). E.S. is the incumbent of the Skirball Chair in New Scientists.


\section{\label{sec:appl} Applications}
Section \ref{subsec:Langevin} shows how 
Theorem \ref{thm:high-temp} and Corollary \ref{cor:low temp PS} 
are utilized
to convert the thermodynamic limits of Langevin dynamic \eqref{diffusion} 
initialized according to a Gibbs measure,  into such limits for non-random initial conditions
with a suitably modified Gaussian disorder,  while in Section \ref{subsec:FPpotential} we apply 
Theorems \ref{thm:high-temp} and \ref{thm:low temp 1}, to compute the Franz-Parisi potential 
at any temperature.

\subsection{\label{subsec:Langevin} Langevin dynamics} For mixture $\xi(t)$ of 
 \eqref{def:xi} such that $\xi(1+\epsilon)<\infty$,  consider the Langevin dynamic \eqref{diffusion} at 
some $\widetilde\beta>0$,  for $t \in [0,T]$ and $\bx_0=\bs$ drawn from $G_{N,\beta}$
for some $\beta>0$.
As shown in \cite[Corollary 4.6]{DS2},
having $1<\rho<1+\epsilon$ with $b_\ell'(\cdot)$ locally Lipschitz on $[0,\rho)$ and
$(\rho - r) b_\ell'(r) \to \infty$ as $r \uparrow \rho$,  guarantees the existence of unique
strong solutions of \eqref{diffusion} 
in $C(\R^+,\R^N)$,  with  $C_N(t,t) < \rho$.  
Fixing hereafter such $b_\ell(\cdot)$ and $\xi(\cdot)$,  our approach applies for any generic mixture
and $\beta>\beta_c$ at which $\xi(t)$ is strictly $k$-\abbr{rsb},  but for clarity we restrict 
hereafter to $k=1$,  in which case we amend $(C_N,\chi_N,\widetilde{H}_N)$ of \eqref{eq:cov} by
\begin{equation}\label{eq:bnq} 
q_N(s):=\frac{1}{N} \langle \bx_s, \bx_\star \rangle\,,
\end{equation} 
for some $\bx_\star\in \sqrt{q_1} \SN$ chosen in the sequel (in the $k$-\abbr{rsb} case 
one should amend $(C_N,\chi_N,\widetilde{H}_N)$ by $k$ such functions).  Also,  aiming at a more uniform 
presentation,  we set $\bx_\star:={\bf 0}$ whenever $\beta \le \beta_c$.  Then,  denoting by $\|U_N\|_T$ the sup-norm 
of a generic function $U_N : [0,T]^2 \to \R$,  for $N$,  $T$,  $H_N(\cdot)$ and a strong 
solution of \eqref{diffusion} starting at $\bx_0=\bs \in \SN$,  we set the error between 
${\bf U}_N:=(C_N,\chi_N, \widetilde{H}_N,q_N)$ and 
a proposed non-random limit ${\bf U}_\infty=(C,\chi,H,q)$,  to be 
\begin{align*}
 \bar f_{N} (\bs,\bx_\star,H_N(\cdot) ) &:= 
\E
\Big[  \|C_N-C\|_T  \wedge 1 +\|\chi_N-\chi\|_T \wedge 1 
+\| \widetilde{H}_N -  H\|_T \wedge 1 + \|q_N-q\|_T \wedge 1 \Big] \,,
\end{align*}
where the expectation
is only over the Brownian motion $\BB_t$ (and the induced solution of \eqref{diffusion}).
Setting $A=(\delta,\infty)$ we utilize Theorem \ref{thm:high-temp} when $\beta \le \beta_c$ and 
Corollary \ref{cor:low temp PS} when $\beta>\beta_c$,  to conclude in \cite[Theorem 1.8]{DS2} 
that $\bar f_N(\bs,\bx_\star,H_N(\cdot)) \to 0$ in probability,  for some 
explicit ${\bf U}_\infty$ which is given in \cite[Definitions 1.5 \& 1.1]{DS2},  respectively.

Indeed,  for $\beta \le \beta_c$ the
high-temperature condition \eqref{eq:bcdef} holds,  with 
$p_N(\bs,E)$ of \eqref{eq:reg-cond} measurable on $\SN \times \R$ and 
the exponential convergence condition \eqref{eq:HighTempCondUnif} is established in \cite[(2.3)]{DS2}.

Turning to $\beta>\beta_c$,  as noted in \cite[(1.26)]{DS2} the functional $\bar f_N(\bs,\bx_\star,\varphi_N(\cdot))$ 
is 
rotationally invariant,  its continuity in $\varphi_N$ is established in \cite[Lemma 2.4]{DS2}
and its continuity in $\bx_\star$ is obvious.  Further,  for generic $\xi(t)$ which is strictly 1-\abbr{rsb} 
at $\beta>\beta_c$ we have the pure state decomposition $(\bx^\star_i,B_i)_{i \le d_N}$ of Proposition \ref{prop:PS}.
Fixing non-random $\bx^\star_0 \in \sqrt{q_1} \SN$ and setting $B_0:= \SN \setminus \cup_i B_i$, 
the partition $(B_i)_{i=0}^{d_N}$ induces the measurable mapping 
$\bx_\star (\bs,H_N)=\bx^\star_i \in \sqrt{q_1} \SN$.  With $G_{N,\beta}(B_0) \to 0$, 
we have thanks to \eqref{eq:energyontree} and properties (2),  (5) of Proposition \ref{prop:PS} that 
with probability approaching one $\bs\in B(\bx_\star,\epsilon_N)$ and 
$\bx_\star \in \sqrt{q_1}\SN$ such that $\gradt H_{N}(\bx_\star)=0$ and
\[
\Big(\frac{1}{N}H_{N}(\bs),\,\frac{1}{N}H_{N}(\bx_\star),\,\frac{\partial_{\bx_\star}H_{N}(\bx_\star)}{Nq_1}
\, \Big)\in V(\epsilon_{N})
\] 
for some  $\epsilon_{N} \to 0$,  with the exponential
convergence condition \eqref{eq:bd_Gaussian_average-PS} provided yet again by \cite[(2.3)]{DS2}.  

The thermodynamic limits for ${\bf U}_N$ are derived in \cite{BDG1} (for $\bx_0$ of \abbr{iid} components),
and in \cite{DemboSubag2020} (with a conditional disorder,  albeit not the one required here),  only for
finite mixtures and without an exponential in $N$ convergence rate.  Consequently,  
\cite[Sections 4 \& 5]{DS2} must extend the proofs in \cite{BDG1,DemboSubag2020} to 
cover different affine conditioning on $H_N(\cdot)$,  as well as handling generic (infinite) mixtures 
and producing exponential in $N$ convergence rates.  The non-random limit 
${\bf U}_\infty$ for Gibbsian initial conditions is novel,  even \abbr{wrt} the physics literature.  The
derivation of \cite[(2.3)]{DS2} is therefore supplemented 
in \cite[Sections 1.2,  3 \& 6]{DS2} by various properties of ${\bf U}_\infty$ 
which are beyond our scope in this work.

\subsection{\label{subsec:FPpotential} Computing the Franz-Parisi potential}
The Franz-Parisi potential involves sampling a point $\bs$ from the Gibbs measure and calculating, 
at an exponential scale,  the Gibbs probability of sampling another point $\bs'$ whose overlap 
with $\bs$ is approximately $r$,  where $\bs$ and $\bs'$ are allowed to be sampled 
from the Gibbs measure at two different inverse-temperatures $\beta$ and $\beta'$ 
(see  \cite{FranzParisiPotential}).
In doing so,  there are two sources of randomness,  the disorder $H_N$  (chosen by law $\P$)
and $\bs$ which is chosen according to $G_{N,\beta}$,  while the
Gibbs probability to sample $\bs'$ with a given overlap with $\bs$ is
determined once we fix $H_N$ and $\bs$ (i.e., it is measurable \abbr{wrt} them).  
One then aims to understand the typical behavior of this variable
as a function of $r$,  which can be interpreted as the typical rate function for
the overlap $\langle \bs,\bs'\rangle/N$  (for $N \gg 1$,  see below). 
Specifically,  the Franz-Parisi potential is the random variable 
\begin{equation}\label{eq:FP-def}
V_{N,\beta,\beta'}(r):=f_{N,r}(\bs)+\frac12\log(1-r^2)-F_{N,\beta'},
\end{equation}
with $\bs$ a random point sampled from $G_{N,\beta}$ and
\begin{equation}\label{eq:fFP}
		f_{N,r}(\bs):=\frac{1}{N}\log\int_{S(\bs,r)}e^{\beta' H_N(\bs')}d\bs',
\end{equation}
where $d\bs'$ denotes integration \abbr{wrt} the uniform measure on 
\[
S(\bs,r):=\big\{\bs'\in \SN:\,\langle \bs',\bs \rangle/N=r \big\},
\]
and the logarithmic term in \eqref{eq:FP-def}
is the limit of $\frac1N\log$ of the Hausdorff measure of $S(\bs,r)$. We note in passing that   
since $H_N(\cdot)$ is $O(\sqrt N)$-Lipschitz with overwhelmingly high probability (see, e.g.\ \cite[Lemma 6.1]{TAPChenPanchenkoSubag} or \cite[Corollary C.2]{geometryMixed}),  one also has that
\begin{equation}\label{eq:Vpotential}
V_{N,\beta,\beta'}(r) = \frac1N\log G_{N,\beta'}\Big(\big\{\bar \bs:\, |\langle \bar \bs,\bs \rangle/N -r|<\epsilon \big\} 
\Big)+ o_{N,\epsilon}(1),
\end{equation}
where $o_{N,\epsilon}(1)$ denotes a random variable which is smaller than $\delta$ with probability at least $1-e^{-N\kappa}$ provided that $N>N_0$ and $\epsilon<\epsilon_0$, for some $\kappa=\kappa(\xi)>0$, 
$N_0=N_0(\xi,\delta)$ and $\epsilon_0=\epsilon_0(\xi,\delta)$, uniformly in $r\in[-r_0,r_0]$ for any fixed $r_0\in(0,1)$.  To see that, let $P_\perp$ be the projection to the orthogonal space to $\bs$ and note if we project any point  
$\bar \bs$ from the set in \eqref{eq:Vpotential} to 
\begin{equation}\label{eq:sigmabar}
	\bs'=\sqrt{N(1-r^2)}P_\perp(\bar \bs)/\|P_\perp(\bar \bs)\|+r\bs\in S(\bs,r),
\end{equation}
the distance $\|\bs'-\bar\bs\|$ is bounded by $\epsilon\sqrt{N}C(r_0)$ for a constant $C(r_0)$ depending on $r_0$, assuming that $\epsilon<(1-r_0)/2$. Using this and the co-area formula, the Gibbs probability can be related to $V_{N,\beta,\beta'}(r)$ to prove \eqref{eq:Vpotential}.

\noindent
{\bf I.  High temperature phase} We start with the analysis in the easier case of $\beta\leq \beta_c$. Define the mixture
\begin{equation}\label{eq:xiqtilde}
	\tilde \xi_q(t):=\xi(q+(1-q)t)-\xi(q)
\end{equation}
and denote by $\tilde F(\beta,r)$ the limiting free energy of the spherical model with mixture $\tilde \xi_{r^2}(t)$ at inverse-temperature $\beta$,  as defined in \eqref{eq:Fconvergence}.
\begin{cor}\label{cor:FP_hightemp} For any spherical model $H_N(\bs)$,  
	$\beta\leq\beta_c$,   $|r| < 1$,  $\beta', \delta>0$ and $f_{N,r}(\bs)$ as in \eqref{eq:fFP},
	\[
	\varlimsup_{N\to\infty}\frac{1}{N}\log\E G_{N,\beta}\Big(
	\big| f_{N,r}(\bs)- \beta\beta'\frac{\xi(r)}{\xi(1)}-\tilde F(\beta',r) \big| > \delta \Big)<0.
	\]
\end{cor}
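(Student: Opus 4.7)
The strategy is to apply Theorem \ref{thm:high-temp} to the random field $f_N(\bs)=f_{N,r}(\bs)$ and the set
\[
A=A_\delta:=\Big\{x\in\R:\,\big|x-\beta\beta'\tfrac{\xi(r)}{\xi(1)}-\tilde F(\beta',r)\big|>\delta\Big\},
\]
so it suffices to verify the uniform conditional bound \eqref{eq:HighTempCondUnif} (which is stronger than \eqref{eq:bd_Gaussian_average}). The variance assumption \eqref{eq:variance-bd} is trivially satisfied for the spherical $p$-spin Hamiltonian, since $\frac{1}{N}\E[H_N(\bs)^2]=\xi(1)$ is constant in $\bs$ (one may absorb the factor $\xi(1)$ into $\beta$ or rescale, as the statement of Theorem \ref{thm:high-temp} is easily adapted, or replace $\beta$ in \eqref{eq:bcdef} with $\xi(1)\beta^2/2$).

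The key point, already set up in the excerpt, is the explicit Gaussian conditioning. Given $\frac1N H_N(\bs)=E$, for $\bs'\in S(\bs,r)$ we may write $H_N(\bs')=\frac{\xi(r)}{\xi(1)}NE+\tilde H_N(\bs')$ with $\tilde H_N$ independent of $\frac1N H_N(\bs)$ and of the form $\hat H_N(\bs')+\sqrt N X$, where $\hat H_N$ is a centered Gaussian field on $S(\bs,r)$ whose covariance (after identifying $S(\bs,r)$ with $\sqrt{1-r^2}\cdot\mathbb{S}^{N-2}$) is that of a spherical mixed $p$-spin model with mixture $\tilde\xi_{r^2}$, and $X$ is an independent scalar centered Gaussian of bounded variance. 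Substituting this in \eqref{eq:fFP} yields, conditionally on $\frac1N H_N(\bs)=E$,
\[
f_{N,r}(\bs)=\beta'\frac{\xi(r)}{\xi(1)}E+\frac{\beta' X}{\sqrt N}+\frac{1}{N}\log\int_{S(\bs,r)}e^{\beta'\hat H_N(\bs')}d\bs'.
\]
The middle term is $o(1)$ in probability (it is a single Gaussian of order $N^{-1/2}$), so the whole expression concentrates around $\beta'\frac{\xi(r)}{\xi(1)}E$ plus the limiting free energy $\tilde F(\beta',r)$ of the spherical model with mixture $\tilde\xi_{r^2}$ at inverse-temperature $\beta'$.

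To turn this into the required exponential bound, I would invoke the Gaussian concentration of the free energy for spherical mixed $p$-spin models, in the sharp exponential form given in \cite[Theorem 1.2]{PanchenkoBook}, applied to the conditional field $\hat H_N$ on $S(\bs,r)$. This yields
\[
\varlimsup_{\epsilon\to 0}\varlimsup_{N\to\infty}\frac{1}{N}\log\sup_{\bs\in\SN,\,|E-\beta|<\epsilon}\P\!\left(\Big|f_{N,r}(\bs)-\beta'\tfrac{\xi(r)}{\xi(1)}E-\tilde F(\beta',r)\Big|>\delta/2\,\Big|\,\tfrac1N H_N(\bs)=E\right)<0,
\]
and since $|\beta'\frac{\xi(r)}{\xi(1)}(E-\beta)|\leq\delta/2$ for $E$ close enough to $\beta$, this in turn implies \eqref{eq:HighTempCondUnif} for $A=A_\delta$. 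Theorem \ref{thm:high-temp} then gives the conclusion.

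\textbf{Main obstacle.} The only genuinely non-routine step is the clean identification of $\tilde H_N$ on $S(\bs,r)$ as a spherical mixed $p$-spin Hamiltonian with mixture $\tilde\xi_{r^2}$, which is exactly the content of \eqref{eq:tilde-Hn}-\eqref{eq:xiqtilde}; everything else (the additive split $\hat H_N+\sqrt N X$, the negligibility of $X/\sqrt N$, and the Gaussian concentration of the free energy) is standard. Thus the corollary really is a direct consequence of Theorem \ref{thm:high-temp} once this conditional representation is in hand.
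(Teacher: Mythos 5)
Your proposal is correct and follows essentially the same route as the paper: verify \eqref{eq:HighTempCondUnif} for $A=A_\delta$ via the conditional representation $H_N(\bs')=\frac{\xi(r)}{\xi(1)}NE+\tilde H_N(\bs')$ (with $\tilde H_N=\hat H_N+\sqrt{N}X$ and $\hat H_N$ a spherical model of mixture $\tilde\xi_{r^2}$ on $S(\bs,r)$), then apply free-energy concentration to $\hat H_N$ and conclude by Theorem~\ref{thm:high-temp}. The normalization remark about $\xi(1)$ is a minor but accurate observation that the paper leaves implicit.
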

A similar result to Corollary \ref{cor:FP_hightemp} was recently proved in \cite{alaoui2023shattering} for the spherical models and \cite{AuffingerElAlaouiSellke_shattering} for Ising models,  both using a technique from \cite{AchlioptasCoja-Oghlan} (while stated there only
for $\beta=\beta'$ and,  in \cite{alaoui2023shattering},  for pure models,  their argument works similarly in the setting below). 
\begin{proof}

As $F_{N,\beta}$ concentrates around its mean,  which is given by the Parisi formula,  for understanding
$V_{N,\beta,\beta'}(r)$ it suffices to analyze $f_{N,r}(\bs)$.
Further,  since $S(\bs,r)$ is a sphere of co-dimension 1,  we can think of the restriction of $H_N(\bs')$ to it,  as a new spherical Hamiltonian.  More precisely,  by the well-known formulas for the conditional law of jointly Gaussian variables  one can easily check that,  conditionally on $\frac{1}{N}H_{N}(\bs)=E$,  for $\bs' \in S(\bs,r)$,
\[
H_N(\bs') = \frac{\xi(r)}{\xi(1)}NE+\tilde H_N(\bs'),
\]
where $\tilde H_N(\bs')$ is a centered Gaussian process with covariance
\begin{align}
\E \big[
\tilde H_N(\bs_1')\tilde H_N(\bs_2')
\big]  &= N\xi\Big({\textstyle\frac1N} \langle \bs_1', \bs_2' \rangle \Big) -N\frac{\xi(r)^2}{\xi(1)}
\nonumber \\
&= N \tilde \xi_{r^2}\Big( \frac{\langle \bs_1'-r\bs,  \bs_2'-r\bs\rangle}{\|\bs_1'-r\bs\|\|\bs_2'-r\bs\|}\Big)+N\xi(r^2)-N\frac{\xi(r)^2}{\xi(1)}\,.
\label{eq:tilde-Hn}
\end{align}
Note that $\tilde H_N(\bs')$ has the same law as $\hat H_N(\bs')+\sqrt{N} X$,   for
a centered Gaussian variable $X$ of variance $\xi(r^2)-\xi(r)^2/\xi(1)\geq0$
which is independent of the centered Gaussian process $\hat H_N(\bs')$ on $S(\bs,r)$ whose 
covariance matches the left-most term in \eqref{eq:tilde-Hn}.
In particular,
$\tilde H_N(\bs')$ and $\hat H_N(\bs')$ have the same expected free energy. Further,  
by the preceding and
the concentration of the free energy (see \cite[Theorem 1.2]{PanchenkoBook}),  for any $\delta>0$, 
\begin{equation*}
	\varlimsup_{\epsilon\to0}\varlimsup_{N\to\infty}\frac{1}{N}\log\sup_{\bs\in\Sigma_{N},\,|E-\beta|<\epsilon}\P\left( \big| f_{N,r}(\bs)-\beta\beta'\frac{\xi(r)}{\xi(1)}-\tilde F(\beta',r)\big| > \delta \,\Big|\,\frac{1}{N}H_{N}(\bs)=E\right)<0.
\end{equation*}
The corollary follows as a direct consequence of Theorem \ref{thm:high-temp}. 
\end{proof}

\noindent
{\bf II.  Low temperature phase} 
Here we treat $\beta>\beta_c$,  for which to the best of our knowledge there is no rigorous computation of the Franz-Parisi potential, for any spherical model.  To this end,  note that
$f_{N,r}(\bs)$ of \eqref{eq:fFP} is of the form $\bar f_N(\bs,H_N(\cdot))$ for a deterministic,  
rotationally invariant $\bar f_N$,  that satisfies \eqref{eq:flim}.  We shall thus use 
Theorem \ref{thm:low temp 1} and work with the event $\cpt(\bs,\vec{\bx}^{\be},E,\vec E,\vec R)$ of \eqref{eq:cpt},  while 
simplifying our presentation by assuming that $\xi(\cdot)$ is not pure.  The straightforward adaptation 
to the pure case (and $\beta$ large enough so that a pure states decomposition exists),
by omitting the conditioning on $\partial_{\bx_1}H_{N}(\bx_1)$, is left to the reader 
(see Remark \ref{rem:pure}).
We note in passing that in the final formula \eqref{eq:Lambda},  this would
amount to removing the 3rd column and row from the matrix $C(\qs)$ below,  
as well as removing the 3rd entry from the vectors $\sfv$ and $\sfu$.  While for simplicity we 
consider only the case of strictly $1$-\abbr{RSB},
the same principles apply,  with obvious modifications,  to the more involved case of strictly 
$k$-\abbr{rsb} for $k\geq2$.

Proceeding to evaluate the value around which $f_{N,r}(\bs)$ concentrates,  set
$\bx_1=\bx_1^{\be}=(\sqrt{Nq_1},0,\ldots,0)$,  and fixing the point
$\bs_1 =(\sqrt{N q_1},\sqrt{N(1-q_1)},0,\ldots,0)  \in B(\bx_1,0)$,  for $|\rho| \le q_1$,  let
\[
S(\bs_1,r;\bx_1,\rho) := \big\{\bs'\in \SN:\,\langle \bs',\bs_1 \rangle/N=r,\, \langle \bs',\bx_1 \rangle/N=\rho\big\}\,.
\]
The set $S(\bs_1,r;\bx_1,\rho)$  is non-empty if and only if 
\begin{equation}\label{eq:tau}
\tau = \tau (q_1,r,\rho):= \frac{\rho^2}{q_1}+ \frac{(r-\rho)^2}{1- q_1}
=r^2 +\frac{(\rho- r q_1)^2}{q_1-q_1^2} \le 1\,,
\end{equation} 
in which case it is a sphere of 
radius $\sqrt {N(1-\tau)}$ 
in $\tilde{\bs}' := (\sigma'_3,\ldots,\sigma'_N)$,  centered at 
\[
\bs_o:=\frac{r-\rho}{1- q_1}\bs_1 + \frac{\rho- rq_1}{q_1- q_1^2} \bx_1 \,.
\]
Alternatively,  $S(\bs_1,r;\bx_1,\rho)$ is non-empty over the interval
\begin{equation}\label{eq:J}
	J=J(q_1,r):=\Big\{\rho:\,|\rho- rq_1|\leq \sqrt{q_1-q_1^2}\sqrt{1-r^2}\,\Big\}
\end{equation}
and consequently
\begin{equation}\label{eq:unionrho}
	S(\bs_1,r) = S(\bs_1,r;\bx_1,J),  \quad \text{for} \quad  
	S(\bs_1,r;\bx_1,I) := \bigcup_{\rho\in I} S(\bs_1,r;\bx_1,\rho).
\end{equation}
The event $\cpt_1:=\cpt(\bs_1,\bx_1,E, E_1, R_1)$ is the intersection of
\begin{align}
\label{eq:vec2}
\Big(\frac{\partial }{\partial x_3}H_{N}(\bx_1),\ldots,\frac{\partial }{\partial x_N}H_{N}(\bx_1)\Big)&=0\,,\\
\label{eq:vec1}
\bigg(\frac{1}{N} H_{N}(\bs_1),\,\frac{1}{N} H_{N}(\bx_1),\,
\frac{1}{\sqrt{N q_1}} \frac{\partial}{\partial x_1} H_{N}(\bx_1),\,
\frac{1}{\sqrt N} \frac{\partial }{\partial x_2} H_{N}(\bx_1)\bigg)& =\big(E,E_1, R_1,0\big) \,.
\end{align}
The covariances between these variables are easily computed (see the proof of Lemma \ref{lem:conditional_law_on_band} for a similar computation).  
Specifically,  the covariance matrix of the vector in \eqref{eq:vec2} is  $\xi'(q_1) \mathbf{I}_{N-2}$ (where $\mathbf{I}_{N-2}$ 
 denotes the identity matrix),  while the vector in  \eqref{eq:vec1} has the covariance matrix 
$N^{-1} C(q_1)$,  for
\[
C(q_1):=
\begin{pmatrix}
	\xi(1) & \xi(q_1) &  \xi'(q_1) & \sqrt{1-q_1}\xi'(q_1)\\
	\xi(q_1) & \xi(q_1) & \xi'(q_1) & 0\\
	 \xi'(q_1) & \xi'(q_1) & \xi''(q_1)+\xi'(q_1)/q_1 & 0\\
	\sqrt{1-q_1}\xi'(q_1) & 0 & 0 & \xi'(q_1)
\end{pmatrix},
\]
 which is invertible (whenever $\xi(t)$ is not pure).\footnote{Since the variables \eqref{eq:vec1} are jointly Gaussian, $C(q_1)$ is invertible if no linear combination of them is deterministically zero.  Recall that 
 $0<q_1<1$,  so working directly with the formula \eqref{eq:Hamiltonian} for the Hamiltonian, one can easily verify that this is indeed the case for our choice of $\bs_1$ and $\bx_1$.}   Further, for any 
$\bs'\in S(\bs_1,r;\bx_1,\rho)$, 
the covariance of $H_N(\bs')$ and the vector in \eqref{eq:vec2} is $\xi'(\rho) \tilde{\bs}'$,
whereas that
of $H_N(\bs')$ and the vector in  \eqref{eq:vec1} is then 
\[
\sfv=\sfv(q_1,r,\rho):=\Big(\xi(r),\,\xi(\rho),\,\xi'(\rho)\rho/q_1,\,\xi'(\rho)\frac{r-\rho}{\sqrt{1-q_1}}\Big)\,.
\]

Since the vectors in \eqref{eq:vec2} and in \eqref{eq:vec1} are independent, 
we have that for any $\bs' \in S(\bs_1,r;\bx_1,\rho)$,
\begin{equation}\label{eq:conditionalexpectation}
\frac{1}{N} \E [ H_N(\bs') \big| \, \cpt_1 \, ] = \sfv(q_1,r,\rho)  C(q_1)^{-1} \big(E,E_1, R_1,0\big)^{\top}.
\end{equation}
Further,  conditionally on $\cpt_1$,  we have that 
$H_N(\cdot) = \tilde H_N(\cdot) + \E[H_{N}(\cdot)\,|\,\cpt_1]$ for a centered Gaussian process 
$\tilde H_N(\cdot)$ such that for any $\bs'_1,  \bs_2' \in S(\bs_1,r;\bx_1,\rho)$,
\[
\frac1N\E \big[ \tilde H_N(\bs'_1) \tilde H_N(\bs'_2) \big]  
= \xi(\tfrac1N\langle\bs'_1,\bs'_2\rangle) -\frac{\xi'(\rho)^2}{\xi'(q_1)}\frac{1}{N} \langle\tilde{\bs}'_1 ,\tilde{\bs}'_2 \rangle   - \sfv C(q_1)^{-1}\sfv^{\top} \,.
\]
Recall that then $\langle\bs'_1,\bs'_2\rangle =N \tau +  \langle\tilde{\bs}'_1 ,\tilde{\bs}'_2 \rangle$
for $\tau=\tau(q_1,r,\rho)
$ of \eqref{eq:tau},  and consequently
\begin{align}\label{eq:covcond}
\frac1N\E \big[ \tilde H_N(\bs'_1) \tilde H_N(\bs'_2) \big] 
& = \xi_{q_1,\tau,\rho}\Big( \frac{\langle \tilde{\bs}'_1, \tilde{\bs}'_2 \rangle}{\|\tilde{\bs}'_1\| \|\tilde{\bs}'_2\|} \Big) 
+\xi\left( \tau \right)
- \sfv C(q_1)^{-1}\sfv^{\top},
\end{align}
so up to mapping $\bs \mapsto \tilde \bs$ and scaling,  
$\tilde H_N(\cdot)$ is the sum of the Hamiltonian for the mixture
\begin{equation}\label{def:xi-r-rho}
\xi_{q_1,\tau,\rho}(t) : =  \xi\big(\tau+(1-\tau)t\big) -\xi(\tau) - \frac{\xi'(\rho)^2}{\xi'(q_1)}(1-\tau)t 
\end{equation}
and an independent centered Gaussian variable of variance $N A$,  where
$A:=\xi\left( \tau \right)	- \sfv C(q_1)^{-1}\sfv^{\top}$.
\begin{rem}\label{rem:xiq1taurho}
Indeed,  $\xi_{q_1,\tau,\rho}(t)=\sum_{p} (\xi^{(p)}_{q_1,\tau,\rho}(0)/p!) t^p$ is a mixture,  as
$\xi_{q_1,\tau,\rho}(0)=0$ and
$
\xi^{(p)}_{q_1,\tau,\rho}(0)
= (1-\tau)^p \xi^{(p)}(\tau) 
\ge 0$
for all $p\geq2$,  while with $\sqrt{\tau q_1}\geq \rho$ we have that 
	\begin{equation*}
		 \xi'_{q_1,\tau,\rho}(0)= (1-\tau)(\xi'(\tau)-\xi'(\rho)^2/\xi'(q_1))
		 \geq 0
	\end{equation*}
(since 
$\xi'(\cdot) \ge 0$ is non-decreasing and by Cauchy-Schwarz also
$\xi'(a)\xi'(b)\ge \xi'(\sqrt{ab})^2$ for all $a,b\in[0,1]$).
Moreover,  $A \ge 0$ since there are $N-2 \gg 1$ points $\bx_i$ such that the entries of the 
non-negative definite covariance matrix of $\{N^{-1/2} \tilde H_N(\bx_i)\}$ 
are $\xi_{q_1,\tau,\rho}(1) {\bf 1}_{i=j}+A$.
\end{rem}
\begin{thm}\label{thm:FP-lt}
Consider the spherical mixed $p$-spin model
with a generic mixture $\xi(t)$,  for $\beta>\beta_{c}$
at which $\xi(t)$ is strictly $1$-\abbr{rsb}.
Let
$\sfu := C(q_1)^{-1} (F'(\beta),\Es(q_1),\Rs(q_1),0)^{\top}$,   $\tau=\tau(q_1,r,\rho)$ of
\eqref{eq:tau} and $F(\beta ; q_1,\tau,\rho)$ the limiting free energy,  as in \eqref{eq:Fconvergence},  
 for the spherical spin model that corresponds to
$\xi_{q_1,\tau,\rho}(\cdot)$ of \eqref{def:xi-r-rho}.
Then,  for $f_{N,r}(\bs)$ of \eqref{eq:fFP} and any $|r| <1$,  $\beta',\delta>0$,
\[
	\lim_{N\to\infty}\E G_{N,\beta}\big( |f_{N,r}(\bs)- \Lambda(\beta,\beta',r)|> \delta\big)=0,
\]
for the limiting Franz-Parisi potential
	\begin{equation}\label{eq:Lambda}
	\begin{aligned}
	\Lambda(\beta,\beta',r) &:= \sup_{\rho\in J(q_1,r)} \{ \lambda(\beta,\beta',r,\rho) \} \,, \\
\lambda(\beta,\beta',r,\rho)&:=
\beta' \langle \sfv(q_1,r,\rho), \sfu  \rangle + F(\beta' ; q_1,\tau,\rho)
	+\frac12 \log\Big(\frac{1-\tau(q_1,r,\rho)}{1-r^2}\Big).
	\end{aligned}
	\end{equation}
\end{thm}

\subsection{\label{FP-lt} Proof of Theorem \ref{thm:FP-lt}}
The following remark specifies our later use of Anderson's inequality.
\begin{rem}\label{rem:Anderson}
	Anderson's inequality \cite{Anderson}, states that for any symmetric $f:\R^d\to[0,\infty)$ of
	convex super-level sets,  any symmetric,  convex $K\subset \R^d$,
     all $c\in[0,1]$ and $y\in\R^d$,
\[
\int _{K}f(x+cy)\,\mathrm {d} x\geq \int _{K}f(x+y)\,\mathrm {d} x \,.
\]
 As a consequence (stated as \cite[Corollary 3]{Anderson}),  
 if $X\sim\N(0,\Sigma_1)$, $Y\sim\N(0,\Sigma_2)$ with $\Sigma_1-\Sigma_2$ non-negative definite, then $\P(X\in K)\leq \P(Y\in K)$ for any symmetric, convex $K$. Suppose that $(W(t))_{t\in T}$ is an a.s.\ continuous, centered Gaussian process on some closed $T\subset \R^n$, $U$ is a Gaussian vector and $W(\cdot)$ and $U$ are jointly Gaussian. Then, in distribution, $W(\cdot) = \tilde W(\cdot) + \E[W(\cdot)\,|\,U]$ with both summands independent and $\tilde W(\cdot)$ a centered Gaussian process such that 
$
 \E  W(x)  W(y) - \E \tilde W(x) \tilde W(y)
 $ is non-negative definite.  Of course,  $\tilde W(\cdot)$ and $\E[W(\cdot)\,|\,U]$ are a.s.\ continuous. 
 Fixing a dense collection $\{x_i\} \subset T$ of distinct points, 
 $s \geq 0$ and finite $I$,  since the set
 $K= \{(s_i)_{i\in I}\in \R^I: \sup_{i\neq j} |s_i-s_j|/|x_i-x_j|\leq s \}$ is convex and symmetric, 
 we have that
	\begin{align*}
	\P\Big( \sup_{i\neq j\in I}\frac{|W(x_i)-W(x_j)|}{|x_i-x_j|}\leq s  \Big) &
	=\P\Big( \big(W(x_i)\big)_{i\in I}\in K \Big)\\
	&\leq \P\Big( \big(\tilde W(x_i)\big)_{i\in I}\in K \Big)=\P\Big( \sup_{i\neq j\in I}\frac{|\tilde W(x_i)-\tilde W(x_j)|}{|x_i-x_j|}\leq s  \Big).
	\end{align*}
	Taking $|I| \uparrow \infty$ 
	we see that the Lipschitz constant of $\tilde W(\cdot)$ is stochastically dominated by that of $W(\cdot)$.
\end{rem}
Turning to prove Theorem \ref{thm:FP-lt},  by
Theorem \ref{thm:low temp 1},  we only need to show that 
	for any $\epsilon_{N}\to0$,
		\begin{equation*}
		\varlimsup_{N\to\infty}\sup_{B(\bx_1,\epsilon_N)\times V(\epsilon_N)}\frac{1}{N}\log\P\left(\,
|f_{N,r}(\bs)- \Lambda(\beta,\beta',r)|> \delta \,\Big|\,\cpt(\bs,\bx_1,E, E_1, R_1)\,\right)<0.
	\end{equation*}
Further, $f_{N,r}(\bs)$ is a free energy of a model on $S(\bs,r)$ whose conditional variance is bounded by the unconditional variance $N\xi(1)$.   Hence by concentration (see \cite[Theorem 1.2]{PanchenkoBook}), it suffices to show that  for any $\epsilon_{N}\to0$,
\begin{equation}\label{eq:cond-exp-lim}
	\varlimsup_{N\to\infty}\sup_{B(\bx_1,\epsilon_N)\times V(\epsilon_N)} \left|\E\big[\,f_{N,r}(\bs)\,\big|\,\cpt(\bs,\bx_1,E, E_1, R_1)\,\big] - \Lambda(\beta,\beta',r)\right|= 0.
\end{equation}

Since the law of $H_N(\bs)$ is invariant under rotations of $\R^N$,  the conditional expectation in \eqref{eq:cond-exp-lim} depends on $\bs$ only through $\langle \bs,\bx_1\rangle/N$.  Hence,  it is enough to 
 restrict the supremum there to $\bs =\bs(q):= (\sqrt{Nq},\sqrt{N(1-q)},0,\ldots,0)$ 
for $q$ such that $|\langle \bs(q),\bx_1\rangle-\langle \bx_1,\bx_1\rangle|/N=|\sqrt{qq_1}-q_1|\leq \epsilon_{N}$. 
We thus set analogously to \eqref{eq:V}, 
\begin{equation*}\label{eq:Vq}
\widetilde{V}(\epsilon) :=
\left\{ (q,E,E_1,R_1):\,|q-q_1|,\,|E-F'(\beta)|,\,| E_1-\Es(q_1)|,\,|R_1-\Rs(q_1)|<\epsilon \right\},
\end{equation*}
and 
events $\cpt_1^q:=\cpt(\bs_1,O_q\bx_1,E, E_1, R_1)$,
for $\cpt(\cdot)$ of \eqref{eq:cpt} and the orthogonal matrix $O_q\in\R^{N\times N}$
that maps $\bs(q)$ to $\bs_1=\bs(q_1)$ 
and acts as the identity on $\sp\{\bs(q),\bs_1\}^{\perp}$.  Now,  by rotational invariance
\[
\E\big[\,f_{N,r}(\bs(q))\,\big|\,\cpt(\bs(q),\bx_1,E, E_1, R_1)\,\big] = 
\E\big[\,f_{N,r}(\bs_1)\,\big|\,   \cpt_1^q \,\big] \,,
\]
 and with the nominal event $\cpt_\star := \cpt(\bs_1,\bx_1,F'(\beta), \Es(q_1), \Rs(q_1))$,
we get \eqref{eq:cond-exp-lim} by showing that
\begin{align}\label{eq:cond-exp-lim2}
	\varlimsup_{N\to\infty} &
	\sup_{\widetilde V(\epsilon_N)} \left|\E\big[\,f_{N,r}(\bs_1)\,\big|\,\cpt_1^q \,\big] - 
	\E\big[\,f_{N,r}(\bs_1)\,\big|\,\cpt_\star \,\big] \right| = 0,\\
\label{eq:cond-exp-lim3}
	\lim_{N\to\infty}  & \E\big[\,f_{N,r}(\bs_1)\,\big|\,\cpt_\star \,\big] = \Lambda(\beta,\beta',r)\,.
\end{align}

Turning to show \eqref{eq:cond-exp-lim2},  let
\begin{align*}
	C_N &:= \beta' \sup_{\widetilde V(\epsilon_N)}
	\sup_{\bs_1'\in S(\bs_1,r)}\frac1N\left|\E\big[H_N(\bs_1')\,\big|\,\cpt_1^q ] -\E\big[H_N(\bs_1')\,\big|\,\cpt_\star ]  \right|,\\
	D_N&:= \beta'^2 \sup_{\widetilde V(\epsilon_N)}
	\sup_{\bs_1',\bs_2'\in S(\bs_1,r)}\frac1N\left|\mbox{Cov}\big(H_N(\bs_1'),H_N(\bs_2')\,\big|\,\cpt_1^q\big) -\mbox{Cov}\big(H_N(\bs_1'),H_N(\bs_2')\,\big|\,\cpt_\star \big)  \right|
\end{align*}
where
$\mbox{Cov}\big(\, \cdot ,\cdot\,\big|\,\cpt_1^{q} \big)$ denotes the conditional covariance. 
Note that for all $N$ the space on which the free energy of $H_N(\bs')$ is computed is $S(\bs_1,r)$
and by Gaussian interpolation (see \cite[Section 1.2]{PanchenkoBook}),
\begin{align*}
	&\left|\E\big[f_{N,r}(\bs_1)\,\big|\,\cpt^q_1 ]-\E\big[f_{N,r}(\bs_1)\,\big|\,\cpt_\star ] \right|\leq C_N+D_N \,.
\end{align*}
Indeed,  the term $C_N$ bounds 
the worst-case effect from the difference in the mean of the two conditional fields.  Thereafter,   let 
$\nu(\bs) := e^{\beta' \E[H_N(\bs)|\cpt_\star]}$ and consider
the interpolation $g_N^t=\sqrt{t} g_N^1+\sqrt{1-t} g_N^0$
for independent,  centered,  Gaussian fields $g_N^0$,  $g_N^1$ on $S(\bs_1,r)$,
setting  
$\varphi(t)=\frac1N\E\log\int_{S(\bs_1,r)}e^{g_N^t(\bs)}\nu(\bs) d\bs$.  After
Gaussian integration by parts,  one has
\[
\varphi'(t)= 
	\frac{1}{2} \E
\langle C_1(\bs_1',\bs_1')-C_0(\bs_1',\bs_1')\rangle_t -\frac{1}{2} \E \langle C_1(\bs_1',\bs_2') - C_0(\bs_1',\bs_2') \rangle_t \,,	
\]
where  $\langle\cdot\rangle_t$ denotes the product Gibbs averaging of $\bs_1',\bs_2'$ under
Hamiltonian $g_N^t$ and $C_i(\bs_1',\bs_2'):=\frac1N\mbox{Cov} (g_N^i(\bs_1'),g_N^i(\bs_2'))$.  Hence, 
$|\varphi(1)-\varphi(0)| \le \sup_{\bs_1',\bs_2'} |C_1(\bs_1',\bs_2') - C_0(\bs_1',\bs_2')|$,  yielding in our case 
the term $D_N$ (as the worst-case over $\widetilde V(\epsilon_N)$).
We further claim that  
\begin{align}\label{eq:CNDNlim}
	\varlimsup_{N\to\infty} C_N=\varlimsup_{N\to\infty} D_N=0 \,.
\end{align}
To see this, first note that setting $T:=\mbox{span}(X)$ for $X=\{\bs_1',\bs_2',\bs_1,O_q\bx_1\}$,  
$\{\partial_{\bx}H_N(O_q\bx_1)\}_{\bx\in T^{\perp}}$ and 
$\{H_N(\by)\}_{\by\in X}\cup\{\partial_{\bx}H_N(O_q\bx_1)\}_{\bx\in T}$ are independent of each other.  Hence, 
letting $Y=\{\by_1,\by_2,\by_3\}$  be unit vectors such that $\mbox{span}(Y)=\{\bz\in T:\,\bz\perp O_q\bx_1\}$, the conditional law of 
$H_N(\bs_1')$ and $H_N(\bs_2')$ given $\cpt_1^q$ is the same as their conditional law given that 
\begin{equation*}
	\Big(\frac{1}{N}H_{N}(\bs_1),\,\frac{1}{N}H_{N}(O_q \bx_1),\,\frac{\partial_{O_q\bx_1}H_{N}(O_q\bx_1)}{\|O_q\bx_1\|^2},\,(\partial_{\by_j} H_{N}(O_q\bx_1))_{j\leq3}\Big)=\big(E,E_1,R_1, {\bf 0}\big).
\end{equation*}
The conditional mean and covariance as in the definition of $C_N$ and $D_N$ above therefore depend continuously on $(E,E_1,R_1)$ and the values $\xi$, $\xi'$ and $\xi''$ at the normalized inner products
$\langle\bx,\by\rangle/N$ for $\bx,\by\in X\cup Y$, uniformly on $\widetilde{V}(\epsilon)$ for any small $\epsilon$
(in a manner which is independent of $N$;
	see also the more involved setting,  with $k \ge 1$,  in our proof of Lemma \ref{lem:Hhat}). From this, \eqref{eq:CNDNlim} easily follows.  

Moving next to \eqref{eq:cond-exp-lim3}, 
we fix $k\geq1$ and 
partition the interval $J$ of \eqref{eq:J} to $k$ equal sub-intervals $J_i$,  writing in view of \eqref{eq:unionrho}, 
\begin{align*}
	f_{N,r}(\bs_1)  = \frac1N\log \Big( \sum_{i=1}^{k} e^{N f_{N,r}(\bs_1;J_i)} \Big)
	\quad   \text{for} \quad f_{N,r}(\bs_1;J_i):= 
	\frac{1}{N} \log \Big( \int_{S(\bs_1,r;\bx_1,J_i)}\!\!\!\!\!\!\!\!\!\!\!\! e^{\beta' H_N(\bs')}d\bs' \Big).
\end{align*}
Each $f_{N,r}(\bs_1;J_i)$
is a free energy of a Hamiltonian whose conditional
variance given $\cpt_\star$ is bounded everywhere by the unconditional variance $N\xi(1)$.
Hence, from concentration (see \cite[Theorem 1.2]{PanchenkoBook}),
\[
\E\big[\,f_{N,r}(\bs_1)\,\big|\,\cpt_\star \,\big] = \max_{i=1}^k \E\Big[\,   f_{N,r}(\bs_1;J_i)
 \,\Big|\,\cpt_\star \,\Big] + o_N(1)\,.
\]
In addition,  there exist $\delta>0$ and closed interval $J' \subset J^o$ (the interior of $J$)
such that
for all large $N$,
\[
\max_{J_i \subset J \setminus J'} \E\Big[\,   f_{N,r}(\bs_1;J_i)
\,\Big|\,\cpt_\star \,\Big] \le \E\Big[\,   f_{N,r}(\bs_1;J \setminus J')\,\Big|\,\cpt_\star \,\Big] 
\le \Lambda(\beta,\beta',r) - \delta
\]
(since decreasing $J \setminus J'$ makes
$\frac1N\log (\mbox{Vol}(S(\bs_1,r;\bx_1,J \setminus J'))$
 as negative as we wish).
To complete the proof of \eqref{eq:cond-exp-lim3},  it thus suffices to show that
for $\lambda(\beta,\beta',r,\rho)$ of  \eqref{eq:Lambda},  some $C<\infty$, 
all $k$ large and any $i$ such that $J_i \not\subset J\setminus J'$
\begin{equation}\label{eq:Eliran1}
\varlimsup_{N\to\infty} \sup_{\rho\in J_i} \Big| \E \Big[\,   f_{N,r}(\bs_1;J_i)
\,\Big|\,\cpt_\star \,\Big] -  \lambda(\beta,\beta',r,\rho) \Big| \leq C/k \,.
\end{equation}
To establish \eqref{eq:Eliran1},  note that by the co-area formula 
\[
f_{N,r}(\bs_1;J_i) =\frac{1}{N} \log \Big( \int_{J_i} \mbox{Jac}(s) 
\int_{S(\bs_1,r;\bx_1,s)}
\!\!\!\!\!\!\!\!\!\!\!\!\!\!\!\! e^{\beta' H_N(\bs')}d\bs' ds\Big) ,
\]
where the inner integration is \abbr{wrt} the uniform measure on $S(\bs_1,r;\bx_1,s)$ and for 
$\tau$ as in \eqref{eq:tau},
\[
\mbox{Jac}(s) \propto \Big(\frac{1-\tau(q_1,r,s)}{1-r^2}\Big)^{\frac{N-4}{2}}\qquad\mbox{and}\qquad\ \  \int_J \mbox{Jac}(s)ds=1\,.
\]
Moreover,
\[
\lim_{N \to \infty} \frac1N\log\mbox{Jac}(s) =   \lim_{N\to\infty}\frac{1}{N}\log\Big(
\frac{\mbox{Vol}(S(\bs_1,r;\bx_1,s))}{\mbox{Vol}(S(\bs_1,r))} \Big) =
\frac12 \log\bigg(\frac{1-\tau(q_1,r,s)}{1-r^2}\bigg),
\]
uniformly over $s \in J_i$ (since $J_i \not\subset J\setminus J'$ and $k$ is large,  so  
$J_i$ is bounded away from $J  \setminus J^o$).  Consequently,
\begin{align*}
\varlimsup_{N\to\infty} \sup_{\rho\in J_i} \Big|\,& \E\big[\,   f_{N,r}(\bs_1;J_i) \big|\,\cpt_\star \,\big] 
- \frac12 \log\Big(\frac{1-\tau(q_1,r,\rho)}{1-r^2}\Big) \\
& - 
\E\Big[\,   \frac1N\log \Big(  \int_{S(\bs_1,r;\bx_1,\rho)}\!\!\!\!\!\!\!\!\!\!\!\!\!\!\!\!\!e^{\beta' H_N(\bs')}d\bs' \Big) \,\Big|\,\cpt_\star \,\Big]
   \Big| \leq \frac{C}{k} \Big(1+\frac{\beta'}{\sqrt{N}}\E\Big[\,L_N \,\Big|\,\cpt_\star \,\Big]\Big),
\end{align*}
where $L_N$ denotes the Lipschitz constant of $H_N(\bs)$ over the ball of radius $\sqrt N$.

Recall that, conditionally on $\cpt_\star$,
we have that 
$H_N(\bs') = \tilde H_N(\bs') + \E[H_{N}(\bs')\,|\,\cpt_\star]$ for a centered Gaussian process 
$\tilde H_N(\bs')$. Also recall \cite[Lemma 6.1]{TAPChenPanchenkoSubag} that for large enough $C'$, $\E L_N \leq C'\sqrt N$.  By Anderson's inequality,  see Remark \ref{rem:Anderson}, 
$\E \tilde L_N \leq C'\sqrt N$ where $\tilde L_N$ is  the Lipschitz constant of the conditional Hamiltonian $\tilde H_N(\bs')$.  
Moreover,  the conditional expectation $\E[H_{N}(\bs')\,|\,\cpt_\star]$ that we computed 
in \eqref{eq:conditionalexpectation},  is also $C'\sqrt N$-Lipschitz in  $\bs'$
(if $C'$ is sufficiently large).

Finally, to prove \eqref{eq:cond-exp-lim3}, it remains to show that 
\begin{align}\label{eq:cond-exp-lim4}
	\varlimsup_{N\to\infty} \sup_{\rho\in J_i} \bigg|\,& \E\Big[\,   \frac1N\log \Big(  \int_{S(\bs_1,r;\bx_1,\rho)}\!\!\!\!\!\!\!\!\!\!\!\!\!\!\!\!\!e^{\beta' H_N(\bs')}d\bs' \Big) \,\Big|\,\cpt_\star \,\Big] - \Big(\beta' \langle \sfv(q_1,r,\rho), \sfu  \rangle + F(\beta' ; q_1,\tau,\rho)\Big)   \bigg| =0\, .
\end{align}
Indeed,  $\E [ H_N(\bs') \,\big| \, \cpt_\star ]=N \langle \sfv(q_1,r,\rho), \sfu  \rangle$
for any $\bs'\in S(\bs_1,r;\bx_1,\rho)$ (see \eqref{eq:conditionalexpectation}).
Moreover,  recall \eqref{eq:covcond},  that mapping the process $\tilde H_N(\bs')$ from $S(\bs_1,r;\bx_1,\rho)$ 
to the sphere $\mathbb S^{N-3}$ while preserving angles, yields 
a mixed model with mixture $\xi_{q_1,\tau,\rho}(t)+A$ for some constant $A\geq 0$ (see Remark \ref{rem:xiq1taurho}), up to scaling by a factor of $\sqrt{N/(N-2)}$.  As explained after \eqref{eq:xiqtilde},  
the expected free energy for the 
mixture $\xi_{q_1,\tau,\rho}(t)+A$ is the same as that for the mixture $\xi_{q_1,\tau,\rho}(t)$.  
 Hence, \eqref{eq:cond-exp-lim4} follows.

\section{\label{sec:putestates}Pure states decomposition}

In \cite{TalagrandPstates} Talagrand proved an asymptotic pure states decomposition for sequences of random measures on the sphere $\SN$ as $N\to\infty$, assuming they satisfy the Ghirlanda-Guerra identities \cite{GhirlandaGuerra} and that the annealed overlap distribution converges to a law which charges the rightmost point in its support. In particular,  his result applies to generic spherical models,  if the rightmost point in the support of the Parisi measure is charged.  That is,  when $\nu_P(q_P)>0$ (which is part of our assumption of finite \abbr{rsb}).
Using ultrametricity \cite{ultramet},  Jagannath \cite{JagannathApxUlt} proved that a pure states decomposition exists for generic models also when $\nu_P(q_P)=0$.  Below we state  these and some additional properties of 
those decompositions 
under the strict $k$-\abbr{RSB} condition.
To this end,  for any $m \le k$,  let
\begin{equation}\label{eq:AA}
	\AA_{m}:=\Big\{\vec\bx=(\bx_i)_{i=1}^m\in\R^{N\times m}:\,\frac1N\langle\bx_i,\bx_j\rangle=q_{i\wedge j},\,\forall i,j\leq m\Big\} \,.
\end{equation}
In particular,  if $\vec\bx \in\AA_{m}$,  then 
\begin{equation}
\label{eq:v+y}
\bx_m \in \Big\{\bx_{m-1}+\by:\,\by \in \sqrt{q_{m}-q_{m-1}} \SN,\,\langle\by,\bx_i\rangle=0,\forall i < m \Big\}\subseteq \sqrt{q_m} \SN \,.
\end{equation}
Recall that in \eqref{eq:cpt},  given $\vec\bx=(\bx_i)_{i=1}^k\in \AA_k$ we   set,  per $i \le k$, 
an arbitrary matrix $M_i=M_i(\vec\bx)\in\R^{N-i\times N}$
whose rows form an orthonormal basis of $\big(\sp\{\bx_1,\ldots,\bx_i\}\big)^{\perp}$ and $\gradt H_{N}(\bx_i):=M_{i}\nabla H_{N}(\bx_i)$.   We likewise define $M_i$ and $\gradt H_{N}(\bx_i)$ for 
any $1 \le i \le m \le k$ and $\vec\bx\in \AA_{m}$.  Then,  denoting by
$\bar V(\epsilon)$ the projection of $V(\epsilon)$ of \eqref{eq:V} to its last two components,  
 we set 
\begin{equation}
	\label{eq:xi_condition}
	\begin{aligned}
		\Cs_{m}(\epsilon):=\Big\{\vec \bx \in \AA_m:\,&\exists (\vec E, \vec R)\in \bar V(\epsilon)\mbox{ such that }\forall i\leq m\\
		&\Big(\frac{1}{N}H_{N}(\bx_i),\,\frac{\partial_{\bx_i-\bx_{i-1}}H_{N}(\bx_i)}{\|\bx_i-\bx_{i-1}\|^2},\,\gradt H_{N}(\bx_i)\Big)=\big(E_i,R_i,0\big) \Big\}\,.
	\end{aligned}
\end{equation} 
From the results of Section \ref{sec:KacRice},  the size of $\Cs_{m}(\epsilon)$ has finite expectation and is therefore a.s.\ finite.

For any vertices $\bu$ and $\bv$ of a rooted tree we denote by $|\bu|$ 
the depth of $\bu$ and by $\bu\wedge\bv$ the least common ancestor, 
writing $\bu\leq\bv$ if $\bu$ is an ancestor of $\bv$ or $\bu=\bv$.
Also, if $\T_N$ is a tree with vertex set $\V_N\subset \R^N$ rooted at the origin, for any vertex ${\bf 0}\neq
\bv\in\V_N$ we let $\vec\bv :=(\bv_1,\ldots,\bv_m=\bv)$  record the
	path from the root to $\bv$ (excluding the root).
\begin{prop}\label{prop:PS}
Consider the spherical mixed $p$-spin model with a generic mixture $\xi(t)$,  
	and $\beta>\beta_{c}$ at which the model is strictly $k$-\abbr{rsb}  
	 for some $k$ finite.
For any positive $c_N \to 0$,  
there exist positive $\delta_N,\epsilon_{N}\to0$ and integer $d_N\to\infty$,  such that the following holds. There exists a 
full regular tree $\T_N$ of degree $d_N$ and depth $k$ with
(random) vertex set $\V_N \subset \R^N$ 
and disjoint  (random) sets $B_i\subset \SN$  associated
with the leaves $\{ \bx^{\star}_i,  i \le d_N^k := (d_N)^k \}$ of $\T_N$,  so with probability
tending to $1$  as $N \to \infty$:
	\begin{enumerate}
		\item\label{enum:PS1}
		 $\displaystyle\vphantom{\Big(} \sum_{i \le d_N^k} G_{N,\beta}\big(
		B_i\big)>1-\epsilon_{N}$\quad and\quad $\displaystyle \min_{i\leq d_N^k}G_{N,\beta}(B_i)>c_N$.
		\item\label{enum:PS2} $\displaystyle\vphantom{\Big(} 
		B_i \subset  B(\vec \bx^\star_i,\delta_N)$  of \eqref{eq:Band}, for all $i \le d_N^k$.
		\item\label{eq:Pt3} $\displaystyle \vphantom{\Big(} \sup_{\bu,\bv \in \V_N}
		\big|\frac1N \langle \bu,\bv\rangle -  q_{|\bu\wedge\bv|} \big| < \epsilon_{N}$.
		\item\label{enum:PS4} $\displaystyle 
		\min_{i,j \le d_N^k}
		\vphantom{\Big(} G_{N,\beta}^{\otimes2}\Big(
		\big|\frac1N \langle \bs_1,\bs_2\rangle - q_{| \bx^{\star}_i\wedge\bx^{\star}_j|}  \big|<\epsilon_{N}\,\Big|\,\bs_1\in B_i,\,\bs_2\in B_j
		\Big)>1-\epsilon_{N}$.
\item\label{enum:PS7} $\displaystyle\vphantom{\Big(} \{ \vec\bx_i^{\star}, i \le d_N^k\} \subset \Cs_{k}(\epsilon_N)$.
	\end{enumerate}	
In addition,  as $N \to \infty$, 
\begin{equation}\label{eq:energyontree}
 \sup_{\bv\in \V_N}\Big|\frac{1}{N}H_N(\bv)-\Es(q_{|\bv|})\Big| \longrightarrow 0 \,,
\quad\mbox{in probability.}  
 \end{equation}
Moreover,  upon ordering the leaves of ${\mathbb T}_N$ so that $i \mapsto G_{N,\beta}(B_i)$ 
be non-increasing (and padding it with zeros at all $i > d_N^k$),  these infinite sequences 
of weights,  converge weakly as $N \to \infty$ to a Poisson-Dirichlet distribution of 
parameter $1-\nu_P(q_P)$.
\end{prop}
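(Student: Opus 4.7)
The plan is to take the Talagrand--Jagannath pure states decomposition as a starting point and then upgrade its vertices to critical points using the complexity machinery from Sections~\ref{sec:complexity} and~\ref{sec:KacRice}.

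By \cite{TalagrandPstates, JagannathApxUlt}, under genericity of $\xi$ (which yields the Ghirlanda--Guerra identities and ultrametricity \cite{ultramet}) together with the $k$-\abbr{RSB} assumption \eqref{eq:kRSB}, one obtains a full regular tree $\T_N^0$ of depth $k$ and degree $d_N \to \infty$, with a random vertex set $\V_N^0 \subset \R^N$ and leaf-indexed pure state subsets $B_i^0 \subset \SN$, satisfying properties (\ref{enum:PS1}), (\ref{eq:Pt3}), (\ref{enum:PS4}), and the Poisson--Dirichlet convergence of the ordered weight sequence. The energy property \eqref{eq:energyontree} then follows from \cite[Proposition~11]{FElandscape}, which says that each vertex $\bv \in \V_N^0$ at depth $m$ nearly maximizes $H_N$ over the affine sphere of radius $\sqrt{N(q_m - q_{m-1})}$ inside $\bv_{m-1} + \sp\{\bv_1, \ldots, \bv_{m-1}\}^{\perp}$, whose near-maximum value equals $N \Es(q_m)$.

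The key new contribution is property (\ref{enum:PS7}), which I would establish recursively, level by level. At level $m$, given $\bx_1, \ldots, \bx_{m-1}$, consider the restriction of $H_N$ to the affine sphere
\[
S_m := \bx_{m-1} + \sqrt{q_m - q_{m-1}}\,\bigl(\SN \cap \sp\{\bx_1, \ldots, \bx_{m-1}\}^{\perp}\bigr).
\]
Its critical points at energy $\approx N\Es(q_m)$ satisfy, by definition, $\gradt H_N(\bx_m) = 0$ and $H_N(\bx_m)/N \approx \Es(q_m)$, while the radial derivative $\partial_{\bx_m - \bx_{m-1}} H_N(\bx_m) / \|\bx_m - \bx_{m-1}\|^2$ is the Lagrange multiplier of the sphere constraint, which equals $\Rs(q_m) = 2\,d\Es/dq_m$ via the differentiation formula \eqref{eq:hRTformula}. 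Combining the Huang--Sellke complexity result \cite{HuangSellke2023} (connecting annealed complexity to the Parisi measure) with the Kac--Rice computation developed in Section~\ref{sec:KacRice}, the number of such critical points on $S_m$ is exponentially large in $N$, uniformly as $\bx_{m-1}$ ranges over a small neighborhood of any candidate level-$(m-1)$ vertex. This provides far more than $d_N$ valid children at each vertex, and preserves the overlap structure $\langle \bx_i, \bx_j \rangle / N = q_{i \wedge j}$ required for membership in $\AA_k$.

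The main obstacle will be to verify that when the original Talagrand--Jagannath vertices are replaced by the above critical points, the Gibbs-weight conclusions (\ref{enum:PS1}), (\ref{enum:PS2}) still hold with bands $B_i = B(\vec\bx^{\star}_i, \delta_N)$ as in \eqref{eq:Band}. For this I would argue that each chosen descendant of a vertex $\bv \in \V_N^0$ differs from $\bv$ by $o(\sqrt N)$ in norm (since both have overlap $\approx q_{|\bv|}$ with the bulk of the Gibbs mass attached to $\bv$), so the bands around the new leaves cover essentially the same portion of $\SN$ as the original pure states, up to $\epsilon_N$ corrections. A standard concentration argument for the free energy restricted to each band then gives $G_{N,\beta}(B_i) \geq c_N$ and $\sum_i G_{N,\beta}(B_i) \geq 1 - \epsilon_N$; property (\ref{enum:PS4}) follows from (\ref{enum:PS2}) and (\ref{eq:Pt3}) by direct geometric computation, and the Poisson--Dirichlet limit for the ordered weight sequence transfers from \cite{TalagrandPstates} because the bands differ from the original pure states by at most an $\epsilon_N$-perturbation.
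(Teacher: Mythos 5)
Your high-level plan---start from the Talagrand--Jagannath decomposition and then perturb its vertices to nearby critical points, using the Huang--Sellke complexity results and the Kac--Rice framework---is indeed the route the paper takes. However, two of the steps in your argument contain genuine gaps.

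First, your claim that ``the number of such critical points on $S_m$ is exponentially large in $N$'' contradicts the actual complexity computation. By Corollary~\ref{cor:Thetastar}, the annealed complexity $\Theta_{\xim}$ evaluated at $(\Es(q_{m+1})-\Es(q_m),(q_{m+1}-q_m)\Rs(q_m))$ is \emph{zero}, and Remark~\ref{rem:Cncomplexity} makes precise that the exponential growth rate of $\E|\Cs_n(\epsilon)|$ tends to $0$ as $\epsilon\to0$. So the critical points you want to land on are subexponential in number, not abundant, and one cannot simply ``choose from far more than $d_N$ valid children.'' More fundamentally, even if there were many such critical points, a counting argument gives no mechanism to select ones that actually capture the Gibbs mass of the pure states---the decomposition has to come first, and the critical points have to be located near the decomposition vertices, not the other way around.

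Second, your argument for why the chosen critical point is $o(\sqrt N)$-close to the original vertex $\bv$---namely that ``both have overlap $\approx q_{|\bv|}$ with the bulk of the Gibbs mass attached to $\bv$''---does not establish what you need. Two points of norm $\approx\sqrt{Nq_m}$ that both have normalized inner product $\approx q_m$ with a pure-state sample $\bs\in\SN$ are only constrained in the direction of $\bs$; their mutual distance can be of order $\sqrt N$. The paper closes this gap with a dedicated lemma in Section~\ref{sec-pf-prop:PS}: for any $\vec\bx_m\in\Cs^0_m(\epsilon)$ and any $\bx\in S(\vec\bx_m)$ with $H_N(\bx)/N\ge\Es(q_{m+1})-\epsilon$, there is a critical point $\bx'\in S(\vec\bx_m)$ with $\|\bx-\bx'\|\le\tau\sqrt N$ and $(\vec\bx_m,\bx')\in\Cs^0_{m+1}(\epsilon)$. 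The mechanism is topological: since $S_\epsilon^+(\vec\bx_m)$ coincides (on the complement of the Borell--TIS exceptional event) with a level set $S_\epsilon(\vec\bx_m)$ bounded above, each of its connected components contains a local maximum; then one shows every such component has diameter $o(\sqrt N)$ by bounding, via the multi-level Kac--Rice formula of Proposition~\ref{prop:multilvlKR} together with Huang--Sellke's Proposition~3.14 (restated here as Proposition~\ref{prop:HS2}) and Lemma~\ref{lem:EsRs}, the probability that any critical point $\bx_{m+1}\in\Cs^0_{m+1}(\epsilon)$ has a competitor of comparable energy at overlap $q_{m+1}-\eta$. Before running this, the paper also pre-processes: since the Talagrand--Jagannath vertices only satisfy $\langle\bv_i,\bv_j\rangle/N\approx q_{i\wedge j}$ approximately, it first projects each $\bv_{m+1}$ onto the exact sphere $S(\vec\bv_m)$ via the map \eqref{eq:proj}, a step your proposal omits but which is needed to even place the candidate in $\AA_{m+1}$. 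A final small correction: \eqref{eq:energyontree} is drawn from \cite[Corollary~15]{FElandscape} (its statement \cite[(1.48)]{FElandscape}) combined with the Borell--TIS inequality, not from \cite[Proposition~11]{FElandscape}, which concerns Parisi distributions.
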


\begin{proof}[Proof of Proposition \ref{prop:PS}, except Property \eqref{enum:PS7}]
Using the results of \cite{JagannathApxUlt}, 
even without the full power of \eqref{eq:kRSB},  the pure state decomposition of 
\cite[Corollary 14]{FElandscape} has properties \eqref{enum:PS1}--\eqref{enum:PS4}
apart from having $\|\cdot\|^2/N$ instead of $q_{|\cdot|}$ in both \eqref{eq:Pt3} and \eqref{enum:PS4}  and the convergence of the weights to a Poisson-Dirichlet distribution.\footnote{For this, in \cite{FElandscape} it is assumed that $|\mbox{supp}(\nu_P)|>1$.  This must hold in our setting,  since for $\beta>\beta_c$ we have from the Parisi formula that $\nu_P\neq \delta_0$,  and from \eqref{eq:kRSB} also that 
	$0\in \mbox{supp}(\nu_P)$.}
Going through its construction in the presence of \eqref{eq:kRSB} allows one to 
verify,  using  \cite[Point (4') in p.49]{FElandscape},  that property \eqref{enum:PS4} must also hold as stated here
(for some other non-random $\epsilon_N \to 0$).  Upon suitably modifying the sequences $\{\epsilon_N\}$,
this implies in turn that \eqref{eq:Pt3} must
hold as stated,  first in case $\bu=\bv$,  and hence by the triangle inequality,  also in its full generality.
In Proposition \ref{prop:PS} we may take $c_N \to 0$ 
slowly enough (which in turn implies it with arbitrary $c_N\to0$) for \cite[Corollary 15]{FElandscape} to hold as well.  Then, 
 as $\bv \mapsto H_N(\bv)$ is $O(\sqrt N)$-Lipschitz \abbr{whp},
combining the statement \cite[(1.48)]{FElandscape} of \cite[Corollary 15]{FElandscape} 
with Property \eqref{eq:Pt3} at $\bu=\bv$ and the Borell-TIS inequality yields  \eqref{eq:energyontree}.
\end{proof}

To summarize,  
we saw that the points in our embedding of the
pure states decomposition,  have energy which is (approximately) maximal over the relevant spheres.
Property \eqref{enum:PS7} of Proposition \ref{prop:PS},  proved
in Section \ref{sec-pf-prop:PS},  goes further to show that such embedding 
$\V_N$ exists where also
 \begin{equation*}\label{eq:Am}
 \bu \le \bv \quad \Longrightarrow \quad \frac1N \langle \bu,\bv \rangle = q_{|\bu|} 
\end{equation*}
(or equivalently,  with $\vec \bv \in \AA_{|\bv|}$ for any $\bv \in \V_N$),  and 
where each $\bx_m \in \V_N$ of depth $m$ is \emph{also a critical point of the energy} 
on the subset \eqref{eq:v+y} of the relevant sphere (as $\gradt H_{N}(\bx_{m})=0$).

\section{\label{sec:HighTempPf}High temperature: proof of Theorem \ref{thm:high-temp}}

First note that since we work with $\beta\leq\beta_{c}$, $\E F_{N,\beta}\to F(\beta)=\frac{1}{2}\beta^{2}$
and the derivative exists and is equal to $F'(\beta)=\beta$. For
$\beta<\beta_{c}$ this follows by definition. For $\beta=\beta_{c}$
it follows since the derivative from the
left is $\beta_{c}$, by convexity the derivative of $F(\beta)$
from the right exists and is at least $\beta_{c}$, and for any $\beta$
(and in particular on some right neighborhood of $\beta_{c}$) by
Jensen's inequality $\E F_{N,\beta}\leq\frac{1}{2}\beta^{2}$.

The argument from this point up to \eqref{eq:Lpm_ratio}
is standard, and is taken almost with no change from \cite{AuffingerChenConcentration}. Let $\epsilon>0$ and define
the subsets
\begin{align*}
	L_{N}^{+}(\epsilon) & :=\left\{ \bs\in\Sigma_{N}:\,H_{N}(\bs)/N-F'(\beta)>\epsilon\right\} ,\\
	L_{N}^{-}(\epsilon) & :=\left\{ \bs\in\Sigma_{N}:\,H_{N}(\bs)/N-F'(\beta)<-\epsilon\right\} .
\end{align*}
For any $\lambda>0$,
\begin{align*}
	\frac{1}{N}\log\int_{L_{N}^{\pm}(\epsilon)}e^{\beta H_{N}(\bs)}d\mu_{N} & \leq\frac{1}{N}\log\int_{\Sigma_{N}}e^{(\beta\pm\lambda)H_{N}(\bs)\mp\lambda N(F'(\beta)\pm\epsilon)}d\mu_{N}\\
	& =F_{N,\beta\pm\lambda}\mp\lambda(F'(\beta)\pm\epsilon).
\end{align*}

Defining
\[
\delta_{N}^{\pm}(\lambda):=\frac{F_{N,\beta\pm\lambda}-F_{N,\beta}}{\lambda}\mp F'(\beta),
\]
we have that
\[
F_{N,\beta\pm\lambda}\mp\lambda(F'(\beta)\pm\epsilon)=F_{N,\beta}-\lambda(\epsilon-\delta_{N}^{\pm}(\lambda)).
\]

Also define
\[
\delta_{\infty}^{\pm}(\lambda):=\frac{F(\beta\pm\lambda)-F(\beta)}{\lambda}\mp F'(\beta)\overset{\lambda\to0}{\longrightarrow}0.
\]

Choose some $\lambda>0$ sufficiently small so that $a(\epsilon,\lambda):=\epsilon-\delta_{\infty}^{+}(\lambda)\vee\delta_{\infty}^{-}(\lambda)>0$.
Define the event 
\begin{equation*}
	\mathcal{E}_{N}(\tau):=
	\big\{  |F_{N,\beta\pm\lambda}-F(\beta\pm\lambda)|\vee |F_{N,\beta}-F(\beta)|<\tau \big\}\,.
\end{equation*}  
 Since we assume that 
$\sup_{N,\bs} \frac{1}{N}\E\big[H_{N}(\bs)^{2}\big]$ is finite,  
from the well-known concentration of the free energy \cite[Theorem 1.2]{PanchenkoBook},  for any $\tau>0$, 
\begin{equation}\label{eq:Etaucomp}
\varlimsup_{N\to\infty}\frac{1}{N}\log\P(\mathcal{E}_{N}(\tau)^{c})<0 \,.
\end{equation}
Taking hereafter $2\tau<\lambda a(\epsilon,\lambda)$,  on $\mathcal{E}_{N}(\tau)$
\[
\frac{1}{N}\log\int_{L_{N}^{\pm}(\epsilon)}e^{\beta H_{N}(\bs)}d\mu_{N}\leq F_{N,\beta}-\lambda a(\epsilon,\lambda)+ 2 \tau
\]
and consequently also
\begin{equation}
	\varlimsup_{N\to\infty}\frac{1}{N}\log\E\bigg(\indic_{\mathcal{E}_{N}(\tau)}\cdot\frac{\int_{L_{N}^{+}(\epsilon)\cup L_{N}^{-}(\epsilon)}e^{\beta H_{N}(\bs)}d\mu_{N}(\bs)}{\int_{\Sigma_{N}}e^{\beta H_{N}(\bs)}d\mu_{N}(\bs)}\bigg)\leq2\tau-\lambda a(\epsilon,\lambda)<0.\label{eq:Lpm_ratio}
\end{equation}
In view of \eqref{eq:Etaucomp},  \abbr{wlog} we can multiply the $[0,1]$-valued 
$G_{N,\beta}\left(f_{N}(\bs)\in A\right)$ in \eqref{eq:ht-result} by $\indic_{\mathcal{E}_{N}(\tau)}$.  Then, 
by
\eqref{eq:G(finA)} and \eqref{eq:Lpm_ratio},  we complete the proof upon showing that for
$L_{N}(\epsilon):=\Sigma_{N}\setminus(L_{N}^{+}(\epsilon)\cup L_{N}^{-}(\epsilon))$,
\begin{equation}
	\varlimsup_{N\to\infty}\frac{1}{N}\log\E\bigg(\indic_{\mathcal{E}_{N}(\tau)}\cdot\frac{\int_{L_{N}(\epsilon)}e^{\beta H_{N}(\bs)}\indic\{f_{N}(\bs)\in A\}d\mu_{N}(\bs)}{\int_{\Sigma_{N}}e^{\beta H_{N}(\bs)}d\mu_{N}(\bs)}\bigg)<0.\label{eq:ratio_required_bd}
\end{equation}
Now, 
 applying \eqref{eq:variance-bd},
\begin{equation*}
	\label{eq:ELN}
	\begin{aligned}
		\E\int_{L_{N}(\epsilon)}e^{\beta H_{N}(\bs)} & \cdot\indic\big\{ f_{N}(\bs)\in A\big\} d\mu_{N}(\bs)
		\\
		& =\int_{\Sigma_{N}}\int_{F'(\beta)-\epsilon}^{F'(\beta)+\epsilon}e^{N(\beta E-\frac{1}{2}E^{2})+o(N)}\P\Big(f_{N}(\bs)\in A\,\Big|\,\frac{1}{N}H_{N}(\bs)=E\Big)dE\,d\mu_{N}(\bs),
	\end{aligned}
\end{equation*}
where the $o(N)$ term is uniform in $\bs$ and $E$.  Since on $\mathcal{E}_{N}(\tau)$
the denominator in (\ref{eq:ratio_required_bd}) is at least $e^{(F(\beta)
-\tau)N}$ and 
\[
\beta F'(\beta) - \frac{1}{2} F'(\beta)^2 - F(\beta) = 0  \,,
\]
if we take small enough $\epsilon$ and $\tau$ then (\ref{eq:ratio_required_bd})
follows from (\ref{eq:bd_Gaussian_average}).
\qed

\section{\label{sec:PfSketch}Proof sketch for Theorem \ref{thm:low temp 1} on the low temperature phase}

In this section we outline the main steps in the proof of Theorem \ref{thm:low temp 1}.

\subsection{\label{subsec:PfSketch_purestates}Using the pure state decomposition of Proposition \ref{prop:PS}}  
 First, 
define the measures (which may have mass larger than $1$), 
\begin{equation}\label{eq:barPi}
\bar\Pi^\epsilon_{N,\beta}(\,\cdot\,):= 
\sum_{\vec\bx\in \mathscr{C}_{k}(\epsilon)} G_{N,\beta}\big(\,\cdot\,\cap\, B(\vec\bx,\epsilon)\big) \le 
\sum_{\vec\bx\in \mathscr{C}_{k}(\epsilon)} G_{N,\beta}\big(\,\cdot\,\big| \, B(\vec\bx,\epsilon)\big) =:\Pi^\epsilon_{N,\beta}(\,\cdot\,)\,.
\end{equation}
Then,  denoting by $\mathcal{B}(\SN)$ the set of Borel measurable subsets of $\SN$, 
it follows from properties \eqref{enum:PS1}, \eqref{enum:PS2} and \eqref{enum:PS7} 
of Proposition \ref{prop:PS},  that
for any fixed $\epsilon>0$, 
\begin{equation}\label{eq:SG}
	\lim_{N\to\infty} \P \Big(\forall M\in \mathcal{B}(\SN):\, \bar\Pi_{N,\beta}^\epsilon(M)\geq G_{N,\beta}(M)-\epsilon_{N}\Big)=1.
\end{equation}
Hence,  to prove \eqref{eq:GAverageLowT}, it suffices to show  that
\begin{equation}
	\lim_{\epsilon\to0}\varlimsup_{N\to\infty}\frac{1}{N}\log \E \Big( \Pi^{\epsilon}_{N,\beta}\big(f_{N}(\bs)\in A \big)\Big)<0. \label{eq:PiAverage}
\end{equation}
More generally, for (sufficiently regular) $\alpha_k:\AA_k\to[0,1]$ such that the law of $\alpha_k(\vec\bx)$ is independent of $\vec\bx$, consider the limit
\begin{equation}\label{eq:genE}
	\lim_{\epsilon\to0}\varlimsup_{N\to\infty}\frac{1}{N}\log  \E  \Big( \sum_{\vec\bx\in \mathscr{C}_{k}(\epsilon)} \alpha_k(\vec{\bx}) \Big)\,.
\end{equation}
Indeed, by \eqref{eq:barPi}, the expectation of \eqref{eq:PiAverage} is the particular case of  
\begin{equation}\label{eq:specific_alpha}
	\alpha_k(\vec{\bx}) = G_{N,\beta}\big(\{\bs:\,f_N(\bx)\in A \}\big| \, B(\vec\bx,\epsilon)\big)\,.
\end{equation}

\subsection{Applying the Kac-Rice formula}

The advantage of dealing with \eqref{eq:genE} rather than \eqref{eq:GAverageLowT} directly, is that to analyze such sums we may apply 
the Kac-Rice formula. Indeed, by considering the sequence of gradients
\begin{equation}\label{eq:k_gradt}
	\vec\bx \mapsto \big(\gradt H_N(\bx_1),\ldots,\gradt H_N(\bx_k)\big)\,,
\end{equation}
we obtain a mapping from the $\Dp$-dimensional manifold $\AA_k$ to $\R^{\Dp}$ as required for the application of the Kac-Rice formula,  with $\Dp:=N-1+\cdots+N-k=k(N-\frac{k+1}{2})$.
Further, consider two vector-valued random fields on $\AA_k$ induced by the real-valued random field $H_N$,
\begin{align}
\vec\bx &\mapsto \big(H_N(\bx_1),\ldots,H_N(\bx_k)\big)\,, \label{eq:k_HN}\\
\vec\bx &\mapsto \bigg(\frac{\partial_{\bx_1}H_{N}(\bx_1)}{\|\bx_1\|^2},\ldots,\frac{\partial_{\bx_k-\bx_{k-1}}H_{N}(\bx_k)}{\|\bx_k-\bx_{k-1}\|^2}\bigg)\,. \label{eq:k_RN}
\end{align}
Then,  by definition, $\Cs_{m}(\epsilon)$ of \eqref{eq:xi_condition} is the set of points in $\AA_k$ such that \eqref{eq:k_gradt} is equal to $0\in\R^{\Dp}$ and \eqref{eq:k_HN} normalized by $N$ and \eqref{eq:k_RN} are within distance $\epsilon$ from $(\Es(q_i))_{i\le k}$ and $(\Rs(q_i))_{i\le k}$ of \eqref{eq:GS} in sup-norm.

\subsection{Warm-up: the $1$-RSB case} Before tackling the general case we explain what to do when
$k=1$.  Here, we write $\vec\bx=\bx\in\AA_1$. If $\alpha_1(\bx)\equiv1$, then \eqref{eq:genE} is  the `usual' annealed complexity  of critical points at radius $q_1$, with energy and radial derivative in a given interval \cite{ABA2,A-BA-C,geometryMixed}. By \eqref{eq:complexity} below, it is equal in this case to 
\begin{equation}\label{eq:complexity_k1}
	\lim_{\epsilon\to0}\varlimsup_{N\to\infty}\frac{1}{N}\log  \Big( \E\{|\mathscr{C}_{1}(\epsilon)|\} \Big) = \Theta_{\xi(q_1\cdot)}(\Es(q_1),q_1\Rs(q_1))\,,
\end{equation}
for the explicit function  $\Theta_\xi(E,R)$ of \eqref{eq:Theta}.

To obtain an upper bound for \eqref{eq:genE} with general $\alpha_1$ using the Kac-Rice formula, one essentially needs to augment the complexity calculation by an upper bound for the expectation of $ \alpha_{1}(\bx)$, under the same conditioning that appears in the Kac-Rice formula.\footnote{The expectation in the Kac-Rice formula also includes the determinant of the Hessian.  However,  this determinant concentrates so well that at the exponential scale in which we work here,  it is not difficult to deal with.} Precisely, denoting by $\Delta(\alpha_1)$ the limit
\begin{equation}\label{eq:CE1}
	\lim_{\epsilon\to0}\varlimsup_{N\to\infty}\frac{1}{N}\log \Big(  \sup_{\substack{| E_1-\Es(q_1)|<\epsilon\\ |R_1-\Rs(q_1)|<\epsilon}}\E \Big\{ \alpha_1(\bx) \,\Big|\,
	\Big(\frac{1}{N}H_{N}(\bx),\,\frac{\partial_{\bx}H_{N}(\bx)}{\|\bx\|^2},\,\gradt H_{N}(\bx)\Big)=\big(E_1,R_1,0\big) \Big\}\Big) \,,
\end{equation}
one can show that
\begin{equation}\label{eq:k1_basicbound}
	\eqref{eq:genE}\le \Theta_{\xi(q_1\cdot)}(\Es(q_1),q_1\Rs(q_1))+\Delta(\alpha_1)\,.
\end{equation}

For the case of interest to us \eqref{eq:specific_alpha}, we shall see that under the  conditioning as in \eqref{eq:CE1}, the Gibbs measure $G_{N,\beta}(\,\cdot\,\big| \, B(\bx,\epsilon))$ on $B(\bx,\epsilon)$ is effectively  in the high-temperature (\abbr{RS}) phase. Thus, by a similar argument to that used for Theorem \ref{thm:high-temp} (which needs to be slightly strengthened to account for supremum), for \eqref{eq:specific_alpha} we show in Section \ref{sec:LowTempPf} that if \eqref{eq:bd_Gaussian_average-11} holds then $\Delta(\alpha_1)<0$.

\subsection{The case of general $k\ge1$}

To bound \eqref{eq:genE} one may in principle apply the Kac-Rice formula directly on the space $\AA_k$,  
but it is more natural to instead
use an inductive argument on $k$.  Specifically, 
for any $\vec\by=(\by_1,\ldots,\by_m)\in \AA_m$ and $m\leq n\leq k$,  we define,   similarly to \eqref{eq:AA} and  \eqref{eq:xi_condition},
\begin{equation}
	\label{eq:AmnCmny}
\begin{aligned}
	\AA_{m,n}(\vec\by)&:=\Big\{\vec\bx=(\bx_i)_{i=1}^{n}\in\AA_{n}:\,(\bx_1,\ldots,\bx_m)=(\by_1,\ldots,\by_m)\Big\}\\
		\Cs_{m,n}(\epsilon,\vec\by)&:=\Big\{\vec \bx \in \AA_{m,n}(\vec\by):\,\exists (\vec E, \vec R)\in \bar V(\epsilon)\mbox{ such that }\forall m+1\leq i\leq n,\\
		&\hphantom{:=\Big\{\vec \bx \in \AA_{m,n}(\vec\by):\,}\Big(\frac{1}{N}H_{N}(\bx_i),\,\frac{\partial_{\bx_i-\bx_{i-1}}H_{N}(\bx_i)}{\|\bx_i-\bx_{i-1}\|^2},\,\gradt H_{N}(\bx_i)\Big)=\big(E_i,R_i,0\big) \Big\}\,.
\end{aligned}
\end{equation}
We may then write
\begin{equation}\label{eq:KRrep}
	\begin{aligned}
		\sum_{\vec\bx\in \mathscr{C}_{k}(\epsilon)} \alpha_k(\vec{\bx}) = \sum_{\by\in \mathscr{C}_{1}(\epsilon)} \alpha_{1,k}^{\epsilon}(\by)
		= \sum_{\by\in \mathscr{C}_{1}(\infty)} \indic\{\by\in \mathscr{C}_{1}(\epsilon)\} \cdot \alpha_{1,k}^{\epsilon}(\by)\,,
	\end{aligned}
\end{equation}
where $\mathscr{C}_{1}(\infty)$ is the set of critical points of $H_N$ over $\AA_1=\sqrt{q_1}\SN$ and
\begin{equation}\label{eq:alpha1k}
		\alpha_{1,k}^{\epsilon}(\by) :=	\sum_{\vec\bx\in \mathscr{C}_{1,k}(\epsilon,\by)} \alpha_k(\vec\bx)\,.
\end{equation}
While \eqref{eq:KRrep} holds with $\alpha_{1,k}^{\epsilon}(\by)$ defined
by summing over $\{\vec\bx\in \mathscr{C}_{k}(\epsilon):\,\bx_1=\by\}$ in \eqref{eq:alpha1k},  
such $\alpha_{1,k}^{\epsilon}$ being zero outside $\mathscr{C}_{1}(\infty)$, hence
lacks the continuity over $\AA_1$ needed for applying the Kac-Rice formula.


Fix $\by\in \sqrt{q_1}\SN$.  Define
\[
\varphi(\bz) = \by+\sqrt{\frac{N(1-q_1)}{N-1}}\theta(\bz)
\]
for an arbitrary linear isometry $\theta:\R^{N-1}\to\by^{\perp}=\{\bx:\bx\perp\by\}$, and note that for this choice of the scale factor $\varphi(\SNt)=(\by+\by^{\perp})\cap\SN$. 
By abuse of notation, for vectors $\vec\bz = (\bz_1,\ldots,\bz_{k-1})$ define $\varphi(\vec\bz)=(\varphi(\bz_i))$ by element-wise operation.
To make the dependence in $H_N$ and $q=(q_i)_{i=1}^k$ explicit, we use the notation $\mathscr{C}_{k}(\infty;H_N,q)$ and $\mathscr{C}_{1,k}(\infty,\by;H_N,q)$, where here taking $\epsilon=\infty$ the values of $\vec E$ and $\vec R$ are irrelevant. 
Define $\hat q=(\hat q_i)_{i=1}^{k-1}$, where $\hat q_i= \frac{q_{i}-q_1}{1-q_1}$.
One can verify that $\vec{\bx}\in\mathscr{C}_{1,k}(\infty,\by;H_N,q)$ with 
\[
\Big(\frac1NH_N(\by),\,\frac{1}{N}H_{N}(\bx_i),\,\frac{\partial_{\bx_i-\bx_{i-1}}H_{N}(\bx_i)}{\|\bx_i-\bx_{i-1}\|^2}\Big) = (\bar E,\,\bar E_i,\, \bar R_i)
\]
if and only if $\vec\bx=\varphi(\vec\bz)$ for  $\vec\bz\in\mathscr{C}_{k-1}(\infty;H_N^\by,\hat q)$  such that
\[
\Big(\frac{1}{N-1}H^{\by}_{N}(\bz_i),\,\frac{\partial_{\bz_i-\bz_{i-1}}H^{\by}_{N}(\bz_i)}{\|\bz_i-\bz_{i-1}\|^2}\Big) = \sqrt{\frac{N}{N-1}}\big(\bar E_{i+1}-\bar E,\, (1-q_1)\bar R_{i+1}\big)\,.
\]
Hence, setting $\hat E=(E_{i+1}-E_1)_{i=1}^{k-1}$ and $\hat R=((1-q_1)R_{i+1})_{i=1}^{k-1}$, we have that
\[
\alpha_{1,k}^{\epsilon}(\by) =	\sum_{\vec\bx\in \mathscr{C}_{1,k}(\epsilon,\by;H_N,q,\vec E,\vec R)} \alpha_{k}(\vec\bx)  
\le
\sum_{\vec\bz\in \mathscr{C}_{k-1}(4\epsilon,\by;H^\by_N,\hat q,\hat E,\hat R)} \hat\alpha_{k-1}(\vec\bz)\,,
\]
where $\hat \alpha_{k-1}(\vec\bz):=\alpha_{k}(\varphi(\vec\bz))$ and where we included $\vec E$ and $\vec R$ in the notation to make the dependence on them explicit. The sum on \abbr{rhs} is of the same form as in \eqref{eq:genE},  with $k$ reduced by $1$ and modified Hamiltonian and parameters.

Proposition \ref{prop:multilvlKR} states an upper bound on \eqref{eq:genE} 
which is proved by an inductive argument using the above idea. Specifically,  it yields a generalization of \eqref{eq:k1_basicbound} for $k\ge1$:
\begin{equation}\label{eq:kge1_basicbound}
	\eqref{eq:genE}\le \sum_{i=0}^{k-1} \Theta_{\xim}\Big(\Es(q_{i+1})-\Es(q_{i}),(q_{i+1}-q_i)\Rs(q_{i+1})\Big) +\Delta(\alpha_k)\,,
\end{equation}
where $\xim$ are explicit mixtures defined by \eqref{eq:xiq} and \eqref{eq:xim} and, generalizing \eqref{eq:CE1}, $\Delta(\alpha_k)$ is defined as the limit
\begin{equation*}
	\lim_{\epsilon\to0}\varlimsup_{N\to\infty}\frac{1}{N}\log \bigg( \sup_{\bar V(\epsilon)}\E \Big\{ \alpha_k(\vec\bx) \,\Big|\,
	 \cpt(\vec \bx,\vec E,\vec R) \Big\}\bigg)\,,
\end{equation*}
for the event
\begin{equation*}
	\cpt(\vec{\bx},\vec E,\vec R) := \bigcap_{1\leq i \le k} \Big\{
	\Big(\frac{1}{N}H_{N}(\bx_i),\,\frac{\partial_{\bx_i-\bx_{i-1}}H_{N}(\bx_i)}{\|\bx_i-\bx_{i-1}\|^2},\,\gradt H_{N}(\bx_i)\Big)=\big(E_i,R_i,0\big) \Big\}\,.
\end{equation*}

By the same argument mentioned above for the case $k=1$, also for $k\ge1$ we show in Section \ref{sec:LowTempPf} that if \eqref{eq:bd_Gaussian_average-11} holds then $\Delta(\alpha_k)<0$.

\subsection{Vanishing of the annealed complexities}

Recall the zero-temperature Parisi formula \eqref{eq:ZTParisi}. Analogously to the set $\mathcal{S}_{P}=\mathcal{S}_{P,\xi,\beta}$ of \eqref{eq:Parisi_characterization_finite_beta} which was defined for finite $\beta>\infty$, a zero-temperature analogue $\mathcal{S}_{P,\xi,\infty}\subset [0,1]$ can be defined from the optimality condition for the Parisi measure, see \eqref{eq:Parisimeasure_ZT}.

Using the characterization \eqref{eq:Parisimeasure_ZT} of the Parisi measure, it was proved in \cite{FElandscape} that the mixtures $\xim$ as in \eqref{eq:kge1_basicbound} are strictly 1-\abbr{rsb} in the sense that $\mathcal{S}_{P,\xim,\infty}=\{0,1\}$, see Corollary \ref{cor:Parisidistinfty} and \eqref{eq:xim_strict1RSB}. 

Making the dependence on the mixture explicit by writing $\Es(q)=\Es(\xi,q)$ and $\Rs(q)=\Rs(\xi,q)$, in Lemma \ref{lem:EsRs} we show that
	\begin{align*}
		\Es(\xim,1)&=\Es(\xi,q_{m+1})-\Es(\xi,q_{m}),\\
		\Rs(\xim,1)&=(q_{m+1}-q_{m})\Rs(\xi,q_{m+1}).
	\end{align*}
Finally, by a result of Huang and Sellke \cite{HuangSellke2023},  which we restate in Lemma \ref{lem:strict1RSB},  
if  $\mathcal{S}_{P,\xim,\infty}=\{0,1\}$, then $\Theta_{\xi}(\Es(1),\hRs(1))=0$. 

By combining these facts, we conclude in Corollary \ref{cor:Thetastar} that each of the complexities in the sum of \eqref{eq:kge1_basicbound} is equal to zero. Since, as stated above, if \eqref{eq:bd_Gaussian_average-11} holds then $\Delta(\alpha_k)<0$,  in Section \ref{sec:LowTempPf} we conclude from this that 
\eqref{eq:genE} is strictly negative in the setting of Theorem \ref{thm:low temp 1},  thereby completing its proof.

\section{\label{sec:complexity}Complexity of critical points and related results}

In this section we collect several results from \cite{geometryMixed,HuangSellke2023,FElandscape} about the complexity of critical points and prove some consequences of theirs that will be used later. 
For any mixture $\xi(t)$ and intervals $I,I'\subset \R$ denote the number of critical points with normalized energy in $I$ and normal derivative in $I'$ by
\begin{equation*}
	\Crt_{N,\xi}(I,I'):=\#\Big\{
	\bs\in\SN:\,\forall \bu\perp\bs,\,\langle\nabla H_N(\bs),\bu\rangle=0, \, \frac1N H_N(\bs)\in I, \frac{1}{N}\langle\nabla H_N(\bs),\bs\rangle\in I'
	\Big\}.
\end{equation*}
For a mixture $\xi(t)$  which is not pure,  define the function
\begin{equation}\label{eq:Theta}
	\Theta_\xi(E,R)=\frac12+\frac12\log\Big(\frac{\xi''(1)}{\xi'(1)}\Big)-\frac12(E,R)\Sigma_{\xi}^{-1}(E,R)^{\top}+\Omega(R/\sqrt{\xi''(1)}),
\end{equation}
where  (see, e.g., \cite[Prop. II.1.2]{logpotential})
\begin{equation}\label{eq:Omega}
	\begin{aligned}
		\Omega(t) & =\frac{1}{2\pi}  \int_{-2}^{2} \log|\lambda-t| \sqrt{4-\lambda^2}d\lambda\\
		& = \begin{cases}
			\frac{t^2}{4}-\frac{1}{2} &|t|\leq 2\\
			\frac{t^2}{4}-\frac{1}{2} - \Big[\frac{|t|}{4}\sqrt{t^2-4}-\log\Big(\sqrt{\frac{t^2}{4}-1}+\frac{|t|}{2}\Big)\Big] &|t|>2,
		\end{cases}
	\end{aligned}
\end{equation}
and
\begin{equation}\label{eq:Sigmaxi}
	\Sigma_{\xi}=\left(
	\begin{matrix}
		\xi(1) & \xi'(1) \\
		\xi'(1) & \xi''(1) +\xi'(1)
	\end{matrix}	
	\right).
\end{equation}
In the pure case $\xi(t)=t^p$,  define
\begin{equation*}\label{eq:Thetapure}
\Theta_\xi(E,R)=\begin{cases}
	\frac12+\frac12\log\Big(\frac{\xi''(1)}{\xi'(1)}\Big)-\frac12 E^2/\xi(1)+\Omega(R/\sqrt{\xi''(1)}) &R=pE\\
	-\infty &\mbox{otherwise.}
\end{cases}
\end{equation*}
Theorem 2.8 of \cite{A-BA-C}\footnote{Note that in the pure case, $\langle\nabla H_N(\bs),\bs\rangle = NpH_N(\bs)$.} and Theorem 3.1 of \cite{geometryMixed} state that for any intervals $I,I'\subset \R$,
\begin{equation}\label{eq:complexity}
	\lim_{N\to\infty}\frac1N\log \E \Crt_{N,\xi}(I,I') = \sup_{(E,R) \in I\times I'}\Theta_\xi(E,R).
\end{equation}

For any $q\in[0,1)$, define the mixtures
\begin{equation}
	\label{eq:xiq}
	\xi_q(t)=\xi(t+q)-\xi(q)-\xi'(q)t,  \qquad \bar \xi_q(t) :=\xi_q((1-q)t) \,.
\end{equation}
Assuming the strict $k$-\abbr{rsb} condition \eqref{eq:kRSB} and setting $q_{k+1}=1$,  we define for any 
$1 \le m \leq k$ the mixtures
\begin{equation}\label{eq:xim}
	\begin{aligned}
		\xim (t)&:=\xi_{q_m}((q_{m+1}-q_m)t),\\
		\bxim (t)&:=\bar \xi_{q_m}(t).
	\end{aligned}
\end{equation}
We recall the definitions \eqref{eq:AA} and \eqref{eq:xi_condition} of 
\begin{equation}\label{eq:AA2}
	\AA_{m}:=\Big\{\vec\bx=(\bx_i)_{i=1}^m\in\R^{N\times m}:\,\frac1N\langle\bx_i,\bx_j\rangle=q_{i\wedge j},\,\forall i,j\leq m\Big\} \,.
\end{equation}
and 
\begin{equation}
	\label{eq:xi_condition2}
	\begin{aligned}
		\Cs_{m}(\epsilon):=\Big\{\vec \bx \in \AA_m:\,&\exists (\vec E, \vec R)\in \bar V(\epsilon)\mbox{ such that }\forall i\leq m\\
		&\Big(\frac{1}{N}H_{N}(\bx_i),\,\frac{\partial_{\bx_i-\bx_{i-1}}H_{N}(\bx_i)}{\|\bx_i-\bx_{i-1}\|^2},\,\gradt H_{N}(\bx_i)\Big)=\big(E_i,R_i,0\big) \Big\}\,.
	\end{aligned}
\end{equation} 
They involve 
discrete paths $\vec\bx=(\bx_1,\ldots,\bx_m)$ such that each $\bx_{n+1}$ is a critical point of $H_N(\bx)$ on the
$(N-n-1)$-dimensional sphere 
\begin{equation*}\label{eq:nlvlSphere}
\Big\{
\bx_{n}+\by: \|\by\|=\sqrt{N (q_{n+1}-q_{n})}, \,\langle\by,\bx_i\rangle=0,\,\forall i\leq n 
\Big\} \,.
\end{equation*}
In the sequel we show that under an appropriate conditioning, the expected number of critical points on the latter sphere 
is expressed by the formula \eqref{eq:complexity} for mixture $\xin$ with $I$ and $I'$ appropriately scaled (see \eqref{eq:1lvlComplexity}),  hence by summing over the different levels, one obtains an asymptotic formula for $\E|\Cs_{m}(\epsilon)|$ (see Remark \ref{rem:Cncomplexity}).

Mixtures as in \eqref{eq:xim} were central to \cite{FElandscape}, where certain free energy functionals on spherical bands were studied and a generalized 
\abbr{TAP} approach was established (also see \cite{TAPChenPanchenkoSubag,TAPIIChenPanchenkoSubag} for the Ising case). For example, the free energy mixture corresponding to $\bxim$ was used to define in \cite{FElandscape} a  \abbr{TAP} correction that generalizes the classical Onsager correction term. These mixtures were also used before in the 1-\abbr{RSB} case \cite{geometryMixed,geometryGibbs} to study the free energy of spherical bands around critical points.

Most relevant to our current  discussion are certain relations between the Parisi measure of $\xim$ at zero-temperature and that of $\xi$ at inverse-temperature $\beta$, which were established in \cite{FElandscape}.  
Recall the set of overlaps $\mathcal{S}_{P}=\mathcal{S}_{P,\xi,\beta}$ defined by the optimality condition 
of Talagrand in \eqref{eq:Parisi_characterization_finite_beta},  where we now add $\xi$ 
and $\beta$ to the notation,  to make the distinction from its zero-temperature analogue 
and between the mixtures $\xi$ and $\bxim$.
\begin{cor}[\cite{FElandscape}]\label{cor:Parisidistbeta}
	If $x_\beta$ is the Parisi distribution of $\xi$ at $\beta$ and $q\in\mathcal{S}_{P,\xi,\beta}$, then
	\[
	\bar x_\beta^q(t) = x_\beta(q+(1-q)t)
	\] 
	is the Parisi distribution of $\bar\xi_q$ of \eqref{eq:xiq} (at $\beta$),
	and 
	\begin{equation}\label{eq:Sq}
	\forall t\in[0,1]:\quad t\in \mathcal{S}_{P,\bar\xi_q,\beta}\iff q+(1-q)t\in \mathcal{S}_{P,\xi,\beta}.
	\end{equation}
\end{cor}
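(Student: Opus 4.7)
The plan is to verify that the candidate $\bar x_\beta^q(t)=x_\beta(q+(1-q)t)$ satisfies Talagrand's characterization \eqref{eq:Parisi_characterization_finite_beta} of the Parisi measure for the mixture $\bar\xi_q$ at inverse-temperature $\beta$, and then to invoke the uniqueness in that characterization. Write $\bar\nu_q$ for the Borel probability measure on $[0,1]$ with distribution $\bar x_\beta^q$. Since $x_\beta$ is non-decreasing and right-continuous with $x_\beta(\hat q)=1$ for some $\hat q<1$, the same holds for $\bar x_\beta^q$ with $(\hat q-q)_+/(1-q)<1$ in place of $\hat q$, so $\bar\nu_q$ is an admissible Parisi-type distribution.

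The core is a direct change-of-variable computation. Differentiation gives $\bar\xi_q'(t)=(1-q)\bigl[\xi'(q+(1-q)t)-\xi'(q)\bigr]$, and the substitution $u=q+(1-q)r$ gives $\int_s^1 \bar x_\beta^q(r)\,dr=\frac{1}{1-q}\int_{q+(1-q)s}^1 x_\beta(u)\,du$. Plugging these into \eqref{eq:phi} and substituting $v=q+(1-q)s$ in the outer integral yields
\[
\Phi_{\bar\nu_q}(t)=(1-q)\bigl[\Phi_{\nu_P}(q+(1-q)t)-\Phi_{\nu_P}(q)\bigr],
\]
and a further integration (with the same substitution) produces
\[
\phi_{\bar\nu_q}(s)=\phi_{\nu_P}(q+(1-q)s)-\phi_{\nu_P}(q)-(1-q)s\,\Phi_{\nu_P}(q).
\]

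The hypothesis $q\in\mathcal{S}_{P,\xi,\beta}$ enters through the observation that $\Phi_{\nu_P}(q)=0$. Indeed, $\phi_{\nu_P}$ is $C^1$ on $[0,1]$ with continuous derivative $\Phi_{\nu_P}$, and attains its global maximum at $q$; so $\Phi_{\nu_P}(q)=0$ for $q\in(0,1)$, while at $q=0$ this reduces to $\Phi_{\nu_P}(0)=\beta^2\xi'(0)=0$ using the standing assumption $\xi(t)=\sum_{p\ge 2}\gamma_p^2 t^p$. Substitution then gives the clean identity
\[
\phi_{\bar\nu_q}(s)=\phi_{\nu_P}(q+(1-q)s)-\phi_{\nu_P}(q)\le 0,
\]
with equality iff $q+(1-q)s\in\mathcal{S}_{P,\xi,\beta}$, which is precisely \eqref{eq:Sq} once (i) is established.

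To close the loop, I verify the inclusion $\mathrm{supp}(\bar\nu_q)\subset\{s\in[0,1]:\ q+(1-q)s\in\mathcal{S}_{P,\xi,\beta}\}$. Points $s>0$ in $\mathrm{supp}(\bar\nu_q)$ correspond via $s\mapsto q+(1-q)s$ to points in $\mathrm{supp}(\nu_P)\cap(q,1]\subset\mathcal{S}_{P,\xi,\beta}$, by the Talagrand characterization applied to $\nu_P$ itself; the possible atom at $s=0$ (of mass $x_\beta(q)$) corresponds to $q\in\mathcal{S}_{P,\xi,\beta}$ by hypothesis. Talagrand's uniqueness then identifies $\bar\nu_q$ as the Parisi measure of $\bar\xi_q$ at $\beta$, giving (i). The only delicate input in the whole argument is the vanishing $\Phi_{\nu_P}(q)=0$ on the argmax set; everything else reduces to two changes of variables and routine bookkeeping.
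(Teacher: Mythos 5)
Your proof is correct, and where the paper simply cites \cite[Proposition 11]{FElandscape} for both parts of the statement — including the key identity $\mathcal{S}_{P,\bar\xi_q,\beta}=\{s:\phi_q(s)=\sup_t\phi_q(t)\}$ with $\phi_q(s)=\phi(q+(1-q)s)-\phi(q)$ — you actually carry out that computation from scratch. Your formulas $\Phi_{\bar\nu_q}(t)=(1-q)[\Phi_{\nu_P}(q+(1-q)t)-\Phi_{\nu_P}(q)]$ and $\phi_{\bar\nu_q}(s)=\phi_{\nu_P}(q+(1-q)s)-\phi_{\nu_P}(q)-(1-q)s\,\Phi_{\nu_P}(q)$ follow from the stated changes of variables, and the reduction $\phi_{\bar\nu_q}=\phi_q$ rests precisely on the lemma that $\Phi_{\nu_P}(q)=0$ on $\mathcal{S}_{P,\xi,\beta}$, which you justify correctly both in the interior (first-order optimality of $\phi_{\nu_P}$) and at $q=0$ (via $\xi'(0)=0$). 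Verifying that $\mathrm{supp}(\bar\nu_q)\subset\{s:q+(1-q)s\in\mathcal{S}_{P,\xi,\beta}\}$ and invoking Talagrand's uniqueness then closes the argument. So the two proofs are identical in spirit — check Talagrand's optimality criterion after a linear change of variables — but yours is self-contained where the paper is a two-line citation; the only thing you give up is brevity, and what you gain is that the reader need not chase down the proof of \cite[Proposition 11]{FElandscape}, where the vanishing $\Phi_{\nu_P}(q)=0$ on the argmax set is (implicitly) the same crux.
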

\begin{proof}
	The first claim,  about the relation between the Parisi measures,  is part of \cite[Proposition 11]{FElandscape}. 
	Fixing $\xi$ and $\beta$,  let
	$\phi:=\phi_{\nu_{\beta,P}}$ denote the function defined by \eqref{eq:phi} for the Parisi measure 
	$\nu_{\beta,P}$ and define $\phi_q(s):=\phi (q+(1-q)s)-\phi (q)$. 
	Recall that $\mathcal{S}_{P,\xi,\beta}=\{s\in[0,1]:\phi (s)=\sup_{t\in[0,1]}\phi (t)\}$.  In the proof of 
	\cite[Proposition 11]{FElandscape} it was shown that  $\mathcal{S}_{P,\bar\xi_q,\beta}=\{s\in[0,1]:\phi_q(s)=\sup_{t\in[0,1]}\phi_q(t)\}$ from which \eqref{eq:Sq} follows.
\end{proof}

An analogue of the optimality condition \cite[Proposition 2.1]{Talag} was also proved for the zero-temperature Parisi formula \eqref{eq:ZTParisi}. For any non-decreasing, right continuous and integrable $\alpha:[0,1)\to[0,\infty)$ and $c>0$ define
\begin{align*}
	\Psi_{\alpha,c}(t)&=\xi'(t)-\int_0^t\frac{ds}{(\int_s^1\alpha(r)dr+c)^2},\\
	\psi_{\alpha,c}(t)&=\int_s^1\Psi_{\alpha,c}(t)dt,
\end{align*}
and let $\eta_{\alpha}$ be the measure on $[0,1)$ defined by $\eta_{\alpha}([0,s])=\alpha(s)$. It was proved in \cite{ChenSen} that the minimizer in the Parisi formula \eqref{eq:ZTParisi} is the unique pair $(\alpha,c)$ such that $\Psi_{\alpha,c}(1)=\min_{s\in[0,1]}\psi_{\alpha,c}(s)=0$ and 
\begin{equation}\label{eq:Parisimeasure_ZT}
\mbox{supp}(\eta_{\alpha})\subset \mathcal{S}^{\infty}_{\alpha,c}:=\{s\in[0,1]:\,\psi_{\alpha,c}(s)=0\}.
\end{equation}
Denote by $\mathcal{S}_{P,\xi,\infty}$ the set $\mathcal{S}^{\infty}_{\alpha,c}$ that corresponds
to the minimizer $(\alpha,c)$.

\begin{cor}[\cite{FElandscape}]\label{cor:Parisidistinfty}
	If $x_\beta$ is the Parisi distribution of $\xi$ at $\beta$ and $0<q\in\mathcal{S}_{P,\xi,\beta}$, then
	the minmizer of the Parisi formula \eqref{eq:ZTParisi} for $\hat\xi_q(t)= \xi(qt)$ is
	\[
	(\alpha_q(t),c_q)=\Big(
	\beta x_\beta(qt),\frac{\beta}{q}\int_q^1 x_\beta(s)ds 
	\Big) \,.
	\] 
	Further, 
	\begin{equation}\label{eq:Sq2}
		\forall t\in[0,1]:\quad t\in \mathcal{S}_{P,\hat\xi_q,\infty}\iff qt\in \mathcal{S}_{P,\xi,\beta}.
	\end{equation}
\end{cor}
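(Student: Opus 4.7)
My plan is to establish Corollary \ref{cor:Parisidistinfty} by verifying that the proposed pair $(\alpha_q, c_q)$ satisfies the three conditions characterizing the zero-temperature Parisi minimizer from \cite{ChenSen}, namely $\Psi_{\alpha,c}(1) = 0$, $\min_s \psi_{\alpha,c}(s) = 0$, and $\mathrm{supp}(\eta_{\alpha}) \subseteq \mathcal{S}^{\infty}_{\alpha,c}$. By the uniqueness part of that characterization, this will simultaneously identify $(\alpha_q, c_q)$ as the minimizer for $\hat\xi_q$ and give the set equality \eqref{eq:Sq2}. The strategy parallels the proof of Corollary \ref{cor:Parisidistbeta}: I will rescale the zero-temperature optimality functions for $\hat\xi_q$ into the finite-$\beta$ Talagrand functions $\Phi_{\nu_{\beta,P}}, \phi_{\nu_{\beta,P}}$ of \eqref{eq:phi}.

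First I would carry out the key change of variables. With $\alpha_q(t) = \beta x_\beta(qt)$, the substitution $u = qr$ yields
\[
\int_s^1 \alpha_q(r)\,dr + c_q = \frac{\beta}{q}\int_{qs}^1 x_\beta(u)\,du.
\]
Plugging this into the definition of $\Psi_{\alpha_q,c_q}$, using $\hat\xi_q'(t) = q\xi'(qt)$, and substituting $v = qs$ in the remaining integral, one arrives at the key identity
\[
\Psi_{\alpha_q, c_q}(t) = \frac{q}{\beta^2}\,\Phi_{\nu_{\beta,P}}(qt).
\]
Integrating over $t \in [s,1]$ and recalling $\phi_{\nu_{\beta,P}}(s) = \int_0^s \Phi_{\nu_{\beta,P}}(t)\,dt$ then gives
\[
\psi_{\alpha_q, c_q}(s) = \frac{1}{\beta^2}\bigl[\phi_{\nu_{\beta,P}}(q) - \phi_{\nu_{\beta,P}}(qs)\bigr].
\]

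With these identities, the three Chen--Sen conditions follow almost at once from $q \in \mathcal{S}_{P,\xi,\beta}$. Condition (i): since $q \in (0,1)$ is an interior maximum of the $C^1$ function $\phi_{\nu_{\beta,P}}$, its derivative $\Phi_{\nu_{\beta,P}}(q)$ vanishes, hence $\Psi_{\alpha_q, c_q}(1) = \frac{q}{\beta^2}\Phi_{\nu_{\beta,P}}(q) = 0$. Condition (ii) together with the precise form of the zero set: since $\phi_{\nu_{\beta,P}}(q) = \sup_t \phi_{\nu_{\beta,P}}(t)$, we get $\psi_{\alpha_q, c_q}(s) \geq 0$ with equality iff $\phi_{\nu_{\beta,P}}(qs) = \sup_t \phi_{\nu_{\beta,P}}(t)$, iff $qs \in \mathcal{S}_{P,\xi,\beta}$, so
\[
\mathcal{S}^{\infty}_{\alpha_q, c_q} = \{s \in [0,1] : qs \in \mathcal{S}_{P,\xi,\beta}\},
\]
which is precisely \eqref{eq:Sq2}. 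Condition (iii) follows because $\mathrm{supp}(\eta_{\alpha_q}) = \{s : qs \in \mathrm{supp}(\nu_{\beta,P})\} \subseteq \{s : qs \in \mathcal{S}_{P,\xi,\beta}\} = \mathcal{S}^{\infty}_{\alpha_q, c_q}$, using \eqref{eq:Parisi_characterization_finite_beta} for $\nu_{\beta,P}$.

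The main challenge is simply keeping the two changes of variables $u = qr$ and $v = qs$ straight so that the key identity $\Psi_{\alpha_q, c_q}(t) = \frac{q}{\beta^2}\Phi_{\nu_{\beta,P}}(qt)$ falls out cleanly; no deeper obstacle is expected. Once this identity is in hand, the whole proof reduces the zero-temperature optimality characterization for $\hat\xi_q$ to the finite-$\beta$ Talagrand characterization \eqref{eq:Parisi_characterization_finite_beta} evaluated at the point $q$, exactly as Corollary \ref{cor:Parisidistbeta} was reduced via the auxiliary function $\phi_q$.
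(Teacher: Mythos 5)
Your proof is correct, and the change-of-variables identities check out: with $u=qr$ one gets $\int_s^1\alpha_q(r)\,dr+c_q=\tfrac{\beta}{q}\int_{qs}^1 x_\beta(u)\,du$, and then $v=qs$ gives $\Psi_{\alpha_q,c_q}(t)=\tfrac{q}{\beta^2}\Phi_{\nu_{\beta,P}}(qt)$ and $\psi_{\alpha_q,c_q}(s)=\tfrac{1}{\beta^2}[\phi_{\nu_{\beta,P}}(q)-\phi_{\nu_{\beta,P}}(qs)]$, from which the three Chen--Sen conditions and \eqref{eq:Sq2} fall out exactly as you say. The paper's own proof is much shorter and less self-contained: it simply cites \cite[Proposition 11]{FElandscape} both for the formula for $(\alpha_q,c_q)$ and for the representation $\mathcal{S}_{P,\hat\xi_q,\infty}=\{s:\psi_q(s)=\min_t\psi_q(t)\}$ with $\psi_q(s):=-\phi(qs)/\beta^2$, and then deduces \eqref{eq:Sq2}. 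Your route effectively re-derives the content of that cited proposition by verifying the zero-temperature optimality criterion directly, which is more work but also more transparent; it also sidesteps the auxiliary fact $\phi(q)=0$ that the paper invokes, since in your form of $\psi_{\alpha_q,c_q}$ only the difference $\phi(q)-\phi(qs)$ appears. One small remark: your claim that $\Psi_{\alpha_q,c_q}(1)=0$ uses that $q$ is an interior maximizer of $\phi_{\nu_{\beta,P}}$; this is fine here because the relevant $q$ lies in $(0,q_P)$ with $q_P<1$, but it is worth noting explicitly that the argument requires $q<1$, which is implicit in the hypothesis.
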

\begin{proof}
	The first claim, about the relation between the Parisi measures, is part of \cite[Proposition 11]{FElandscape}. 
	Fixing $\xi$ and $\beta$,  recall that with $\phi=\phi_{\nu_{\beta,P}}$ as above, 
	$\mathcal{S}_{P,\xi,\beta}=\{s\in[0,1]:\phi (s)=\sup_{t\in[0,1]}\phi (t)\}$. 
	In the proof of \cite[Proposition 11]{FElandscape}, it was shown that   $\mathcal{S}_{P,\hat\xi_q,\infty}=\{s\in[0,1]:\psi_q(s)=\min_{t\in[0,1]}\psi_q(t)\}$  for $\psi_q(s):=-\phi (qs)/\beta^2$ (using $\phi (q)=0$), 
	from which \eqref{eq:Sq2} follows.
\end{proof}

Assuming \eqref{eq:kRSB}, Corollary \ref{cor:Parisidistbeta} implies that for $1 \le m\leq k$,
the Parisi distribution of $\bxim$ at $\beta$ is
\begin{equation}	
\label{eq:Sbarxim}
\begin{aligned}
	x_\beta^m(t) &= x_\beta(q_m+(1-q_m)t), \qquad  with \\
\mathcal{S}_{P,\bxim,\beta} &= \bigg\{\frac{q_i-q_m}{1-q_m}:\,i=m,\ldots,k  \bigg\}.
\end{aligned}
\end{equation} 
Corollary \ref{cor:Parisidistinfty} with $\xi=\bxim$ and $q=(q_{m+1}-q_{m})/(1-q_{m})$ then implies that
the minimizer of the zero-temperature Parisi formula for $\xim$ is
\begin{align}
	\nonumber (\alpha^m_q(t),c^m_q)&=\Big(
	\beta x_\beta(q_m 
	+ (q_{m+1}-q_m) t
	),\frac{\beta}{q_{m+1}-q_m}\int_{q_{m+1}}^{1} x_\beta(s)ds
	\Big),\quad with  \\
	\label{eq:xim_strict1RSB}\mathcal{S}_{P,\xim,\infty} &= \{0,1\}.
\end{align}
Models such as \eqref{eq:xim_strict1RSB},  where $\mathcal{S}_{P,\xi,\infty}=\{0,1\}$ are called
in \cite{HuangSellke2023} strictly 1-\abbr{RSB}.  The next two results of Huang and Sellke \cite{HuangSellke2023}
about the complexity of critical points of such models will be important to us. 
\begin{lem}[\cite{HuangSellke2023}]\label{lem:strict1RSB}
	If $\xi$ is a mixture such that $\mathcal{S}_{P,\xi,\infty}=\{0,1\}$ and $\Es(1)$ and $\hRs(1)$ are defined by \eqref{eq:GS}, then $$\Theta_{\xi}(\Es(1),\hRs(1))=0.$$
\end{lem}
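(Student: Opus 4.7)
My plan is to reduce the claim, via the zero-temperature Parisi formula, to an algebraic identity forced by the Chen--Sen optimality conditions that precede Corollary \ref{cor:Parisidistinfty}. First, since the minimizing pair $(\alpha,c)$ in \eqref{eq:ZTParisi} must satisfy $\mathrm{supp}(\eta_\alpha)\subset\mathcal{S}_{P,\xi,\infty}\cap[0,1)=\{0\}$, the function $\alpha$ is forced to be a constant $\alpha(t)\equiv a$ on $[0,1)$ for some $a\geq 0$, so only two real parameters $(a,c)$ describe the minimizer. Computing the elementary integrals $\int_0^t(a(1-s)+c)^{-2}\,ds$ and $\int_0^1(a(1-s)+c)^{-1}\,ds$, the optimality conditions $\Psi_{\alpha,c}(1)=0$ and $\psi_{\alpha,c}(0)=0$ reduce to the pair of identities
\begin{equation*}
\xi'(1)\,c(a+c)=1,\qquad a^2\xi(1)=\log\Big(\tfrac{a+c}{c}\Big)-\tfrac{a}{a+c}.
\end{equation*}

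Next, I would use \eqref{eq:hRTformula} at $q=1$, where $\hat\xi_1=\xi$ and $(\alpha_1,c_1)=(\alpha,c)$. Combined with the identity $\int_0^1(s\xi''(s)+\xi'(s))\,ds=\xi'(1)$, this yields the explicit expressions
\begin{equation*}
\hRs(1)=c\,\xi''(1)+(a+c)\,\xi'(1),\qquad 2\Es(1)=\xi'(1)(a+c)-a\!\int_0^1 t\xi''(t)\,dt+\tfrac{1}{a}\log\Big(\tfrac{a+c}{c}\Big).
\end{equation*}
From AM--GM applied to the two summands of $\hRs(1)$ together with $c(a+c)\xi'(1)=1$, one obtains $\hRs(1)\geq 2\sqrt{c(a+c)\xi'(1)\xi''(1)}=2\sqrt{\xi''(1)}$, so $\hRs(1)/\sqrt{\xi''(1)}\geq 2$ and the nontrivial logarithmic branch of $\Omega$ in \eqref{eq:Omega} is the one in play.

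Finally, I would substitute the two displays above into the definition \eqref{eq:Theta} of $\Theta_\xi(\Es(1),\hRs(1))$. The quadratic form $(\Es(1),\hRs(1))\,\Sigma_\xi^{-1}(\Es(1),\hRs(1))^{\top}$ expands (with determinant $\det\Sigma_\xi=\xi'(1)\xi''(1)$) into a combination of rational and logarithmic functions of $(a,c)$ and the triple $(\xi(1),\xi'(1),\xi''(1))$. Combined with the closed form of $\Omega$ at a point outside $[-2,2]$, one checks that the two Euler--Lagrange identities above are precisely what is needed for the $\log$ and rational pieces to cancel against the constant $\tfrac{1}{2}+\tfrac{1}{2}\log(\xi''(1)/\xi'(1))$, leaving $\Theta_\xi(\Es(1),\hRs(1))=0$.

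The main obstacle is the algebraic matching in the last step: the cancellation is mechanical but sensitive to how terms regroup, and one must track signs carefully so that the branch of $\Omega$ selected by $\hRs(1)/\sqrt{\xi''(1)}\geq 2$ is indeed consistent with the quadratic form expansion. A useful intermediate observation is that $\Es(1)$ can be rewritten using the first identity $c(a+c)\xi'(1)=1$ so as to eliminate the $\tfrac{1}{a}\log(\frac{a+c}{c})$ term against a term coming from $(\Es,\hRs)\Sigma_\xi^{-1}(\Es,\hRs)^\top$; once this rearrangement is made, the remaining equation is an algebraic identity in $(a,c,\xi'(1),\xi''(1))$ enforced by $c(a+c)\xi'(1)=1$ alone.
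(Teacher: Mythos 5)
Your approach is genuinely different from the paper's. The paper proves this lemma purely by citation: \cite[Lemma~3.18]{HuangSellke2023} gives $\Theta_\xi(E_0,R_0)=0$ for the $(E_0,R_0)$ of \cite[(3.6)]{HuangSellke2023}, and the only work done is to unwind \cite[Lemma~3.5]{HuangSellke2023} and \eqref{eq:hRTformula} to check that $(E_0,R_0)=(\Es(1),\hRs(1))$. You instead re-derive the vanishing of $\Theta_\xi$ directly from the Chen--Sen optimality conditions. That route does work, and it is cleaner in some respects: once one records $\xi'(1)c(a+c)=1$, $a^2\xi(1)=\log\frac{a+c}{c}-\frac{a}{a+c}$, one finds the compact expressions $\Es(1)=\frac1a\log\frac{a+c}{c}$, $\hRs(1)=\frac1c+c\,\xi''(1)$, and (writing $z:=c\sqrt{\xi''(1)}$, $\ell:=a\Es(1)=\log\frac{a+c}{c}$) the quadratic form collapses to $(\Es(1),\hRs(1))\Sigma_\xi^{-1}(\Es(1),\hRs(1))^{\top}=1+\ell+z^{2}$; together with $\Omega(z+1/z)=\tfrac{z^2}{2}-\log z$ for $z\le 1$, the four pieces of $\Theta_\xi$ cancel identically. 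So the mechanism you describe is real.

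That said, there are concrete problems. First, $\det\Sigma_\xi=\xi'(1)\xi''(1)$ is false: from \eqref{eq:Sigmaxi} one has $\det\Sigma_\xi=\xi(1)\bigl(\xi''(1)+\xi'(1)\bigr)-\xi'(1)^2$, which is not $\xi'(1)\xi''(1)$ even under the two Euler--Lagrange identities (for pure mixtures it vanishes, while $\xi'(1)\xi''(1)$ does not). Since this is the only computation in your last step that you actually display, it strongly suggests the final cancellation was not carried out. Second, your AM--GM argument gives $\hRs(1)/\sqrt{\xi''(1)}\ge 2$ but does not decide the sign of $c^2\xi''(1)-1$, and this matters: the closed form of $\Omega$ at $t=z+1/z$ is $\tfrac{z^2}{2}-\log z$ when $z\le 1$ but $\tfrac{1}{2z^2}+\log z$ when $z\ge 1$, and only the first choice produces the cancellation (with the second choice one gets $\Theta_\xi=2\log z+\tfrac{1}{2z^2}-\tfrac{z^2}{2}<0$ for $z>1$). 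You therefore need the additional observation that strict $1$-RSB forces $c^2\xi''(1)\le 1$; this follows from $\psi_{\alpha,c}(1)=\psi'_{\alpha,c}(1)=0$ and $\psi_{\alpha,c}\ge 0$, which gives $\psi''_{\alpha,c}(1)=\tfrac1{c^2}-\xi''(1)\ge 0$, but you do not mention it. Finally, be aware that \eqref{eq:Omega} as printed has the coefficient $\tfrac{|t|}{2}$ in front of $\sqrt{t^2-4}$, whereas the correct coefficient (the one consistent with $\Omega'(t)=\tfrac{t-\sqrt{t^2-4}}{2}$ for $t>2$ and with $\Omega(t)\sim\log t$ as $t\to\infty$) is $\tfrac{|t|}{4}$; any explicit verification must use the corrected formula, or the claimed cancellation fails.
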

\begin{proof}
	Assuming $\mathcal{S}_{P,\xi,\infty}=\{0,1\}$,  it is shown in \cite[Lemma 3.18]{HuangSellke2023} 
	that $\Theta_{\xi}(E_0,R_0)=0$ for $(E_0,R_0)$ of \cite[(3.6)]{HuangSellke2023}.  
	Thus,  it suffices to verify that 
	\begin{equation}
		\label{eq:0starequivalence}
		(E_0,R_0)=(\Es(1),\hRs(1)).
	\end{equation} 
	Indeed,  by \cite[Lemma 3.5]{HuangSellke2023},  $E_0=\Es(1)$ and 
	$L,u>0$ of that lemma are such that
	the minimizer of the zero-temperature Parisi formula \eqref{eq:ZTParisi} for $\xi$ is 
	$\alpha(t)\equiv u$ and $c=L-u$.  In the proof of \cite[Lemma 3.5]{HuangSellke2023} it is further 
	shown that $\xi'(1)=(L(L-u))^{-1}$.  We thus get from \cite[(3.6)]{HuangSellke2023} that
	$R_0=(c+u)\xi'(1)+c\xi''(1)$.  In comparison,  
	for $\alpha(t)\equiv u$ we get from \eqref{eq:hRTformula} (at $q=1$),  that also $\hRs(1)=(c+u)\xi'(1)+c\xi''(1)$.
\end{proof}
Relying on \eqref{eq:0starequivalence},  we next re-state \cite[Proposition 3.14]{HuangSellke2023}
in our notation,  while writing $\Crt_{N,\xi}(E,R)$ for $\Crt_{N,\xi}(\{E\},\{R\})$ with singletons.
\begin{prop}[\cite{HuangSellke2023}]\label{prop:HS2}
	Assume that $\mathcal{S}_{P,\xi,\infty}=\{0,1\}$.
	For any sufficiently small $\eta>0$ there exist some $a(\eta),b(\eta)>0$ such that for any $a\leq a(\eta)$, $b\leq b(\eta)$ and $\bs\in\SN$,
	\[
	\lim_{N\to\infty}\sup_{\substack{|E-\Es(1)|\leq a\\ |R-\hRs(1)|\leq a}}\frac1N\log \bigg(
	\P\bigg[
	\frac1N\sup_{\substack{\bx\in\SN:\\ \langle \bx,\bs\rangle=(1-\eta) N}} \{ H_N(\bx) \} \geq \Es(1)-b\,\bigg|\,\bs\in \Crt_{N,\xi}(E,R)
	\bigg]
	\bigg)<0.
	\]
\end{prop}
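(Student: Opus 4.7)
My plan is to analyze the conditional Gaussian field $H_N$ restricted to the spherical band
$$
S_\eta(\bs) := \{\bx \in \SN :\, \langle \bx,\bs\rangle = (1-\eta)N\}
$$
given the Palm-type conditioning $\bs \in \Crt_{N,\xi}(E,R)$, and then combine the asymptotic value of the conditional expected supremum with Borell-TIS concentration to obtain the required exponential probability bound. The target is to show that under this conditioning, the typical value of $\frac{1}{N}\sup_{S_\eta(\bs)} H_N$ lies strictly below $\Es(1) - b$ when $\eta,a,b$ are all small.

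Decomposing $\bx \in S_\eta(\bs)$ as $\bx = (1-\eta)\bs + \by$ with $\by \perp \bs$ and $\|\by\|^2 = N(2\eta - \eta^2)$, jointly Gaussian formulas analogous to those in Section~\ref{subsec:FPpotential} (cf.\ \eqref{eq:covcond}--\eqref{def:xi-r-rho}) yield a decomposition $H_N(\bx) = m_N(\bx) + \tilde H_N(\bx)$, where $m_N$ is deterministic and depends only on $\langle\bx,\bs\rangle/N$ and $(E,R)$, while $\tilde H_N$ restricted to $S_\eta(\bs)$ is (up to a constant additive Gaussian) a centered spherical spin glass on the $(N{-}2)$-dimensional sphere of radius $\sqrt{N(2\eta-\eta^2)}$, with an explicit mixture $\xi^{(\eta)}$ computable from $\xi$. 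Applying the zero-temperature Parisi formula \eqref{eq:ZTParisi} to this restricted Gaussian model, together with concentration of the ground state, one finds that $\frac{1}{N}\sup_{S_\eta(\bs)} H_N$ converges in probability to a deterministic limit $L(\eta;E,R)$ equal to $\sup_{S_\eta(\bs)} m_N/N$ plus the (appropriately scaled) ground state energy of the restricted model, and that Borell-TIS provides exponential concentration around this limit.

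The main obstacle is to show that $L(\eta;\Es(1),\hRs(1)) < \Es(1)$ for all small $\eta > 0$: continuity in $(E,R)$ then allows one to choose $a(\eta), b(\eta) > 0$ so that $L(\eta;E,R) < \Es(1) - b$ uniformly on $|E-\Es(1)|,|R-\hRs(1)| \leq a(\eta)$ and $b \leq b(\eta)$, and Borell-TIS concludes. At $\eta = 0$ the band degenerates to $\{\bs\}$ and $L(0;E,R) = E = \Es(1)$, so what is needed is strict decrease on a right neighborhood of $0$. This is exactly where the strict 1-RSB hypothesis $\mathcal{S}_{P,\xi,\infty} = \{0,1\}$ and Lemma~\ref{lem:strict1RSB} (giving $\Theta_\xi(\Es(1),\hRs(1)) = 0$) come together. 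A two-replica Kac-Rice analysis expresses the expected number of pairs of critical points at overlap $1-\eta$ both having energy/radial-derivative near $(\Es(1),\hRs(1))$ as $\exp(N\Theta^{(2)}(\eta)+o(N))$ for a pair-complexity rate $\Theta^{(2)}$; the strict 1-RSB condition, by forbidding any intermediate value in the support of the two-replica Parisi order parameter at the ground state, forces $\Theta^{(2)}(\eta) < 2\Theta_\xi(\Es(1),\hRs(1)) = 0$ for $\eta \in (0,\eta_0)$. Dividing by the vanishing single-point rate via Palm inversion translates this strict negativity into $L(\eta;\Es(1),\hRs(1)) < \Es(1)$ on this range. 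The technical core, namely the rigorous two-replica variational calculation and the extraction of the exclusion of intermediate overlaps from $\mathcal{S}_{P,\xi,\infty} = \{0,1\}$, is carried out in \cite{HuangSellke2023}.
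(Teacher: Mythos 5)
The paper does not prove this proposition: it is a direct re-statement of \cite[Proposition 3.14]{HuangSellke2023}, and the only work done in the paper is verifying the notational translation $(E_0,R_0)=(\Es(1),\hRs(1))$ in Lemma~\ref{lem:strict1RSB}. Your proposal instead attempts to sketch a proof, which is a different (and more ambitious) task, but you ultimately defer the ``technical core'' to the same reference, so at best this is a heuristic reconstruction.

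That said, there is a genuine gap in the reconstruction. You argue that the band-supremum concentration level $L(\eta;\Es(1),\hRs(1))$ drops strictly below $\Es(1)$ because a two-replica Kac--Rice rate $\Theta^{(2)}(\eta)$ for \emph{pairs of critical points of the full model on $\SN$} at overlap $1-\eta$ is strictly negative after subtracting the single-point rate. But the random variable in the statement is $\sup_{\bx\in S_\eta(\bs)} H_N(\bx)$, and its maximizer is a critical point of the \emph{restriction} of $H_N$ to the $(N-2)$-dimensional band $S_\eta(\bs)$; the component of $\nabla H_N$ normal to the band need not vanish there, so the maximizer is generically \emph{not} a critical point of $H_N$ on $\SN$. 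Consequently, showing that the full-model pair complexity $\Theta^{(2)}(\eta)-\Theta_\xi(\Es(1),\hRs(1))<0$ controls the conditional number of second full-model critical points near the ground state at overlap $1-\eta$ but does not bound the band supremum. To get what is actually needed, one must either (i) count critical points of the \emph{restricted} (band) Hamiltonian via a mixed Kac--Rice formula, with a correspondingly different rate function, or (ii) directly evaluate the conditional ground state on the band via the zero-temperature Parisi formula for the conditional restricted model (this is the approach suggested by the decomposition $H_N=m_N+\tilde H_N$ that you set up in your second paragraph, and you should then show that the resulting deterministic maximum of $m_N$ plus the restricted ground state energy is strictly below $\Es(1)$ for small $\eta$, using the strict 1-RSB structure). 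Your plan conflates the full-model pair complexity with the quantity that actually controls the band supremum, and this logical step would not go through as written.

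A smaller issue: after establishing $L(\eta;\Es(1),\hRs(1))<\Es(1)$ and invoking Borell--TIS for the conditional (still Gaussian) field, you also need the conditional mean $\E[\sup_{S_\eta(\bs)} H_N\,|\,\bs\in\Crt_{N,\xi}(E,R)]/N$ and the concentration level to be jointly continuous in $(E,R)$ uniformly in $N$ to justify the $\sup$ over $|E-\Es(1)|,|R-\hRs(1)|\le a$ in the statement; this continuity is plausible and you assert it, but it should be stated as a separate step rather than absorbed into ``continuity in $(E,R)$ then allows one to choose $a(\eta),b(\eta)$.''
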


We observe that the gradient, and thus the radial derivative, are roughly determined at approximate ground states in the following sense. 

\begin{lem}
	\label{lem:rder}Let $q\in(0,1)$ and define
	\[
	L_{N}(q,\epsilon):=\left\{ \bx\in\sqrt{q}\SN:\,\Big|\frac{1}{N}H_{N}(\bx)-\Es(q)\Big|<\epsilon\right\} .
	\]
	Then, for any fixed $\epsilon',\tau>0$, for sufficiently small $\epsilon>0$,
	\begin{align}\label{eq:ddRatGS}
	\varlimsup_{N\to\infty}\frac1N\log\bigg(\P\bigg[\sup_{\bx\in L_{N}(q,\epsilon)}\Big|\frac{1}{\|\bx\|^2}\partial_{\bx}H_{N}(\bx)-\hRs(q)\Big|>\epsilon'\bigg]\bigg)&<0,\\
	\label{eq:gradatGS}
	\varlimsup_{N\to\infty}\frac1N\log\bigg(\P\bigg[\sup_{\bx\in L_{N}(q,\epsilon)} \sup_{\by:\,\by\perp\bx}\Big|\frac{1}{\sqrt{N}\|\by\|}\partial_{\by}H_{N}(\bx)\Big|>\tau\bigg]\bigg)&<0.
	\end{align}
\end{lem}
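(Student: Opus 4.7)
The plan is to exploit the calculus identity $\hRs(q) = 2\Es'(q)$ together with Borell-TIS concentration of the ground-state energy across nearby radii: probing the radial derivative via a radial perturbation and the tangential gradient via a sphere-preserving perturbation. The unifying mechanism is that $\bx \in L_N(q,\epsilon)$ forces $\frac{1}{N}H_N(\bx) \geq \Es(q) - \epsilon$, while for any small perturbation $\bx'$ of $\bx$ one has an envelope $\frac{1}{N}H_N(\bx') \leq \Es(\|\bx'\|^2/N) + \eta$ valid uniformly with exponentially good probability. Matching the first-order Taylor expansion of $H_N$ against this envelope pins the relevant directional derivative to a narrow range.

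For \eqref{eq:ddRatGS}, I would set $\bx_t := \sqrt{1+t/q}\, \bx \in \sqrt{q+t}\,\SN$ for $\bx \in L_N(q,\epsilon)$ and $|t|$ small. Decomposing $H_N = \sum_p H_N^{(p)}$ into $p$-homogeneous pieces and using Euler's identity, $\sum_p p H_N^{(p)}(\bx) = \partial_\bx H_N(\bx) = Nq\,\rho(\bx)$ for $\rho(\bx):=\partial_\bx H_N(\bx)/\|\bx\|^2$, so a Taylor expansion in the scaling parameter $c=\sqrt{1+t/q}$ yields
\[
\tfrac{1}{N} H_N(\bx_t) = \tfrac{1}{N} H_N(\bx) + \tfrac{t}{2}\, \rho(\bx) + t^2\, r_N(\bx),
\]
with $\sup_{\bx \in \sqrt{q}\SN} |r_N(\bx)|$ bounded by the normalized extremum of an auxiliary Gaussian field $\sum_p p(p-1) c^{p-2} H_N^{(p)}(\bx)$, whose normalized variance is uniformly bounded thanks to $\xi(1+\delta) < \infty$; Borell-TIS then supplies exponential concentration for this sup. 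Combining with Borell-TIS at radii $q$ and $q+t$, and using smoothness of $\Es$ to write $\Es(q+t) = \Es(q) + (t/2)\hRs(q) + O(t^2)$, I would conclude $\rho(\bx) \leq \hRs(q) + 2\epsilon/t + O(t)$ for $t > 0$ small, with the reverse bound from $t < 0$. Choosing $t$ first and then $\epsilon$ small enough delivers the claim uniformly in $\bx \in L_N(q,\epsilon)$.

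For \eqref{eq:gradatGS}, fix $\bx \in L_N(q,\epsilon)$ and a unit $\bu \perp \bx$, and set $\bx(\theta) := \cos\theta\, \bx + \sqrt{Nq}\sin\theta\, \bu \in \sqrt{q}\SN$. An analogous homogeneity expansion gives
\[
\tfrac{1}{N} H_N(\bx(\theta)) = \tfrac{1}{N} H_N(\bx) + \theta \sqrt{q/N}\, \partial_\bu H_N(\bx) + \theta^2\, \tilde r_N(\bx, \bu),
\]
where $\sup_{\bx, \bu} |\tilde r_N(\bx,\bu)|$ is controlled via $\sup_{\bx} |\partial_\bx H_N(\bx)|/N$ and $\sup_{\bx} \|\nabla^2 H_N(\bx)\|_{\mathrm{op}}$, both $O(1)$ with exponentially small failure probability (and the operator-norm bound is automatically uniform in $\bu \perp \bx$). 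Since $\bx(\theta)$ stays on $\sqrt{q}\SN$, Borell-TIS at radius $q$ gives $\frac{1}{N} H_N(\bx(\theta)) \leq \Es(q) + \eta$, whence $\theta \sqrt{q/N}\, \partial_\bu H_N(\bx) \leq \epsilon + \eta + C\theta^2$; taking both signs of $\theta$ and then shrinking $\theta,\epsilon,\eta$ yields $|\partial_\bu H_N(\bx)|/\sqrt{N} < \tau$ uniformly in $\bx \in L_N(q,\epsilon)$ and unit $\bu \perp \bx$. The main technical hurdle in both parts is bounding the second-order Taylor remainders uniformly on the sphere with exponentially small failure probability, which is handled via the homogeneous decomposition of $H_N$ and Borell-TIS for the resulting auxiliary Gaussian fields.
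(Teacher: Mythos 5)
Your proof is correct and rests on the same mechanism as the paper's: Taylor-expand $H_N$ along a radial (resp.\ tangential) perturbation of $\bx\in L_N(q,\epsilon)$, compare against the Borell--TIS envelope $\frac1N\sup_{\sqrt{q'}\SN}H_N\leq\Es(q')+o(1)$, and extract the derivative bound from $\hRs=2\Es'$. The one genuine technical difference is in part \eqref{eq:ddRatGS}, in how the quadratic Taylor remainder is controlled: the paper quotes the a priori high-probability Lipschitz bound on $\nabla H_N$ from \cite[Corollary C.2]{geometryMixed}, giving a deterministic $\tfrac12 Ct^2$ error on an event of probability $1-e^{-cN}$, whereas you bound the remainder $\bx^{\top}\nabla^2 H_N(c_\ast\bx)\bx$ by identifying it (via Euler's identity) with the auxiliary field $\sum_p p(p-1)c_\ast^{p-2}H_N^{(p)}(\bx)$ and applying Borell--TIS to its supremum over the sphere; this is a self-contained alternative to invoking the gradient-Lipschitz lemma and works because $\xi(1+\delta)<\infty$ bounds its variance. (For part \eqref{eq:gradatGS} you then revert to the operator-norm bound on $\nabla^2 H_N$, as the paper does.) Your geodesic curve $\bx(\theta)=\cos\theta\,\bx+\sqrt{Nq}\sin\theta\,\bu$ is a slightly cleaner realization of the paper's sketch of ``move along $\by$ and project back to $\sqrt q\,\SN$.'' One small imprecision worth fixing: writing $\Es(q+t)=\Es(q)+\tfrac t2\hRs(q)+O(t^2)$ tacitly assumes $\Es\in C^2$; the paper is more careful and uses only differentiability of $\Es$, via the one-sided inequality $\Es(q_t)\leq\Es(q)+t\sqrt q\,(\hRs(q)+\epsilon'/2)$ for all small $t>0$, and you can and should do the same.
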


\begin{rem} We follow the usual convention that the supremum over an empty set is $-\infty$ (but 
this does not really matter,  since by the Borell-TIS inequality, 
$\P(L_{N}(q,\epsilon)=\varnothing) \le e^{-c N}$ for some $c=c(\epsilon)>0$).
\end{rem}

\begin{proof} Fix $\epsilon'>0$ and assume towards contradiction that for any $\epsilon>0$, 
	\begin{align}\label{eq:ddRatGS2}
		\varlimsup_{N\to\infty}\frac1N\log\bigg(\P\bigg[\sup_{\bx\in L_{N}(q,\epsilon)} \frac{1}{\|\bx\|^2}\partial_{\bx}H_{N}(\bx)-\hRs(q) >\epsilon'\bigg]\bigg)=0.
	\end{align}
	By
	\cite[Corollary C.2]{geometryMixed},
	for some constants $C=C(\xi)$ and $c=c(\xi)$, denoting $\BN:=\{\bx:\,\|\bx\|\leq\sqrt{N}\}$,
	\begin{equation}
		\P\Big(\forall\bx,\by\in\BN:\,\|\nabla H_{N}(\bx)-\nabla H_{N}(\by)\|_{{\rm op}}<C\|\bx-\by\|\Big)>1-e^{-cN}.\label{eq:Lip_grad}
	\end{equation}
	Let $t>0$ be a small number to be determined below.  Define $q_{t}:=(\sqrt q+t)^2$. Recall the definition \eqref{eq:GS} of $\Es(q)$.  By the Borell-TIS inequality, for any $\delta>0$, for some $c'=c'(\xi,\delta)>0$,
	\begin{equation}
		\P\left(\max_{\bx\in\sqrt{q}\SN}\frac{1}{N}H_{N}\Big(\bx+t\sqrt{N}\frac{\bx}{\|\bx\|}\Big)=\max_{\bx\in\sqrt{q_{t}}\SN}\frac{1}{N}H_{N} (\bx)<\Es(q_{t})+\delta
		\right)\ge 1-e^{-c'N}.\label{eq:Esqepsilon}
	\end{equation}

On some subsequence of $N\to\infty$,  
with positive probability
the three events in \eqref{eq:ddRatGS2}-\eqref{eq:Esqepsilon} occur simultaneously.
From  \eqref{eq:Lip_grad} we have that \abbr{whp} for all $\bx \in \BN$, 
	\begin{equation}\label{eq:TaylorApxDerivative}\begin{aligned}
			H_{N}\Big(\bx&+t\sqrt{N}\frac{\bx}{\|\bx\|}\Big) = 
			H_{N}(\bx)+\int_0^{t\sqrt{N}} \partial_{\bx/\|\bx\|} H_{N}\Big(\bx+s\frac{\bx}{\|\bx\|}\Big) ds \\
			&=H_{N}(\bx)+t\sqrt{N} \partial_{\bx/\|\bx\|} H_{N}(\bx)  + \int_0^{t\sqrt{N}} \bigg[\int_0^{s}  \partial_{\bx/\|\bx\|}^2 H_{N}\Big(\bx+w\frac{\bx}{\|\bx\|}\Big) dw\bigg]ds \\
			&
			\ge  H_{N}(\bx)+
			t\sqrt{q}N\frac{1}{\|\bx\|^2}\partial_{\bx}H_{N}(\bx)- \frac{N}{2}  C t^2
		\end{aligned}
	\end{equation}
where we used that $t\sqrt{N}\partial_{\bx/\|\bx\|}H_{N}(\bx)=t\sqrt{q}N\frac{1}{\|\bx\|^2}\partial_{\bx}H_{N}(\bx)$.
Thus,  when the events in \eqref{eq:ddRatGS2}-\eqref{eq:Lip_grad} occur
there exists a point $\bx\in L_{N}(q,\epsilon)$ such that
	\begin{equation}\label{eq:HqtLB}
	\frac{1}{N}H_{N}\Big(\bx+t\sqrt{N}\frac{\bx}{\|\bx\|}\Big)\geq 
	\Es(q)-\epsilon
	+t\sqrt{q}\big(\hRs(q)+\epsilon'\big)-\frac{1}{2}Ct^{2}\,.
	\end{equation}
	
Since $\frac{d}{dt}q_{t}|_{t=0}=2\sqrt{q}$ and $\hRs(q):=2\frac{d}{dq}\Es(q)$,
	for all small enough $t>0$,
	\[
	\Es(q_{t})\leq\Es(q)+t\sqrt{q}\big(\hRs(q)+\epsilon'/2\big).		
	\]
	
	Combining  the last two inequalities and canceling like terms, the point as in \eqref{eq:HqtLB} satisfies
		\begin{align*}
		\frac{1}{N}H_{N}\Big(\bx+t\sqrt{N}\frac{\bx}{\|\bx\|}\Big)&\ge
		\Es(q_t)+t\sqrt{q}\epsilon'/2-\epsilon
		-\frac{1}{2}Ct^{2} \ge 
		\Es(q_t)+\delta,
	\end{align*}
for $\epsilon'$ fixed and $\epsilon,\delta,t$ appropriately chosen, (first taking $t$ sufficiently small so that $t\sqrt{q}\epsilon'/2
-\frac{1}{2}Ct^{2}>0$ and then $\epsilon,\delta$ small enough)
	in contradiction to the event in \eqref{eq:Esqepsilon}. It follows that, for some $\epsilon>0$, \eqref{eq:ddRatGS2} does not hold. A similar argument proves the same bound for the opposite inequality (with $-\epsilon'$) and \eqref{eq:ddRatGS} follows.

%
	
The bound of \eqref{eq:gradatGS} can be proved by a similar idea. Fix $\tau>0$ and assume towards contradiction that for any $\epsilon>0$,  
	\begin{align}\label{eq:ddyatGS}
		\varlimsup_{N\to\infty}\frac1N\log\bigg(\P\bigg[\sup_{\bx\in L_{N}(q,\epsilon)} \sup_{\by:\,\by\perp\bx}\Big|\frac{1}{\sqrt{N}\|\by\|}\partial_{\by}H_{N}(\bx)\Big|>\tau\bigg]\bigg)=0.
	\end{align}
Similarly to \eqref{eq:TaylorApxDerivative}, from  \eqref{eq:Lip_grad} we have that \abbr{whp} for all $\bx \in \BN$ and $\by\perp\bx$, 
	\begin{equation*}
		H_{N}\Big(\bx+t\sqrt{N}\frac{\by}{\|\by\|}\Big) 
		\ge  H_{N}(\bx)+
		t\sqrt{N} \partial_{\by/\|\by\|} H_{N}(\bx)- \frac{N}{2}  C t^2\,.
\end{equation*}
From \eqref{eq:ddyatGS}, \abbr{whp} for some $\bx\in L_{N}(q,\epsilon)$ and $\by\perp\bx$,
\begin{equation*}
	\frac1NH_{N}\Big(\bx+t\sqrt{N}\frac{\by}{\|\by\|}\Big) 
	\ge  \Es(q)-\epsilon+
	t\tau - \frac{1}{2}  C t^2\geq \Es(q+t^2)+2\delta-\epsilon\geq \Es(q+t^2)+\delta\,,
\end{equation*}
where the second inequality follows for sufficiently small $t$ and $\delta$ and the third for small $\epsilon$. Since $\bx+t\sqrt{N}\by/\|\by\|\in \sqrt{q+t^2}\SN$, we arrive at a contradiction and \eqref{eq:gradatGS} follows.
\end{proof}

Similarly to $\Cs_{m}(\epsilon)$ of \eqref{eq:xi_condition2},  let
\begin{equation}
	\label{eq:xi_condition_hat}
	\begin{aligned}
		\hat \Cs_{m}(\epsilon)
		:=\Big\{\vec \bx \in \AA_m:\,&\exists (\vec E, \vec R)\in \bar V(\epsilon)\mbox{ such that }\forall i\leq m \\
	&\Big(\frac{1}{N}H_{N}(\bx_i),\,\frac{\partial_{\bx_i}H_{N}(\bx_i)}{\|\bx_i\|^2},\,\gradt H_{N}(\bx_i)\Big)=\big(E_i,R_i,0\big) \Big\}
	\end{aligned}
\end{equation}
and
\begin{equation}
\label{eq:C0meps}
\begin{aligned}
\Cs^0_{m}(\epsilon)
		:=\Big\{\vec \bx \in \AA_m:\,&\exists E_i\in [\Es(q_i)-\epsilon, \Es(q_i)+\epsilon]  \mbox{ such that }\forall i\leq m \\
& \Big(\frac{1}{N}H_{N}(\bx_i),\,\gradt H_{N}(\bx_i)\Big)=\big(E_i,0\big) \Big\}\,.
\end{aligned}
\end{equation} 
\begin{cor}\label{cor:CshCs}
	For any $1 \le m\leq k$ and fixed  $\epsilon'>0$, for sufficiently small $\epsilon>0$,
	\[
	\varlimsup_{N\to\infty} \frac{1}{N}\log\bigg(1-\P\Big[
	 \Cs_{m}(\epsilon)\cup  \hat \Cs_{m}(\epsilon)\subset \Cs^0_{m}(\epsilon) \subset \Cs_{m}(\epsilon')\cap  \hat \Cs_{m}(\epsilon')
	\Big]\bigg)<0.
	\]
\end{cor}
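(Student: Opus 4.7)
The plan is to handle the two inclusions separately. The inclusion $\Cs_m(\epsilon)\cup\hat\Cs_m(\epsilon)\subset\Cs^0_m(\epsilon)$ holds deterministically: both $\Cs_m(\epsilon)$ and $\hat\Cs_m(\epsilon)$ impose the energy conditions $|\tfrac{1}{N}H_N(\bx_i)-\Es(q_i)|<\epsilon$ and the tangential-gradient condition $\gradt H_N(\bx_i)=0$ that define $\Cs^0_m(\epsilon)$, together with an additional constraint on a radial derivative. The substantive part is the reverse inclusion $\Cs^0_m(\epsilon)\subset\Cs_m(\epsilon')\cap\hat\Cs_m(\epsilon')$, which I will derive from Lemma \ref{lem:rder}.

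Given $\epsilon'>0$, I will fix a small auxiliary parameter $\tau>0$ to be chosen at the end, and apply Lemma \ref{lem:rder} at each level $q=q_1,\ldots,q_m$ with tolerances $\epsilon'/2$ and $\tau$. A union bound over the finitely many levels produces an event $\mathcal{A}_N$ of probability $\geq 1-e^{-cN}$ on which, provided $\epsilon$ is small enough, for every $i\le m$ and every $\bx\in L_N(q_i,\epsilon)$,
\begin{equation}\label{eq:proposal-lem}
\Big|\frac{\partial_\bx H_N(\bx)}{\|\bx\|^2}-\hRs(q_i)\Big|<\frac{\epsilon'}{2},\qquad \sup_{\by\perp\bx}\frac{|\partial_\by H_N(\bx)|}{\|\by\|}\le\tau\sqrt{N}.
\end{equation}
For any $\vec\bx\in\Cs^0_m(\epsilon)$ on $\mathcal{A}_N$, each $\bx_i$ lies in $L_N(q_i,\epsilon)$, and the first bound of \eqref{eq:proposal-lem} immediately yields $\vec\bx\in\hat\Cs_m(\epsilon')$.

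For membership in $\Cs_m(\epsilon')$, I will use $\gradt H_N(\bx_i)=0$ to write $\nabla H_N(\bx_i)=\sum_{j\le i}a^{(i)}_j\bx_j$. A short calculation from $\langle\bx_j,\bx_l\rangle=Nq_{j\wedge l}$ shows that $\langle\bx_j,\bx_i-\bx_{i-1}\rangle$ vanishes for $j<i$ and equals $\|\bx_i-\bx_{i-1}\|^2$ for $j=i$, whence $\partial_{\bx_i-\bx_{i-1}}H_N(\bx_i)/\|\bx_i-\bx_{i-1}\|^2=a_i^{(i)}$, reducing the goal to $|a_i^{(i)}-\hRs(q_i)|<\epsilon'$. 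Decomposing $\nabla H_N(\bx_i)$ into its radial and tangential components along $\bx_i$, the tangential part is $\bn_i:=\sum_{j<i}a_j^{(i)}(\bx_j-(q_j/q_i)\bx_i)$, and since $\langle\nabla H_N(\bx_i),\bn_i\rangle=\|\bn_i\|^2$, the second bound of \eqref{eq:proposal-lem} forces $\|\bn_i\|\le\tau\sqrt{N}$. Under \eqref{eq:kRSB} the $q_j$ are strictly increasing, so the $(i-1)\times(i-1)$ Gram matrix with entries $q_{j\wedge l}-q_jq_l/q_i$ is positive definite with constants depending only on $\vec q$, giving $|a_j^{(i)}|\le C(\vec q)\tau$ for all $j<i$. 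Substituting into $\partial_{\bx_i}H_N(\bx_i)/\|\bx_i\|^2=\sum_{j\le i}a_j^{(i)}q_j/q_i$ and invoking the first bound of \eqref{eq:proposal-lem}, I get $|a_i^{(i)}-\hRs(q_i)|\le\epsilon'/2+C'(\vec q)\tau$, which is $<\epsilon'$ upon taking $\tau$ small enough. The main technical obstacle is precisely this conversion between the ``full-radial'' derivative controlled by Lemma \ref{lem:rder} and the ``incremental-radial'' derivative appearing in the definition of $\Cs_m$, but the finite-dimensional linear system is well-conditioned under \eqref{eq:kRSB}, making the step routine.
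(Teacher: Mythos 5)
Your proof is correct, and for the first inclusion and the $\hat\Cs_m$ part it matches the paper. For the key inclusion $\Cs^0_m(\epsilon)\subset\Cs_m(\epsilon')$, however, you take a genuinely different route. You decompose the \emph{gradient}: using $\gradt H_N(\bx_i)=0$ you write $\nabla H_N(\bx_i)=\sum_{j\le i}a_j^{(i)}\bx_j$, observe that the incremental-radial derivative equals $a_i^{(i)}$ and the full-radial derivative equals $\sum_{j\le i}a_j^{(i)}q_j/q_i$, then control the off-diagonal coefficients $a_j^{(i)}$, $j<i$, by projecting onto $\bx_i^\perp$, invoking \eqref{eq:gradatGS}, and inverting a well-conditioned Gram matrix. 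The paper instead decomposes the \emph{direction vector} $\bx_i-\bx_{i-1}=\bz_i+\by_i$ into its components parallel and perpendicular to $\bx_i$; this yields the deterministic one-line identity
\[
\frac{\partial_{\bx_i-\bx_{i-1}}H_N(\bx_i)}{\|\bx_i-\bx_{i-1}\|^2}=\frac{\partial_{\bx_i}H_N(\bx_i)}{\|\bx_i\|^2}+\sqrt{\frac{q_{i-1}}{q_i(q_i-q_{i-1})}}\frac{\partial_{\by_i}H_N(\bx_i)}{\sqrt N\,\|\by_i\|},
\]
from which both terms are immediately controlled by \eqref{eq:ddRatGS} and \eqref{eq:gradatGS}, with no use of $\gradt H_N(\bx_i)=0$ and no linear algebra. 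The two arguments lean on precisely the same Lemma \ref{lem:rder}, but the paper's identity replaces your Gram-matrix inversion with a single geometric projection of $\bx_i-\bx_{i-1}$; what your route buys in return is the explicit (and potentially reusable) expansion $\nabla H_N(\bx_i)=\sum_{j\le i}a_j^{(i)}\bx_j$, which is not needed here.
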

\begin{proof} The first containment follows deterministically by definition of the various sets.  For $\Cs^0_{m}(\epsilon) \subset \hat\Cs_{m}(\epsilon')$,  since $(\vec E, \vec R)\in \bar V(\epsilon)$
 the bound on the probability follows directly from \eqref{eq:ddRatGS} (see \eqref{eq:V}).
	
	To prove that $\Cs^0_{m}(\epsilon) \subset \Cs_{m}(\epsilon')$ with overwhelming probability, take some small $\tau$ and assume that $\epsilon$ is sufficiently small so that \eqref{eq:gradatGS} holds.
	Let $\vec\bx=(\bx_1,\ldots,\bx_m)\in \Cs^0_{m}(\epsilon)$, so that in particular $\bx_i\in L_N(q_i,\epsilon)$.  Define the projection $\bz_i=\langle\bx_i-\bx_{i-1},\bx_i\rangle\bx_i/\|\bx_i\|^2$ and $\by_i=\bx_i-\bx_{i-1}-\bz_i\perp \bx_i$. 
	Since $\vec\bx\in\AA_m$, $\|\bx_i-\bx_{i-1}\|^2=\langle\bz_i,\bx_i\rangle=N(q_i-q_{i-1})$, $\|\bx_i\|^2=Nq_i$ and $\|\by_i\|^2=N(q_i-q_{i-1})q_{i-1}/q_i$.
	Thus,
	\begin{align*}
	\frac{\partial_{\bx_i-\bx_{i-1}}H_{N}(\bx_i)}{\|\bx_i-\bx_{i-1}\|^2}&=\frac{1}{N(q_i-q_{i-1})}\Big(\langle\bz_i,\nabla H_N(\bx_i) \rangle+\langle\by_i,\nabla H_N(\bx_i) \rangle\Big)\\
	&=\frac{1}{N(q_i-q_{i-1})}\Big(\langle\bz_i,\bx_i \rangle\frac{\partial_{\bx_i}H_{N}(\bx_i)}{\|\bx_i\|^2}+\partial_{\by_i} H_N(\bx_i) \Big)\\
	&=\frac{\partial_{\bx_i}H_{N}(\bx_i)}{\|\bx_i\|^2}+\sqrt{\frac{q_{i-1}}{q_i(q_i-q_{i-1})}}\frac{1}{\sqrt{N}\|\by_i\|}\partial_{\by_i}H_{N}(\bx_i).
	\end{align*}
	Recall that $\big|\frac{\partial_{\bx_i}H_{N}(\bx_i)}{\|\bx_i\|^2}-\Rs(q_i)\big|<\epsilon$. Hence, if $\epsilon$ and $\tau$ are sufficiently small, on the complement of the event in \eqref{eq:gradatGS}, $\vec\bx\in\Cs_{m}(\epsilon')$.
\end{proof}

For later use we also relate the ground state energy and its derivative for the mixtures $\xim$ (see \eqref{eq:xim}) to those of the mixture $\xi$ and to make explicit the dependence on $\xi$,  below we use the notations $\Es(\xi,q)$ and $\Rs(\xi,q)$.
\begin{lem}\label{lem:EsRs}
	Assuming \eqref{eq:kRSB}, we have that
	\begin{align}
		\Es(\xim,1)&=\Es(\xi,q_{m+1})-\Es(\xi,q_{m}),\label{eq:Esrelation}\\
		\Rs(\xim,1)&=(q_{m+1}-q_{m})\Rs(\xi,q_{m+1}).\label{eq:Rsrelation}
	\end{align}
\end{lem}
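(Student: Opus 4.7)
The plan is to establish both identities by direct substitution into the explicit zero-temperature Parisi formulas. The key inputs are the representation $\Es(\xi,q)=\mathcal{P}_{\hat\xi_q,\infty}(\alpha_q,c_q)$ from \eqref{eq:hRTformula} and its derivative counterpart for $\Rs(\xi,q)$, together with the explicit minimizers. For $q\in\{q_m,q_{m+1}\}\subset\mathcal{S}_{P,\xi,\beta}$, Corollary \ref{cor:Parisidistinfty} gives $\alpha_q(t)=\beta x_\beta(qt)$ and $c_q=(\beta/q)\int_q^1 x_\beta(s)\,ds$, whereas for the mixture $\xim$ the corresponding minimizer $(\alpha^m_q,c^m_q)$ is recorded in the discussion preceding \eqref{eq:xim_strict1RSB}. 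The crucial structural feature is that $\alpha^m_q\equiv\beta x_\beta(q_m)$ is constant on $[0,1)$, since $\nu_{\beta,P}$ has no atoms in the open interval $(q_m,q_{m+1})$, and $x_\beta$ is correspondingly constant on $[q_m,q_{m+1})$.

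For \eqref{eq:Esrelation}, I would expand $\Es(\xim,1)=\mathcal{P}_{\xim,\infty}(\alpha^m_q,c^m_q)$ according to the definition in \eqref{eq:ZTParisi} and then perform the change of variables $u=q_m+(q_{m+1}-q_m)s$ in each of the integrals appearing there. Using the definition of $\xim$ from \eqref{eq:xim}, this converts the $s$-integrals into $u$-integrals over $[q_m,q_{m+1}]$ expressing everything in terms of $\xi'(u)$, $\xi''(u)$ and $\xi'(q_m)$. Comparing the result term by term with $\mathcal{P}_{\hat\xi_{q_{m+1}},\infty}(\alpha_{q_{m+1}},c_{q_{m+1}})-\mathcal{P}_{\hat\xi_{q_m},\infty}(\alpha_{q_m},c_{q_m})$ and exploiting that $x_\beta$ contributes a single value on the open gap (so the mass of $\eta_{\alpha^m_q}$ at $0$ matches the increment of $x_\beta$ crossed in passing from $q_m$ to $q_{m+1}$), the identity \eqref{eq:Esrelation} reduces to a small number of integration-by-parts identities.

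For \eqref{eq:Rsrelation}, I would apply \eqref{eq:hRTformula} directly at $q=1$ with mixture $\xim$. Thanks to the identity $s\,\xim''(s)+\xim'(s)=\frac{d}{ds}(s\,\xim'(s))$ together with the constancy of $\alpha^m_q$ on $[0,1)$, the integral in \eqref{eq:hRTformula} collapses to $\beta x_\beta(q_m)\,\xim'(1)$, while the boundary term multiplying $c^m_q$ is computed explicitly from \eqref{eq:xim}. A parallel evaluation of $\Rs(\xi,q)$ via \eqref{eq:hRTformula} at the relevant atom of $\nu_{\beta,P}$, carried out under the same substitution $u=q_m+(q_{m+1}-q_m)s$, then isolates the prefactor $(q_{m+1}-q_m)$. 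The main obstacle is the algebraic bookkeeping: both identities involve several interdependent integrals, and one must carefully track the piecewise-constant structure of $x_\beta$, the jump contributions at the atoms $q_m$ and $q_{m+1}$, and the rescalings relating the Parisi functional for $\xim$ to those for $\hat\xi_{q_m}$ and $\hat\xi_{q_{m+1}}$.
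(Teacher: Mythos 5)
Your direct-substitution plan is a genuinely different route from the paper's, which instead applies the \abbr{TAP} representation $F(\xi)\ge\beta\Es(\xi,q)+\frac12\log(1-q)+F(\bar\xi_q)$ (with equality at $q\in\mathcal{S}_{P,\xi,\beta}$) twice in a nested way and reads off both identities from matching terms and from stationarity of the right-hand side. For \eqref{eq:Esrelation}, your plan does in fact close: after expanding $\Es(\xim,1)=\mathcal{P}_{\xim,\infty}(\alpha^m_q,c^m_q)$, using $\xim(0)=\xim'(0)=0$ and the constancy of $x_\beta$ on $(q_m,q_{m+1})$, and performing $u=q_m+(q_{m+1}-q_m)s$, the three pieces of the zero-temperature Crisanti--Sommers functional telescope exactly against $\mathcal{P}_{\hat\xi_{q_{m+1}},\infty}(\alpha_{q_{m+1}},c_{q_{m+1}})-\mathcal{P}_{\hat\xi_{q_m},\infty}(\alpha_{q_m},c_{q_m})$; no optimality condition is needed beyond the explicit form of the minimizers.

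For \eqref{eq:Rsrelation} there is a genuine gap. As you say, \eqref{eq:hRTformula} for $\xim$ at $q=1$ with $\alpha^m_q\equiv\beta x_\beta(q_m)$ collapses (via $s\xim''(s)+\xim'(s)=\frac{d}{ds}(s\xim'(s))$) to the \emph{local} expression $c^m_q\big(\xim''(1)+\xim'(1)\big)+\beta x_\beta(q_m)\xim'(1)$. But on the $\xi$-side, the integral in \eqref{eq:hRTformula}, after substituting $u=qs$, becomes $\frac{\beta}{q}\int_0^q\big(u\xi''(u)+\xi'(u)\big)x_\beta(u)\,du$, which is \emph{nonlocal}: it sees the Parisi measure on all of $[0,q]$, in particular the atoms $q_i$ with $i<m$. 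Equating the two sides therefore does not reduce to integration by parts; the contribution $\int_0^{q_m}(u\xi'(u))'x_\beta(u)\,du$ must be eliminated, and that cancellation only happens after invoking the Parisi stationarity conditions $\Phi_{\nu_P}(q_i)=0$, $1\le i\le k$, from \eqref{eq:Parisi_characterization_finite_beta}. The paper's \abbr{TAP}-based proof uses exactly these conditions implicitly, since the equality case of the \abbr{TAP} bound at $q\in\mathcal{S}_P$ encodes them; your sketch never brings them in, so the ``algebraic bookkeeping'' will not close. Relatedly, your hedge ``at the relevant atom'' papers over a real decision: if you carry the computation through, the factor $(q_{m+1}-q_m)$ multiplies $\Rs(\xi,q_{m+1})$, not $\Rs(\xi,q_m)$ (compare the case $m=0$, $q_0=0$ for a pure $p$-spin with $p>2$, where $\Rs(\xi,0)=0$ while $\Rs(\xim,1)>0$; and compare the index appearing in \eqref{eq:Imprime}). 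You should pin this down rather than leaving it vague.
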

\begin{proof}
While	the lemma can be proved using the Parisi formula,  we instead use the \abbr{TAP} representation of \cite[Theorem 5]{FElandscape} (which by \cite[Theorem 10]{FElandscape} applies to 
overlaps in $\mathcal{S}_{P,\xi,\beta}$).  Specifically,  denote by $F(\xi)$ the limiting free 
energy of the spherical model with mixture $\xi$ at inverse-temperature $\beta$ and,  recalling the definition \eqref{eq:xiq} of $\bar\xi_q(t)$,  define
the operators
	\begin{align*}
	\mathcal{A}_q&:\xi(t)\mapsto \bar\xi_q(t), \\
	\mathcal{K}_q&:\xi(t)\mapsto \xi(qt).
	\end{align*}
The \abbr{TAP} representation of \cite[Theorems 5 and 10]{FElandscape}  state that for any $q\in[0,1)$, 
	\begin{equation}\label{eq:TAPrep1}
		F(\xi)\geq \beta \Es(\xi,q)+\frac12\log(1-q)+F(\mathcal{A}_q(\xi)),
	\end{equation}	
and for any $q\in \mathcal{S}_{P,\xi,\beta}$
		\begin{equation}\label{eq:TAPrepEquality}
			F(\xi)= \beta \Es(\xi,q)+\frac12\log(1-q)+F(\mathcal{A}_q(\xi)).
		\end{equation}
	 By applying the \abbr{TAP} representation again to the free energy on the \abbr{rhs} (namely,  to the mixture $\mathcal{A}_q(\xi)$),  we obtain that for any $\bar q\in[0,1)$,
	\begin{equation}\label{eq:TAPrep2}
		\begin{aligned}
			F(\xi)&\geq \beta \Es(\xi,q)+\frac12\log(1-q)
			+\beta\Es(\mathcal{A}_q(\xi),\bar q)+\frac12\log(1-\bar q)+F(\mathcal{A}_{\bar q}\circ \mathcal{A}_q(\xi)),
		\end{aligned}
	\end{equation}
	with equality if both $q\in \mathcal{S}_{P,\xi,\beta}$ and $\bar q\in \mathcal{S}_{P,\mathcal{A}_q(\xi),\beta}$.
	
	By assumption, $q_m,q_{m+1}\in\mathcal{S}_{P,\xi,\beta} $ and by  \eqref{eq:Sbarxim} also $\bar q:= \frac{q_{m+1}-q_m}{1-q_m}\in\mathcal{S}_{P,\bxim,\beta}$. One can verify by a direct calculation that $\mathcal{A}_{\bar q}\circ \mathcal{A}_{q_m}(\xi)= \mathcal{A}_{q_{m+1}}(\xi)
	$, $\mathcal{K}_{\bar q}\mathcal{A}_{q_m}(\xi)=\xim$ and
	\begin{equation}\label{eq:log}
		\begin{aligned}
			\Es(\mathcal{A}_{q_m}(\xi),\bar q) &= \Es(\mathcal{K}_{\bar q}\mathcal{A}_{q_m}(\xi),1)=\Es(\xim,1),\\
			\log(1-\bar q)&=\log(1-q_{m+1})-\log(1-q_m).
		\end{aligned} 
	\end{equation}
	Hence, by using the first representation \eqref{eq:TAPrepEquality} for $F(\xi)$ with $q=q_{m+1}$ and the second 
	representation \eqref{eq:TAPrep2} with $q=q_m$ and $\bar q$ as above,  we arrive at  \eqref{eq:Esrelation}
	upon canceling the like-terms.

Now, for $q>q_m$, consider the second representation \eqref{eq:TAPrep2} with $q$ replaced by $q_m$ and $\bar q:= \frac{q-q_{m}}{1-q_m}$. Namely,
\begin{equation}\label{eq:TAPrep3}
	\begin{aligned}
		F(\xi)&\geq \beta \Es(\xi,q_m)+\frac12\log(1-q_m)
		+\beta\Es(\mathcal{A}_{q_m}(\xi),\bar q)+\frac12\log(1-\bar q)+F(\mathcal{A}_{\bar q}\circ \mathcal{A}_{q_m}(\xi))\\
		&=\beta \Es(\xi,q_m)+\beta\Es\Big(\xim,\frac{q-q_m}{q_{m+1}-q_m}\Big)+\frac12\log(1-q)
		+F(\mathcal{A}_{q}(\xi))\\
	\end{aligned}
\end{equation}
where we used $\mathcal{A}_{\bar q}\circ \mathcal{A}_{q_m}(\xi)= \mathcal{A}_{q}(\xi)$ and similarly to \eqref{eq:log},
	\begin{equation*}
	\begin{aligned}
		\Es(\mathcal{A}_{q_m}(\xi),\bar q) &= \Es(\mathcal{K}_{\bar q}\mathcal{A}_{q_m}(\xi),1)=\Es\Big(\xim,\frac{q-q_m}{q_{m+1}-q_m}\Big),\\
		\log(1-\bar q)&=\log(1-q_{m+1})-\log(1-q_m).
	\end{aligned} 
\end{equation*}

Both representations \eqref{eq:TAPrepEquality} and \eqref{eq:TAPrep3} hold with equality for $q=q_{m+1}$. Since the the \abbr{lhs} does not depend on  $q$, the derivative in $q$ of the \abbr{rhs} of each of them is equal to zero. In particular, by comparing those two derivative and canceling like-terms we have
\[
\frac{d}{dq}\Big|_{q=q_{m+1}}\Es(\xi,q)=  \frac{d}{dq}\Big|_{q=q_m}\Es\Big(\xim,\frac{q-q_m}{q_{m+1}-q_m}\Big),
\]
from which \eqref{eq:Rsrelation} follows by the definition of $\Rs(q):=2\frac{d}{dq}\Es(q)$.
\end{proof}

For later use, we record the following immediate corollary of Lemmas \ref{lem:EsRs} and \ref{lem:strict1RSB} and \eqref{eq:xim_strict1RSB}.
\begin{cor}\label{cor:Thetastar}
	Assuming \eqref{eq:kRSB}, for $\Es(q_m)$ and $\Rs(q_m)$ as in \eqref{eq:GS},
	\[
	\Theta_{\xim}\Big(\Es(q_{m+1})-\Es(q_{m}),(q_{m+1}-q_m)\Rs(q_{m+1})\Big)=0.
	\]
\end{cor}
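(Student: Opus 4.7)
The corollary is labeled ``immediate,'' and indeed the plan is simply to chain together the three cited results with no additional computation. Here is how I would organize the proof.

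First, I would invoke the identity \eqref{eq:xim_strict1RSB}, which says that $\mathcal{S}_{P,\xi_{(m)},\infty}=\{0,1\}$. Thus the mixture $\xi_{(m)}$ falls within the hypothesis of Lemma \ref{lem:strict1RSB} (that is, $\xi_{(m)}$ is strictly 1-\abbr{RSB}). Applying that lemma directly to $\xi_{(m)}$ in place of $\xi$ yields
\[
\Theta_{\xi_{(m)}}\bigl(\Es(\xi_{(m)},1),\,\Rs(\xi_{(m)},1)\bigr)=0,
\]
where I am using the notation $\Es(\cdot,\cdot)$ and $\Rs(\cdot,\cdot)$ that makes the $\xi$-dependence explicit, as introduced just before Lemma \ref{lem:EsRs}.

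Second, I would substitute into this identity the explicit expressions from Lemma \ref{lem:EsRs}, namely
\[
\Es(\xi_{(m)},1)=\Es(q_{m+1})-\Es(q_{m}),\qquad \Rs(\xi_{(m)},1)=(q_{m+1}-q_m)\,\Rs(q_m).
\]
Plugging these into the previous display gives precisely the desired formula
\[
\Theta_{\xi_{(m)}}\bigl(\Es(q_{m+1})-\Es(q_m),\,(q_{m+1}-q_m)\Rs(q_m)\bigr)=0.
\]

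Since each ingredient is already established earlier in the paper, there is no genuine obstacle; the only thing to be careful about is matching notation, specifically verifying that the $\Es$ and $\Rs$ appearing in Lemma \ref{lem:strict1RSB} refer to the ground state data of the mixture being fed in (here $\xi_{(m)}$), whereas those in the statement of Corollary \ref{cor:Thetastar} refer to the ground state data of the original mixture $\xi$ at radius $\sqrt{N q_m}$ or $\sqrt{N q_{m+1}}$. Lemma \ref{lem:EsRs} is precisely the bridge between the two conventions, so no further argument is required.
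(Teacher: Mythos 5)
Your proof is correct and follows exactly the route the paper intends: combine \eqref{eq:xim_strict1RSB} with Lemma \ref{lem:strict1RSB} applied to $\xim$, then translate $\Es(\xim,1)$ and $\Rs(\xim,1)$ back to the original mixture via Lemma \ref{lem:EsRs}. Your remark about carefully tracking which mixture the ground-state notation refers to is precisely the only point of caution, and you handle it correctly.
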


\section{\label{sec:KacRice}Kac-Rice formulas}

The results of this section do not assume condition \eqref{eq:kRSB}.  Thus,  while 
we fix throughout some $0=q_0 < q_1 <\cdots <q_k<1$,  these $\{q_i\}$ do not have to be related to the Parisi measure.  
We recall that $\gradt H_{N}(\bx_i):=M_{i}\nabla H_{N}(\bx_i)$ for $M_i=M_i(\vec\bx)\in\R^{(N-i)\times N}$ an arbitrary matrix 
whose rows form an orthonormal basis of $\big(\sp\{\bx_1,\ldots,\bx_i\}\big)^{\perp}$ and 
by an abuse of notation,  for $m\leq k$ and $\vec\bx\in\AA_m$ of \eqref{eq:AA2},   define the event
\begin{equation}\label{eq:CPi}
\cpt(\vec{\bx},\vec E,\vec R) := \bigcap_{1\leq i \le m} \Big\{
	\Big(\frac{1}{N}H_{N}(\bx_i),\,\frac{\partial_{\bx_i-\bx_{i-1}}H_{N}(\bx_i)}{\|\bx_i-\bx_{i-1}\|^2},\,\gradt H_{N}(\bx_i)\Big)=\big(E_i,R_i,0\big) \Big\}\,.
\end{equation}
While the
set of indices $i$ in \eqref{eq:CPi} 
is determined by the size of the vector $\vec\bx$,  the vectors $\vec E$ and $\vec R$ are allowed to be longer
(and $\mathscr{C}_m(\epsilon)$ of \eqref{eq:xi_condition2} is merely 
the subset of $\AA_m$ for which $\cup_{\bar V(\epsilon)} \cpt(\vec{\bx},\vec E,\vec R)$ occurs).
Fixing $n\leq k$ we consider a non-random  
$\bar \alpha_{N,n}(\vec\bx,\varphi): \AA_n\times C^\infty(\BN) \mapsto \R_+$ which as in Theorem \ref{thm:low temp 1}
 is rotationally invariant in the sense that $\bar \alpha_{N,n}(\vec\bx,\varphi)=\bar \alpha_{N,n}(O\vec\bx,\varphi(O^T\cdot))$ for any orthogonal $O$ (where $O\vec\bx:=(O\bx_1,\ldots,O\bx_n)$),  and such that 
for any $\varphi,\psi_{i} \in C^\infty(\BN)$ and $d\geq1$, 
\begin{equation}
	\lim_{t_{i}\to0}\bar\alpha_{N,n}\Big(\vec\bx,\varphi+\sum_{i\leq d}t_{i}\psi_{i}\Big)=\bar\alpha_{N,n}(\vec\bx,\varphi)
	\,.
	\label{eq:alphalimits}
\end{equation}
Our goal in this section is to prove the following multi-level Kac-Rice formula. To lighten the notation we often omit $N$ from various functions,  
such as $\alpha_{n}(\vec\bx):=\bar \alpha_{N,n}(\vec\bx,H_{N}(\cdot))$.
\begin{prop}\label{prop:multilvlKR}
Suppose that $\alpha_{n}(\vec\bx)$ as above,  is 
$a.s.\ $~continuous in the topology induced on $\AA_n$ from $\R^{N\times n}$.
For $\epsilon>0$ and $0 \le m < n$,  set 
\begin{align}\label{eq:Im}
		I_m(\epsilon)&=[\Es(q_{m+1})-\Es(q_m)-2\epsilon,\Es(q_{m+1})-\Es(q_m)+2\epsilon],\\
\label{eq:Imprime}
		I'_m(\epsilon)&=(q_{m+1}-q_m)[\Rs(q_{m+1})-\epsilon,\Rs(q_{m+1})+\epsilon].
\end{align}
Then,  for any finite $M$,   
	\begin{align}\label{eq:multilvlKR}
		\varlimsup_{N\to\infty}\frac1N\log \E\Big[ \big(\sum_{\vec\bx\in \Cs_{n}(\epsilon)}
		\alpha_n(\vec\bx) \big) \wedge e^{MN} \Big] 
		& \leq \sum_{m=0}^{n-1}\sup_{(E,R) \in I_m(\epsilon)\times I'_m(\epsilon)}\Theta_{\xim}(E,R) \\
		& +
		\varlimsup_{N\to\infty}\frac1N\log\Big(\sup_{\bar V(\epsilon)}\E\Big[ \alpha_n(\vec\bx_{n}^{\be}) \wedge e^{MN}  \,\Big|\,\cpt(\vec \bx_{n}^\be,\vec E,\vec R) \Big]\Big)\,.  
		\nonumber
	\end{align}
\end{prop}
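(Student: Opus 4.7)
The plan is to iterate the Kac-Rice formula level by level, from the innermost sphere outward, extracting at each level $m$ the complexity bound \eqref{eq:complexity} for the mixture $\xim$ while propagating the weight $\alpha_n$ into a terminal conditional expectation. First, I would decompose
\[
\sum_{\vec\bx\in\Cs_n(\epsilon)}\alpha_n(\vec\bx)=\sum_{\vec\bx_{n-1}\in\Cs_{n-1}(\epsilon)}\sum_{\bx_n\,:\,(\vec\bx_{n-1},\bx_n)\in\Cs_n(\epsilon)}\alpha_n(\vec\bx_{n-1},\bx_n).
\]
The inner sum ranges over tangential critical points $\bx_n$ (with $\gradt H_N(\bx_n)={\bf 0}$) of $H_N$ restricted to the inner sphere \eqref{eq:nlvlSphere} (with $m=n-1$), having normalized energy increment $H_N(\bx_n)/N-H_N(\bx_{n-1})/N$ in $I_{n-1}(\epsilon)$ and scaled radial derivative in $I'_{n-1}(\epsilon)$. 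By Lemma \ref{lem:EsRs} and Corollary \ref{cor:Thetastar}, these intervals are centered at the ground-state data $(\Es(\xi_{(n-1)},1),\Rs(\xi_{(n-1)},1))$ of the mixture $\xi_{(n-1)}$, at which $\Theta_{\xi_{(n-1)}}$ vanishes (so that the complexity estimate will not blow up in $\epsilon\to0$).

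Next, I would condition on the sigma-field $\mathcal{F}_{n-1}$ generated by the data of $\cpt$ at $\bx_1,\ldots,\bx_{n-1}$. A direct Gaussian covariance computation (along the lines of those leading to \eqref{eq:covcond} for $k=1$, using $\xi(q+s)-\xi(q)-\xi'(q)s$ rescaled as in \eqref{eq:xim}) shows that the restriction of $H_N$ to the inner sphere, modulo a deterministic shift absorbed into the specified energy and derivative windows, is a centered Gaussian spherical spin glass with mixture exactly $\xi_{(n-1)}$. Applying the generalized Kac-Rice formula to this conditional field and invoking \eqref{eq:complexity} for $\xi_{(n-1)}$ on the range $I_{n-1}(\epsilon)\times I'_{n-1}(\epsilon)$ produces
\[
\E\Bigl[\sum_{\bx_n}(\alpha_n(\vec\bx_{n-1},\bx_n)\wedge e^{MN})\,\Bigm|\,\mathcal{F}_{n-1}\Bigr]\le \exp\!\bigl(N\sup_{I_{n-1}\times I'_{n-1}}\Theta_{\xi_{(n-1)}}+o(N)\bigr)\sup_{(E_n,R_n)}\E\bigl[\alpha_n\wedge e^{MN}\,\bigm|\,\cpt\bigr],
\]
where the $e^{MN}$ truncation decouples the $|\det\Hesst H_N|$ factor from the $\alpha_n$ factor inside the Kac-Rice integrand (via H\"older's inequality together with an additional conditioning on the local data determining the Hessian's spectrum at $\bx_n$, which becomes asymptotically independent of $\alpha_n$ thanks to the continuity property \eqref{eq:alphalimits}). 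Iterating this peeling for $m=n-2,\ldots,0$ accumulates the sum $\sum_{m=0}^{n-1}\sup\Theta_{\xim}$ on the right-hand side of \eqref{eq:multilvlKR}.

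Finally, rotational invariance of both the law of $H_N$ and of $\bar\alpha_{N,n}$, together with the equivariance of the overlap constraints defining $\AA_n$ and of the critical-point equations, lets me transfer the residual conditional expectation from the actual path $\vec\bx$ to the canonical basis-aligned path $\vec\bx_n^\be$ via an orthogonal change of variables, producing $\E[\alpha_n(\vec\bx_n^\be)\wedge e^{MN}\mid\cpt(\vec\bx_n^\be,\vec E,\vec R)]$; taking the supremum over $(\vec E,\vec R)\in\bar V(\epsilon)$ absorbs the range of admissible parameters collected across all levels. I expect the main technical obstacle to lie in executing the conditional Kac-Rice formula carefully, in particular: (i) establishing the identification of the inner-sphere conditional mixture as $\xim$ by explicit Gaussian conditioning, (ii) handling the interplay between $\alpha_n$ and $|\det\Hesst H_N|$ under the degenerate conditioning $\gradt H_N={\bf 0}$, where \eqref{eq:alphalimits} is used to disentangle the two factors after conditioning further on the Hessian-determining data, and (iii) coordinating the order of the $\varlimsup_{N\to\infty}$ with the supremum over $\bar V(\epsilon)$ and the iterated level-by-level peeling.
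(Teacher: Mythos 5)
Your high-level plan — iterate a one-level Kac--Rice bound across the $k$ levels, extracting the complexity exponent $\Theta_{\xim}$ at each level and propagating the random weight into a terminal conditional expectation, then invoke rotational invariance to move to the canonical path $\vec\bx_n^\be$ — is indeed the skeleton of the paper's argument. However, there is a substantial gap that the sketch does not engage with at all, and it is the technical heart of the paper's proof.

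The issue is continuity. Whether you peel $\bx_n$ first (as you propose) or $\bx_1$ first (as the paper does), at every intermediate level you must apply a Kac--Rice formula in which the weight $w_N(\bx_{m+1})$ attached to each critical point $\bx_{m+1}$ is the residual sum over deeper-level critical points, i.e.\ a quantity of the form $\sum_{\vec\bx\in\Cs_{m+1,n}(\epsilon,(\vec\bx_m,\bx_{m+1}))}\alpha_n(\vec\bx)$. This weight is \emph{not} a continuous function of $\bx_{m+1}$: as $\bx_{m+1}$ moves, critical points at the next level can enter or exit the constrained set, making the sum jump. But a.s.\ continuity of the weight (and of the joint density used in the Kac--Rice integrand) is a hypothesis of the one-level formula. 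Your proposal neither notices this nor explains how to circumvent it. In the paper this is handled by a $\delta$-regularization: one replaces the sharp indicators defining $\Cs_{m+1,n}(\epsilon,\cdot)$ by the continuous cut-offs $\bar h_\delta$, introduces the eigenvalue-gap quantity $\eta_i(\vec\bx)$ of \eqref{eq:etai} to suppress critical points that are about to appear or disappear, proves a.s.\ continuity of the regularized weight $\alpha_{m,n}^{\epsilon,\delta}$ via the implicit function theorem (Lemma~\ref{lem:alpha_continuity}), and shows via a degeneracy argument (Lemma~\ref{lem:etapositive_conditional}) that the regularized weight increases to the unregularized one as $\delta\downarrow 0$, enabling a monotone-convergence limit inside the Kac--Rice bound. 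Without some version of this machinery, the iteration cannot be closed.

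Two further points are worth flagging. First, your statement that you ``condition on the sigma-field $\mathcal{F}_{n-1}$ generated by the data of $\cpt$ at $\bx_1,\ldots,\bx_{n-1}$'' is not well defined because the path $\vec\bx_{n-1}$ over which you subsequently sum ranges over the random set $\Cs_{n-1}(\epsilon)$; the rigorous version of this step must be phrased, as in Lemma~\ref{lem:1lvlInductionIndic}, as a conditional Kac--Rice formula at a fixed path $\vec\bx_m^\be$, not as a tower-property manipulation over a random index set. Second, you invoke the continuity property \eqref{eq:alphalimits} to argue that $\alpha_n$ becomes ``asymptotically independent'' of the Hessian-determining data after further conditioning. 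This is not the mechanism in the proof: \eqref{eq:alphalimits} is used, after smoothing $w_N$ by an independent Gaussian of variance $\epsilon^2$, to verify that the augmented vector $V_N(\bz)$ of \eqref{eq:gvec} has a continuous, strictly positive joint density (see \eqref{eq:pst}), which is what licenses the Kac--Rice upper bound. The decoupling of $|\det\Hess H_N^{(m)}|$ from the weight then follows from the \emph{exact} conditional independence of $\bG$, $\grad H_N^{(m)}$, and $U_N(\bz)$ established in Corollary~\ref{cor:Hm2}, combined with a truncation of the determinant by $\Gamma_{N,R}(t)$ and a Cauchy--Schwarz estimate for the tail event, rather than a single H\"older step. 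These are not cosmetic differences: they are where the $e^{MN}$ truncation actually earns its keep (via the term $6e^{MN}e^{-C_N N}$ in \eqref{eq:A-new} and \eqref{eq:KRunifrombd}).
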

\begin{rem}\label{rem:Cncomplexity}
In particular,  taking $\bar \alpha_{N,n}(\cdot)\equiv 1$ we arrive at 
	\[
	\varlimsup_{N\to\infty}\frac1N\log \E\Big[ |\Cs_{n}(\epsilon)| \wedge e^{MN}\Big] \leq \sum_{m=0}^{n-1}
	\sup_{(E,R) \in I_m(\epsilon)\times I'_m(\epsilon)}\Theta_{\xim}(E,R) \,.
	\]
	For $q_i$ the atoms of the Parisi measure as in \eqref{eq:kRSB}, typically $\Cs_{n}(\epsilon)\neq\varnothing$ (for it contains the paths of the tree of pure states, 
	as  in Proposition \ref{prop:PS}),  but as $\epsilon\to0$,  the preceding bound on its 
	exponential in $N$ growth rate,  goes to zero.
\end{rem}
In Section \ref{sec:multiLvlKR} we prove Proposition \ref{prop:multilvlKR} by induction on $m$,  where in each step we 
apply the one-level Kac-Rice formulas of Section \ref{subsec-one-level-KR}.   Towards this,  we first 
provide in Section \ref{subsec-cond-law} explicit representations of $H_N(\cdot)$
and its derivatives,  on the relevant sub-spaces ($S_0(\vec \bx_m^\be)$ below),  conditional on   
$\cpt(\vec\bx_m^\be,\vec E,\vec R)$.
\subsection{Conditional laws}\label{subsec-cond-law}

We start with the conditional law of $H_N(\cdot)$ given $\cpt(\vec{\bx},\vec E,\vec R)$,  for $\vec{\bx} \in \AA_m$, 
on certain $(N-m)$-dimensional linear sub-spaces.
\begin{lem}\label{lem:conditional_law_on_band}
	Let $m\leq k$ and $\vec\bx=(\bx_1,\ldots,\bx_m)\in\AA_m$. For any 
	\begin{equation}\label{eq:yi}
		\by_1,\by_2\in S_0(\vec\bx):=\Big\{\by\in\R^N:\,\forall i\leq m,\, \frac1N\langle\by, \bx_i \rangle=q_i\Big\},
	\end{equation}
	(such that $\xi(\|\by_i\|^2/N)<\infty$) we have that 
	\begin{equation}\label{eq:conditional_law}
		\E \Big[ H_N(\by_1)\,\Big|\,  \cpt(\vec\bx,\vec E,\vec R)\Big]=N E_m
	\end{equation} 
	and
	\begin{equation}
		\label{eq:conditional_law2}
		\begin{aligned}
			&\E \Big[(H_N(\by_1)-N E_m)(H_N(\by_2)-N E_m)\,\Big|\,  \cpt(\vec\bx,\vec E,\vec R)\Big]\\ 
			&= N\Big(\xi(\langle \by_1,\by_2\rangle/N)-\xi(q_m)-\xi'(q_m)\langle \by_1-\bx_m,\by_2-\bx_m\rangle/N \Big)
			=
			 N\xi_{q_m}(\langle \by_1,\by_2\rangle/N-q_m),
		\end{aligned}
	\end{equation}
for the mixture $\xi_q$ of \eqref{eq:xiq}. 
\end{lem}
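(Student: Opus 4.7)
The plan is to reduce both identities \eqref{eq:conditional_law} and \eqref{eq:conditional_law2} to a single orthogonality statement. Fix $\by\in S_0(\vec\bx)$ and set $\tilde\by:=\by-\bx_m$. Since $\langle\by,\bx_i\rangle=Nq_i=\langle\bx_m,\bx_i\rangle$ for every $i\le m$, we have $\tilde\by\perp V$ where $V:=\sp\{\bx_1,\ldots,\bx_m\}$; equivalently, $S_0(\vec\bx)$ is the affine subspace $\bx_m+V^{\perp}$. I claim that the Taylor-like residual
\[
R_\by := H_N(\by)-H_N(\bx_m)-\langle \nabla H_N(\bx_m),\tilde\by\rangle
\]
is uncorrelated with every conditioning variable appearing in $\cpt(\vec\bx,\vec E,\vec R)$.

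To verify the claim, I will use the basic Gaussian covariance identities
\begin{align*}
\E[H_N(\bu)H_N(\bv)] &= N\xi(\langle\bu,\bv\rangle/N),\\
\E[H_N(\bu)\nabla H_N(\bv)] &= \xi'(\langle\bu,\bv\rangle/N)\bu,\\
\E[\nabla H_N(\bu)\nabla H_N(\bv)^{\top}] &= \xi'(\langle\bu,\bv\rangle/N)I+\xi''(\langle\bu,\bv\rangle/N)\bu\bv^{\top}/N,
\end{align*}
together with $\langle\bx_i,\bx_j\rangle/N=q_{i\wedge j}$, $\tilde\by\perp\bx_i$ for $i\le m$, and the consequence $\bx_i\perp(\bx_j-\bx_{j-1})$ for $i<j$ (so that in particular $M_j(\bx_j-\bx_{j-1})=0$). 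Taking $Z$ in turn equal to $H_N(\bx_i)$, to the radial derivative $\partial_{\bx_i-\bx_{i-1}}H_N(\bx_i)$, and to any component of $\gradt H_N(\bx_i)$ for $i\le m$, the covariance of $H_N(\by)-H_N(\bx_m)$ with $Z$ and that of $\langle\nabla H_N(\bx_m),\tilde\by\rangle$ with $Z$ cancel exactly; the $\xi''$-pieces vanish because they always carry a factor proportional to some $\langle\tilde\by,\bx_i\rangle=0$.

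With that orthogonality, standard Gaussian conditioning identifies the $L^2$-projection as
\[
\E[H_N(\by)\,|\,\cpt(\vec\bx,\vec E,\vec R)] = H_N(\bx_m)+\langle\nabla H_N(\bx_m),\tilde\by\rangle.
\]
Substituting the conditioning values then proves \eqref{eq:conditional_law}: $H_N(\bx_m)=NE_m$, and $\gradt H_N(\bx_m)=0$ forces $\nabla H_N(\bx_m)\in V$, whose inner product against the $V^{\perp}$-vector $\tilde\by$ vanishes. For \eqref{eq:conditional_law2}, the conditional covariance equals $\mathrm{Cov}(R_{\by_1},R_{\by_2})$, and expanding this via the same covariance identities (the terms involving $\nabla H_N(\bx_m)$ collapse using $\bx_m\perp\tilde\by_j$) gives
\[
N\xi(\langle\by_1,\by_2\rangle/N)-N\xi(q_m)-\xi'(q_m)\langle\tilde\by_1,\tilde\by_2\rangle,
\]
which, using $\langle\tilde\by_1,\tilde\by_2\rangle=\langle\by_1,\by_2\rangle-Nq_m$ and the definition of $\xi_q$ in \eqref{eq:xiq}, simplifies to $N\xi_{q_m}(\langle\by_1,\by_2\rangle/N-q_m)$.

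The only genuine obstacle is the book-keeping in verifying $\mathrm{Cov}(R_\by,Z)=0$ for $Z=H_N(\bx_i)$ or $Z$ a component of $\gradt H_N(\bx_i)$ with $i<m$: in these cases neither summand of $R_\by$ is individually uncorrelated with $Z$, and one needs the precise cancellation between the two summands. Concretely, this uses that the $\xi'$-coefficient in $\E[\nabla H_N(\bx_m)\nabla H_N(\bx_i)^{\top}]=\xi'(q_i)I+\xi''(q_i)\bx_i\bx_m^{\top}/N$ matches that of $\E[H_N(\by)\nabla H_N(\bx_i)]=\xi'(q_i)\by$, after which the $\xi''$-contribution again drops by the orthogonality $\tilde\by\perp\bx_i$.
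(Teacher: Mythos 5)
Your proposal is correct, and it takes a genuinely different route from the paper's proof. The paper argues by induction on $m$: it first treats $m=1$ by a direct covariance computation combined with the Gaussian conditioning formula, and then passes to general $m$ by showing that, conditionally on $\cpt(\vec\bx_m,\vec E,\vec R)$, the shifted field $\hat H_N(\by)=H_N(\by+\bx_m)-H_N(\bx_m)$ on $S_\perp(\vec\bx_m)$ is a spherical model with mixture $\xi_{q_m}$, so that the $m=1$ case can be applied again. Your argument avoids the induction entirely: by subtracting \emph{also} the tangential linear term $\langle\nabla H_N(\bx_m),\tilde\by\rangle$, the residual $R_\by$ is rendered orthogonal to \emph{every} conditioning variable at every level $i\le m$ at once, which turns the computation of the conditional law into a pure orthogonality/cancellation bookkeeping, never invoking the full Gaussian conditioning formula (only the elementary fact that for centered jointly Gaussian variables, $\E[A\mid W]=0$ whenever $A\perp W$, and $\mathrm{Cov}(A_1,A_2\mid W)=\mathrm{Cov}(A_1,A_2)$). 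This is slicker and makes the role of the gradient conditioning, and why the conditional mean is constant across $S_0(\vec\bx)$, completely transparent: $\gradt H_N(\bx_m)=0$ forces $\nabla H_N(\bx_m)\in\sp\{\bx_1,\dots,\bx_m\}$, which is orthogonal to $\tilde\by$.

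Two small points of bookkeeping, neither affecting the validity of the argument. First, your displayed general identity for $\E[\nabla H_N(\bu)\nabla H_N(\bv)^{\top}]$ has the outer product transposed; the correct form is $\xi'(\langle\bu,\bv\rangle/N)\mathbf{I}+\xi''(\langle\bu,\bv\rangle/N)\,\bv\bu^{\top}/N$ (your later specialization with $\bu=\bx_m$, $\bv=\bx_i$ is written correctly). Since in every use the $\xi''$-contribution carries a factor $\langle\tilde\by,\bx_i\rangle$ or $\langle\tilde\by,\bx_m\rangle$, both of which vanish, the transposition is immaterial here. Second, it is worth noting explicitly (as you do implicitly) that $H_N(\bx_m)+\langle\nabla H_N(\bx_m),\tilde\by\rangle$ lies in the span of the conditioning variables precisely because $\tilde\by\in\big(\sp\{\bx_1,\dots,\bx_m\}\big)^\perp$, which is the row space of $M_m$, so $\langle\nabla H_N(\bx_m),\tilde\by\rangle$ is a fixed linear combination of the components of $\gradt H_N(\bx_m)$; this is what lets one substitute $(NE_m,0)$ and obtain a deterministic value.
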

\begin{proof}
	We implicitly assume throughout that $\xi(\|\by_i\|^2/N)<\infty$.
	Consider first the case $m=1$ and set $[\by,\bx]:=\langle\by,\bx\rangle/N$.  We then have that 
	for any unit vectors $\bu\perp \bx_1$, $\bu_1,\bu_2$ and $\by_1\in S_0(\bx_1)$, 
	\begin{align}\label{eq:H-diff}
		\E\big[(H_N(\by_1)-H_N(\bx_1))H_N(\bx_1)\big]&=N(\xi( [\by_1,\bx_1])-\xi([ \bx_1,\bx_1]))=0,\\
		\label{eq:cov-H-der}
		\E\big[H_N(\bx_1) \partial_{\bu_1}H_N(\bx)|_{\bx=\bx_1}\big]&=N\partial_{\bu_1}\xi([\bx_1,\bx])|_{\bx=\bx_1}=\xi'(q_1)\langle\bx_1,\bu_1\rangle,\\
		\label{eq:cov-H-ort}
		\E\big[(H_N(\by_1)-H_N(\bx_1))\partial_{\bu}H_N(\bx)|_{\bx=\bx_1}\big]&=
		\xi'(q_1)\langle\by_1,\bu\rangle - \xi'(q_1) \langle \bx_1,\bu \rangle 
		= \xi'(q_1)\langle\by_1,\bu\rangle,\\	
		\label{eq:cov-H-x1}
			\E\big[(H_N(\by_1)-H_N(\bx_1))\partial_{\bx_1}H_N(\bx)|_{\bx=\bx_1}\big]&=
			\xi'(q_1)\langle\by_1,\bx_1\rangle - \xi'(q_1) \langle \bx_1,\bx_1 \rangle
			=0,\\	
		\E\big[\partial_{\bu_1}H_N(\bx)|_{\bx=\bx_1}\partial_{\bu_2}H_N(\by)|_{\by=\bx_1}\big]
	    &=N\partial_{\bu_1}\partial_{\bu_2}\xi([\bx,\by])|_{\bx=\bx_1,\by=\bx_1} \nonumber\\	
	    &
		=\xi'(q_1)\langle\bu_1,\bu_2\rangle+\frac1N\xi''(q_1)\langle\bx_1,\bu_1\rangle\langle\bx_1,\bu_2\rangle,
		\label{eq:cov-der}
	\end{align}
	where $\partial_{\bv}f(\bx)=\langle\nabla f(\bx),\bv\rangle$ 
	and on the \abbr{rhs} we apply 
	$\partial_{\bu_1}$ \abbr{wrt} the argument $\bx$ and $\partial_{\bu_2}$ \abbr{wrt} $\by$. 
	
	Hence, $H_N(\by_1)-H_N(\bx_1)$ and $\gradt H_{N}(\bx_1)$ are independent of $H_{N}(\bx_1)$ and $\partial_{\bx_1} H_{N}(\bx_1)$. The conditional mean of $H_N(\by_1)-H_N(\bx_1)$ given $\cpt(\vec\bx,\vec E,\vec R)$ is therefore equal to its conditional mean given that $\gradt H_{N}(\bx_1)=0$. That is, it is equal to $0$ and \eqref{eq:conditional_law} follows. Moreover, the conditional covariance of $H_N(\by_1)-H_N(\bx_1)$ and $H_N(\by_2)-H_N(\bx_1)$ given $\cpt(\vec\bx,\vec E,\vec R)$ is equal to the conditional covariance only given that $\gradt H_{N}(\bx_1)=0$.
	From the well-known formulas for the conditional law of jointly Gaussian variables, 
	\begin{align*}
		&\E \big[(H_N(\by_1)-H_N(\bx_1))(H_N(\by_2)-H_N(\bx_1)) \,|\,\gradt H_{N}(\bx_1)=0 \big]\\
		&=\E \big[(H_N(\by_1)-H_N(\bx_1))(H_N(\by_2)-H_N(\bx_1))\big] \\
		&- \E\big[(H_N(\by_1)-H_N(\bx_1))\gradt H_{N}(\bx_1)^{\top}\big]
		\big(\E\big[\gradt H_{N}(\bx_1)\gradt H_{N}(\bx_1)^{\top}\big]\big)^{-1}\E\big[(H_N(\by_2)-H_N(\bx_1))\gradt H_{N}(\bx_1)\big]\,.
	\end{align*}
	Note that the first expectation on the right-side 
	is $N \xi([\by_1,\by_2])-N \xi(q_1)$.  Further,  the entries of 
	$\E\big[\gradt H_{N}(\bx_1)\gradt H_{N}(\bx_1)^{\top}\big]$ correspond to \eqref{eq:cov-der}
	for an orthonormal basis $\{\bu_j\}$ of $\bx_1^{\perp}$,  so this matrix is merely $\xi'(q_1)\mathbf{I}$.  Likewise,  by \eqref{eq:cov-H-ort},
	$$
	\E\big[(H_N(\by_i)-H_N(\bx_1))\gradt H_{N}(\bx_1)^{\top}\big] = \xi'(q_1) (\langle \by_i,\bu_1\rangle,\ldots,
	\langle \by_i,\bu_{N-1}\rangle) \,,
	$$ 
	and we arrive at \eqref{eq:conditional_law2} upon utilizing the obvious identity
	$\langle \by_1-\bx_1,\by_2-\bx_1\rangle = \sum_{j} \langle \by_1,\bu_j\rangle \langle \by_2,\bu_j\rangle$.

	We proceed by induction on $m$,  setting $\vec\bx_m=(\bx_1,\ldots,\bx_m)\in\AA_m$,
	 $\vec\bx_{m+1}=(\bx_1,\ldots,\bx_{m+1}) \in\AA_{m+1}$ and assuming that the lemma holds for $m$.  Defining $\hat H_N(\by)=H_N(\by+\bx_m)-H_N(\bx_m)$, we have by our induction hypothesis
 that conditional on $\cpt(\vec\bx_{m},\vec E,\vec R)$,  on $S_{\perp}(\vec\bx_m):=\{\by:\,\by\perp\bx_i,\forall i\leq m\}$,
	the process $\hat H_N(\by)$ has the same law as the mixed model with the mixture $\xi_{q_m}$ of \eqref{eq:xiq}.  Note that the conditional law given $\cpt(\vec\bx_{m+1},\vec E,\vec R)$ can be obtained  by first conditioning on $\cpt(\vec\bx_{m},\vec E,\vec R)$ and then further conditioning on 
	\begin{equation*}
		\Big(\frac{1}{N}\hat H_{N}(\hat \bx_{m+1}),\,\frac{\partial_{\hat \bx_{m+1}}\hat H_{N}(\hat \bx_{m+1})}{\|\hat \bx_{m+1}\|^2},\,\gradt \hat H_N(\hat \bx_{m+1}) \Big)=\big(\hat{E}_{m+1},
		R_{m+1},0\big),
	\end{equation*}
	where $\hat\bx_{m+1}:=\bx_{m+1}-\bx_{m}$,  
	$\hat E_{m+1} := E_{m+1}-E_m$ and
	$\gradt \hat H_N(\by) := M_{m+1}(\vec\bx_{m+1})\nabla \hat H_N(\by) $ (with $M_{m+1}$ as 
	in \eqref{eq:CPi}).  Hence,  by the lemma at $m=1$,  
	we have that for any $\by_1,\by_2\in S_{\perp}(\vec\bx_m)$ such that $[\by_i,\hat \bx_{m+1}]=q_{m+1}-q_m$, 
	\begin{equation*}
		\E \Big[ \hat H_N(\by_1)\,\Big|\,  \cpt(\vec\bx_{m+1},\vec E,\vec R)\Big]=N \hat E_{m+1}
	\end{equation*} 
	and,  denoting $\hat q:=q_{m+1}-q_m$,
	\begin{equation}\label{eq:induct}
		\begin{aligned}
			&\E \Big[(\hat H_N(\by_1)-N \hat E_{m+1})(\hat H_N(\by_2)-N \hat E_{m+1})\,\Big|\,  \cpt(\vec\bx_{m+1},\vec E,\vec R)\Big]  \\
			&=N\Big(
			\xi_{q_{m}}([ \by_1,\by_2])-\xi_{q_{m}}(\hat q)-\xi_{q_{m}}'(\hat q)
			[ \by_1-\hat \bx_{m+1},\by_2-\hat \bx_{m+1}]
			\Big).
		\end{aligned}
	\end{equation}
Now,  note that $\check{\by}_i := \by_i+\bx_m\in S_0(\vec\bx_{m+1})$, 
	$[\by_1,\by_2]=[ \check{\by}_1,\check{\by}_2]-q_m$ and 
	\[[ \by_1-\hat \bx_{m+1},\by_2-\hat \bx_{m+1}] = [ \check{\by}_1,\check{\by}_2]-q_{m+1}.\]
	By substituting the above and using \eqref{eq:xiq}, we obtain that the \abbr{RHS} of 
	\eqref{eq:induct} is equal to 
	\[
	N\Big(\xi([ \check{\by}_1,\check{\by}_2])-\xi(q_{m+1})-\xi'(q_{m+1})([ \check{\by}_1,\check{\by}_2]-q_{m+1})\Big)=N\xi_{q_{m+1}}([ \check{\by}_1,\check{\by}_2]-q_{m+1}).
	\]
	This establishes
	the lemma for $m+1$ and concludes the proof.
\end{proof}

Recall that $\vec\bx_m^\be=(\bx_1^\be,\ldots, \bx_m^\be)\in\AA_m$ for 
$\bx^{\be}_i := \sum_{j=1}^i \sqrt{N(q_j-q_{j-1})} \be_j$.
We next re-write the representation of Lemma \ref{lem:conditional_law_on_band} in terms of
the linear mapping $T^{(m)}:\R^{N-m} \to S_0(\vec\bx_m^{\be})$,  given by
\begin{align*}
	T^{(m)}(\bz) &= T^{(m)}((z_1,\ldots,z_{N-m})) = \bx_{m}^{\be}+\sqrt{{\textstyle \frac{N}{N-m}}(q_{m+1}-q_m)}(0,\ldots,0,z_1,\ldots,z_{N-m})\,,
\end{align*}
and note in passing that $T^{(m)}$ maps $\mathbb{S}^{N-m-1}=\{\bx\in \R^{N-m}: \|\bx\|^2=N-m\}$ to  $S(\vec\bx_{m}^{\be})$ of \eqref{eq:S(y)}.
\begin{cor}\label{cor:Hm1}
	As a process on $\mathbb{S}^{N-m-1}$, conditional on $\cpt(\vec\bx^{\be}_{m},\vec E,\vec R)$, 
	\begin{equation}\label{eq:Hm}
		H_N^{(m)}(\bz):=\sqrt{\frac{N-m}{N}} \Big( H_N\big(T^{(m)}(\bz)\big) - NE_m\Big)
	\end{equation} 
	has the same law as the mixed spherical model with the mixture $\xim$ of  \eqref{eq:xim}.

	Moreover, conditional on $\cpt(\vec\bx^{\be}_{m},\vec E,\vec R)$, if $\by=T^{(m)}(\bz)$ and 
	\begin{equation*}
		\Big(\frac{1}{N}H_{N}(\by),\,\frac{\partial_{\by-\bx^{\be}_{m}}H_{N}(\by)}{\|\by-\bx^{\be}_{m}\|^2}\Big)=\big(E,R\big),
	\end{equation*}
	then
	\begin{equation}\label{eq:ERtransform}
		\Big(\frac{1}{N-m} H^{(m)}_{N}(\bz),\,\frac{\partial_{\bz} H^{(m)}_{N}(\bz)}{\|\bz\|^2}\Big)=\sqrt{\frac{N}{N-m}}\big((E-E_m),\,(q_{m+1}-q_m)R\big).
	\end{equation}
\end{cor}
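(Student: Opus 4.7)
\textbf{Proof plan for Corollary \ref{cor:Hm1}.}
The proof is essentially a direct application of Lemma \ref{lem:conditional_law_on_band} after unpacking the linear map $T^{(m)}$, together with the chain rule for the gradient. The plan has two parts corresponding to the two assertions.

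First, I would verify that $T^{(m)}(\bz) \in S_0(\vec\bx_m^{\be})$ for every $\bz\in \mathbb{S}^{N-m-1}$. Since the nonzero entries of $T^{(m)}(\bz)-\bx_m^{\be}$ lie in coordinates $m+1,\ldots,N$, this vector is orthogonal to every $\bx_i^{\be}$, $i\le m$. Hence $\langle T^{(m)}(\bz),\bx_i^{\be}\rangle=\langle \bx_m^{\be},\bx_i^{\be}\rangle=Nq_i$, which puts $T^{(m)}(\bz)$ in $S_0(\vec\bx_m^{\be})$. Similarly, for $\|\bz\|^2=N-m$, a direct computation shows $\|T^{(m)}(\bz)-\bx_m^{\be}\|^2=N(q_{m+1}-q_m)$, so $T^{(m)}$ actually maps $\mathbb{S}^{N-m-1}$ into $S(\vec\bx_m^{\be})$.

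Next, I would apply Lemma \ref{lem:conditional_law_on_band} with $\by_i=T^{(m)}(\bz_i)$ to conclude that, conditional on $\cpt(\vec\bx_m^{\be},\vec E,\vec R)$, $H_N(T^{(m)}(\bz))$ is centered around $NE_m$ and has conditional covariance $N\xi_{q_m}(\langle\by_1,\by_2\rangle/N-q_m)$. Using the identity $\langle T^{(m)}(\bz_1),T^{(m)}(\bz_2)\rangle/N=q_m+\frac{q_{m+1}-q_m}{N-m}\langle\bz_1,\bz_2\rangle$ and the definition $\xim(t)=\xi_{q_m}((q_{m+1}-q_m)t)$ in \eqref{eq:xim}, the conditional covariance becomes $N\xim(\langle\bz_1,\bz_2\rangle/(N-m))$. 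After multiplying by the scaling factor $\frac{N-m}{N}$ from \eqref{eq:Hm}, this reduces to $(N-m)\xim(\langle\bz_1,\bz_2\rangle/(N-m))$, which is exactly the covariance structure on $\mathbb{S}^{N-m-1}$ of the mixed spherical model with mixture $\xim$, as specified by \eqref{eq:Hamiltonian}.

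For the second part \eqref{eq:ERtransform}, the computation for the normalized energy is immediate from \eqref{eq:Hm}: under $H_N(\by)=NE$ we get $\frac{1}{N-m}H_N^{(m)}(\bz)=\sqrt{\frac{N}{N-m}}(E-E_m)$. For the radial derivative, I would write $T^{(m)}(\bz)=\bx_m^{\be}+A\bz$ with $A$ an $N\times(N-m)$ matrix of Frobenius scaling $\sqrt{\frac{N}{N-m}(q_{m+1}-q_m)}\bigl(\begin{smallmatrix}0\\ \mathbf{I}\end{smallmatrix}\bigr)$, apply the chain rule to get $\nabla_{\bz}H_N^{(m)}(\bz)=\sqrt{\frac{N-m}{N}}A^{\top}\nabla H_N(\by)$, and then use $A\bz=\by-\bx_m^{\be}$ to conclude $\partial_{\bz}H_N^{(m)}(\bz)=\sqrt{\frac{N-m}{N}}\partial_{\by-\bx_m^{\be}}H_N(\by)$. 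Dividing by $\|\bz\|^2=N-m$ and substituting the hypothesis on the radial derivative yields the claimed $\sqrt{N/(N-m)}(q_{m+1}-q_m)R$.

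This is really just a careful change-of-variables computation rather than a genuine argument, so there is no substantive obstacle; the only step that requires any care is verifying the two algebraic identities $\langle T^{(m)}(\bz_1),T^{(m)}(\bz_2)\rangle/N-q_m=\frac{q_{m+1}-q_m}{N-m}\langle\bz_1,\bz_2\rangle$ and $A\bz=\by-\bx_m^{\be}$, and then tracking the factors of $\sqrt{(N-m)/N}$ consistently between the definitions \eqref{eq:Hm} and \eqref{eq:xim}.
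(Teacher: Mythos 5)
Your proposal is correct and follows essentially the same route as the paper: verify the inner-product identity for $T^{(m)}$, invoke Lemma \ref{lem:conditional_law_on_band} to get the conditional mean and covariance, rewrite the covariance as $(N-m)\xim(\langle\bz_1,\bz_2\rangle/(N-m))$, and then for \eqref{eq:ERtransform} use the chain rule plus $\|T^{(m)}(\bz)-\bx_m^\be\|^2/\|\bz\|^2 = N(q_{m+1}-q_m)/(N-m)$. Your preliminary check that $T^{(m)}$ lands in $S_0(\vec\bx_m^\be)$ is a sensible explicitation of something the paper leaves implicit, and your scaling computations agree with the paper's once the inverted form $\partial_{\by-\bx_m^\be}H_N(\by)=\sqrt{N/(N-m)}\,\partial_{\bz}H_N^{(m)}(\bz)$ is noted.
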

\begin{proof}
	For $\bz_1,\bz_2\in \mathbb{S}^{N-m-1}$,
	\[
	\frac1N \langle T^{(m)}(\bz_1), T^{(m)}(\bz_2) \rangle = q_m+\frac{q_{m+1}-q_m}{N-m}
	\langle \bz_1, \bz_2 \rangle\,.
	\]
	Hence, by \eqref{eq:conditional_law} and \eqref{eq:conditional_law2}, 
	$\E[ H_N^{(m)}(\bz_1)\,|\, \cpt(\vec\bx,\vec E,\vec R)]=0$ and 
	\[
	\E \big[H_N^{(m)}(\bz_1)H_N^{(m)}(\bz_2)\,\big|\,\cpt(\vec\bx,\vec E,\vec R)\big]=(N-m)\xim \Big(
	\frac{1}{N-m}\langle \bz_1,\bz_2\rangle
	\Big),
	\]
	and our first claim thus follows.
	
	Next,  from the definitions it follows that,  if $\by=T^{(m)}(\bz)$,
	then $\partial_{\by-\bx^{\be}_{m}}H_{N}(\by)=\sqrt {\frac{N}{N-m}}\partial_{\bz} H^{(m)}_{N}(\bz)$. Since $\|T^{(m)}(\bz)-\bx_m^\be\|^2/\|\bz\|^2=N(q_{m+1}-q_m)/(N-m)$, the second claim follows.
\end{proof}

Building on Corollary \ref{cor:Hm1} we proceed to represent the conditional 
on $\cpt(\vec \bx_{m}^\be,\vec E,\vec R)$ joint law of $H^{(m)}_N$ and certain
first and second derivatives of it.  Specifically,  let
$(F_i)_{i=1}^{N-m-1}$ be some piecewise smooth orthonormal frame field on the sphere $\mathbb{S}^{N-m-1}$ and for $\bz\in\mathbb{S}^{N-m-1}$ define 
$\grad H^{(m)}_{N}(\bz) := (F_iH^{(m)}_N(\bz))_{i=1}^{N-m-1}$ and $\Hess H^{(m)}_{N}(\bz) := (F_iF_jH^{(m)}_N(\bz))_{i,j=1}^{N-m-1}$.  Recall that the $d$-dimensional \abbr{GOE} matrix $\bM$ is a real symmetric matrix whose elements are centered Gaussian variables, independent of each other, up to symmetry,  and throughout
we shall use the normalization $\E\big[\bM_{ij}^2\big]=(1+\delta_{ij})/d$.
\begin{cor}\label{cor:Hm2}
	Let $\bz\in \mathbb{S}^{N-m-1}$ be an arbitrary point and define the matrix
	\begin{equation*}
		\bG := \sqrt{\frac{N-m}{(N-m-1)\xim''(1)}}\Bigg( \Hess H^{(m)}_N(\bz)+\frac{\partial_{\bz}H^{(m)}_N(\bz)}{\|\bz\|^2}\mathbf{I} \Bigg).
	\end{equation*}
	Conditional on $\cpt(\vec \bx_{m}^\be,\vec E,\vec R)$, the three random variables
	\[
	U_N(\bz):=\Big(\frac1{N-m} H^{(m)}_N(\bz),\frac{\partial_{\bz}H^{(m)}_N(\bz)}{\|\bz\|^2}\Big),\qquad\grad H^{(m)}_N(\bz)\qquad\mbox{and }\qquad\bG
	\]
	are independent. Moreover, with $\Sigma_{\xi}$ as defined in \eqref{eq:Sigmaxi},
	we have that $U_N(\bz)\sim \N(0,(N-m)^{-1}\Sigma_{\xim})$, $\grad H^{(m)}_N(\bz)\sim \N(0,\xim'(1)\mathbf{I})$ and
	$\bG$ is an $(N-m-1)$-dimensional \abbr{GOE} matrix.
\end{cor}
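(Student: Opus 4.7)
The plan is to leverage Corollary~\ref{cor:Hm1}: conditionally on $\cpt(\vec\bx^{\be}_m,\vec E,\vec R)$, we may realize $H^{(m)}_N$ as (the restriction to $\mathbb{S}^{N-m-1}\subset\R^{N-m}$ of) the Gaussian field on $\R^{N-m}$ with covariance kernel $g(\bz^{(1)},\bz^{(2)}):=(N-m)\xim(\langle\bz^{(1)},\bz^{(2)}\rangle/(N-m))$. Since this law is $O(N-m)$-invariant while the marginals claimed ($\N(0,c\,\mathbf{I})$ and GOE) are orthogonal-conjugation invariant, I may fix $\bz=\sqrt{N-m}\,\be_1$ and choose any convenient orthonormal frame at $\bz$; I take a geodesic frame with $F_i(\bz)=\be_{i+1}$, $1\le i\le N-m-1$. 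The first-order quantities then reduce to Euclidean partials: $F_iH^{(m)}_N(\bz)=\partial_{z_{i+1}}H^{(m)}_N(\bz)$ and $\partial_{\bz}H^{(m)}_N(\bz)/\|\bz\|^2=\partial_{z_1}H^{(m)}_N(\bz)/\sqrt{N-m}$.

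The key geometric input is the second-order relation. Using the great-circle geodesic $\gamma(t)=\cos(t/r)\bz+r\sin(t/r)\be_{i+1}$ with $r=\sqrt{N-m}$ (so $\gamma''(0)=-\bz/r^2$), a standard geodesic second-derivative computation yields
\[
F_iF_jH^{(m)}_N(\bz)\;=\;\partial_{z_{i+1}}\partial_{z_{j+1}}H^{(m)}_N(\bz)\;-\;\frac{\delta_{ij}}{\|\bz\|^2}\,\partial_{\bz}H^{(m)}_N(\bz),
\]
which is exactly the curvature correction built into the definition of $\bG$. Consequently $\bG=\sqrt{(N-m)/((N-m-1)\xim''(1))}\cdot[\partial_{z_{i+1}}\partial_{z_{j+1}}H^{(m)}_N(\bz)]_{i,j=1}^{N-m-1}$ is a scalar multiple of the matrix of tangential Euclidean second partials at $\bz$.

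With this identity in hand, everything reduces to a joint Gaussian covariance computation obtained by differentiating $g$ in the appropriate coordinates and evaluating at $\bz^{(1)}=\bz^{(2)}=\bz$. The crucial simplification is that $z_{i+1}=0$ for $i\ge 1$ at the chosen base point, so in each chain-rule expansion every term retaining a factor $z_{i+1}$ (as arising from $\xim^{(k)}(1)$ with $k\ge 3$) vanishes, leaving only the Kronecker-delta contributions from $\xim'(1)$ and $\xim''(1)$. Straightforward bookkeeping then yields: $U_N(\bz)\sim\N(0,(N-m)^{-1}\Sigma_{\xim})$, matching $\Sigma_{\xim}$ of \eqref{eq:Sigmaxi}; the tangential partials $\partial_{z_{i+1}}H^{(m)}_N(\bz)$, $i\ge 1$, are i.i.d. $\N(0,\xim'(1))$, whence $\grad H^{(m)}_N(\bz)\sim\N(0,\xim'(1)\mathbf{I})$; the tangential second-partial matrix $A$ obeys $\E[A_{ij}A_{kl}]=(\xim''(1)/(N-m))(\delta_{ik}\delta_{jl}+\delta_{il}\delta_{jk})$, so after the scaling factor $\bG$ has the desired $(N-m-1)$-dimensional GOE normalization $\E[G_{ij}G_{kl}]=(\delta_{ik}\delta_{jl}+\delta_{il}\delta_{jk})/(N-m-1)$; and every cross-covariance between the three families vanishes by the same $z_{i+1}=0$ principle, which upgrades to full independence via joint Gaussianity. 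The main obstacle is the geometric identity in the second paragraph—the precise curvature correction encoded in $\bG$—while the remainder is routine Gaussian bookkeeping made easy by the vanishing-coordinates trick at the chosen base point.
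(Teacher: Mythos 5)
Your proposal is correct, and it uses the same key reduction as the paper, namely Corollary~\ref{cor:Hm1}: conditionally on $\cpt(\vec\bx_m^{\be},\vec E,\vec R)$, the field $H_N^{(m)}$ restricted to $\mathbb{S}^{N-m-1}$ is a spherical model with mixture $\xim$. The paper then simply cites \cite[Section 4.1]{geometryMixed}, where the joint law of a spherical Hamiltonian, its gradient and its (curvature-corrected) Hessian at a point is worked out; you instead rederive that computation from scratch. Your derivation is sound: the choice of north pole $\bz=\sqrt{N-m}\,\be_1$ with a geodesic frame gives $F_iF_jH^{(m)}_N(\bz)=\partial_{z_{i+1}}\partial_{z_{j+1}}H^{(m)}_N(\bz)-\delta_{ij}\,\partial_{\bz}H^{(m)}_N(\bz)/\|\bz\|^2$ (the same identity the paper later uses in the proof of Lemma~\ref{lem:etapositive_conditional}), so $\bG$ is exactly the rescaled matrix of tangential second partials; and since all tangential base-point coordinates vanish, every chain-rule term in the covariances carrying a tangential coordinate drops out, leaving only the $\delta$-contributions from $\xim'(1)$ and $\xim''(1)$. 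This yields the stated $\N(0,(N-m)^{-1}\Sigma_{\xim})$, $\N(0,\xim'(1)\mathbf{I})$ and GOE normalizations, and shows all cross-covariances vanish, so independence follows from joint Gaussianity. In short: same reduction as the paper, with a self-contained proof supplied where the paper cites.
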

\begin{proof}
	In \cite[Section 4.1]{geometryMixed} this corollary is proved for the Hamiltonian of a general spherical model on $\SN$. Here we translated this result to our setting on $\mathbb{S}^{N-m-1}$ with the mixture $\xim$, which corresponds to the conditional law of $H_N^{(m)}(\bz)$ by Corollary \ref{cor:Hm1}.
\end{proof}

\subsection{One-level Kac-Rice formulas}\label{subsec-one-level-KR}
For any $\vec\by=(\by_1,\ldots,\by_m)\in \AA_m$ and $m\leq n\leq k$,  we define
\begin{equation}\label{eq:Amn-y}
	\AA_{m,n}(\vec\by):=\Big\{\vec\bx=(\bx_i)_{i=1}^{n}\in\AA_{n}:\,(\bx_1,\ldots,\bx_m)=(\by_1,\ldots,\by_m)\Big\}
\end{equation}
and similarly to \eqref{eq:xi_condition} denote
\begin{equation}
	\label{eq:xi_condition_mn}
	\begin{aligned}
		\Cs_{m,n}(\epsilon,\vec\by):=\Big\{\vec \bx \in \AA_{m,n}(\vec\by):\,&\exists (\vec E, \vec R)\in \bar V(\epsilon)\mbox{ such that }\forall m+1\leq i\leq n,\\
		&\Big(\frac{1}{N}H_{N}(\bx_i),\,\frac{\partial_{\bx_i-\bx_{i-1}}H_{N}(\bx_i)}{\|\bx_i-\bx_{i-1}\|^2},\,\gradt H_{N}(\bx_i)\Big)=\big(E_i,R_i,0\big) \Big\}\,.
	\end{aligned}
\end{equation} 

By going through the proof of \eqref{eq:complexity} in \cite{geometryMixed}, one can verify that multiplying the mixture by a factor that goes to one as $N\to\infty$ or scaling $I$ and $I'$ by such a factor do not affect the asymptotic result. 
Hence, from Corollary \ref{cor:Hm1}, we have that
\begin{equation}\label{eq:1lvlComplexity}
	\lim_{N\to\infty}\frac1N\log\Big(\E\big[
	\#\Cs_{m,m+1}(\epsilon,\vec\bx_m^{\be})\,\big|\, \cpt(\vec\bx^{\be}_{m},\vec E,\vec R)
	\big]\Big)
	=
	\sup_{I_m(\epsilon,E_m)\times I'_m(\epsilon)}\Theta_{\xim}(E,R),
\end{equation}
for $I'_m(\cdot)$ of \eqref{eq:Imprime} and 
$I_m(\epsilon,E_m)=[\Es(q_{m+1})-E_m-\epsilon,\Es(q_{m+1})-E_m+\epsilon]$.

Our next lemma provides the one-level Kac-Rice formula we need for our inductive argument. 
To this end,  we further set $\vec \bx_0^\be=\bx_0^\be:=0$, $\Cs_{0,1}(\epsilon,\bx_0^\be):=\Cs_{1}(\epsilon)$, $\AA_{0,1}:=\AA_1$ with $\cpt(\vec \bx_0^\be,\vec E,\vec R)$ the entire sample space.  We recall that  $\BN\subset\R^N$ denotes the ball of radius $\sqrt {N r}$  for some $1 \le r < 1 + \epsilon_\xi$.

\begin{lem}\label{lem:1lvlInductionIndic}
	Fix $0\leq m\leq k-1$ and a non-random $\bar w_N(\bx,\varphi): 
	\sqrt{q_{m+1}}\,\SN\times C^\infty(\BN) \to \R$,  assuming as in Theorem \ref{thm:low temp 1},
	 that $\bar w_N$ is rotationally invariant and that 
	for any $\varphi,\psi_{i}\in C^\infty(\BN)$ and $d\geq1$, 
	\begin{equation}
		\lim_{t_{i}\to0}\bar w_{N}\Big(\bx,\varphi+\sum_{i\leq d}t_{i}\psi_{i}\Big)=\bar w_{N}(\bx,\varphi).
		\label{eq:glim2}
	\end{equation}
	Suppose further that for any $(\vec E,\vec R)\in \bar V(\epsilon)$, 
	the function $w_{N}(\bx):=\bar w_{N}(\bx,H_{N}(\cdot))$ is 
	 a.s.\ continuous on\footnote{\label{fn:C}In fact, as can be verified by going through the proof, it is enough to work only on the set $\{\bx_{m+1}: (\vec\bx_m^{\be},\bx_{m+1})\in \AA_{m,m+1}\}$ and require continuity and finiteness of the second moment of $w_N(\bx)$ only there.} $\sqrt {q_{m+1}}\,\SN$ conditionally on $\cpt(\vec \bx_m^\be,\vec E,\vec R)$ and
	$\E[w_{N}(\bx)^{2}|\cpt(\vec \bx_m^\be,\vec E,\vec R)]<\infty$ for any $\bx$ and $N$.  Let $I\subset \R$ be an open set and $\bar I$ its closure.  Then,  for $I_m$ and $I'_m$ of \eqref{eq:Im} and \eqref{eq:Imprime},  
	\begin{align*}
		&\varlimsup_{N\to\infty}\frac1N\log\Big(\sup_{\bar V(\epsilon)} \E\Big[ \sum_{\vec\bx\in \Cs_{m,m+1}(\epsilon,\vec\bx_m^\be)}
		\indic\big\{w_N(\bx_{m+1})\in I \big\} \,\Big|\,\cpt(\vec \bx_m^\be,\vec E,\vec R) \Big]\Big)\\
		&\leq    \sup_{(E,R) \in I_m(\epsilon)\times I'_m(\epsilon)}\Theta_{\xim}(E,R)  +
		\varlimsup_{N\to\infty}\frac1N\log\Big(\sup_{\bar V(\epsilon)}\P\Big[ w_N(\bx_{m+1}^{\be})\in \bar I  \,\Big|\,\cpt(\vec \bx_{m+1}^\be,\vec E,\vec R) \Big]\Big).
	\end{align*}
\end{lem}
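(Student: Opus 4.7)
The strategy is to pull back the counting problem to $\mathbb{S}^{N-m-1}$ via the map $T^{(m)}$, apply the Kac-Rice formula there, and exploit rotational symmetry to collapse the spatial integral to a single representative point. By Corollary \ref{cor:Hm1}, conditionally on $\cpt(\vec\bx_m^\be,\vec E,\vec R)$ the process $H_N^{(m)}$ on $\mathbb{S}^{N-m-1}$ has the law of the mixed spherical model with mixture $\xim$, and the scaling \eqref{eq:ERtransform} puts $\Cs_{m,m+1}(\epsilon,\vec\bx_m^\be)$ in bijection with those critical points $\bz$ of $H_N^{(m)}$ on $\mathbb{S}^{N-m-1}$ whose normalized energy and radial derivative lie in appropriate intervals $J_N\subset\R^2$ comparable to $I_m(\epsilon)\times I'_m(\epsilon)$ up to $o_N(1)$ corrections.

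Applying Kac-Rice, the conditional expectation of interest becomes the integral over $\bz\in\mathbb{S}^{N-m-1}$ of the conditional expectation of $|\det\Hess H_N^{(m)}(\bz)|\,\indic\{U_N(\bz)\in J_N\}\,\indic\{w_N(T^{(m)}(\bz))\in I\}$, given $\grad H_N^{(m)}(\bz)=0$ and $\cpt(\vec\bx_m^\be,\vec E,\vec R)$, multiplied by the Gaussian density of $\grad H_N^{(m)}(\bz)$ at the origin. By Corollary \ref{cor:Hm2} the triple $(U_N(\bz),\grad H_N^{(m)}(\bz),\bG)$ is conditionally independent, so further conditioning on $U_N(\bz)=(E^*,R^*)$ and integrating reproduces exactly the Kac-Rice integrand behind the complexity formula \eqref{eq:complexity}, while isolating the additional factor $\P[w_N(T^{(m)}(\bz))\in I\,|\,U_N(\bz)=(E^*,R^*),\,\grad H_N^{(m)}(\bz)=0,\,\cpt(\vec\bx_m^\be,\vec E,\vec R)]$. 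Rotational invariance of the conditional law of $H_N$ given $\cpt(\vec\bx_m^\be,\vec E,\vec R)$ under orthogonal transformations fixing $\bx_1^\be,\ldots,\bx_m^\be$, combined with the rotational invariance of $\bar w_N$, implies that this probability does not depend on $\bz$; the $\bz$-integral then collapses to $\mathrm{Vol}(\mathbb{S}^{N-m-1})$ times the integrand at $\bz_0:=(T^{(m)})^{-1}(\bx_{m+1}^\be)$. At $\bz_0$, the combined conditioning translates via \eqref{eq:ERtransform} precisely to $\cpt(\vec\bx_{m+1}^\be,\vec E',\vec R')$ with $(\vec E',\vec R')\in\bar V(\epsilon)$.

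Combining these ingredients, the expected count is bounded by the supremum over $\bar V(\epsilon)$ of $\P[w_N(\bx_{m+1}^\be)\in\bar I\,|\,\cpt(\vec\bx_{m+1}^\be,\vec E',\vec R')]$ times the expected complexity of critical points with energy/radial-derivative in $I_m(\epsilon)\times I'_m(\epsilon)$, whose exponential growth rate is $\sup_{(E,R)\in I_m(\epsilon)\times I'_m(\epsilon)}\Theta_{\xim}(E,R)$ by \eqref{eq:complexity}. The main obstacle lies in this passage from pointwise probabilities to a supremum: it rests on a continuity argument, since varying $(\vec E',\vec R')$ merely shifts the conditional mean of $H_N$ by a smooth deterministic perturbation of the field, and hypothesis \eqref{eq:glim2} combined with the a.s. continuity of $w_N$ then yields continuity in $(\vec E',\vec R')$ of the law of $w_N(\bx_{m+1}^\be)$. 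Replacing $I$ by its closure $\bar I$ accommodates any boundary jumps in distribution, so the supremum over $\bar V(\epsilon)$ is attained on its closure and can be pulled outside the conditional expectation with no asymptotic loss. Taking $\frac{1}{N}\log$ of both sides then yields the claimed inequality.
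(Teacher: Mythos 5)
Your overall strategy is the one the paper uses -- pull back via $T^{(m)}$, apply Kac-Rice, exploit Corollary \ref{cor:Hm2}'s independence to isolate a conditional probability factor, and identify the Kac-Rice density with $\Theta_{\xim}$ -- but there are two technical gaps that the paper's proof addresses and your sketch does not.

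First, to apply the Kac-Rice formula with the factor $\indic\{w_N(T^{(m)}(\bz))\in I\}$ inside, one needs the vector-valued process $(H_N^{(m)}(\bz),\partial_{\bz}H_N^{(m)}(\bz),\grad H_N^{(m)}(\bz),\Hess H_N^{(m)}(\bz),w_N(T^{(m)}(\bz)))$ to have a positive, continuous joint density; but $w_N$ is merely assumed continuous and square-integrable, so its law may have atoms. The paper works around this by regularizing, setting $w_N^\epsilon := w_N + \epsilon Z$ with $Z\sim N(0,1)$ independent of everything, and showing via \eqref{eq:glim2} and bounded convergence that the conditional density $p_\epsilon(s|\underline{t})$ of $w_N^\epsilon$ given the first and second derivative data is continuous. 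The passage from $I$ to $\bar I$ in the conclusion comes from this regularization step via the Kac-Rice upper bound, not from ``accommodating boundary jumps.''

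Second, and more seriously, your claim that the Kac-Rice integrand ``isolates the additional factor'' $\P[w_N\in I\mid U_N=(E^*,R^*),\ldots]$ as a multiplicative factor against the complexity integral is not immediate. Conditionally on $\cpt(\vec\bx_m^\be,\vec E,\vec R)$, $\grad H_N^{(m)}(\bz)=0$, and $U_N(\bz)=(E,R)$, the Hessian determinant $|\det\Hess H_N^{(m)}(\bz)|$ (a function of the GOE matrix $\bG$) and the indicator $\indic\{w_N(T^{(m)}(\bz))\in\bar I\}$ (a functional of the whole field $H_N$, hence correlated with $\bG$) are \emph{not} conditionally independent, so you cannot simply factor the conditional expectation $\E_0[|\det\Hess|\,\indic\{w_N\in\bar I\}]$ into $\E_0[|\det\Hess|]\cdot\P_0(w_N\in\bar I)$. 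The paper handles this by truncating the determinant at a threshold $\Gamma_{N,R}(t_N)$ slightly above its large-$N$ typical value: on the good event $\calE$ where $|\det|\le\Gamma_{N,R}(t_N)$, one pulls out the constant $\Gamma_{N,R}(t_N)$ (obtaining the probability factor $Q_{E,R}(\bar I)$), while on $\calE^c$ a Cauchy-Schwarz bound combined with the second moment of the GOE determinant and the GOE large-deviation estimate at speed $N^2$ shows the contribution is super-exponentially small. Without this truncation argument, the factorization step in your sketch has no justification, so this is a genuine gap rather than a presentational shortcut.
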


\begin{proof}
	
	In the pure case of $\xi(t)=ct^{p}$ with $m=0$ the random variable $\partial_{\bx}H_{N}(\bx)=\partial_{\bx-\bx_0^{\be}}H_{N}(\bx)$
	is measurable \abbr{wrt} $H_{N}(\bx)$, while in the mixed case (that
	is, any other mixture $\xi(t)$),  or if $m\geq1$ the law of $(H_{N}(\bx),\partial_{\bx-\bx_m^{\be}}H_{N}(\bx))$
	is non-degenerate for $\bx\perp\bx_m^{\be}$. We assume hereafter either that the mixture $\xi(t)$
	is not pure or that $m\geq1$. In the case that $\xi(t)$ is pure and $m=0$ some modifications to the proof are needed. The main one being that the terms involving $\partial_{\bz} H^{(m)}_{N}(\bz)$ should be removed from the definitions of $U_N(\bz)$, $V_N(\bz)$ and $\mathcal{H}_N(\bz)$ below and,  since $\partial_{\bz} H^{(m)}_{N}(\bz)/\|\bz\|^2=\alpha_{p,q_1} H^{(m)}_{N}(\bz)/(N-m)$ for $\alpha_{p,q}=\sqrt{q}p=q\Rs(q)/\Es(q)$, the set $I_1\times I_2$ should be replaced by $\bar I_1:=\{E\in I_1, \alpha_{p,q_1}E\in I_2\}$.  Noting that the degeneracy only simplifies the proof and that $\cpt(\vec \bx_m^\be,\vec E,\vec R)=\Omega_0$ for $m=0$ where $\Omega_0$ is the whole sample space,  other modifications in this case are straightforward and left to the reader.
	
	Throughout the proof,  we denote expectation conditional on $\cpt(\vec \bx_m^\be,\vec E,\vec R)$ 
	for fixed values of $(E_i,R_i)_{i\leq m}$  by $\E_0$.
	 Recalling
	 the definition \eqref{eq:Hm} of $H^{(m)}_N(\bz)$,  we have from Corollary \ref{cor:Hm1}
	 that
\begin{align}
	\E_0\Big[ \sum_{\vec\bx\in \Cs_{m,m+1}(\epsilon,\vec\bx_m^\be)} \!\!\!\!\!\!\!\!
	\indic\big\{w_N(\bx_{m+1}) \in I\big\}  \Big]=\E_0\Big[ \sum_{\substack{\bz\in \mathbb{S}^{N-m-1}:\\ \grad H_{N}^{(m)}(\bz)=0}} \!\!\!\!
	\indic\big\{ w_N(T^{(m)}(\bz))\in I,\,  U_N(\bz)\in I_1\times I_2  \big\}    \Big],\label{eq:meanw}
\end{align}
	where 
	\begin{equation}\label{eq:I1I2}
		\begin{aligned}
			U_N(\bz)&:=\Big(\frac{1}{N-m} H^{(m)}_{N}(\bz),\,\frac{\partial_{\bz} H^{(m)}_{N}(\bz)}{\|\bz\|^2}\Big),\\
			I_1&:=\sqrt{\frac{N}{N-m}}\big(\Es(q_{m+1})-E_m-\epsilon,\Es(q_{m+1})-E_m+\epsilon\big),\\
			I_2&:=\sqrt{\frac{N}{N-m}}(q_{m+1}-q_m)\big(\Rs(q_{m+1})-\epsilon,\Rs(q_{m+1})+\epsilon\big).
		\end{aligned}
	\end{equation}
	We thus proceed to apply the Kac-Rice formula in order
	to bound the expectation in \eqref{eq:meanw}.  To this end,  fixing $\epsilon>0$ we set
	the process $w_{N}^{\epsilon}(\bx)=w_{N}(\bx)+\epsilon Z$
	where $Z\sim \N (0,1)$ is independent of all other random variables.  We further define
	the vector-valued processes
	\begin{align*}
		V_{N}(\bz)&:=\left( H^{(m)}_{N}(\bz),\partial_{\bz} H^{(m)}_{N}(\bz),\grad H^{(m)}_{N}(\bz),\mbox{vec}(\Hess H^{(m)}_{N}(\bz)),w_{N}^{\epsilon}(T^{(m)}(\bz))\right), \\
		\mathcal{H}_{N}(\bz)&:=\left( H^{(m)}_{N}(\bz),\partial_{\bz} H^{(m)}_{N}(\bz),\grad H^{(m)}_{N}(\bz),\mbox{vec}(\Hess H^{(m)}_{N}(\bz))\right),
	\end{align*}
	where $\mbox{vec}(\Hess H^{(m)}_{N}(\bz))$ means the vectorization of the on-and-above elements of $\Hess H^{(m)}_{N}(\bz)$. Note that by rotational invariance, the laws of $V_{N}(\bz)$ and $\mathcal{H}_{N}(\bz)$ do not depend on $\bz\in \mathbb{S}^{N-m-1}$.
	
	Set $\oN:=N-m-1$ and $D=2+\oN+\frac12\oN(\oN+1)$.  By Corollary \ref{cor:Hm2},  conditional on  $\cpt(\vec \bx_m^\be,\vec E,\vec R)$,   
	per fixed $\bz$,  the $D$-dimensional vector $\mathcal{H}_{N}(\bz)$ has non-degenerate Gaussian density $\varphi_{\mathcal{H}_{N}(\bz)}(\cdot)$ on $\R^{D}$. 
	Since $H_{N}$ is a Gaussian process, conditional on $\cpt(\vec \bx_m^\be,\vec E,\vec R)$ and on $\mathcal{H}_{N}(\bz)=\underline{t}=(t_{1},\ldots,t_{D})$, the process $H_{N}(\cdot)$ has the same law as
	\[
	\E_0\left[H_{N}(\cdot)\,|\,\mathcal{H}_{N}(\bz)=\underline{t}\right]+Y_{N}(\cdot)=\sum_{i=1}^{D}t_{i}\psi_{N}^{(i)}(\cdot)+Y_{N}(\cdot),
	\]
	for some deterministic functions $\psi_{N}^{(i)}\in C^\infty(\BN)$, where $Y_{N}\in C^\infty(\BN)$ is a Gaussian process whose law is that of $H_N$ given that $\mathcal{H}_{N}(\bz)=0$.  In particular, 
	conditionally on $\cpt(\vec \bx_m^\be,\vec E,\vec R)$,  the vector $V_{N}(\bz)$ has at any $(\underline{t},s)$
	the density
	$p_{\epsilon}(s|\underline{t}) \varphi_{\mathcal{H}_{N}(\bz)}(\underline{t})$,  where 
	\begin{equation*}
		\label{eq:pst}
		\begin{aligned}
			p_{\epsilon}(s|\underline{t})&:=\frac{1}{\sqrt{2\pi\epsilon^{2}}}\E_0\bigg(\exp\Big[-\frac{1}{2\epsilon^{2}}\big(s-\bar w_{N}\big(T^{(m)}(\bz),H_{N}(\cdot)\big)\big)^{2}\Big] \,\Big|\, \mathcal{H}_N(\bz)=\underline{t}\bigg)
			\\
			&=\frac{1}{\sqrt{2\pi\epsilon^{2}}}\E_0\exp\Big[-\frac{1}{2\epsilon^{2}}\Big(s-\bar w_{N}\big(T^{(m)}(\bz),\sum_{i=1}^{D}t_{i}\psi_{N}^{(i)}(\cdot)+Y_{N}(\cdot)\big)\Big)^{2}\Big].
		\end{aligned}
	\end{equation*}
	From the bounded convergence theorem and \eqref{eq:glim2},  the density $p_{\epsilon}(s|\underline{t})$
	is continuous in $(\underline{t},s)$,  and hence,  by the preceding, 
	conditionally,  $V_{N}(\bz)$ has a strictly positive,  continuous density.
	  This
    implies in turn,  by the same reasoning as in \cite[Proposition 5.1]{DemboSubag2020},   that
	the \abbr{rhs} of \eqref{eq:meanw} is bounded above by $K(\Omega_0)$ where $\Omega_0$ 
	is the whole sample space and  
\begin{align*}
	K(\cdot):=C_{N,m}
	\E_0\bigg[ \left|\det\left(\Hess  H^{(m)}_{N}(\bz)\right)\right|\indic\{\cdot\}\indic\big\{  w_N(T^{(m)}(\bz))\in \bar I,\, U_N(\bz)\in I_1\times I_2  \big\} \, \bigg| \, \grad H^{(m)}_{N}(\bz)=0 \bigg],
\end{align*} 	
with
$\bz\in\mathbb{S}^\oN$
an arbitrary point, 
	\begin{equation}\label{eq:CNm}
	C_{N,m}:=(\sqrt{ N-m})^{N-m-1}
	\frac{2\pi^{(N-m)/2}}{\Gamma\left((N-m)/2\right)} \widetilde{\varphi}_{N,m} (0)
	\end{equation}
	and $\widetilde{\varphi}_{N,m}(\cdot)$ denotes the Gaussian density of $\grad H^{(m)}_{N}(\bz)$
	conditional on $\cpt(\vec \bx_m^\be,\vec E,\vec R)$,  which by Corollary \ref{cor:Hm2}
	is merely the density of a $\N(0,\xim'(1)\mathbf{I}_{\oN})$ vector.  
	To bound $K(\Omega_0)=K(\calE)+K(\calE^c)$ we will bound each of the two summands 
	for $\calE$ to be determined below.
	
	Now,  by Corollary \ref{cor:Hm2} the density of $U_N(\bz)$ conditional on 
	$\cpt(\vec \bx_{m}^\be,\vec E,\vec R)$ and $\grad H^{(m)}_{N}(\bz)=0$,  is merely the density 
	$\varphi_{N,m}$ of $\N(0,\frac{1}{N-m}\Sigma_{\xim})$,  while upon conditioning also on  
	 $U_N(\bz)=(E,R)$,  the matrix $\Hess  H^{(m)}_{N}(\bz)$ has the law of $b_m \bG - R \,\mathbf{I}$
	where $b_m^2 := \xim''(1)$ and $\bG$ is
	an $\oN$-dimensional \abbr{GOE} matrix,  scaled by the factor $
	\sqrt{\frac{\oN}{\oN +1}}$.  To bound $K(\Omega_0)$,  we set
\begin{equation}\label{eq:amirLD2}
Z_{\oN,R}:=\frac{1}{\oN}\log \big|\det(\bG-(R/b_m)\mathbf{I})\big| = 
 \int \log |\lambda - R/b_m| d\mu_{\oN}(\lambda) \,,
\end{equation}
where $\mu_{\oN}$ is the empirical measure of eigenvalues of $\bG$. Considering for $t>0$ the event
$\calE:=\big\{ Z_{\oN,R} \le \Omega(R/b_m) + t 
\big\}$ and decomposing according to the value of $U_N(\bz)$,  we see that
	\[
	K(\calE) \le 
	C_{N,m} \int_{I_1\times I_2} \Gamma_{\oN,R}(t) Q_{E,R} (\bar I) \varphi_{N,m}(E,R)dE dR \,,
	\]	
	where for $\Omega(\cdot)$ of \eqref{eq:Omega},
	\begin{align}\label{eq:Gamma}
	\Gamma_{\oN,R}(t) &: = b_m^{\oN} \exp\big( \oN(\Omega(R/b_m)+t)\big)
\qquad \text{and} \\
		Q_{E,R} (\bar I)&:=
\mathbb{P}_0\Big[ w_N(T^{(m)}(\bz))\in \bar I  \, \Big| \, \grad H^{(m)}_{N}(\bz)=0,\, U_N(\bz)=(E,R) \Big]\,.\nonumber
\end{align}
Similarly,  
\begin{equation*}
K(\calE^c) \le  C_{N,m} \int_{I_1 \times I_2}  b_m^{\oN} 
 \E \Big[ 
e^{\oN Z_{\oN,R}}\cdot  \indic\{Z_{\oN,R} > \Omega(R/b_m)+ t \}
\Big] \varphi_{N,m}(E,R)dE dR \,,
\end{equation*}
from which we deduce by Cauchy-Schwarz,  that 
$K(\calE^c) \le  C_{N,m} A(t) |I_1||I_2|$,  for
\begin{align*}
A (t)^2 := \sup_{R\in I_2} \Big\{ b_m^{2 \oN} \E \Big[ e^{2 \oN Z_{\oN,R}} 
\Big]  \Big\} 
\sup_{R\in I_2} \Big\{ \P  \big( Z_{\oN,R} > \Omega(R/b_m) + t \big) \Big\} \,.
\end{align*}
Denoting by $\lambda_i$ the eigenvalues of $\bG-(R/b_m)\mathbf{I}$ and setting $\bar\lambda:=\max_i |\lambda_i|$, we have that $Z_{\oN,R} \leq  \log \bar\lambda$.  Recall from \cite[Lemma 6.3]{BDG},  that for 
some universal constant $x_0$ and any 
$x \ge x_0$ 
\begin{equation}\label{eq:BDG}
\P\big( |\bar \lambda| > x + |R|/b_m \big) \le e^{-\oN x^2/9} \qquad \forall \oN,  R \,,
\end{equation}
hence,
\begin{equation}\label{eq:LDP-bd1}
\limsup_{N \to \infty} \sup_{R \in I_2} \frac{1}{\oN} \log \Big( b_m^{2\oN} 
\E \big[ e^{2 \oN Z_{\oN,R}}  \big] \Big) < \infty \,.
\end{equation}
Further,  fixing $t>0$,  define for $\delta>0$  the bounded,  continuous truncated functions 
	\[
	\log_\delta(t) := \log \Big( ( |t| \vee \delta)  \wedge (1/\delta) \Big)\,,
	\]  
whereby
	\begin{align}\label{eq:LDP-bd2}
		\P  \big( Z_{\oN,R} > \Omega(R/b_m) + t \big)&\leq \P  \Big( \int \log_\delta (\lambda - R/b_m) d\mu_{\oN}(\lambda)  > \Omega(R/b_m) + t \Big)+\P\big(\bar\lambda > \frac1\delta\big) \,.
	\end{align}
Note that there exists a number $\delta_o(t)>0$ depending on $t$, such that for any $\delta \le \delta_o(t)$ and all $R\in I_2$,  
\[
\frac{1}{2\pi}  \int_{-2}^{2} \log_{\delta} (\lambda-R/b_m) \sqrt{4-\lambda^2}d\lambda - \Omega(R/b_m) \le \frac{t}{2} \,,
\]
in which case,  by the large deviation principle at rate $\oN^2$ for $\mu_\oN$
(see \cite[Theorem 2.1.1]{BAG97}),  we bound the first term on the \abbr{rhs} of \eqref{eq:LDP-bd2}
by  $\exp(-C_\delta \oN^2)$ for some $C_\delta>0$,  all $\oN$ large enough and 
any $R \in I_2$.   Bounding the second term 
in \eqref{eq:LDP-bd2} by \eqref{eq:BDG},  then taking $N \to \infty$ followed by $\delta \to 0$ we conclude that for any $t>0$, 
	\begin{align*}
		\lim_{N \to \infty} \sup_{R \in I_2} \frac{1}{\oN} \log \P  \big( Z_{\oN,R} > \Omega(R/b_m) + t \big)  &= -\infty \,.
	\end{align*}
	This,  together with \eqref{eq:LDP-bd1}
	implies in turn that $A(t_N)\leq e^{-C_N N}$ for some $t_N \to 0$ and $C_N \to \infty$.

Assume \abbr{wlog} that $\bz=(T^{(m)})^{-1}(\bx_{m+1}^\be)$ and set $(E_{m+1},R_{m+1})$ by the inverse of \eqref{eq:ERtransform},  namely
	\begin{equation}\label{eq:ERinversetransform}
		E_{m+1}:= \sqrt{\frac{N-m}{N}}E+E_m,\qquad R_{m+1}:= \sqrt{\frac{N-m}{N}}\frac{1}{q_{m+1}-q_m}R,
	\end{equation}
	to get that
	\[
	Q_{E,R}(\bar I) =
\mathbb{P}\left(  w_N(\bx_{m+1}^{\be})\in \bar I \, \Big| \, \cpt(\vec \bx_{m+1}^\be,\vec E,\vec R) \right)\,.
\]	
Consequently,  subject to \eqref{eq:ERinversetransform},  we have that for some $t_N \to 0$ and $C_N \to \infty$,  
\begin{align}\label{eq:A-new}
& \E\Big[ \sum_{\vec\bx\in \Cs_{m,m+1}(\epsilon,\vec\bx_m^\be)}
		\indic\big\{w_N(\bx_{m+1})\in I \big\} \,\Big|\,\cpt(\vec \bx_m^\be,\vec E,\vec R) \Big] \nonumber \\
		& \le C_{N,m} \Big[ e^{-C_N N} +  \int_{I_1\times I_2} \Gamma_{\oN,R}(t_N)
		\mathbb{P}\left(  w_N(\bx_{m+1}^{\be})\in \bar I \, \Big| \, \cpt(\vec \bx_{m+1}^\be,\vec E,\vec R) \right)
		 \varphi_{N,m}(E,R)dEdR  \Big]\,.
\end{align}
By definition,  the mapping \eqref{eq:ERinversetransform} is a bijection between $(E,R) \in I_1\times I_2$ and
\[
\big\{(E_{m+1},R_{m+1}):\, |E_{m+1}-\Es(q_{m+1})|<\epsilon,\,|R_{m+1}-\Rs(q_{m+1})|<\epsilon\big\}\,.
\] 
 Thus,  if $(E_i,R_i)_{i \le m} \in \bar V(\epsilon)$ and $(E,R) \in I_1 \times I_2$,  then 
also $(E_i,R_i)_{i \le m+1} \in \bar V(\epsilon)$ (more precisely, in both cases, $(E_i,R_i)_i$ can be completed to a length $k$ vector in $\bar V(\epsilon)$).  Hence,
	\begin{align}
			& \varlimsup_{N\to\infty} \frac1N\log\Big(\sup_{\bar V(\epsilon)} \E\big[ \sum_{\vec\bx\in \Cs_{m,m+1}(\epsilon,\vec\bx_m^\be)}
			\indic\big\{w_N(\bx_{m+1})\in I \big\} \,\Big|\,\cpt(\vec \bx_m^\be,\vec E,\vec R) \big]\Big) \nonumber \\
			&\quad\leq   \varlimsup_{N\to\infty}  \sup_{E_m}\sup_{I_1\times I_2} \frac1N\log\Big( C_{N,m}
			\Gamma_{\oN,R}(0) \varphi_{N,m}(E,R)
			\Big)  \label{eq:supEm} \\
			&\quad+
			\varlimsup_{N\to\infty}\frac1N\log\Big(\sup_{\bar V(\epsilon)}\P\Big[ w_N(\bx_{m+1}^{\be})\in \bar I  \,\Big|\,\cpt(\vec \bx_{m+1}^\be,\vec E,\vec R) \Big]\Big)=:A+B\,,\nonumber
	\end{align}
	where the supremum in \eqref{eq:supEm} is over $E_m$ such that $|E_m - \Es(q_m)|<\epsilon$. Noting that $B$ is the same term as in the bound stated in the lemma, it remains to show that 
	\begin{equation}\label{eq:Abound}
		A \le  \sup_{(E,R) \in I_m(\epsilon)\times I'_m(\epsilon)}\Theta_{\xim}(E,R)\,.
	\end{equation}
	
	The function whose supremum is taken in \eqref{eq:supEm} only depends on $(E,R)$. The dependence on $E_m$ is through the set $I_1$  defined in \eqref{eq:I1I2}. Instead of the double supremum, we can therefore take in \eqref{eq:supEm} a supremum over $(E,R)$ in 
	\[
	J_N(\epsilon):=\big\{ (E,R):\, \exists E_m \mbox{\ \ s.t.\ \ }|E_m - \Es(q_m)|<\epsilon,\,(E,R)\in I_1\times I_2\big\}\,.
	\]
	For $I_m(\epsilon)$ and $I'_m(\epsilon)$ of \eqref{eq:Im}, the decreasing sequence $J_{N+1}(\epsilon)\subset J_N(\epsilon)$ converges to 
	\[
	\lim_{N \to \infty} J_N(\epsilon):=\cap_{N>m} J_N(\epsilon) = I_m(\epsilon)\times I'_m(\epsilon)\,.
	\]

	Finally,  for $C_{N,m}$,  $\Gamma_{\oN,R}(0)$,  $\Sigma_{\xim}$ of \eqref{eq:CNm}, \eqref{eq:Gamma} and \eqref{eq:Sigmaxi},  respectively,  with
	\[
	\widetilde{\varphi}_{N,m}(0) = (2\pi \xi_{(m)}'(1))^{-\frac{N-m-1}{2}},\quad \varphi_{N,m}(E,R)=\frac{N-m}{2\pi\sqrt{\det(\Sigma_{\xim})}}\exp\Big(-\frac{N-m}2{(E,R)\Sigma_{\xim}^{-1}(E,R)^T}\Big),
	\] 
we confirm via Stirling's approximation (for $\Gamma((N-m)/2)$ in \eqref{eq:CNm}), 
 that uniformly on compacts,  
	\[
	 \lim_{N\to\infty} \frac1N \log\Big( C_{N,m} \Gamma_{\oN,R}(0) \varphi_{N,m}(E,R) \Big) 
	= \Theta_{\xim}(E,R) \,,
	 \]
	 and thereby conclude the proof of \eqref{eq:Abound}, and thus the lemma. 
\end{proof}

Building on Lemma \ref{lem:1lvlInductionIndic},  we further  control the corresponding conditional 
first moment of $w_N(\cdot)$.
\begin{cor}\label{cor:1lvlKR_E}
	Assume in the setting of Lemma \ref{lem:1lvlInductionIndic},  that in addition,  $w_N(\bx) \in [0, e^{NM}]$ for  finite $M$,  any $\bx$ and $N$. Then,
	\begin{align}\label{eq:Ew}
		& \varlimsup_{N\to\infty}\frac1N\log\Big(\sup_{\bar V(\epsilon)} \E\Big[ \sum_{\vec\bx\in \Cs_{m,m+1}(\epsilon,\vec\bx_m^\be)}
		w_N(\bx_{m+1}) \,\Big|\,\cpt(\vec \bx_m^\be,\vec E,\vec R) \Big]\Big) \nonumber \\
		&\leq    \sup_{I_m(\epsilon)\times I'_m(\epsilon)}\Theta_{\xim}(E,R)  +
		\varlimsup_{N\to\infty}\frac1N\log\Big(\sup_{\bar V(\epsilon)}\E \Big[ w_N(\bx_{m+1}^{\be}) \,\Big|\,\cpt(\vec \bx_{m+1}^\be,\vec E,\vec R) \Big]\Big). 
	\end{align}
\end{cor}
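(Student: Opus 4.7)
The plan is to prove \eqref{eq:Ew} by a logarithmic discretization of the range of $w_N(\bx_{m+1})$, reducing the expected weighted sum over $\Cs_{m,m+1}(\epsilon,\vec\bx_m^\be)$ to a finite sum of expected counts of the form treated by Lemma \ref{lem:1lvlInductionIndic}.

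Fix $\delta>0$ and for $-N\le j\le \lceil NM/\delta\rceil$ set $I_j:=(e^{j\delta},e^{(j+1)\delta})$. Since $0\le w_N\le e^{NM}$, we have pointwise
$$w_N(\bx_{m+1})\le e^{-N\delta}+\sum_j e^{(j+1)\delta}\indic\big\{w_N(\bx_{m+1})\in\bar I_j\big\}.$$
Summing over $\vec\bx\in\Cs_{m,m+1}(\epsilon,\vec\bx_m^\be)$, taking conditional expectations, and applying Lemma \ref{lem:1lvlInductionIndic} both with $I=I_j$ (for each $j$) and with $I=\R$ (to bound $\E[|\Cs_{m,m+1}|\mid\cpt]\le e^{N(\Theta_\star+o(1))}$, where $\Theta_\star:=\sup_{I_m(\epsilon)\times I'_m(\epsilon)}\Theta_{\xim}$), yields
$$\sup_{\bar V(\epsilon)}\E\Big[\sum_{\vec\bx} w_N(\bx_{m+1})\,\Big|\,\cpt\Big]\le e^{N(\Theta_\star-\delta+o(1))}+e^{N(\Theta_\star+o(1))}\sum_j e^{(j+1)\delta}\sup_{\bar V(\epsilon)}\P\big[w_N(\bx_{m+1}^\be)\in\bar I_j\,\big|\,\cpt_{m+1}\big].$$
Crucially, the $o(1)$ correction produced by Lemma \ref{lem:1lvlInductionIndic} is uniform in the interval: inspecting the Kac-Rice bound \eqref{eq:A-new} in its proof, the interval $I$ enters only through the factor $\P[w_N\in\bar I\mid\cpt_{m+1}]$, while $C_{N,m}\Gamma_{N,R}(t_N)\varphi_{N,m}$ depends solely on $\xim$ and the fixed window $I_1\times I_2$.

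To handle the remaining sum, let $A:=\sup_{\bar V(\epsilon)}\E[w_N(\bx_{m+1}^\be)\mid\cpt_{m+1}]$. Markov's inequality (applied at each fixed $v\in\bar V(\epsilon)$ before taking the supremum) gives $\sup_{\bar V(\epsilon)}\P[w_N\in\bar I_j\mid\cpt_{m+1}]\le\min(1,A/e^{j\delta})$. Splitting the sum, whose length is $O(N)$ for fixed $\delta$, at $j^\star:=\lceil(\log A)/\delta\rceil$ and summing a geometric series on $\{j<j^\star\}$ and the constant bound $A/e^{j\delta}\cdot e^{(j+1)\delta}=e^\delta A$ on $\{j\ge j^\star\}$, one gets $\sum_j e^{(j+1)\delta}\sup_{\bar V(\epsilon)}\P[\,\cdot\,]\le C_\delta\, N\,A$. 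Hence
$$\varlimsup_{N\to\infty}\frac{1}{N}\log\sup_{\bar V(\epsilon)}\E\Big[\sum_{\vec\bx} w_N(\bx_{m+1})\,\Big|\,\cpt\Big]\le\max\Big\{\Theta_\star-\delta,\;\Theta_\star+\varlimsup_{N\to\infty}\frac{1}{N}\log A\Big\}.$$
Since this holds for every $\delta>0$, letting $\delta\to\infty$ removes the first argument of the max and yields \eqref{eq:Ew}; the degenerate situation $\varlimsup\frac{1}{N}\log A=-\infty$ is covered by the same limit.

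The main obstacle is to secure the uniformity of the implicit constant in Lemma \ref{lem:1lvlInductionIndic} across the $O(N)$ intervals $I_j$, so that summing polynomially many copies does not erode the exponential rate; this requires a short but careful reading of the proof of Lemma \ref{lem:1lvlInductionIndic}, confirming that the only $I$-dependence in \eqref{eq:A-new} sits in the final probability factor. A minor technical point is that the supremum over $\bar V(\epsilon)$ need not commute with the sum over $j$ (the supremum for different $j$ may be attained at different points), which is precisely why the crude Markov bound $\min(1,A/e^{j\delta})$ is used; the resulting $O(N)$ loss is harmless on the $\frac{1}{N}\log$ scale.
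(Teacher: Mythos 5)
Your proposal is correct and rests on the same central idea as the paper's proof: replace the limsup form of Lemma~\ref{lem:1lvlInductionIndic}, which is too weak to sum over polynomially many intervals, by its explicit quantitative core \eqref{eq:A-new}, whose dependence on the target set $I$ is isolated in the probability factor $\P[w_N\in\bar I\mid\cpt_{m+1}]$, and then discretize the range of $w_N$ logarithmically. You are right to flag the uniformity-over-$j$ point as the crux; \eqref{eq:A-new} indeed provides it.

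Your execution differs from the paper's in two ways. First, the paper chooses an \emph{overlapping} dyadic covering $B_1=(-e^{-N^2},e^{-N^2})$, $B_i=(2^{i-2.5}e^{-N^2},2^{i-1}e^{-N^2})$ for which the pointwise domination $\sum_i b_i\indic_{\bar B_i}(w)\le 5w+e^{-N^2}$ holds; substituting this into \eqref{eq:A-new} at each fixed $(E,R)$ directly gives \eqref{eq:KRunifrombd} with $\E[w_N\mid\cpt_{m+1}]$ appearing, with no detour. You instead use disjoint slices, extract the supremum over $\bar V(\epsilon)$ too early (before the $(E,R)$-integral in \eqref{eq:A-new}), and recover the lost correlation via Markov, paying a harmless $O(N)$ factor; this forces the auxiliary $\delta\to\infty$ limit. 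Had you kept the sum over $j$ inside the $(E,R)$-integral you would get, pointwise in $(E,R)$, $\sum_j e^{(j+1)\delta}\P[w_N\in\bar I_j\mid\cpt_{m+1}]\le 2e^\delta\E[w_N\mid\cpt_{m+1}]$, which reproduces the paper's cleaner bound without Markov. Second, you should replace the closed sets $\bar I_j$ in the covering bound by slightly wider open intervals before invoking \eqref{eq:A-new} (which, as you apply it to $\indic\{w_N\in\bar I_j\}$, is stated for open $I$); this is a trivial widening. Neither issue is a gap -- your argument works -- but the paper's choice of covering avoids both detours.
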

\begin{proof} We fix the values of $(E_i,R_i)_{i\leq m}$ and cover $[0,e^{MN}]$ by the union of 
open intervals $B_1,\ldots,B_\kappa$ (allowing $B_i$ and $\kappa$ to depend on $N$),  with 
$b_i:=\sup B_i$,  such that    
\[
w  \le \sum_{i=1}^\kappa b_i \indic_{B_i} (w) \le \sum_{i=1}^\kappa b_i \indic_{\bar B_i}(w) \stackrel{(\star)}{\le}
5 w + e^{-N^2}  \quad  and \quad \sum_{i=1}^\kappa b_i
\stackrel{(\star\star)}{\le}  6 e^{M N}  \qquad \forall w \in [0,e^{MN}] \,.  
\]
Indeed,  the only non-trivial inequalities here are $(\star)$ and $(\star\star)$,  both of which are
satisfied for example by $B_1=(-e^{-N^2},e^{-N^2})$ and $B_i = (2^{i-2.5} e^{-N^2},2^{i-1} e^{-N^2})$,  $i \ge 2$.
Utilizing the preceding bounds,  we get from \eqref{eq:A-new} for $I=B_i$,  $1 \le i \le \kappa$,  that for some $t_N \to 0$ and $C_N \to \infty$,  under the relation \eqref{eq:ERinversetransform},  
\begin{align}\label{eq:KRunifrombd}
\E\Big[  \sum_{\vec\bx\in \Cs_{m,m+1}(\epsilon,\vec\bx_m^\be)} & \!\!\!\!
		w_N(\bx_{m+1}) \,\Big|\,\cpt(\vec \bx_m^\be,\vec E,\vec R) \Big] \le C_{N,m} \bigg\{  6 e^{M N} e^{-C_N N} 
		\nonumber \\
		& +  \int_{I_1\times I_2} \Gamma_{\bar N,R}(t_N) 
		\Big( e^{-N^2} + 5 \E \big[  w_N(\bx_{m+1}^{\be}) \, \big| \, \cpt(\vec \bx_{m+1}^\be,\vec E,\vec R) \big] \Big)
		 \varphi_{N,m}(E,R)dEdR  \bigg\}\,,
\end{align}
and \eqref{eq:Ew} follows by the same argument as after \eqref{eq:A-new}.
For later use,  we note that the bound \eqref{eq:KRunifrombd} holds uniformly for any $w_N$ as in the corollary 
(since $C_N$ depends only on $A(t_N)$ whose definition does not involve $w_N$).
\end{proof}

\subsection{Proof of Proposition \ref{prop:multilvlKR}}\label{sec:multiLvlKR}
To prove Proposition \ref{prop:multilvlKR},  we set for $m\leq n$ and $\vec\by\in\AA_m$,
\begin{equation}\label{eq:alpha^eps}
	\alpha_{m,n}^{\epsilon}(\vec\by):=\sum_{\vec\bx\in \mathscr{C}_{m,n}(\epsilon,\vec\by)} \alpha_n(\vec\bx)\,,
\end{equation}
where $\mathscr{C}_{m,n}(\epsilon,\vec\by)$ is defined in \eqref{eq:xi_condition_mn}.
With $\vec\bx_0^{\be}=0$,  $\mathscr{C}_{0,n}(\epsilon,0)=\mathscr{C}_{n}(\epsilon)$
and $\cpt(\vec \bx_{0}^\be,\vec E,\vec R)$ the entire sample space,  the 
\abbr{lhs} of \eqref{eq:multilvlKR} is merely
	\begin{align*}
	& \varlimsup_{N\to\infty}\frac1N \log \E\Big[ \alpha_{0,n}^{\epsilon}(\vec\bx_0^{\be}) \wedge e^{MN} 
\,\big|\,	\cpt(\vec \bx_{0}^\be,\vec E,\vec R) \Big] \,.
	\end{align*}
Further,  as $\alpha_{n,n}^{\epsilon}(\cdot)=\alpha_n(\cdot)$,
the proposition follows upon showing that,  for any $0\leq m\leq n-1$,
\begin{equation}\label{eq:KRalphaeps}
\begin{aligned}
	&\varlimsup_{N\to\infty}\frac1N\log\Big(\sup_{\bar V(\epsilon)} \E\Big[ \alpha_{m,n}^{\epsilon}(\vec\bx_m^{\be}) \wedge e^{MN} \,\big|\,\cpt(\vec \bx_{m}^\be,\vec E,\vec R) \Big]\Big)\\
	&\leq    \sup_{I_m(\epsilon)\times I'_m(\epsilon)}\Theta_{\xim}(E,R)  +
	\varlimsup_{N\to\infty}\frac1N\log\Big(\sup_{\bar V(\epsilon)}\E\Big[ \alpha_{m+1,n}^{\epsilon}(\vec\bx_{m+1}^{\be}) \wedge e^{MN}  \,\big|\,\cpt(\vec \bx_{m+1}^\be,\vec E,\vec R) \Big]\Big).
\end{aligned}
\end{equation}	
To this end,  we plan to apply the one-level Kac-Rice formula of Corollary \ref{cor:1lvlKR_E}.  However,  as we move $\vec\by$, new points may `enter' $\mathscr{C}_{m,n}(\epsilon,\vec\by)$ while others may `disappear' from it.  With 
$\alpha_{m,n}^{\epsilon}(\vec \by)$ discontinuous at those $\vec\by$ where this happens,  some 
preliminary work is needed to approximate $\alpha_{m,n}^{\epsilon}(\cdot)$,  $m \le n$ by continuous functions
of the form 
\[
\alpha_{m,n}^{\epsilon,\delta}(\vec\by):=\sum_{\vec\bx\in \mathscr{C}_{m,n}(\epsilon,\vec\by)}
\Psi^{\epsilon,\delta}_{m,n}(\vec\bx)
\alpha_n(\vec\bx)\,.
\]
Specifically,  to define $\gradt H_{N}(\bx_i):=M_{i}\nabla H_{N}(\bx_i)$ we used an arbitrary matrix $M_i=M_i(\vec\bx)\in\R^{N-i\times N}$
whose rows form an orthonormal basis of $\big(\sp\{\bx_1,\ldots,\bx_i\}\big)^{\perp}$. Using the same matrix, for any $i\leq m\leq k$ and $\vec\bx=(\bx_1,\ldots,\bx_m)$, define 
$\Hesst H_{N}(\bx_i):=M_{i}\nabla^2 H_{N}(\bx_i)M_i^{\top}$ and 
\begin{equation}\label{eq:etai}
	\eta_i(\vec\bx):=\min_j \bigg|\lambda_j(\Hesst H_{N}(\bx_i))-\frac{\partial_{\bx_i-\bx_{i-1}}H_{N}(\bx_i)}{\|\bx_i-\bx_{i-1}\|^2}\bigg|,
\end{equation} 
where $\lambda_j(A)$ are the eigenvalues of a matrix $A$.  For any $\frac{\epsilon}{2}>\delta>0$,  let 
$\bar h_\delta(t)=1 \wedge (\frac{t}{\delta}-1)_+$ and set
\begin{align*}
	\Psi^{\epsilon,\delta}_{m,n}(\vec\bx):=\prod_{i=m+1}^n&\bar h_\delta\Big(\epsilon -\big | \frac1NH_N(\bx_i)-\Es(q_i)\big|\Big)
	\bar h_\delta\Big(\epsilon - \big| \frac{\partial_{\bx_i-\bx_{i-1}}H_{N}(\bx_i)}{\|\bx_i-\bx_{i-1}\|^2}-\Rs(q_i)\big| \Big)
	\bar h_\delta\big(\eta_i(\vec\bx)\big).
\end{align*}
Note that $\bar h_\delta(t) \nearrow \indic_{(0,\infty)}(t)$ as $\delta \downarrow 0$,  and therefore 
$\Psi^{\epsilon,\delta}_{m,n}(\vec\bx) \nearrow 1$ for any 
$\vec \bx \in \mathscr{C}_{m,n}(\epsilon,\vec\bx_m^{\be})$ with $\min_{m<j \le n} \{ \eta_j(\vec\bx)\} > 0$.
Recall that any such $\vec \bx$ is in $\AA_{m,n}(\vec\bx_m^{\be})$ and must satisfy the condition as in \eqref{eq:xi_condition_mn} 
at any $m< i\leq n$.  In particular,  for such $\vec\bx$ we have that 
$\gradt H_{N}(\bx_i)=0$ for all $i \in (m,n]$.  The same applies if 
$\vec\bx\in \mathscr{C}_{m+1,n}(\epsilon,\vec\by)$ for some 
$\vec\by\in \mathscr{C}_{m,m+1}(\epsilon,\vec\bx_m^{\be})$,  so from Lemma \ref{lem:etapositive_conditional} 
we deduce 
that conditionally on $\cpt(\vec\bx_m^{\be},\vec E,\vec R)$,  almost surely,  as $\delta \downarrow 0$, 
$$
\sum_{\substack{\vec\bx\in \mathscr{C}_{m+1,n}(\epsilon,\vec\by)\\ \vec\by\in \mathscr{C}_{m,m+1}(\epsilon,\vec\bx_m^{\be})}}
\Psi^{\epsilon,\delta}_{m+1,n}(\vec\bx)
\alpha_n(\vec\bx)\,
\nearrow 
\sum_{\substack{\vec\bx\in \mathscr{C}_{m+1,n}(\epsilon,\vec\by)\\ \vec\by\in \mathscr{C}_{m,m+1}(\epsilon,\vec\bx_m^{\be})}} \alpha_n(\vec\bx) \,,
$$
or equivalently,  that 
\begin{equation}\label{eq:alpha-del}
\sum_{\vec\by\in \mathscr{C}_{m,m+1}(\epsilon,\vec\bx_m^{\be})} \alpha_{m+1,n}^{\epsilon,\delta}(\vec\by) 
\nearrow 
\sum_{\vec\by \in \mathscr{C}_{m,m+1}(\epsilon,\vec\bx_m^{\be})}
\alpha_{m+1,n}^{\epsilon}(\vec\by) \,.
\end{equation}

\begin{lem}\label{lem:etapositive_conditional}
For any $0 \le m < n \leq N$ and $\vec E, \vec R$,  conditionally on $\cpt(\vec\bx_m^{\be},\vec E,\vec R)$,  almost surely,
\begin{equation*}\label{eq:eta0}
\Big\{ \vec\bx\in \AA_{m,n}(\vec\bx_m^{\be}):\, \forall m< i\leq n,\, \gradt H_{N}(\bx_i)=0,\, \min_{m < j\leq n}\eta_j(\vec\bx)= 0  \Big\} = \varnothing.
\end{equation*}
\end{lem}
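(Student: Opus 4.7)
The plan is a Kac--Rice / Bulinskaya type argument showing that the conditional expected cardinality of the set in \eqref{eq:eta0} is zero. Since $\eta_j(\vec\bx)=0$ means some eigenvalue $\lambda_\ell(\Hesst H_N(\bx_j))$ coincides with the radial derivative $R_j^\bx := \partial_{\bx_j-\bx_{j-1}}H_N(\bx_j)/\|\bx_j-\bx_{j-1}\|^2$, a union bound over $j\in(m,n]$ and $\ell\in\{1,\ldots,N-j\}$ reduces matters to fixing such a pair $(j,\ell)$ and showing that conditionally a.s.\ no $\vec\bx\in\AA_{m,n}(\vec\bx_m^\be)$ simultaneously satisfies $\gradt H_N(\bx_i)=0$ for all $m<i\le n$ and $\lambda_\ell(\Hesst H_N(\bx_j))=R_j^\bx$.

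With $(j,\ell)$ fixed, I would iterate Corollary \ref{cor:Hm1} to peel off one level at a time, conditioning first on a valid choice of $(\bx_{m+1},\ldots,\bx_{j-1})\in\AA_{m,j-1}(\vec\bx_m^\be)$ satisfying the critical-point conditions at the intermediate levels. After such conditioning, the field on the remaining slice at level $j$ is (in law) a spherical mixed model $\tilde H$ on a sphere of dimension $N-j$ with mixture $\xi_{(j-1)}$. The set of admissible intermediate paths is conditionally a.s.\ finite (by iterated Kac--Rice finiteness, implicit in the one-level formula \eqref{eq:1lvlComplexity}), so a Fubini/tower argument reduces the claim to the following single-level statement: conditionally a.s., no critical point $\bz$ of $\tilde H$ on its sphere has a tangential Hessian eigenvalue equal to its radial derivative.

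For this base case the key input is Corollary \ref{cor:Hm2}. At any fixed $\bz$ it gives the independence of $(U_N(\bz),\grad \tilde H(\bz),\bG)$, where $\bG$ is a \abbr{GOE} matrix, the radial component $R$ of $U_N(\bz)$ is Gaussian, and the tangential Hessian is expressible as $c_1 \bG - R\,\mathbf{I}$ up to explicit constants. Conditionally on $\grad \tilde H(\bz)=0$ the joint law of $(R,\lambda_\ell(\Hesst \tilde H(\bz)))$ is therefore absolutely continuous on $\R^2$; in particular the event $\{\lambda_\ell(\Hesst \tilde H(\bz))=R\}$ has zero probability. A Kac--Rice formula mirroring the one in the proof of Lemma \ref{lem:1lvlInductionIndic}, but carrying along an extra indicator of this null event on the Hessian, then expresses the expected number of such $\bz$ as an integral of the Kac--Rice density times this vanishing conditional probability, hence is zero. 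Integer-valuedness of the count yields the desired a.s.\ emptiness at level $j$, and unwinding the union bounds and conditionings over $(j,\ell)$ and the intermediate paths gives the lemma.

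The main obstacle will be the careful bookkeeping of the iterated conditioning: formally justifying that the inner Kac--Rice bound at level $j$ can be combined with the outer a.s.\ finiteness of intermediate-level critical points via Fubini, and that on the exceptional events where finiteness or the Kac--Rice density fails the conclusion still holds (which it does, since those events already have probability zero from the one-level analysis of Section \ref{subsec-one-level-KR}). The preservation of non-degenerate Gaussian conditional laws by Corollaries \ref{cor:Hm1} and \ref{cor:Hm2}, together with the smoothness needed for Kac--Rice at each level, is precisely what makes this level-by-level stacking work.
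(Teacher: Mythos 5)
Your approach is a genuinely different one from the paper's: you propose a Kac--Rice expected-count argument, whereas the paper uses a Bulinskaya-type volume-covering argument. The paper's proof never invokes Kac--Rice for this lemma. It instead observes that the maps $\vec\bx\mapsto\|\gradt H_N(\bx_i)\|$ and $\vec\bx\mapsto\eta_n(\vec\bx)$ are Lipschitz on the compact manifold $\AA_{m,n}(\vec\bx_m^\be)$ of dimension $d_m$; if the bad set is nonempty, then an $\epsilon$-ball's worth of $\vec\bx'$ satisfies a relaxed event $\DD_{\vec\bx'}$ (all these quantities below $c\epsilon$). Integrating over the manifold, using rotational invariance to fix the base point $\vec\bx_n^\be$, and using Corollary~\ref{cor:Hm2} to show the joint law of the $d_m$ gradient coordinates together with $\eta_n(\vec\bx_n^\be)$ has a positive density on $\R^{d_m+1}$, it deduces $\P(\DD_{\vec\bx_n^\be})=O(\epsilon^{d_m+1})$. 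Dividing by $\mbox{Vol}(\epsilon)=\Theta(\epsilon^{d_m})$ and letting $\epsilon\to0$ kills the probability. The underlying probabilistic fact (the Hessian is a shifted \abbr{GOE} independent of the radial derivative, hence $\eta=0$ has zero probability at a fixed point) is the same in both arguments, but the reduction from a statement about the random set to a statement at a fixed point is done by covering/volume, not by Kac--Rice.

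There is a concrete gap in your reduction step. You propose a ``Fubini/tower argument'' that first conditions on a random intermediate critical-point path $(\bx_{m+1},\ldots,\bx_{j-1})$ and then applies a single-level bound at level $j$. But ``conditioning on a valid choice of a critical-point path'' is Palm conditioning, not the pointwise $\cpt$ conditioning on which Corollaries~\ref{cor:Hm1} and \ref{cor:Hm2} are based, and the a.s.~finiteness of the intermediate paths does not by itself license summing the pointwise-null conditional probabilities over random paths that depend on the field. Making this rigorous requires a multi-level Kac--Rice on $\AA_{m,n}(\vec\bx_m^\be)$, and the paper's own multi-level Kac--Rice (Proposition~\ref{prop:multilvlKR}, via \eqref{eq:alpha-del}) actually \emph{uses} the lemma you are trying to prove, so following that route would be circular. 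To avoid this you would need to set up the Kac--Rice formula directly on the $d_m$-dimensional Stiefel-type manifold in one shot, carrying the eigenvalue-coincidence indicator and proving nondegeneracy of the Jacobian --- which is certainly doable, but is a substantially heavier piece of machinery than the paper's covering argument, and your proposal as written leaves this essential step unjustified. Your single-level base case (Corollary~\ref{cor:Hm2} gives independence of $(U_N,\grad,\bG)$, so the shifted \abbr{GOE} has an eigenvalue equal to the radial derivative with probability zero) is correct and is essentially the same probabilistic input the paper uses; the divergence and the gap are entirely in how that fact is lifted to a statement about the random, multi-level set.
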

\begin{proof} Fixing $N \ge n > m \ge 0$ and $\vec E,\vec R$,  it suffices
to show that conditionally on $\cpt(\vec\bx_m^{\be},\vec E,\vec R)$,  a.s.,
\[
\Big\{ \vec\bx\in \AA_{m,n}(\vec\bx_m^{\be}):\, \forall m < i\leq n,\, \gradt H_{N}(\bx_i)=0,\, \eta_{n}(\vec\bx)= 0  \Big\} = \varnothing.
\]
We start with the unconditional version of $m=0$,  whereby setting 
\[
\Cs_n := \Big\{ \vec\bx\in \AA_{n}:\, \forall i\leq n,\, \gradt H_{N}(\bx_i)=0  \Big\},
\]
we proceed to show that $\{\vec\bx\in\Cs_n:\,\eta_n(\vec\bx)=0\}=\varnothing$ a.s. 
	
Indeed,  note that $\AA_n\subset \R^{N\times n}$ is a compact manifold of dimension 
$d_0 := \sum_{i=1}^n (N-i)$ (which is diffeomorphic to the Stiefel manifold $V_n(\R^N)$ of orthonormal $n$-frames 
in $\R^N$) and that the Euclidean structure on $\R^{N\times n}$ induces 
a  Riemannian  metric on $\AA_n$,  whose distance function and volume form we denote by 
$d(\cdot,\cdot)$  and $dv$.  Let $\mbox{Vol}(\AA_n)$ 
	and $\mbox{Vol}(\epsilon)$ denote the volume \abbr{WRT} $dv$ of $\AA_n$ and of  a ball of radius 
	$\epsilon$ in $\AA_n$ around $\vec\bx\in\AA_n$,  which by symmetry does not depend on $\vec\bx$.

Note that the mappings $\vec\bx\mapsto\|\gradt H_{N}(\bx_i)\|$,  $m< i \le n$ and $\vec\bx\mapsto \eta_n(\vec\bx)$ 
are Lipschitz on $\AA_n$ with some random Lipschitz constant $L_n=L_{N,n}$ which depends on the smooth function $H_N(\cdot)$ (but is independent of our choice of $M_i(\vec\bx)$). 
Thus,  denoting by $\mathcal{E}(c)$ the event that $L_n <c$ and $\{\vec\bx\in\Cs_n:\,\eta_n(\vec\bx)=0\}\neq \varnothing$,  to finish the proof,  it suffices to show that $\P\{\mathcal{E}(c)\}=0$ for any fixed,  finite $c$.  

	For any $\vec\bx'\in\AA_n$, define the event
	\[
	\DD_{\vec\bx'} := \{  \forall m< i\leq n,\, \|\gradt H_{N}(\bx'_i)\|<c\epsilon,\, \eta_n(\vec\bx')<c\epsilon \} \,.
	\]
	Note that by rotational invariance,
	$\P(\DD_{\vec\bx'})$ is the same for any fixed,  non-random $\vec\bx'\in\AA_n$. In particular, $\P(\DD_{\vec\bx'})=\P(\DD_{\vec\bx^{\be}_n})$ for such $\vec\bx'\in\AA_n$.
	Now,  if $\mathcal{E}(c)$ occurs then: first,  there exists some $\vec\bx\in\AA_n$ such that $\gradt H_{N}(\bx_i)=0$ for $m<i\leq n$ and $\eta_n(\vec\bx)=0$; second, for any $\vec\bx'\in\AA_n$ within distance $\epsilon$ from $\vec\bx$ the event
	$\DD_{\vec\bx'}$ holds .  
	It follows that the expected total volume 
		of the subset of $\vec\bx' \in \AA_n$ for which $\DD_{\vec\bx'}$ holds,  is at least
		$\P\{\mathcal{E}(c)\} \mbox{Vol}(\epsilon)$. On the other hand, by Fubini's theorem, that expected total volume is $\P(\DD_{\vec\bx^{\be}_n})\mbox{Vol}(\AA_n)$. Hence,
	\begin{equation}\label{eq:PEc}
		\P\{\mathcal{E}(c)\} \mbox{Vol}(\epsilon) \leq \P(\DD_{\vec\bx^{\be}_n})\mbox{Vol}(\AA_n) \,.
	\end{equation}	
	
	Invoking  rotational invariance once more, we note that 
	$\P(\DD_{\vec\bx^{\be}_n})$ 
	does not depend on the specific choice of  $M_{i}=M_{i}(\vec\bx^{\be}_n)$,  as long as its
	rows form an orthonormal basis of $\big(\sp\{\bx^{\be}_1,\ldots,\bx^{\be}_{i}\}\big)^{\perp}$. 
	Suppose that the $j$-th row of $M_i$ is the standard basis element $\be_{j+i}\in\R^N$.
	Let $m\leq n-1$ and take $\bz=(T^{(m)})^{-1}(\bx^{\be}_{m+1})=\sqrt{N-m}\,\be_1\in\R^{N-m}$, where we use the same notation as in Corollary \ref{cor:Hm1} (and by abuse of notation, denote by $\be_i$ the basis elements in both $\R^N$ and $\R^{N-m}$). By Corollary \ref{cor:Hm1}, 
	\[
	\frac{\partial_{\bx^{\be}_{m+1}-\bx^{\be}_{m}}H_{N}(\bx^{\be}_{m+1})}{\|\bx^{\be}_{m+1}-\bx^{\be}_{m}\|^2} = \sqrt{\frac{N-m}{N}}\frac{1}{q_{m+1}-q_{m}} \frac{\partial_{z_1} H^{(m)}_{N}(\bz)}{\|\bz\|}.
	\]
	By working with the definitions, one can also check that 	 
	\begin{align*}
		(\gradt H_N(\bx^{\be}_{m+1}))_{i} &= \frac{1}{\sqrt{q_{m+1}-q_{m}}} \partial_{z_{i+1}} H^{(m)}_{N}(\bz),\\
	(\Hesst H_N(\bx^{\be}_{m+1}))_{ij} &= \sqrt{\frac{N-m}{N}}\frac{1}{q_{m+1}-q_{m}} \partial_{z_{i+1}}\partial_{z_{j+1}} H^{(m)}_{N}(\bz)  .
	\end{align*}
	Finally, in the notation of Corollary \ref{cor:Hm2}, for an appropriate choice of $F_i$, (see e.g.\ \cite[Eq.~(7.11)]{geometryMixed})
	\begin{align*}
		\grad H^{(m)}_{N}(\bz) &= \big(\partial_{z_{i+1}} H^{(m)}_{N}(\bz)   \big)_{i=1}^{N-m-1},
		\\
	\Hess H^{(m)}_{N}(\bz) &= \big(\partial_{z_{i+1}}\partial_{z_{j+1}} H^{(m)}_{N}(\bz)   \big)_{i,j=1}^{N-m-1}-\frac{\partial_{z_1} H^{(m)}_{N}(\bz)}{\|\bz\|}\mathbf{I}.
	\end{align*}

By Corollary \ref{cor:Hm2},  given $\cpt(\vec\bx_m^{\be},\vec E,\vec R)$ which includes $\gradt H_N(\bx_i^{\be})$, $i\leq m$, the vector $\gradt H_N(\bx_{m+1}^{\be})$ has positive density on $\R^{N-(m+1)}$.  Proceeding 
inductively over $1 \le m \le n$,  we find that unconditionally,  the Gaussian 
$X:=\big(\gradt H_N(\bx_i^{\be})\big)_{i\leq n}$ has positive density on $\R^{d_0}$.  
From Corollary \ref{cor:Hm2} we further have that 
conditionally on $\cpt(\vec \bx_n^{\be},\vec E,\vec R)$ which includes $X$,  the variable $\eta_n(\vec\bx_n^{\be})$
is the minimal eigenvalue (in absolute value),  of a scaled $(N-n-1)$-dimensional \abbr{GOE} matrix,  shifted
by an independent random multiple of the identity.  As such,  this variable clearly has a positive density.  Thus,  unconditionally,  $(X,\eta_n(\vec\bx_n^{\be}))$ has a positive 
joint density on $\R^{d_0+1}$ and consequently,  the probability on the \abbr{rhs} of
\eqref{eq:PEc} is of order $O(\epsilon^{d_0+1})$.  Of course,  $\mbox{Vol}(\epsilon)=\Theta(\epsilon^{d_0})$,  
so upon dividing both sides by $\mbox{Vol}(\epsilon)$ and
taking $\epsilon\to0$,  we conclude that $\P\{\mathcal{E}(c)\}=0$.

For $m \ge 1$ we run a similar argument,  for the conditional law 
$\P_0$ on the compact manifold $\AA_{m,n}(\vec\bx_m^{\be})$ of dimension 
$d_m:=\sum_{i>m}^n (N-i)$,  given $\cpt(\vec\bx^\be_m,\vec E,\vec R)$,  now considering
$$
\Cs_{m,n} := \big\{ \vec\bx\in \AA_{m,n}(\vec\bx_m^{\be}):\, \forall m< i\leq n,\, \gradt H_{N}(\bx_i)=0\big\} \,,
$$ 
with $\mathcal{E}(c)$ the event that $L_n <c$ and $\{\vec\bx\in\Cs_{m,n}:\,\eta_n(\vec\bx)=0\}\neq \varnothing$ 
(where now $L_n$ is measured
\abbr{wrt} the distance on $\AA_{m,n}(\vec\bx_m^{\be})$).  Having under $\P_0$ the relevant symmetries for $\DD_{\vec\bx'}$ within 
$\AA_{m,n}(\vec\bx_m^{\be})$,  we get similarly to the derivation of \eqref{eq:PEc},  that 
\begin{equation*}\label{eq:PEc2}
\P_0\{\mathcal{E}(c)\} \mbox{Vol}(\epsilon) \leq \P_0(\DD_{\vec\bx_n^{\be}})\mbox{Vol}(\AA_{m,n}(\vec\bx_m^{\be})) \,.
\end{equation*}
As $\mbox{Vol}(\epsilon)=\Theta(\epsilon^{d_m})$,  it thus suffices to show that for $m<n$,  the collection 
$\big(\big(\gradt H_N(\bx_i^{\be})\big)_{m < i\leq n},\eta_{n}(\vec\bx_{n}^{\be})\big)$ has  
strictly positive density under $\P_0$.  For the latter task we utilize as before
Corollary \ref{cor:Hm2} conditional on $\cpt(\vec\bx_m^{\be},\vec E,\vec R)$,  now
using the fact that $\cpt(\vec\bx_{m+1}^{\be},\vec E,\vec R) \subset \cpt(\vec\bx_{m}^{\be},\vec E,\vec R)$
in order to remain throughout the proof under the conditional law $\P_0$.
\end{proof}

\begin{lem}\label{lem:alpha_continuity}
For any $\delta \in (0,\epsilon/2)$,  the function
	$\vec\by\mapsto \alpha^{\epsilon,\delta}_{m,n}(\vec\by)$ is a.s.\  continuous on $\AA_m$. 
\end{lem}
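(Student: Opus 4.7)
The plan is to leverage the cutoff $\Psi^{\epsilon,\delta}_{m,n}$ to confine the sum defining $\alpha^{\epsilon,\delta}_{m,n}(\vec\by)$ to non-degenerate critical points of $H_N$ on the nested spheres, and then to track these points smoothly under perturbations of $\vec\by$ via the implicit function theorem.

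First I would verify that the integrand $\vec\bx \mapsto \Psi^{\epsilon,\delta}_{m,n}(\vec\bx)\,\alpha_n(\vec\bx)$ is a.s. continuous on $\AA_n$. Since $H_N$ is a.s. smooth, and the frame $M_i(\vec\bx)$ can be chosen locally smoothly (e.g. via Gram--Schmidt on any chart of $\AA_n$), each of $H_N(\bx_i)$, $\partial_{\bx_i - \bx_{i-1}} H_N(\bx_i)/\|\bx_i - \bx_{i-1}\|^2$, and the eigenvalues of $\Hesst H_N(\bx_i)$ depends continuously on $\vec\bx$; these are frame-invariant, so $\eta_i(\vec\bx)$ is intrinsically well-defined and continuous. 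Composing with the continuous functions $\bar h_\delta$ yields continuity of $\Psi^{\epsilon,\delta}_{m,n}$, and combined with the hypothesized a.s. continuity of $\alpha_n$, the integrand is a.s. continuous. Next, I would observe that the cutoffs force a quantitative non-degeneracy: if $\Psi^{\epsilon,\delta}_{m,n}(\vec\bx)>0$, then in particular $\eta_i(\vec\bx)>\delta$ for each $m < i \le n$, which says that the Riemannian Hessian of $H_N$ restricted to the sphere \eqref{eq:nlvlSphere}---namely $\Hesst H_N(\bx_i) - R_i\,\mathbf{I}$ with $R_i$ the radial derivative---has smallest singular value exceeding $\delta$. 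By compactness of $\AA_n$ this uniform non-degeneracy entails that there are only finitely many $\vec\bx$ extending a given $\vec\by$ with $\Psi>0$, and they are isolated with a uniform lower bound on pairwise separation.

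The heart of the argument is an implicit function theorem applied to the map $\Phi : \vec\by' \times (\bx'_{m+1},\ldots,\bx'_n) \mapsto (\gradt H_N(\bx'_i))_{m < i \le n}$ on $\AA_n$. Since $\gradt H_N(\bx_i)$ only depends on $\bx_1,\ldots,\bx_i$, the linearization in the $(\bx_{m+1},\ldots,\bx_n)$-variables is block upper-triangular, and its diagonal blocks are precisely the shifted tangential Hessians above, which are invertible by $\eta_i>\delta$. Hence IFT produces a smooth local continuation $\vec\by' \mapsto \vec\bx(\vec\by')$ near any $\vec\bx \in \mathscr{C}_{m,n}(\epsilon,\vec\by)$ with $\Psi>0$. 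Conversely, for $\vec\by_k \to \vec\by$ in $\AA_m$, any sequence $\vec\bx_k \in \mathscr{C}_{m,n}(\epsilon,\vec\by_k)$ with $\Psi(\vec\bx_k)$ bounded away from $0$ has, by compactness, a limit $\vec\bx$ satisfying $\Psi(\vec\bx)>0$; by continuity $\vec\bx \in \mathscr{C}_{m,n}(\epsilon,\vec\by)$, and by local uniqueness it must agree with one of the IFT continuations. Matching critical points in a one-to-one continuous manner on the support of $\Psi^{\epsilon,\delta}_{m,n}$ and summing the (continuous) contributions then yields continuity of $\vec\by \mapsto \alpha^{\epsilon,\delta}_{m,n}(\vec\by)$.

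I expect the main obstacle to be ruling out spurious jumps caused by bifurcation of critical points as $\vec\by$ moves---i.e. new critical points appearing or old ones annihilating. This is exactly what the factor $\bar h_\delta(\eta_i)$ is designed to handle: any bifurcation requires $\eta_i \to 0$ at the bifurcating configuration, and there $\Psi^{\epsilon,\delta}_{m,n}$ already vanishes, so bifurcations absorb continuously into the sum. A secondary technicality is the smooth local choice of $M_i(\vec\bx)$ on $\AA_n$, resolved either by Gram--Schmidt on a chart of $\AA_n$ or by noting that all quantities entering $\Psi^{\epsilon,\delta}_{m,n}$ are frame-invariant; either way the argument goes through on a full-probability event.
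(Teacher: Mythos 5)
Your proposal follows the paper's strategy closely: the cutoff $\bar h_\delta(\eta_i)$ forces invertibility with margin $\delta$ of the shifted tangential Hessian $\Hesst H_N(\bx_i) - R_i\mathbf{I}$ (exactly the matrix entering $\eta_i$ and \eqref{eq:HessTheta}), and the implicit function theorem then yields a local continuation of contributing critical points as $\vec\by$ moves, with bifurcations suppressed because $\Psi$ already vanishes where some $\eta_i$ drops to $\delta$. The block-triangular formulation for all levels at once is a pleasant shortcut; the paper instead reduces to $n=m+1$ by induction, but the diagonal blocks and the IFT application are the same. That said, two points of your sketch need more work. First, applying IFT ``on $\AA_n$'' requires actually parametrizing the constraint that each $\bx_i$ lies in $S(\vec\bx_{i-1})$; the paper's explicit charts $\psi_m$, $\Gamma_m$ are what produce the $-R_{m+1}\mathbf{I}$ shift in \eqref{eq:HessTheta} as a genuine consequence of the spherical geometry (it comes from the normalization $\alpha(\bb)$ in the definition of $\psi^m$), not merely an ansatz matching $\eta_{m+1}$.

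Second, and more substantively, appealing to ``compactness of $\AA_n$'' for a uniform lower bound on pairwise separation of contributing critical points, and a uniform cap on their number, does not go through by itself: compactness plus isolation gives finiteness at each fixed $\vec\by$, but your continuity argument needs the cap to be uniform as $\vec\by'\to\vec\by$, since otherwise ``extra'' critical points over $\vec\by'$ with $\Psi$ small but positive could proliferate and fail to vanish in aggregate. The paper closes this via the quantitative gradient estimate \eqref{bd:M}, which lower-bounds $\|M_{m+1}(\vec\bx')\nabla H_N(\bx'_{m+1})\|$ by $\delta\|\bx'_{m+1}-\bx_{m+1}\|-cD\|\bx'_{m+1}-\bx_{m+1}\|^2$ using boundedness of the directional derivatives of $H_N$ up to order three on $\BN$; this yields pairwise disjoint balls of uniform radius $\delta/(cD)$ around contributing points and hence the cap $c^{n-m}$. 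That cap is exactly what licenses the final inequality $|\alpha^{\epsilon,\delta}_{m,n}(\vec\by')-\alpha^{\epsilon,\delta}_{m,n}(\vec\by)|\le \zeta c^{n-m}$ for $\|\vec\by'-\vec\by\|<\tau/C$. An analogue of \eqref{bd:M} is needed to make your sketch rigorous.
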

\begin{proof}
Recall that $H_N(\cdot)$ is a.s.\ continuously differentiable up to order three over the ball of radius $\sqrt N$, e.g.\ by \cite[Corollary C.2]{geometryMixed}. Hence, it will be enough to prove the stated continuity for fixed deterministic $N$ and $H_N(\cdot)$ with the same property, as we shall.  Denote by $C_N'$ the maximal directional derivative up to order $3$ over the ball, which is finite by compactness.
	Since  rotations of $H_N(\cdot)$ preserve the above differentiability property, do not change the value of $C_N'$, and act as rotations also on the function
	$\vec\by\mapsto \alpha^{\epsilon,\delta}_{m,n}(\vec\by)$, \abbr{wlog} it is sufficient to prove continuity only at $\vec\by=\vec\by^\be_m=(\by_1^{\be},\ldots,\by_m^{\be}) \in \AA_m$ where
	$\by^{\be}_i := \sum_{j=1}^i \sqrt{N(q_j-q_{j-1})} \be_j$.

	Recall that $M_{i}=M_{i}(\vec\bx)\in\R^{(N-i)\times N}$ is a matrix
	whose rows form an orthonormal basis of $\big(\sp\{\bx_1,\ldots,\bx_{i}\}\big)^{\perp}$ and define, for $n>m$,
	\begin{equation*}\label{def:Amn-s}
		\AA_{m,n}^*:=\Big\{\vec\bx\in\AA_{n}:\, M_{i}(\vec\bx)\nabla H_N(\bx_{i}) = 0,\,\forall m+1\leq i\leq n\Big\}.
	\end{equation*}
	The main step of the proof is to show using the implicit function theorem that for any
	$m<n\leq k$
	and $\vec\bx\in\AA^*_{m,n}(\vec\by):=\AA_{m,n}(\vec\by)\cap \AA^*_{m,n}$ 
	(for $\AA_{m,n}(\vec\by)$ of \eqref{eq:Amn-y}),  with
	$\eta_i(\vec\bx)>\delta$ for $\eta_i(\vec \bx)$ of \eqref{eq:etai} and	
	all $m<i\le n$,  there exists a function $f_{\vec\by,\vec\bx}$ from a small neighborhood of $\vec\by$ in $\AA_m$ (\abbr{wrt} the distance $\|\vec\by-\vec\by'\|^2:=\sum_{i=1}^m \|\by_i-\by'_i\|^2$)  to $\R^{(n-m)\times N}$,  such that:
	\begin{equation}\label{eq:Lip_gN}
		\begin{aligned}
			\vec \bx':= (\vec\by',f_{\vec\by,\vec\bx}(\vec\by'))& \in \AA^*_{m,n}(\vec\by')\,,  \\
			\|f_{\vec\by,\vec\bx}(\vec\by')-(\bx_{m+1},\ldots,\bx_{n})\| & \leq C \|\vec\by'-\vec\by\|,
		\end{aligned}
	\end{equation}
	for some $C$ that depends only on $\delta$, $N$, $(q_i)_{i=1}^{k}$ and $C_N'$. 
		
	Note that, in fact,  by induction 
	it suffices to establish \eqref{eq:Lip_gN} only for $n=m+1$, because we can compose such functions that increase the index by $1$ to obtain the required function with general $n>m$ (here it is important 
	that the assumed bound $\eta_i(\vec\bx)>\delta$ holds  for all $m<i\leq n$).
	Moreover,  since rotations of $H_N(\cdot)$ also operate as rotations on 
	$\eta_{m+1}(\vec\bx)$,  to prove the existence of such a  function $f_{\vec\by,\vec\bx}$ for general $\vec\bx\in\AA^*_{m,m+1}(\vec\by)$ it is sufficient to prove it under the assumption that $\vec\bx\in\AA^*_{m,m+1}(\vec\by)$ and $\vec\bx=\vec\bx^\be_{m+1}=(\bx_1^{\be},\ldots,\bx_{m+1}^{\be})$ where
	$\bx^{\be}_i := \sum_{j=1}^i \sqrt{N(q_j-q_{j-1})} \be_j$, as we shall.  Note that in this case, we may also assume that the rows $M_i(\vec\bx)$ are the standard basis vectors $\be_{i+1},\ldots,\be_{N}$.

	Further,  fixing $n=m+1$,  $m \ge 1$,  we claim that \eqref{eq:Lip_gN} then follows from the implicit function theorem if,  given $\vec\by=\vec\by_m^\be$ and $\vec\bx=\vec\bx_n^\be\in\AA^*_{m,m+1}(\vec\by)$ as above,  we 
	have on some neighborhoods $A$ and $B$ of the origin in $\R^{N-1+\cdots+N-m}$ and $\R^{N-(m+1)}$,  respectively,  deterministic functions $\psi_m:A\to\AA_m$ and  $\Gamma_m:A\times B\to \AA_{m+1}$,  
	with $\Theta_m(\ba,\bb) := H_N((\Gamma_m(\ba,\bb))_{m+1})$
	(where $(\Gamma_m(\ba,\bb))_{i}$ is the $i$-th element of  $\Gamma_m(\ba,\bb)$),  such that:
	\begin{enumerate}
		\item\label{enum:pt1} $\psi_m$ is invertible on $A$ and $\psi_m(0)=\vec\by$.
		\item\label{enum:pt2} $ \Gamma_m(\ba,\bb) \in \AA_{m,m+1}(\psi_m(\ba))$ is twice continuously differentiable on $A\times B$ and $\Gamma_m(0,0)=\vec\bx$.
		\item \label{enum:pt6} The functions  $\vec\by'\mapsto \psi_m^{-1}(\vec\by')$  and  $(\vec\by',\bb)\mapsto (\Gamma_m( \psi_m^{-1}(\vec\by'),\bb))_{m+1}$ are Lipschitz on a small neighborhood of $(\vec\by,0)$ in $\AA_m\times B$ and the function $\Gamma_m$ is Lipschitz on a small neighborhood of the origin with constants that depend only on $N$ and $(q_i)_{i=1}^{k}$. 
		\item \label{enum:pt4} $\Gamma_m(\ba,\bb)\in \AA^*_{m,m+1}(\psi_m(\ba))\iff \frac{d}{db_i}\Theta_m(\ba,\bb)=0,\,\forall i\leq N-(m+1)$.
		\item \label{enum:pt5} The Hessian matrix of $\Theta_m$ in $\bb$ at the origin,  namely, $\Big(\frac{d}{db_i}\frac{d}{db_j}\Theta_m(0,0)\Big)_{i,j=1}^{N-(m+1)}$ is equal to
		\begin{equation} \label{eq:HessTheta}
			M_{m+1}(\vec\bx)\nabla^2 H_N(\bx_{m+1}) M_{m+1}(\vec\bx)^{\top} - \frac{\langle \nabla H_N(\bx_{m+1}), \bx_{m+1}-\bx_{m}\rangle}{\|\bx_{m+1}-\bx_m\|^2}\mathbf{I}.
		\end{equation}
	\end{enumerate}
	Indeed,  by Point \eqref{enum:pt2} the function $\big(\frac{d}{db_i}\Theta_m(\ba,\bb)\big)_{i\leq N-(m+1)}$ is continuously differentiable on $A\times B$. 
	Moreover,  since $\vec\by\in\AA_m$ and $\vec\bx\in\AA_{m,m+1}^*(\vec\by)$ with $\eta_{m+1}(\vec\bx)>\delta/2$,  by Points \eqref{enum:pt1}, \eqref{enum:pt2} and \eqref{enum:pt4}, $\big(\frac{d}{db_i}\Theta_m(0,0)\big)_{i\leq N-(m+1)}=0$,  whereas by point \eqref{enum:pt5},  the matrix $\big(\frac{d}{db_i}\frac{d}{db_j}\Theta_m(0,0)\big)_{i,j\leq N-(m+1)}$ is invertible.  Therefore,  the 	
	implicit function theorem defines for any $\ba\in A$ in a small neighborhood of the origin,  some $\bb(\ba)\in B$ such that the condition in Point \eqref{enum:pt4} holds at $(\ba,\bb(\ba))$.  For
	any $\vec\by'$ close to $\vec\by$ we can thus define $\ba'= \psi_m^{-1}(\vec\by')$, $\bb'=\bb(\ba')$ and		
	$\vec\bx' = \Gamma_m(\ba',\bb')\in\AA_{m,m+1}^*(\vec\by')$. 		Precisely, this defines 
	\[
	f_{\vec\by,\vec\bx}(\vec\by')= \Big(\Gamma_m\big(\psi_m^{-1}(\vec\by'),\bb(\psi_m^{-1}(\vec\by'))\big)\Big)_{m+1}\,.
	\]
	
	By Point \eqref{enum:pt5} and the assumption that $\eta_{m+1}(\vec\bx)>\delta$,  from the implicit function theorem, on a small neighborhood of $0\in A$, the gradient of $\ba\mapsto\bb(\ba)$ is bounded by some constant $C_0$ that depends on $\delta$ and $C_N'$. 
	Combining this bound on the gradient of $\bb(\ba)$ with Point \eqref{enum:pt6}  
	then yields the required bound as in \eqref{eq:Lip_gN}.
	
	We proceed to complete the proof of \eqref{eq:Lip_gN} by explicitly constructing 
	the functions $\psi_m$ and $\Gamma_m$ as above.  To this end,  note first
	that we may parameterize $\AA_m$ by $\chi_m:=\prod_{i=1}^m \mathbb{S}^{N-i}(1)$ where $\mathbb{S}^{N-i}(1):=\{\bx\in\R^{N-i+1}: \|\bx\|=1\}$. Precisely,  map any $\vec\bz\in\chi_m$ to $\Phi_m(\vec\bz)=(\Phi_{m,i}(\vec\bz))_{i=1}^m\in\AA_m$ defined by\footnote{Let $i<j$  and note that $T\circ O_{i+1}\circ T\cdots O_{j-1}\circ T(\bz_j)$ is orthogonal to $\be_{N-i+1}$ (simply by the operation of the left-most $T$). Hence, $O_{i}\circ T\circ O_{i+1}\circ T\cdots O_{j-1}\circ T(\bz_j)$ is orthogonal to $O_{i}(\be_{N-i+1})=\bz_i$ and $O_{1}\circ T\circ O_{2}\circ T\cdots O_{j-1}\circ T(\bz_j)$ is orthogonal  to
		$O_{1}\circ T\circ O_{2}\circ T\cdots O_{i-1}\circ T(\bz_i)$. From this we have that indeed $\Phi_m(\vec\bz)\in\AA_m$. Of course it is a bijection since if $\vec\bz$ and $\vec\bz'$ are such that $\bz_i=\bz_i'$ for all $i\leq j-1$ and $\bz_j\neq \bz_j'$ then $\Phi_{m,j}(\vec\bz)\neq \Phi_{m,j}(\vec\bz')$.
	}
	\begin{equation}\label{eq:Phi}
		\begin{aligned}
			\Phi_{m,i}(\vec\bz) &= \sqrt{Nq_1}\bz_1 + \sqrt{N(q_2-q_1)}O_1\circ T(\bz_2)+\cdots\\
			&+\sqrt{N(q_i-q_{i-1})}O_{1}\circ T\circ O_{2}\circ T\cdots O_{i-1}\circ T(\bz_i) ,
		\end{aligned}
	\end{equation}
	where, for any $\ell$,  set $T(x_1,\ldots,x_\ell):=(x_1,\ldots,x_\ell,0)$ and $O_i$ is the $(N-i+1)\times(N-i+1)$ rotation matrix that maps $\be_{N-i+1}$ to $\bz_i$ and acts as the identity on any vector in the orthogonal space to $\be_{N-i+1}$ and $\bz_i$.  Having done this,  let $A\subset \R^{N-1+\cdots+N-m}$ be some neighborhood of the origin and fixing some smooth chart $\bar\psi_m:A\to\chi_m$,  define $\psi_m:A\to \AA_m$ by 
	$\psi_m=\Phi_m\circ\bar\psi_m$,  choosing the chart $\bar\psi_m$ so that $\vec\by=\vec\by_m^\be=\psi_m(0)$. 
	
	With $\vec\by=\vec\by_m^\be\in\AA_m$ and $\vec\bx=\vec\bx_{m+1}^\be=(\bx^\be_1,\ldots,\bx^\be_{m+1})\in\AA^*_{m,m+1}(\vec\by)$,   
	note that
	\begin{equation}\label{eq:S(y)}
		\begin{aligned}
			S(\vec\by)&
			:=\big\{\bx: (\vec\by,\bx)\in \AA_{m,m+1}(\vec\by)\big\}\\
			&\,
			=\Big\{\bx^\be_{m}+\by:\,\frac1N\|\by\|^2=q_{m+1}-q_{m},\,\langle\by,\be_i\rangle=0,\forall i\leq m\Big\}\,,
		\end{aligned}
	\end{equation}
	is an $(N-m-1)$-dimensional sphere of radius $\sqrt{N(q_{m+1}-q_m)}=\|\bx^\be_{m+1}-\bx^\be_m\|$.  For any $\vec\by'\in\AA_m$ we map $\vec\bx\in\AA_{m,m+1}(\vec\by)$ to $\vec\bx'=\Phi_{m+1}(\vec\bz',\bz_{m+1})\in\AA_{m,m+1}(\vec\by')$
	where $\vec\bz=(\bz_1,\ldots,\bz_{m+1})=\Phi^{-1}_{m+1}(\vec\bx)$ and
	$\vec\bz'= \Phi^{-1}_{m}(\vec\by')$.  This induces a mapping $\pi_{\vec\by'}:S(\vec\by)\to S(\vec\by')$, which is an isometry. 
	Let $B\subset \R^{N-(m+1)}$ be some small  neighborhood of the origin and define
	$\Gamma_m(\ba,\bb)= (\psi_m(\ba),\pi_{\psi_m(\ba)}(\psi^m(\bb)))$,  where	
	\begin{equation}\label{eq:psim}
		\psi^m:\, B \ni \bb \mapsto M_{m+1}^{\top} (\vec\bx_{m+1}^\be)\bb+\alpha(\bx^\be_{m+1}-\bx^\be_{m}) + \bx^\be_{m}\in S(\vec\by),
	\end{equation}
	for the unique $\alpha>0$ such that $N(q_{m+1}-q_{m})\alpha^2+\|\bb\|^2=N(q_{m+1}-q_{m})$. 
	Finally,  recall that 
	\begin{equation}\label{eq:Thetam2}
		\Theta_m(\ba,\bb)=H_N((\Gamma_m(\ba,\bb))_{m+1})= H_N(\pi_{\psi_m(\ba)}(\psi^m(\bb)))\,.
	\end{equation}
	
	With these definitions all the properties we required above from the functions $\psi_m$, $\Gamma_m$ and $\Theta_m$ can be checked either directly from the definitions or by simple geometric considerations (with an appropriate choice of the chart $\bar\psi_m$).  The functions $\vec\by'\mapsto \psi_m^{-1}(\vec\by')$  and  $(\vec\by',\bb)\mapsto (\Gamma_m( \psi_m^{-1}(\vec\by'),\bb))_{m+1}$ defined after \eqref{eq:S(y)},  which map between manifolds and do not depend on $H_N(\cdot)$,  are easily seen to satisfy the Lipschitz bounds of Point \eqref{enum:pt6}.  Point \eqref{enum:pt4} follows since the push-forward of the standard basis
		$(\frac{d}{db_i})_{i\leq N-(m+1)}$ of the tangent space to $\bb\in B$ by
		$\pi_{\psi_m(\ba)}\circ \psi^m$ is a basis $(\bv_i)_{i\leq N-(m+1)}$ of the tangent space to $\pi_{\psi_m(\ba)}(\psi^m(\bb))\in S(\psi_m(\ba))$.
Therefore,  using \eqref{eq:Thetam2},  the condition on the \abbr{rhs} of Point \eqref{enum:pt4} holds if and only if the derivative of $H_N(\cdot)$ at
	$(\Gamma_m(\ba,\bb))_{m+1}$ in any direction in the tangent space of $(\Gamma_m(\ba,\bb))_{m+1}$ in $S(\psi_m(\ba))$ is zero.
	For Point \eqref{enum:pt5} we note that upon substituting $\ba=0$ we obtain $\Theta_m(0,\bb)=H_N(\psi^m(\bb))$ and the second term in \eqref{eq:HessTheta} comes from  the term involving 
	$\alpha=\alpha(\bb)$ in the definition of $\psi^m$.
		
We proceed to show 
	that for any $\vec\by\in\AA_m$,  the number of points $\vec \bx  \in \AA^*_{m,m+1}(\vec\by)$ such that $\eta_{m+1}(\vec\bx)> \delta$ is bounded from above by a random constant $c$ that depends only 
	on $N$, $(q_i)_{i=1}^{k}$, $\delta$ and $C_N'$.  To this end,  let $\vec\bx=(\vec\by,\bx_{m+1})$ be such a point. Defining $\psi^m$ as in \eqref{eq:psim} (but now with general $\vec{\bx}$), we write
	\begin{align*}
		&H_N(\psi^m(\bb))=H_N\Big(\bx_{m+1}+M_{m+1}^{\top} (\vec\bx)\bb+(\bx_{m+1}-\bx_{m})\Big(-\frac12\frac{\|\bb\|^2}{{N(q_{m+1}-q_{m})}}+O\Big(\frac{\|\bb\|^4}{{N^2(q_{m+1}-q_{m})^2}}\Big)\Big) \Big)\\
		&= H_N(\bx_{m+1})+\frac12\Big\langle M_{m+1}(\vec\bx)\nabla^2 H_N(\bx_{m+1}) M_{m+1}(\vec\bx)^{\top} - \frac{\partial_{\bx_{m+1}-\bx_{m}}H_{N}(\bx_{m+1})}{\|\bx_{m+1}-\bx_{m}\|^2}
		\mathbf{I}, \bb \bb^\top\Big\rangle +O(\|\bb\|^3),
	\end{align*}
	where the linear term in $\bb$ was dropped since $M_{m+1}(\vec\bx)\nabla H_N(\bx_{m+1})=0$ and the constant of the $O(\|\bb\|^3)$ term depends only on $N$, $(q_i)_{i=1}^{k}$ and $C_N'$.
	Recall \eqref{eq:etai} and note that all the eigenvalues of the matrix have absolute value at least $\delta$. Thus, there is a ball  $B_0\subset B$ centered around the origin $0\in \R^{N-(m+1)}$ such that:
	\begin{itemize}
	\item  The center point $0\in B_0$ is a critical point of $H_N(\psi^m(\bb))$.
	\item  Any $\bb\in B_0\setminus\{0\}$ is not a critical point of $H_N(\psi^m(\bb))$.
	 \item The radius of $B_0$ only depends on $N$, $(q_i)_{i=1}^{k}$, $\delta$ and $C_N'$. 
	 \end{itemize}
		The image $\psi^m(B_0)$ is an open set in $S(\vec{\by})$. It contains a ball $B_0'\subset S(\vec{\by})$ with $\bx_{m+1}\in B_0'$ such that:
		\begin{itemize}
		 \item The point $\bx_{m+1}$ is a critical point of $H_N(\cdot)$ as a function on $S(\vec{\by})$.
		 \item Any $\bx\in B_0'\setminus\{\bx_{m+1}\}$ is not a critical point of $H_N(\cdot)$ (again, as a function on $S(\vec{\by})$).
		  \item  The  radius of $B_0'$ only depends on $N$, $(q_i)_{i=1}^{k}$, $\delta$ and $C_N'$. 
		  \end{itemize}
		  To make the dependence on $\vec\bx$ explicit we write  $B_0'(\vec\bx)$ and define $B_0''(\vec\bx)$ as the same ball with half the radius.  Note that for any
		$\vec \bx  \in \AA^*_{m,m+1}(\vec\by)$ such that $\eta_{m+1}(\vec\bx)> \delta$ we have associated a ball $B_0''(\vec\bx)$.  Those balls are disjoint,  they all have the same radius and the number of such disjoint balls bounded by a random constant $c$ that depends only 
		on $N$, $(q_i)_{i=1}^{k}$, $\delta$ and $C_N'$. Hence, the same bound follows for the number of points $\vec\bx$, as we claimed above.
		
Note that by induction we also have that for any $n>m$,  the number of points 
$\vec\bx\in\AA^*_{m,n}(\vec\by)$
such that $\eta_{i}(\vec\bx)> \delta$ for all $i=m+1,\ldots,n$ is at most $c^{n-m}$. 
Further,  $\eta_i(\vec\bx)$ of \eqref{eq:etai} is a.s.\  continuous,  so with $\bar h_\delta(\cdot)$ continuous,  $\Psi^{\epsilon,\delta}_{m,n}(\vec\bx)$ is also a.s.\  continuous. 
	Recall that $\alpha_{n}(\vec\bx)$ is assumed to be  a.s.\  continuous,  so by compactness
	$\Psi^{\epsilon,\delta}_{m,n}(\vec\bx)\alpha_n(\vec\bx)$ is uniformly continuous.  Fix $\zeta>0$ and $\tau>0$ small enough so 
	$\|\Psi^{\epsilon,\delta}_{m,n}(\vec\bx)\alpha_n(\vec\bx)-\Psi^{\epsilon,\delta}_{m,n}(\vec\bx')\alpha_n(\vec\bx')
	\|<\zeta$ whenever $\|\vec\bx-\vec\bx'\|<\tau$.   Only at most $c^{n-m}$ 
	points $\vec\bx \in \AA^*_{m,n}(\vec\by)$ with $\min_{i>m}^n \{\eta_i(\vec\bx)\}>\delta$ can contribute 
	to $\alpha^{\epsilon,\delta}_{m,n}(\vec\by)$,  and if $\|\vec\by'-\vec\by\|<\tau /C$,  then by utilizing
	$f_{\vec\by,\vec\bx}$ of \eqref{eq:Lip_gN},  there
	exists some $\vec\bx' \in \AA^*_{m,n}(\vec\by')$ such that $\|\vec\bx-\vec\bx'\| < \tau$.  Consequently,  
	for any $\vec\by,\vec\by'\in\AA_m$ such that $\|\vec\by'-\vec\by\|<\tau /C$,
	\[
	\alpha^{\epsilon,\delta}_{m,n}(\vec\by')  \geq \alpha^{\epsilon,\delta}_{m,n}(\vec\by)-\zeta c^{n-m}.
	\]
	Since the same inequality holds if we interchange $\vec\by$ and $\vec\by'$,  the lemma follows. 
\end{proof}

\begin{proof}[Proof of Proposition \ref{prop:multilvlKR}]
Equipped with Lemmas \ref{lem:etapositive_conditional} and \ref{lem:alpha_continuity},  
we fix $0 \leq m \leq n-1$ and proceed to derive \eqref{eq:KRalphaeps}.
To this end,  recall first that by definition (see \eqref{eq:alpha^eps}),
\[
\alpha_{m,n}^{\epsilon}(\vec\bx_m^{\be})\wedge e^{MN} 
\leq 
\sum_{\vec\by \in \mathscr{C}_{m,m+1}(\epsilon,\vec\bx_m^{\be})}\big(
\alpha_{m+1,n}^{\epsilon}(\vec\by)\wedge e^{MN}\big).
\]
Denoting hereafter by $\E_0$ the expectation conditional on $\cpt(\vec \bx_{m}^\be,\vec E,\vec R)$,  we thus have that
\begin{equation*}
\E_0\Big[ \alpha_{m,n}^{\epsilon}(\vec\bx_m^{\be}) \wedge e^{MN} \,\Big] \leq 
\E_0 \Big[ \sum_{\vec\by\in \mathscr{C}_{m,m+1}(\epsilon,\vec\bx_m^{\be})}\big(
\alpha_{m+1,n}^{\epsilon}(\vec\by)\wedge e^{MN}\big) \Big].
\end{equation*}
Hence,  by our consequence \eqref{eq:alpha-del} of Lemma  \ref{lem:etapositive_conditional} and the  
monotone convergence theorem,    
\begin{equation}\label{eq:KRm1}
\E_0\Big[ \alpha_{m,n}^{\epsilon}(\vec\bx_m^{\be}) \wedge e^{MN} \,\Big] \le
\lim_{\delta\to0}\E_0\Big[ \sum_{\vec\by\in \mathscr{C}_{m,m+1}(\epsilon,\vec\bx_m^{\be})}\big(
\alpha_{m+1,n}^{\epsilon,\delta}(\vec\by)\wedge e^{MN}\big) \Big].
\end{equation}

Lemma \ref{lem:alpha_continuity} about the a.s.\  continuity of 
$\vec \by \mapsto \alpha_{m+1,n}^{\epsilon,\delta}(\vec \by)$ is basically deterministic. 
Precisely, its proof only uses that 
$\bs \mapsto H_N(\bs)$ is twice continuously differentable.  It therefore also holds conditionally on $\cpt(\vec \bx_{m}^\be,\vec E,\vec R)$. Of course, $\Psi^{\epsilon,\delta}_{m+1,n}(\vec\bx)=\bar\Psi^{\epsilon,\delta}_{m+1,n}(\vec\bx,H_N(\cdot))$ for some rotationally invariant deterministic $\bar\Psi^{\epsilon,\delta}_{m+1,n}$ and one can therefore easily verify that $\alpha_{m+1,n}^{\epsilon,\delta}(\vec\bx) =\bar\alpha_{m+1,n}^{\epsilon,\delta}(\vec\bx,H_N(\cdot))$ for rotationally invariant $\bar\alpha_{m+1,n}^{\epsilon,\delta}$.
Since taking maximum with $e^{MN}$ preserves both continuity and rotational invariance,  subject to \eqref{eq:ERinversetransform},  we have 
from \eqref{eq:KRunifrombd},\footnote{We note that to use \eqref{eq:KRunifrombd}, here we view $\alpha_{m+1,n}^{\epsilon,\delta}(\vec\bx)$ with $\vec\bx=(\vec\bx_m^{\be},\bx_{m+1})$ as a function of $\bx_{m+1}$. Indeed, in the proofs of Lemma \ref{lem:1lvlInductionIndic} and Corollary \ref{cor:1lvlKR_E}, we always work with $\bx_{m+1}$ such that $(\vec\bx_m^{\be},\bx_{m+1})\in\Cs_{m,m+1}(\epsilon,\vec\bx_m^\be)$.  See Footnote \ref{fn:C}.} that  
\begin{equation}\label{eq:KRmulti2}
\begin{aligned}
& \E_0\Big[   \sum_{\vec\by \in \Cs_{m,m+1}(\epsilon,\vec\bx_m^\be)} \!\!
	\big(
	\alpha_{m+1,n}^{\epsilon,\delta}(\vec\by)\wedge e^{MN}\big)
	 \, \Big] \le C_{N,m} \bigg\{  6 e^{M N} e^{-C_N N} 
	 \\
		& +  \int_{I_1\times I_2} \Gamma_{N,R}(t_N) 
		\Big( e^{-N^2} + 5 \E \big[  \alpha_{m+1,n}^{\epsilon,\delta}(\vec\bx_{m+1}^{\be})\wedge e^{MN} 
		 \, \big| \, \cpt(\vec \bx_{m+1}^\be,\vec E,\vec R) \big] \Big)
		 \varphi_{N,m}(E,R)dEdR  \bigg\}\,,
\end{aligned}
\end{equation}
where $C_N\to\infty$ and $t_N \to 0$ do not depend on $\delta$ or $(\vec E,\vec R,E,R)$
(see remark after \eqref{eq:KRunifrombd}).  
Since $\alpha_{m+1,n}^{\epsilon,\delta}(\cdot)\leq \alpha_{m+1,n}^{\epsilon}(\cdot)$,  we can and shall replace
$\alpha_{m+1,n}^{\epsilon,\delta}(\vec\by)$ by $\alpha_{m+1,n}^{\epsilon}(\vec\by)$ on the \abbr{rhs} of 
 \eqref{eq:KRmulti2}.  At this point we take $\delta \downarrow 0$,  and combining \eqref{eq:KRm1} with 
 \eqref{eq:KRmulti2},  arrive at
\begin{equation*}
\begin{aligned}
\E \big[ & \alpha_{m,n}^{\epsilon}(\vec\bx_m^{\be} ) \wedge e^{MN} \, \big| \, \cpt(\vec \bx_{m}^\be,\vec E,\vec R) \big]
	  \le C_{N,m} \bigg\{  6 e^{M N} e^{-C_N N} 
	 \\
		& +  \int_{I_1\times I_2} \Gamma_{N,R}(t_N) 
		\Big( e^{-N^2} + 5 \E \big[  \alpha_{m+1,n}^{\epsilon}(\vec\bx^{\be}_{m+1})\wedge e^{MN} 
		 \, \big| \, \cpt(\vec \bx_{m+1}^\be,\vec E,\vec R) \big] \Big)
		 \varphi_{N,m}(E,R)dEdR  \bigg\}\,,
\end{aligned}
\end{equation*}
from which we get \eqref{eq:KRalphaeps} by the same reasoning as we have done after \eqref{eq:A-new}. 
\end{proof}

\section{Proof of Property \eqref{enum:PS7} of Proposition \ref{prop:PS}}\label{sec-pf-prop:PS}

Recall from Section \ref{sec:putestates} that fixing some $c_N \to 0$ slowly enough,  
the vertex sets $\V_N \subset \R^N$ of full regular trees $\T_N$ of degrees $d_N \to \infty$ 
and depth $k$ can with probability going to $1$ as $N \to \infty$,  be chosen so that 
 for some non-random $\delta_N,\epsilon_N \to 0$,
the collections $\V_N$ and the disjoint subsets $\{B_i, i \le d_N^k\}$ of $\SN$ associated
with the leaves of $\T_N$,  have all the properties
stated in Proposition \ref{prop:PS},  except possibly Property \eqref{enum:PS7}.
Hereafter, we call such $(\T_N,(B_i)_i,c_N,d_N,\delta_{N},\epsilon_{N})$ a \emph{good pure state decomposition}. 
For any $m \le k$,  
denote by $\V_{N, m}$ the subset of $\V_N$ of depth $m$.  In addition,  for the root $\V_{N,0}$ 
we let $\vec{\bv}=\bv={\bf 0}$,  the origin in $\R^N$,  
while for any $\bv\in \V_{N,m}$ (with $|\bv|=m\geq1$),  we denote by 
$\vec{\bv}=(\bv_1,\ldots,\bv_m)$ the path from the root to $\bv=\bv_m$,  excluding the root.  
Recalling the definition \eqref{eq:C0meps} of $\Cs^0_m(\epsilon)$,  it suffices to show by induction 
on $m \le k$,  that 
\begin{equation}\label{eq:claim}
	\begin{aligned}
		&\mbox{there exists a good pure states decomposition with 
		$\vec\bv \in \Cs^0_m(\epsilon_N)$ for all $\bv \in \V_{N,m}$.}
	\end{aligned}	
\end{equation}
Indeed,  from Corollary \ref{cor:CshCs} we know that for any fixed $\epsilon'>0$,  for some  $\eta_N=\eta_N(\epsilon')\to0$,
	\[
	\P (	 \Cs^0_{k}(\epsilon_{N}) \not\subset \Cs_{k}(\epsilon') ) \le \eta_N\,.
	\]
Hence,  after further enlarging $\epsilon_N$,  Proposition \ref{prop:PS} follows from \eqref{eq:claim} at $m=k$.  

In case $m=0$,  we obviously have \eqref{eq:claim}.  Thus,  we assume hereafter that \eqref{eq:claim} holds for some 
$m \ge 0$ and show that it then must hold also at $m+1$,  by finding non-random $a_N,  b_N \to 0$
(with $a_N \ge \epsilon_N$),  and a (random) mapping $\bv_{m+1} \mapsto \bv'_{m+1}$ of $\V_{N,m+1}$,  such that with probability going to 1 as $N \to \infty$,
\begin{align}\label{eq:claim-new}
 \sup_{\V_{N,m+1}}  & \| \bv'_{m+1}  - \bv_{m+1} \| \le b_N  \sqrt{N} \quad \textrm{and} \quad
\{ \vec \bv'_{m+1}  \} \subset  \Cs^0_{m+1}(a_N) \,.
\end{align}
Indeed,  
keeping the disjoint sets $\{B_i\}$ of our good pure state decomposition,  while 
increasing both $\delta_N$ and $\epsilon_N$ by $4 b_N$,  induces Properties 
(\ref{enum:PS1})-(\ref{enum:PS4}) of Proposition \ref{prop:PS} for our modified vertex set.
Likewise,  \eqref{eq:energyontree} transfers to the modified vertex set,  since \abbr{whp}
$\bv \mapsto H_N(\bv)$ is $C\sqrt N$-Lipschitz and enlarging $\epsilon_N$ to $a_N$,  
we now have that \eqref{eq:claim} holds at $m+1$. 

Turning to establish \eqref{eq:claim-new},  
recall that $\Cs_m^0(\epsilon) \subset \AA_m$ of \eqref{eq:AA} and the definition 
\eqref{eq:S(y)} of $S(\vec\by)$.  Setting $r_{m+1} := \sqrt{N (q_{m+1}-q_m)}$,  
for each vertex $\bv_{m}\in\V_{N,m}$ at level $m$ we denote by $p(\cdot)$ the orthogonal projection onto 
$\mbox{span}\{\bv_1,\ldots,\bv_{m}\}^{\perp}$ and consider the decomposition obtained by replacing 
each of its descendants  $\bv_{m+1}\in\V_{N,m+1}$ by its image $\bu_{m+1}$ under the mapping 
\begin{equation*}\label{eq:proj}
\bv_{m+1} \mapsto \bu_{m+1} := \bv_{m}+\frac{r_{m+1}}{\|p(\bx)\|} p(\bx)
 \in S(\vec \bv_m) \,,  \quad \textrm{where} \quad \bx:=\bv_{m+1}-\bv_m \,.
\end{equation*}
Now, with $\{ \bv_i-\bv_{i-1},  i \le m\}$,  orthogonal of norms $r_i$,  we have for some finite 
 $c_m=c_m(\{q_i\})$,
\[
\frac{1}{\sqrt{N}} \|\bx - p(\bx)\| = \Big[ \sum_{i=1}^m \frac{1}{N r_i^2} \langle \bx, \bv_i-\bv_{i-1} \rangle^2 \Big]^{1/2} 
\le \frac{c_m}{N} \max_{i=1}^m |\langle \bx,\bv_i\rangle| \le 2c_m \epsilon_N \,,
\]
since by Property \eqref{eq:Pt3} of Proposition \ref{prop:PS}, 
$|\langle \bx,\bv_i \rangle| 
= |\langle \bv_{m+1},\bv_i\rangle - \langle \bv_{m},\bv_i\rangle | \le 2\epsilon_N N$ for all $i \le m$ (where for $i=1$, we define above $\bv_{i-1}=\bv_{0}=0$).
Similarly,  by our induction hypothesis,  \eqref{eq:claim} holds.  Hence,  $\|\bv_m\|^2 = N q_m$ with
\[
\big|  \| \bx \|^2 - r_{m+1}^2 \big| \le \big| \|\bv_{m+1}\|^2 - N q_{m+1} \big| +
2 |\langle \bv_{m+1}, \bv_m \rangle - N q_m | \le 3 \epsilon_N N 
\]
and since by the triangle inequality
\begin{align*}
	\|\bv_{m+1}-\bu_{m+1}\| &\le \|\bx-p(\bx)\|+\|p(\bx)-r_{m+1} \frac{p(\bx)}{\|p(\bx)\|} \| \\
	&= \|\bx-p(\bx)\|+\big|\|p(\bx)\|-r_{m+1} \big|  \le 2 \|\bx-p(\bx)\| + \big|\|\bx\|-r_{m+1}\big|  \,,
\end{align*}
we deduce that $\|\bv_{m+1}-\bu_{m+1}\| \le b'_N \sqrt{N}$ 
for some $b'_N=b'_N(m,\epsilon_N) \to 0$.  As explained above,  \abbr{whp}  
$H_N(\cdot)$ is $C\sqrt N$-Lipschitz and
$\big|\frac{1}{N}H_N(\bv_{m+1})-\Es(q_{m+1})\big| \le a_N$
for some non-random $a_N \to 0$.  Hence,  after suitably increasing $a_N$,  we have that \abbr{whp}
\begin{equation}\label{eq:sup-level-set}
\sup_{\V_{N,m+1}} \Big|\frac{1}{N}H_N(\bu_{m+1})-\Es(q_{m+1})\Big| \leq a_N.
\end{equation}
Now,  having $\vec \bv_m \in \Cs_m^0(\epsilon_N)$ and $\bu_{m+1} \in S(\vec \bv_m)$ 
satisfying \eqref{eq:sup-level-set},  we apply our next lemma at $\epsilon = a_N$ (and some
$\tau=\tau_N \to 0$),  to get with probability going to $1$ 
as $N \to \infty$,  a perturbation $\bv'_{m+1} \in S(\vec \bv_m)$ of $\bu_{m+1}$ 
for which  \eqref{eq:claim-new} holds (say,  with $b_N=b'_N+\tau_N$).
\begin{lem}
	For any $m\leq k-1$ and $\tau>0$, if $\epsilon>0$ is sufficiently small,
	\begin{align*}
	\varlimsup_{N\to\infty} \frac1N\log\bigg(1-	\P\Big[
	\forall \vec\bx_m\in\Cs^0_m(\epsilon),\,&\bx\in S(\vec\bx_m):\,\frac{1}{N}H_N(\bx)\geq\Es(q_{m+1})-\epsilon \implies \\
	&\exists \bx'\in S(\vec\bx_m),\,\|\bx-\bx'\|\leq \tau\sqrt{N}, (\vec\bx_m,\bx')\in\Cs^0_{m+1}(\epsilon)
	\Big]
	\bigg)<0.
	\end{align*}
\end{lem}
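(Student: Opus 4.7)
The plan is to show that the complementary (bad) event $\mathrm{BE}_\epsilon$ --- there exist $\vec\bx_m\in\Cs^0_m(\epsilon)$ and $\bx\in S(\vec\bx_m)$ with $H_N(\bx)/N\ge \Es(q_{m+1})-\epsilon$ but no $\bx'\in S(\vec\bx_m)$ within distance $\tau\sqrt N$ of $\bx$ satisfying $(\vec\bx_m,\bx')\in\Cs^0_{m+1}(\epsilon)$ --- has probability $\le e^{-cN}$ for some $c>0$, once $\epsilon$ is sufficiently small.

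The key observation is that on $\mathrm{BE}_\epsilon$ we can replace the unspecified high-energy point by an exact critical point at the top of $S(\vec\bx_m)$. Let $\bx^*\in S(\vec\bx_m)$ be any maximizer of $H_N|_{S(\vec\bx_m)}$; then automatically $\gradt H_N(\bx^*)=0$. By Corollary \ref{cor:Hm1}, conditional on $\cpt(\vec\bx_m^\be,\vec E,\vec R)$ with $(\vec E,\vec R)\in\bar V(\epsilon)$, the restriction of $H_N$ to $S(\vec\bx_m^\be)$ is (up to the isometric scaling $T^{(m)}$) a spherical spin glass of mixture $\xim$, whose ground-state energy equals $\Es(q_{m+1})-\Es(q_m)$ by Lemma \ref{lem:EsRs}. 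Borell--TIS concentration then forces $|H_N(\bx^*)/N-\Es(q_{m+1})|<\epsilon$ with overwhelming probability, and rotational invariance transfers this uniformly to arbitrary $\vec\bx_m\in\Cs^0_m(\epsilon)$. Hence $(\vec\bx_m,\bx^*)\in\Cs^0_{m+1}(\epsilon)$, which by definition of $\mathrm{BE}_\epsilon$ forces $\|\bx-\bx^*\|>\tau\sqrt N$. Thus, up to an exponentially-small event, $\mathrm{BE}_\epsilon$ is contained in
\[
\mathrm{BE}_\epsilon':=\Big\{\exists\,\vec\bx_{m+1}\in\Cs^0_{m+1}(\epsilon),\ \exists\,\bx\in S(\vec\bx_m):\ \|\bx-\bx_{m+1}\|>\tau\sqrt N,\ H_N(\bx)/N\ge \Es(q_{m+1})-\epsilon\Big\}.
\]

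To bound $\P(\mathrm{BE}_\epsilon')$ I would apply Markov's inequality, writing $\P(\mathrm{BE}_\epsilon')\le\E[\sum_{\vec\bx\in\Cs^0_{m+1}(\epsilon)}\alpha_{m+1}(\vec\bx)]$ for $\alpha_{m+1}(\vec\bx_{m+1}):=\indic\{\exists\bx\in S(\vec\bx_m),\,\|\bx-\bx_{m+1}\|>\tau\sqrt N,\,H_N(\bx)/N\ge \Es(q_{m+1})-\epsilon\}$. After enlarging $\Cs^0_{m+1}(\epsilon)$ to $\Cs_{m+1}(\epsilon')$ via Corollary \ref{cor:CshCs} (at negligible cost) and replacing $\alpha_{m+1}$ by a continuous dominating functional meeting the hypothesis \eqref{eq:alphalimits} (for instance $\psi(\sup_{\bx:\|\bx-\bx_{m+1}\|\ge \tau\sqrt N}H_N(\bx)/N-(\Es(q_{m+1})-2\epsilon))$ for a Lipschitz cut-off $\psi$ supported on $[\epsilon,\infty)$), I would invoke Proposition \ref{prop:multilvlKR}. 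The complexity contribution $\sum_{m'\le m}\sup_{I_{m'}\times I'_{m'}}\Theta_{\xi_{(m')}}$ tends to $0$ as $\epsilon\to 0$ by Corollary \ref{cor:Thetastar} and continuity of $\Theta_{\xi_{(m')}}$. The residual conditional expectation is --- after the change of variables $T^{(m)}$ --- exactly the conditional probability appearing in Proposition \ref{prop:HS2}: conditioning on $\cpt(\vec\bx_{m+1}^\be,\vec E,\vec R)$ places $\bz^*=(T^{(m)})^{-1}(\bx_{m+1}^\be)$ as a critical point of the strictly $1$-\abbr{RSB} model $\xim$ at near-ground-state energy, and the event becomes the existence of $\bz\in\mathbb{S}^{N-m-1}$ at rescaled distance $\gtrsim\tau'\sqrt{N-m}$ from $\bz^*$ with $H_N^{(m)}(\bz)/(N-m)\ge \Es(\xim,1)-O(\epsilon)$. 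Covering the far region by a finite union of the slabs $\{\langle\bz,\bz^*\rangle/(N-m)=1-\eta\}$ that appear in Proposition \ref{prop:HS2} (using the $O(\sqrt{N-m})$-Lipschitz bound on $H_N^{(m)}$ to discretize the values of $\eta$), that proposition bounds the conditional probability by $e^{-c'N}$ for some $c'=c'(\tau)>0$ independent of $\epsilon$. Combining, $\P(\mathrm{BE}_\epsilon')\le e^{(o_\epsilon(1)-c')N}$, which is $\le e^{-c''N}$ for all sufficiently small $\epsilon$.

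The main obstacle is the smoothing step required before invoking Proposition \ref{prop:multilvlKR}: the natural indicator $\alpha_{m+1}$ is only upper semi-continuous both in $\vec\bx_{m+1}$ and in $H_N(\cdot)$, and one must engineer a continuous dominating functional that still (a) triggers on $\mathrm{BE}_\epsilon'$ and (b) after conditioning on $\cpt(\vec\bx_{m+1}^\be,\vec E,\vec R)$ and application of $T^{(m)}$ reduces to the event treated by Proposition \ref{prop:HS2}. A secondary technical point is the discretization over the parameter $\eta$ mentioned above, which turns the single-slab bound of Proposition \ref{prop:HS2} into a bound over the entire region of points at distance $>\tau'\sqrt{N-m}$ from $\bz^*$.
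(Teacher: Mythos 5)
The reduction to $\mathrm{BE}_\epsilon'$ is where the argument breaks. By passing from the local maximizer in the connected component of the level set (the paper's choice) to the \emph{global} maximizer $\bx^*$ of $H_N|_{S(\vec\bx_m)}$, you lose the topological trap that confines the relevant far point to a single thin slab. Your event $\mathrm{BE}_\epsilon'$ --- there exist a near-ground-state critical point $\bx_{m+1}$ on $S(\vec\bx_m)$ and a high-energy point $\bx$ at distance $>\tau\sqrt N$ from it --- is not exponentially rare; it holds with probability tending to $1$. Indeed, the strict $1$-\abbr{RSB} structure of $\xim$ (\eqref{eq:xim_strict1RSB}) means the zero-temperature Parisi measure has an atom at $0$: there are several near-ground-state critical points on $S(\vec\bx_m)$ with pairwise overlap $\approx q_m$ (hence mutual distance $\approx\sqrt{2(q_{m+1}-q_m)N}$). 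Concretely, already Properties (1)--(4) of Proposition \ref{prop:PS} (which hold \abbr{whp} and are proved independently of Property (5)) provide $d_N\to\infty$ such children of any depth-$m$ vertex, at this mutual distance and at energy $\approx\Es(q_{m+1})$. Taking $\bx_{m+1}$ to be one of them and $\bx$ another already triggers $\mathrm{BE}_\epsilon'$, so no Kac-Rice bound can make $\P(\mathrm{BE}_\epsilon')$ small. Your ``secondary technical point'' --- that Proposition \ref{prop:HS2} only gives a single-slab bound for \emph{small} $\eta$ and must be discretized over all far slabs --- is in fact the fatal obstruction: for $\eta$ of order one (overlap $\approx 0$), the conclusion of Proposition \ref{prop:HS2} simply fails, for exactly the reason above.

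The paper's proof sidesteps this by working with the connected component of the superlevel set $S_\epsilon(\vec\bx_m)$ containing the given high-energy point $\bx$. That component contains a local maximum $\bx_{m+1}$, and once one shows the energy on the \emph{single} slab $\{\langle\cdot,\bx_{m+1}\rangle/N=q_{m+1}-\eta\}$ drops below $\Es(q_{m+1})-\epsilon$ for one small fixed $\eta=\eta(\tau)$, the component is trapped in the cap of radius $O(\sqrt{\eta N})$ around $\bx_{m+1}$ --- so $\|\bx-\bx_{m+1}\|\le\tau\sqrt N$. Reducing to that single slab is precisely what makes Proposition \ref{prop:HS2} (which requires $\eta$ small) applicable after smoothing and Proposition \ref{prop:multilvlKR}, with no covering over $\eta$ needed. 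Your subsequent Kac-Rice, Corollary \ref{cor:CshCs}, and Corollary \ref{cor:Thetastar} steps align with the paper's, but they are downstream of a reduction that cannot work.
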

\begin{proof}
	By the Borell-TIS inequality, with probability at least $1-e^{-cN}$ for some $c=c(\xi,\epsilon)$,
	\begin{equation}\label{eq:Ebdlvlset}
		\sup_{\bx\in \sqrt{q_{m+1}}\SN}  \Big\{ \frac1N	H_N(\bx) \Big\} \leq \Es(q_{m+1})+\epsilon.
	\end{equation}
	Suppose $\vec\bx_m=(\bx_1,\ldots,\bx_m)\in\Cs^0_m(\epsilon)$ and 
	recall that $S(\vec\bx_m)$ is a sphere of  dimension  $N-m-1$ and radius $\sqrt{N(q_{m+1}-q_m)}$. 
	Consider the subsets
	\begin{align*}
	S^+_\epsilon(\vec\bx_m)&:=\Big\{
	\bx\in S(\vec\bx_m):\,\frac{1}{N}H_N(\bx)\geq\Es(q_{m+1})-\epsilon
	\Big\},\\
	S_\epsilon(\vec\bx_m)&:=\Big\{
	\bx\in S(\vec\bx_m):\,\Big|\frac{1}{N}H_N(\bx)-\Es(q_{m+1})\Big|\leq \epsilon
	\Big\}.
	\end{align*}
	Assume that the event in \eqref{eq:Ebdlvlset} occurs, so that $S_\epsilon(\vec\bx_m)=S^+_\epsilon(\vec\bx_m)$, since $S(\vec\bx_m)\subset\sqrt{q_{m+1}}\SN$.
	
	Each of the connected components of $S_\epsilon(\vec\bx_m)=S^+_\epsilon(\vec\bx_m)$ contains at least one local maximum and, in particular, a critical point $\bx_{m+1}$ of $H_N(\bx)$.  Namely, a point $\vec\bx_{m+1}=(\bx_1,\ldots,\bx_{m+1})$ such that $\gradt H_N(\bx_{m+1})=0$, where the latter gradient is defined above \eqref{eq:xi_condition}. To prove the implication as in the lemma, assuming \eqref{eq:Ebdlvlset}, it is therefore sufficient to show that for any critical point $\bx_{m+1}$ in $S_\epsilon(\vec\bx_m)$, 
	the connected component of $S_\epsilon(\vec \bx_m)$ containing $\bx_{m+1}$ 
	has diameter less than $\sqrt{N}\delta$ for some 
	$\delta=\delta(\epsilon) \to 0$ as $\epsilon \to 0$.  The latter claim about the diameter follows if for some $\eta=\eta(\epsilon) \to 0$ and any such
	$\bx_{m+1}$
	\begin{equation}\label{dfn:Ups}
	\Upsilon_{N}(\vec\bx_{m+1}):=\frac1N\sup_{\substack{\bx\in S(\vec\bx_m):\\ \frac1N\langle\bx,\bx_{m+1}\rangle=q_{m+1}-\eta}} \Big\{ H_N(\bx) \Big\} < \Es(q_{m+1})-\epsilon\,,
	\end{equation}
since this implies that any $\bx\in S(\vec\bx_m)$ with $\frac1N\langle\bx,\bx_{m+1}\rangle=q_{m+1}-\eta$, equivalently with $\frac1N\|\bx-\bx_{m+1}\|^2=2\eta$, does not belong to $S^+_\epsilon(\vec\bx_m)$.
	We conclude that in order to prove the lemma, it suffices to show that for any fixed $\eta>0$ and $\epsilon<\epsilon_0(\eta)$,
	\begin{align}\label{eq:topLemE1}
		\varlimsup_{N\to\infty} \frac1N\log  \E\Big[1 \wedge \sum_{\vec\bx \in\Cs^0_{m+1}(\epsilon)} \indic
		\big\{ \Upsilon_{N}(\vec\bx)\ge   \Es(q_{m+1})- \epsilon \big\} \Big]&<0.
	\end{align}
By Corollary \ref{cor:CshCs},  with overwhelming probability, $\Cs^0_{m+1}(\epsilon)\subset \Cs_{m+1}(\epsilon')$ whenever $\epsilon<\epsilon_1(\epsilon')$.  Hence,  \eqref{eq:topLemE1} holds if for any fixed $\eta$,  
small enough $\epsilon'=\epsilon'(\eta)$ and any $\epsilon<\epsilon_1(\epsilon')$,
\begin{align}\label{eq:topLemE2}
	\varlimsup_{N\to\infty} \frac1N\log
	 \E\Big[1 \wedge \sum_{\vec\bx \in\Cs_{m+1}(\epsilon')} \indic
		\big\{ \Upsilon_{N}(\vec\bx) \ge \Es(q_{m+1})- \epsilon  \big\} \Big]
	&<0.
\end{align}
The bounded,  continuous function $\tilde h_\epsilon(t) := 1 \wedge (\frac{t}{\epsilon} + 1)_+$ exceeds  
the indicator on $\R_+$,  hence the \abbr{lhs}
of \eqref{eq:topLemE2} is bounded from above by
\begin{align}\label{eq:topLemE3}
	\varlimsup_{N\to\infty} \frac1N\log \E \Big[\Big(\sum_{\vec\bx\in\Cs_{m+1}(\epsilon') } \alpha_{m+1,\epsilon}(\vec\bx)\Big)\wedge 1 \Big],
\end{align}
where we have set
\[
\alpha_{m+1,\epsilon}(\vec\bx) := \tilde h_\epsilon(\Upsilon_{N}(\vec\bx) - \Es(q_{m+1}) + \epsilon)\,.
\]
The $[0,1]$-valued $\alpha_{m+1,\epsilon}(\vec\bx)=\bar \alpha_{N,m+1} (\vec\bx,H_N(\cdot))$ is a.s.\  
continuous on $\AA_{m+1}$,  where the non-random,  rotationally invariant $\bar \alpha_{N,m+1}(\vec\bx,\varphi)$
that corresponds to replacing $H_N(\bx)$ by $\varphi(\bx)$ in \eqref{dfn:Ups},  satisfies \eqref{eq:alphalimits}.  
Consequently,  by Proposition \ref{prop:multilvlKR},  the limit in \eqref{eq:topLemE3} is bounded from above by 
	\begin{align}\label{eq:PSKRfinalform}
	\sum_{n=0}^{m}\sup_{I_n(\epsilon')\times I'_n(\epsilon')}\Theta_{\xin}(E,R)  +
	\varlimsup_{N\to\infty}\frac1N\log\Big(\sup_{\bar V(\epsilon')}\E\big[ \alpha_{m+1,\epsilon}(\vec\bx_{m+1}^{\be})   \,\big|\,\cpt(\vec \bx_{m+1}^\be,\vec E,\vec R) \big]\Big),
\end{align}
and we recall that
\begin{equation*}
	\begin{aligned}
		I_m(\epsilon')&=[\Es(q_{m+1})-\Es(q_m)-2\epsilon',\Es(q_{m+1})-\Es(q_m)+2\epsilon'],\\
		I'_m(\epsilon')&=(q_{m+1}-q_m)[\Rs(q_{m+1})-\epsilon',\Rs(q_{m+1})+\epsilon'].
	\end{aligned}
\end{equation*}
Since $(E,R)\mapsto\Theta_{\xi}(E,R)$ is continuous, by Corollary \ref{cor:Thetastar},  the sum over $n$
in \eqref{eq:PSKRfinalform} can be made as small as we wish by taking $\epsilon'$ small enough.

By our choice of $\tilde h_\epsilon$,  the expectation in \eqref{eq:PSKRfinalform} is bounded from above by the conditional probability that $\Upsilon_{N}(\vec\bx_{m+1}^{\be})\geq\Es(q_{m+1})-2\epsilon$.
By Corollary \ref{cor:Hm1},  the latter
conditional probability is equal to 
\begin{align*}
\P\Big(
\frac{1}{N-m} \sup_{\substack{\bz\in\mathbb{S}^{N-m-1}:\\ \frac{1}{N-m}\langle \bz,\bs\rangle=1-\eta'}} H_N^{(m)}(\bz)\geq E'\,\Big|\,\bs\in \Crt_{N-m,\xim}(E,R)
\Big)\,,
\end{align*}
where $\bs\in\mathbb{S}^{N-m-1}$ is arbitrary, $\eta'=\frac{\eta}{q_{m+1}-q_m}$,
\begin{align*}
	E'&= \sqrt{\frac{N}{N-m}}\Big(
	\Es(q_{m+1})-E_m-2\epsilon\Big),\\
	E&= \sqrt{\frac{N}{N-m}}\big(
	E_{m+1}-E_m\big),\\
	R&=\sqrt{\frac{N}{N-m}}\big(q_{m+1}-q_m\big) R_{m+1},
\end{align*}
and $\Crt_{N-m,\xim}$ is defined as in Section \ref{sec:complexity},  
albeit now for the spherical model $H_N^{(m)}(\bz)$ of mixture $\xim$.

For $N$ sufficiently large,  by choosing $\epsilon'$ sufficiently small we can make sure that uniformly over $\bar V(\epsilon')$, $E'$, $E$ and $R$ are as close as we wish to $\Es(q_{m+1})-\Es(q_m)-2\epsilon$, $\Es(q_{m+1})-\Es(q_m)$ and $(q_{m+1}-q_m)\Rs(q_{m+1})$, respectively.
Hence, the proof is completed by Lemma \ref{lem:EsRs} and  Proposition \ref{prop:HS2}, noting that the bound in 
Proposition \ref{prop:HS2} only improves as we decrease $a$ and $b$.
\end{proof}

\section{\label{sec:LowTempPf} Low temperature: proof of Theorem \ref{thm:low temp 1}}

Recall the measure 
\begin{equation*}\label{eq:Pi}
\Pi^\epsilon_{N,\beta}(\,\cdot\,):= 
\sum_{\vec\bx\in \mathscr{C}_{k}(\epsilon)} G_{N,\beta}\big(\,\cdot\,\big|\, B(\vec\bx,\epsilon)\big)
=\sum_{\vec\bx\in \mathscr{C}_{k}(\epsilon)} \frac{G_{N,\beta}\big(\,\cdot\,\cap\, B(\vec\bx,\epsilon)\big)}{G_{N,\beta}\big(B(\vec\bx,\epsilon)\big)},
\end{equation*}
and subset 
\begin{align*}
	B(\vec \bx,\epsilon)&:=\left\{ \bs\in\SN:\,|\langle\bs,\bx_i\rangle-\langle\bx_i,\bx_i\rangle|\leq N\epsilon,\,\forall i\leq k \right\} .
\end{align*} 
which we defined in \eqref{eq:barPi} and \eqref{eq:Band}. We denote by $\mathcal{B}(\SN)$ the set of Borel measurable subsets of $\SN$. 
From the discussion of Section \ref{subsec:PfSketch_purestates}, for $\epsilon_N \to 0$ of 
Proposition \ref{prop:PS} and any fixed $\epsilon>0$,
\begin{equation*}
	\lim_{N\to\infty} \P \Big(\forall M\in \mathcal{B}(\SN):\, \Pi_{N,\beta}^\epsilon(M)\wedge 1\geq G_{N,\beta}(M)-\epsilon_{N}\Big)=1
\end{equation*}
and thus to prove \eqref{eq:GAverageLowT}, it is enough to show  that
\begin{equation}
	\varlimsup_{\epsilon\to0}\varlimsup_{N\to\infty}\frac{1}{N}\log \Big(\E \Big[\Pi^{\epsilon}_{N,\beta}\big(f_{N}(\bs)\in A \big)\wedge 1\Big]\Big)<0.\label{eq:PiAverageMin}
\end{equation}
In contrast to Section \ref{subsec:PfSketch_purestates}, here we take the minimum with $1$ which is justified since $G_{N,\beta}(\SN)= 1$.

For a notation consistent with Proposition \ref{prop:multilvlKR},  we define, for $\vec\bx\in\AA_k$,
\begin{equation}\label{dfn:alpha-k}
\alpha_k(\vec\bx) =\alpha_{k,\epsilon} (\vec\bx) = G_{N,\beta}\Big(\,\big\{ \bs\in B(\vec\bx,\epsilon):\,f_{N}(\bs)\in A  \big\} \Big)/ G_{N,\beta}(B(\vec\bx,\epsilon)),
\end{equation}
so that, in the notation of Section \ref{sec:multiLvlKR},
\[
\Pi^{\epsilon}_{N,\beta}\big(f_{N}(\bs)\in A \big)\wedge 1 = \alpha^{\epsilon}_{0,k}(0) \wedge 1.
\]
Written more explicitly in terms of $H_N(\bs)$ and $\bar f_N(\bs,H_N)$,  the function 
$\alpha_{k,\epsilon} (\vec\bx)$ is given by
\begin{equation}\label{eq:alphaPS}
\alpha_{k,\epsilon} (\vec\bx) 
=\frac{\int_{B(\vec\bx,\epsilon)}\indic\{ \bar f_{N}(\bs,H_N)\in A \}e^{\beta H_N(\bs)}d\bs}{\int_{B(\vec\bx,\epsilon)}e^{\beta H_N(\bs)}d\bs}.
\end{equation}

Note that $\alpha_{k,\epsilon}$ is a.s.\ continuous on $\AA_k$ by bounded convergence, since the volume of the symmetric difference $B(\vec\bx,\epsilon)\vartriangle B(\vec\bx',\epsilon)$ goes to zero as $\vec\bx\to\vec\bx'$. Bounded convergence, together with \eqref{eq:flim},  also implies the continuity property of \eqref{eq:alphalimits}. In \eqref{eq:alphaPS} we have $\alpha_{k,\epsilon}(\vec\bx)$ in the form $\bar\alpha_{k,\epsilon}(\vec\bx,H_N(\cdot))$ for  deterministic $\bar\alpha_{k,\epsilon}$.  By a change of variables, its rotational invariance is inherited from $\bar f_N$.  Further,  $\alpha_{k,\epsilon}(\vec\bx_{k}^{\be})\leq1$ a.s.\  and 
hence,  by Proposition \ref{prop:multilvlKR}, we have that the \abbr{lhs} of \eqref{eq:PiAverageMin} is bounded by the sum of
\begin{align}\label{eq:vanishingComplexity}
	\varlimsup_{\epsilon\to0}
	\sum_{m=0}^{k-1}\sup_{I_m(\epsilon)\times I'_m(\epsilon)}\Theta_{\xim}(E,R) 
\end{align}
and 
\begin{align}\label{eq:1ptE}
	\varlimsup_{\epsilon\to0}
	\varlimsup_{N\to\infty}\frac1N\log\Big(\sup_{\bar V(\epsilon)}\E\big[ \alpha_{k,\epsilon}(\vec\bx_{k}^{\be}) \,\big|\,\cpt(\vec \bx_{k}^\be,\vec E,\vec R) \big]\Big).
\end{align}

Recall that, since $(E,R)\mapsto\Theta_{\xi}(E,R)$ is continuous and \eqref{eq:kRSB} holds,  by Corollary \ref{cor:Thetastar}, \eqref{eq:vanishingComplexity} is zero,  so we finish the proof upon showing that 
\eqref{eq:1ptE} is negative.
By standard diagonalization argument,  it suffices to consider in \eqref{eq:1ptE} only the limit superior in $N$,  for a
fixed,  arbitrary sequence $\epsilon_{N}\to0$.  That is,  it suffices to show that
	\begin{align}\label{eq:1ptE3}
		\varlimsup_{N\to\infty}\frac1N\log\Big(\sup_{\bar V(\epsilon_N)}
		\E\big[ \alpha_{k,\epsilon_N}(\vec\bx_{k}^{\be}) \,\big|\,\cpt(\vec \bx_{k}^\be,\vec E,\vec R) \big]
	\Big)<0.
	\end{align} 
	
	To this end,  suppose that $F_{N,\beta}^\gamma$ is the free energy of the Gibbs measure
	$G^{\gamma}_{N,\beta}$ associated to $H_N^\gamma$,  where 
	$\big(H^\gamma_N(\bs)\big)_{\gamma\in\Gamma}$ is,  for each $N$,   
	a family of Gaussian fields on the same probability reference space $(\Sigma_N,\cF_N,\mu_N)$,
	such that for some fixed $v>0$,
	\begin{align}\label{eq:cond-Gamma}
	\lim_{N\to\infty}\sup_{\gamma\in\Gamma,\bs\in\bar\Sigma_N}\Big|\frac1N\E\big[H^\gamma_N(\bs)^2\big]-v\Big| &= \lim_{N\to\infty}\sup_{\gamma\in\Gamma,\bs\in\bar\Sigma_N}\Big|\frac1N\E\big[H^\gamma_N(\bs)\big]\Big| \nonumber \\
	& = \lim_{N\to\infty} \sup_{\gamma \in \Gamma} \, \big|\E F_{N,\beta}^{\gamma}-\frac12\beta^2v \big|=0\,,
	\end{align}
	for some measurable subset $\bar\Sigma_N\subset \Sigma_N$ such that $\mu_N(\bar\Sigma_N)=1$.
Let $\bar f_N(\bs,\varphi)$ be a deterministic function from $\bar\Sigma_{N}\times \R^{\Sigma_{N}}$ to $\R$ 
such that $\bar f_N(\bs,H_N^\gamma)$ is a.s.\ measurable.  By essentially the same proof as for 
Theorem \ref{thm:high-temp}, if, 
for some measurable $A\subset \R$, 
\begin{equation}\label{eq:HighTemp-Gamma}
	\varlimsup_{\epsilon\to0}\varlimsup_{N\to\infty}\frac{1}{N}\log\Big\{\sup_{\substack{\gamma\in\Gamma\\\bs\in\bar\Sigma_{N}}}
	\sup_{|E-\beta v|<\epsilon}\P\big(\bar f_{N}(\bs,H_N^\gamma)\in A\,\big|\,\frac{1}{N}H^{\gamma}_{N}(\bs)=E\big)\Big\}<0,
\end{equation}
	then 
	\begin{equation}\label{eq:Gamma-concl}
	\varlimsup_{N\to\infty}\frac{1}{N}\log \Big( \sup_{\gamma\in\Gamma}\E \big[ G^{\gamma}_{N,\beta}\left(\bar f_{N}(\bs,H_N^{\gamma})\in A\right) \big] \Big)  <0\,.
	\end{equation}
Indeed,  with $v=|\Gamma|=1$ and $\E H^\gamma_N(\bs)=0$,  the bound \eqref{eq:Gamma-concl} follows from Theorem \ref{thm:high-temp} (see \eqref{eq:HighTempCondUnif}) applied with $\bar \Sigma_N$. Generalizing to general $v>0$ is just a matter of scaling, see \eqref{eq:variance-bd}.  By going over the proof, one can check that allowing for vanishing expectation and uniform bound over $\gamma$ for $|\Gamma|>1$ only requires straightforward modifications.

Our next lemma,  whose proof is deferred to the end of the section,  verifies \eqref{eq:cond-Gamma}
and thereby allows us to employ the above,  for $v=\bxik(1)$ of \eqref{eq:xim}, $\Gamma=\bar V(\epsilon_{N})$,
$\bar\Sigma_N=B(\vec \bx_{k}^\be,\epsilon_N)$, $\Sigma_{N}=\SN$ where $H_N^\gamma(\cdot)$ at $\gamma=(\vec E,\vec R)$ 
follows the conditional law of  $H_N(\cdot)-N\Es(q_k)$ given $\cpt(\vec \bx_{k}^\be,\vec E,\vec R)$.
\begin{lem}\label{lem:Hhat}
		Suppose that $\hat H_N(\bs;\vec E,\vec R)$ is a Gaussian field having the law of $H_N(\bs)-N\Es(q_k)$ conditional on $\cpt(\vec \bx_{k}^\be,\vec E,\vec R)$. Then,
		\begin{align}
			\lim_{N\to\infty}\sup_{\bar V(\epsilon_{N})}\sup_{\bs\in B(\vec \bx_{k}^\be,\epsilon_N)}&\Big|\frac1N \E\big[ \hat H_N(\bs;\vec E,\vec R) \big]\Big| =0,\label{eq:Hhat1}\\
			\lim_{N\to\infty}\sup_{\bar V(\epsilon_{N})}\sup_{\bs\in B(\vec \bx_{k}^\be,\epsilon_N)}&\Big|\frac1N \E\big[\hat H_N(\bs;\vec E,\vec R)^2\big]- \bar\xi_{k}(1)\,\Big|=0,\label{eq:Hhat2}\\
			\lim_{N\to\infty}\sup_{\bar V(\epsilon_{N})}&\bigg|\, \E\bigg[\frac1N\log\Big( \int_{B(\vec \bx_{k}^\be,\epsilon_N)} e^{\beta \hat H_N(\bs;\vec E,\vec R)}d\bs\Big) \bigg]-\frac12\beta^2\bar \xi_k(1)\, \bigg|=0,\label{eq:Hhat3}
		\end{align}
	where here $d\bs$ denotes integration \abbr{wrt} the normalized to $1$ measure on $B(\vec\bx_k^{\be},\epsilon_N)$.
	\end{lem}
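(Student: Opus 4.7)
My plan is to verify each of the three claims by combining the rotational invariance of $H_N$, the Gaussian conditioning formulas of Lemma \ref{lem:conditional_law_on_band}, and uniform continuity in the relevant inner products and parameters.

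For \eqref{eq:Hhat1} and \eqref{eq:Hhat2}, note that $\cpt(\vec\bx_k^{\be},\vec E,\vec R)$ is determined by finitely many linear functionals of $H_N$ at the fixed sites $\bx_i^{\be}$. Hence by Gaussian conditioning, $\E[H_N(\bs)\,|\,\cpt]$ is an explicit linear combination of $\{NE_i,R_i\}_{i\le k}$ with coefficients that depend continuously on $\{\langle\bs,\bx_j^{\be}\rangle/N\}_{j\le k}$ and $\|\bs\|^2/N$, while $\V(H_N(\bs)\,|\,\cpt)$ is a continuous function of the same inner products alone. At the reference point $\bs\in S(\vec\bx_k^{\be})=S_0(\vec\bx_k^{\be})\cap\SN$, Lemma \ref{lem:conditional_law_on_band} evaluates them to $NE_k$ and $N\xi_{q_k}(1-q_k)=N\bar\xi_k(1)$, respectively. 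Since $(\vec E,\vec R)\in\bar V(\epsilon_N)$ forces $|E_k-\Es(q_k)|<\epsilon_N$ (with the remaining $E_i,R_i$ uniformly bounded) and $\bs\in B(\vec\bx_k^{\be},\epsilon_N)$ forces $|\langle\bs,\bx_j^{\be}\rangle/N-q_j|<\epsilon_N$, uniform continuity on a compact neighborhood of $(q_1,\ldots,q_k,1)$ yields \eqref{eq:Hhat1} and \eqref{eq:Hhat2}.

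For \eqref{eq:Hhat3}, decompose $\hat H_N=\tilde H_N+\mu_N$ with $\mu_N(\bs):=\E[\hat H_N(\bs;\vec E,\vec R)]$ and $\tilde H_N$ centered. By \eqref{eq:Hhat1}, $\sup_{B}|\mu_N|/N\to 0$ uniformly in $\gamma=(\vec E,\vec R)$, so sandwiching $e^{\beta\mu_N(\bs)}$ between $e^{\beta\inf\mu_N}$ and $e^{\beta\sup\mu_N}$ shows that replacing $\hat H_N$ by $\tilde H_N$ in the free energy changes it only by an additive $o(1)$. By \cite[Lemma 6.1]{TAPChenPanchenkoSubag} together with Anderson's inequality (which transfers the unconditional Lipschitz bound to the conditional Gaussian field), on an event $\mathcal{L}$ of probability at least $1-e^{-cN}$ uniformly in $\gamma$, the field $\tilde H_N$ is $C\sqrt{N}$-Lipschitz on $\BN$. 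Define the projection $\pi:B(\vec\bx_k^{\be},\epsilon_N)\to S(\vec\bx_k^{\be})$ by $\pi(\bs):=\bx_k^{\be}+\sqrt{N(1-q_k)}\,\bs_{\perp}/\|\bs_{\perp}\|$, where $\bs=\bs_{\parallel}+\bs_{\perp}$ is the orthogonal decomposition relative to $\sp\{\bx_i^{\be}\}$. The band constraints give $\|\bs-\pi(\bs)\|\le C\sqrt{N}\epsilon_N$, so on $\mathcal{L}$, $\sup_{\bs\in B}|\tilde H_N(\bs)-\tilde H_N(\pi(\bs))|/N\le C\epsilon_N$. Since $B$ is invariant under rotations that fix $\sp\{\bx_i^{\be}\}$ and $\pi$ commutes with such rotations, the pushforward by $\pi$ of the normalized uniform measure on $B$ coincides with the normalized uniform measure on $S(\vec\bx_k^{\be})$. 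Consequently, on $\mathcal{L}$,
\[
\frac{1}{N}\log\int_{B(\vec\bx_k^{\be},\epsilon_N)} e^{\beta\tilde H_N(\bs)}d\bs=\frac{1}{N}\log\int_{S(\vec\bx_k^{\be})} e^{\beta\tilde H_N(\by)}d\by+O(\beta\epsilon_N)\,.
\]
By Corollary \ref{cor:Hm1} at $m=k$, the integrand on the right is, after the bijection $T^{(k)}$ and the negligible rescaling $\sqrt{N/(N-k)}\to 1$, the partition function of the mixed spherical Hamiltonian with mixture $\xi_{(k)}=\bar\xi_k$ on $\mathbb{S}^{N-k-1}$ at inverse-temperature $\beta$. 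By \eqref{eq:Sbarxim} at $m=k$, $\mbox{supp}(\nu_{P,\bar\xi_k,\beta})=\{0\}$, so this model is in its replica symmetric phase at $\beta$ and the Parisi formula \eqref{eq:Parisi} reduces to $\tfrac{1}{2}\beta^2\bar\xi_k(1)$. Concentration of free energy \cite[Theorem 1.2]{PanchenkoBook}, together with a Cauchy--Schwarz estimate that absorbs the $\mathcal{L}^c$-contribution (bounded polynomially in $N$ via standard Gaussian tails), then yields \eqref{eq:Hhat3}.

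The main technical obstacle will be obtaining the Lipschitz bound for the conditional field uniformly in $\gamma\in\bar V(\epsilon_N)$ and verifying precisely the pushforward identification used in the reduction from $B$ to $S(\vec\bx_k^{\be})$; both are routine but rely on the specific geometric description of the band and on Anderson's inequality applied to the conditional Gaussian gradient.
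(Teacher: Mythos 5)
Your handling of \eqref{eq:Hhat3} is sound and essentially matches the paper's strategy; if anything, you are more explicit than the paper in reducing the band to the sphere via the rotation-equivariant projection $\pi$ and identifying the pushforward measure, and you correctly recover the replica-symmetric free-energy identity $F(\beta,\bar\xi_{(k)})=\frac12\beta^2\bar\xi_{(k)}(1)$ from \eqref{eq:Sbarxim} at $m=k$ rather than citing \cite[Corollary 6]{FElandscape}.

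However, there is a genuine gap in your treatment of \eqref{eq:Hhat1}--\eqref{eq:Hhat2}. You begin by asserting that ``$\cpt(\vec\bx_k^{\be},\vec E,\vec R)$ is determined by finitely many linear functionals of $H_N$,'' and that consequently the conditional mean of $H_N(\bs)$ is an explicit linear combination of $\{NE_i,R_i\}_{i\le k}$ with coefficients depending continuously on the inner products $\langle\bs,\bx_j^{\be}\rangle/N$. Neither claim is immediate: the conditioning event specifies $\gradt H_N(\bx_i^{\be})$ for each $i\le k$, and each of these is an $(N-i)$-dimensional vector, so the total number of conditioned variables grows linearly in $N$. The conditional-mean formula for jointly Gaussian variables involves the inverse of the full covariance matrix of all conditioned coordinates, and without further structure it is not clear that this inverse collapses to a quantity depending only on finitely many inner products, nor that the dependence is continuous \emph{uniformly in} $N$ (which your ``uniform continuity on a compact neighborhood'' argument requires). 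The paper's proof supplies exactly the missing ingredient: it decomposes the gradient conditioning into a bounded-dimensional piece $W_N$ (directions lying in $U_{\bs}=\operatorname{span}\{\bs,\bx_1^{\be},\ldots,\bx_k^{\be}\}$) and an infinite-dimensional piece $\bar W_N$ (directions in $U_{\bs}^{\perp}$), then shows via the explicit covariance computation \eqref{eq:cov-idn} that $\bar W_N$ is independent of $(W_N,H_N(\bs))$. Only after this reduction does the conditional mean reduce to the $N$-independent formula \eqref{eq:Phi-N}, from which the uniform control \eqref{eq:cont-cond-mean} follows. Your sketch skips this reduction, which is the essential technical content of the lemma; the rest of the argument (evaluation at the reference band via Lemma \ref{lem:conditional_law_on_band}, continuity in $\epsilon_N$) does follow once the reduction is in place.
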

	
	From \eqref{eq:alphaPS},  we may write the expectation in \eqref{eq:1ptE3} as
	\[
	\E\Bigg[ \frac{\int_{B(\vec \bx_{k}^\be,\epsilon_N)}\indic\Big\{ \bar f_{N}(\bs, \hat H_N(\,\cdot\,;\vec E,\vec R) + N\Es(q_k))\in A \Big\}e^{\beta \hat H_N(\bs;\vec E,\vec R)}d\bs}
	{\int_{B(\vec \bx_{k}^\be,\epsilon_N)}e^{\beta  \hat H_N(\bs;\vec E,\vec R)}d\bs}  \Bigg] \,.
	\]
    	Thus,  \eqref{eq:1ptE3},  which is of the form of \eqref{eq:Gamma-concl},  holds,  once we 
    	verify the relevant version of	\eqref{eq:HighTemp-Gamma}. That is,  the proof will be complete upon showing that 
	\begin{equation}\label{eq:conditional_prob_lowtemp}
		\varlimsup_{\epsilon\to0}\varlimsup_{N\to\infty}\frac{1}{N}\log\Big\{ \sup_{\bar V(\epsilon_N)}\sup_{\bs\in B(\vec \bx_{k}^\be,\epsilon_N)}\sup_{|E-\beta \bar\xi_k(1)|<\epsilon} P(\bs,A;E,\vec E,\vec R) \Big\} <0,
	\end{equation} 
where,  written in terms of $\tilde H_N(\cdot) := \hat H_N(\cdot)+N \Es(q_k)$,  we have that
\begin{align*}
	P(\bs,A;E,\vec E,\vec R)	&:=
	\P\big(\bar f_{N}(\bs, \tilde H_N(\,\cdot\,;\vec E,\vec R) ) \in A\,\big|\,\frac{1}{N} \tilde H_N(\bs;\vec E,\vec R)=E+\Es(q_k)\big) \\
	&\hphantom{:}= \P\big(\bar f_{N}(\bs, H_N) \in A\,\big|\, \cpt(\bs, \vec \bx_{k}^\be,E+\Es(q_k),\vec E,\vec R)
	\big). 
\end{align*}
Note that \eqref{eq:conditional_prob_lowtemp} follows directly 
from our assumption \eqref{eq:bd_Gaussian_average-11} (see also \eqref{eq:V}),
provided that 
\begin{equation}\label{eq:Fp-bxi}
F'(\beta,\xi)=\Es(q_k)+\beta \bxik(1) 
\end{equation}
(where we denote by $F(\beta,\xi)$ the limiting free energy of the spherical model with mixture $\xi$).
To verify \eqref{eq:Fp-bxi},  recall \eqref{eq:kRSB},  \eqref{eq:xiq},  and 
that by the \abbr{TAP} representation of 
\cite[Theorem 5]{FElandscape},  for any $q\in[0,1)$
\[
F(\beta,\xi)  \ge  \beta \Es(q) +\frac12\log(1-q)+F(\beta,\bar\xi_{q}) \,,
\]
with equality for $q\in \mathcal{S}_P$.  Consequently,  $F'(\beta,\xi)$ equals 
the partial derivative in $\beta$ of the \abbr{rhs} at $q_k\in\mathcal{S}_P$.  That is,
\begin{equation}\label{eq:TAPbetaderivative}
F'(\beta,\xi) = \Es(q_k) +\frac{d}{d\beta }F(\beta,\bxik)
\end{equation}
(as $\bxik=\bar\xi_{q_k}$,   see \eqref{eq:xim}).  From \cite[Corollary 6]{FElandscape}  we have that
$F(\beta,\bxik)=\frac12\beta^2\bxik(1)$,  so $\beta$ is bounded by the critical inverse-temperature of the mixture $\bxik(t)$. If $\beta$ is strictly less than the critical inverse-temperature then $F(\beta',\bxik)=\frac12\beta'^2\bxik(1)$ for all $\beta'$ in a neighborhood of $\beta$ and obviously $\frac{d}{d\beta }F(\beta,\bxik)=\beta \bxik (1)$.  In fact, 
the last identity holds even if $\beta$ is the critical inverse-temperature,  see the beginning of Section \ref{sec:HighTempPf}.  Upon comparing \eqref{eq:Fp-bxi} with \eqref{eq:TAPbetaderivative},  it remains to prove Lemma \ref{lem:Hhat}.
\qed

\begin{proof}[Proof of Lemma \ref{lem:Hhat}]
Recall that $B(\vec\bx_k^{\be},0) = \SN \cap S_0(\vec\bx_k^{\be})$ for $S_0(\cdot)$ of \eqref{eq:yi}.  
	Thus,  by Lemma \ref{lem:conditional_law_on_band} we have that conditional on 
	$\cpt(\vec \bx_{k}^\be,\vec E,\vec R)$,  the Gaussian field $H_N(\bs)-NE_k$ has on $B(\vec\bx_k^{\be},0)$
	zero mean and covariance $N \xi_{q_k}(\langle \bs,\bs'\rangle/N - q_k)$.  Mapping 
	$B(\vec\bx_k^{\be},0)$
	to $\mathbb{S}^{N-k-1}$ by $\bs \mapsto \frac{1}{\sqrt{1-q_k}} (\bs - \bx_k^{\be})$,  yields a
	conditional centered field which is a spherical model of the mixture $\bxik$ of \eqref{eq:xim}.  
	This spherical model has normalized variance $\bxik(1)$ and the limiting free energy $\frac12\beta^2\bxik(1)$
	(by \cite[Corollary 6]{FElandscape}),  resulting with \eqref{eq:Hhat1}-\eqref{eq:Hhat3} when $\bs$ is 
	restricted from $B(\vec\bx_k^{\be},\epsilon_N)$ to $B(\vec\bx_k^{\be},0)$.  
	
	To deal with the larger bands $B(\vec\bx_k^{\be},\epsilon_N)$, 
	note that conditionally on $\cpt(\vec \bx_{k}^\be,\vec E,\vec R)$,  the field $H_N(\bs)$ can be written 
	as the sum of two parts
	\begin{equation*}\label{eq:conditionalPS}
		N \varPhi_N(\bs; \vec E,\vec R) +  H_N^0(\bs) := \E\Big[ H_N(\bs) \,\Big|\,\cpt(\vec \bx_{k}^\be,\vec E,\vec R) \Big] + H_N^0(\bs),
	\end{equation*}
	where the centered field $H_N^0(\bs)$ has the law as $H_N(\bs)$ conditional on 
	$\cpt(\vec \bx_{k}^\be,\vec 0,\vec 0)$.  By Anderson's inequality,  see Remark \ref{rem:Anderson},  the Lipschitz constant of $H_N^0(\bs)$ over the sphere is stochastically dominated by that of $H_N(\bs)$.  
The latter is bounded by $L\sqrt N$ with probability at least $1-e^{-CN}$ for some constants $L=L(\xi)$ and $C=C(\xi)$, 
see e.g.\ \cite[Lemma 6.1]{TAPChenPanchenkoSubag} or \cite[Lemma A.3]{MontanariSubag2023}. 	
Thus, for some $\delta_N\to0$, on $B(\vec\bx_k^{\be},\epsilon_N)$,  the normalized conditional variance 
$\frac1N\E\big[ H^0_N(\bs)^2]$ is uniformly within $\delta_N$ from its value $\bxik(1)$ on $B(\vec\bx_k^{\be},0)$.
Further,  by the same Lipschitz property,  one easily sees that in the $N\to\infty$ limit,  the free energy of $H^0_N(\bs)$ 
on the band $B(\vec\bx_k^{\be},\epsilon_N)$ is the same as the one computed over $B(\vec\bx_k^{\be},0)$ 
(treated as a sphere with the uniform measure on it).  The proof is thus completed 
by showing that for some $\delta_N \to 0$,
\begin{equation}\label{eq:cont-cond-mean}
\sup_{(\vec E,\vec R)\in \bar V(\epsilon_{N})}
\sup_{\bs\in B(\vec\bx_k^{\be},\epsilon_N)}  \big|\varPhi_N(\bs; \vec E,\vec R)-E_k \big| \le \delta_N \,,
\end{equation}
where $E_k$ denotes the $k$-th element of $\vec E$. 
Proceeding to prove \eqref{eq:cont-cond-mean},  
we set $U_{\bs}=\mbox{span}\{U,\bs\}$ for
$U=\mbox{span}\{\bx_i^{\be}\}_{i\leq k}=\mbox{span}\{\be_i\}_{i\leq k}$,  letting
$\bar W_N=\{\partial_\bv H_N(\bx_a^{\be})\}_{a\leq k,\bv\in U_{\bs}^{\perp}}$ and 
\begin{align*}
		W_N&=\bigg\{\frac1N H_N(\bx_i^{\be}), \frac{\partial_{\bx_i^{\be}-\bx_{i-1}^{\be}} H_N(\bx_i^{\be})}{\|\bx_i^{\be}-\bx_{i-1}^{\be}\|^2},  \frac{\partial_{\bs-\bx_i^{\be}}H_N(\bx_i^{\be})}{\|\bs-\bx_i^{\be}\|^2} \bigg\}_{1 \leq i\leq k} 
		\bigcup 
		\bigg\{\frac{1}{\sqrt{N}} \partial_{\be_j} H_N(\bx_i^{\be}) \bigg\}_{1 \leq i< j\leq k} \,.
	\end{align*}
	We claim that the Gaussian vectors $(W_N,H_N(\bs)/N)$ and $\bar W_N$  are independent.  Indeed,  
	similarly to \eqref{eq:cov-H-der} and \eqref{eq:cov-der},  we have that
	\begin{equation}\label{eq:cov-idn}
	\begin{aligned}
		\E \left\{
		\partial_\bv H_N(\bx_a^{\be}) H_N(\bx) \right\} & = \xi'([ \bx_a^{\be},\bx ]) \langle \bv,\bx \rangle,\\
		\E \left\{
		\partial_\bv H_N(\bx_a^{\be}) \partial_{\bu} H_N(\bx_i^{\be}) \right\} & =  \xi'([ \bx_a^{\be},\bx_i^{\be} ]) \langle \bv,\bu \rangle + N^{-1}\xi''([ \bx_a^{\be},\bx_i^{\be} ]) \langle \bv,\bx_i^{\be} \rangle  \langle \bx_a^{\be},\bu \rangle,
	\end{aligned}
	\end{equation}
	and the inner products involving $\bv$ are zero for $\bx\in\{\bs,\bx_i^{\be}\}_{i\leq k}$ and $\bu\in\{\bs,\be_j,\bx_i^{\be}\}_{i,j\leq k}$.  
	This means that although the event $\cpt(\vec \bx_{k}^\be,\vec E,\vec R)$ specifies all variables in $W_N\cup \bar W_N$, it is only those in $W_N$ that affect the value of $\varPhi_N(\bs; \vec E,\vec R)$.
	
	Using the well-known formula for the conditional expectation of Gaussian variables,
	\begin{equation}\label{eq:Phi-N}
	\varPhi_N(\bs; \vec E,\vec R) = \Big[\E \big(W_N H_N(\bs) \big)\Big]^{\top} \Big[ N \E \big( W_N W_N^{\top}\big)\Big]^{-1} \varPsi (\vec E,\vec R),
	\end{equation}
	where here we treat $W_N$ as a vector of length $\frac{k(k+5)}{2}$
	and $\varPsi (\vec E,\vec R)$ is a vector whose values are $E_i$ and  $R_i$ in the entries corresponding to $\frac1N H_N(\bx_i^{\be})$ and $\frac{\partial_{\bx_i^{\be}-\bx_{i-1}^{\be}} H_N(\bx_i^{\be})}{\|\bx_i^{\be}-\bx_{i-1}^{\be}\|^2}$ 
	and $0$ elsewhere.
		
	From the proof of
	 Lemma \ref{lem:conditional_law_on_band} or directly from \eqref{eq:cov-idn},
	 one sees that for $\bs \in B(\vec \bx_k^{\be},0)$ the vector $\E \big(W_N H_N(\bs) \big)$ and the
	  invertible\footnote{Here we use the fact that $\xi$ is not a pure mixture, since we assume it is generic. 
	  If it is pure,  then $k=1$ (see Remark \ref{rem:pure}),  and
$\partial_{\bx_1^{\be}}H_N(\bx_1^{\be})$ is measurable \abbr{wrt} $H_N(\bx_1^{\be})$. }
	 matrix $N \E (W_N W_N^{\top})$ depend neither on $\bs$ nor on $N$.
	 For $\bs \in B(\vec \bx_k^{\be},\epsilon_N)$,  the vector $\E \big(W_N H_N(\bs) \big)$
	 and the matrix $N \E (W_N W_N^{\top})$ match their values on $B(\vec \bx_k^{\be},0)$,
	 up to a small perturbation which depends on the vector  $\big(\langle \bs,\bx_i^{\be}\rangle/N-q_i\big)_{i\leq k}$ 
	 whose elements are bounded by $\epsilon_N$ in absolute value.  With $\varPsi (\vec E,\vec R)$
	 independent of $\bs$ and $N$,  this extends to  the \abbr{lhs} of \eqref{eq:Phi-N},  thereby yielding 
	 \eqref{eq:cont-cond-mean} (where we have also utilized the compactness of $\bar V(\epsilon_{N})$).	 
\end{proof}

\appendix
\section{\label{sec:Appendix}Statistics relative to pure states}

To extend
the \abbr{ckchs}-equations (which as derived apply only for \abbr{iid} initial conditions),
for Langevin dynamics \eqref{diffusion} initialized according to the Gibbs measure,  one must deal 
with more general statistics that are measured relative to the path $\vec\bx$ with which a pure state 
is defined (see Proposition \ref{prop:PS}).  We  thus derive here the corresponding generalization
of Theorem \ref{thm:low temp 1} 
(which is indeed used in \cite{DS2},  see Section \ref{subsec:Langevin}).

Assume the strict $k$-\abbr{RSB} condition \eqref{eq:kRSB} and suppose that  $\bar f_N(\bs,\vec\bx,\varphi)$ is a deterministic function from $\SN\times\AA_k\times C^\infty(\BN)$ to $\R$, which is  rotationally invariant in the sense that $\bar f_N(\bs,\vec \bx, \varphi(\cdot))= \bar f_N(O\bs,O\vec\bx,\varphi(O^T\cdot))$ for any orthogonal $O\in \R^{N\times N}$. The  continuity property of \eqref{eq:flim} is replaced here by having for  smooth $\varphi,\psi_i\in C^{\infty}(\BN)$,
\begin{equation}
	\lim_{t_{i}\to0}\bar f_{N}\Big(\bs,\vec\bx,\varphi+\sum_{i\leq k}t_{i}\psi_{i}\Big)=\bar f_{N}(\bs,\vec\bx,\varphi)\mbox{\quad and\quad} \lim_{\vec\bx'\to\vec\bx}\bar f_{N}(\bs,\vec\bx',\varphi)=\bar f_{N}(\bs,\vec\bx,\varphi).
\label{eq:flim2}
\end{equation}

Define the subset $\mathscr{B}(\epsilon)=\cup_{\vec\bx\in \mathscr{C}_k(\epsilon)}B(\vec\bx,\epsilon)$.
By Proposition \ref{prop:PS}, there exists a sequence $\epsilon_{N}\to0$ such that $\lim_{N\to\infty}G_{N,\beta}(\mathscr{B}(\epsilon_N))=1$, in probability. Suppose $\vec\bx(\cdot)$ is a random function from $\mathscr{B}(\epsilon_N)$ to $\mathscr{C}_k(\epsilon_N)$ such that $\bs\in B(\vec\bx(\bs),\epsilon_{N})$.

\begin{cor}
	\label{cor:low temp PS} Consider the spherical mixed $p$-spin model
	with a generic mixture $\xi(t)$ and $\beta>\beta_{c}$ at which $\xi(t)$ is strict $k$-\abbr{rsb}.
	Suppose that $f_N(\bs,\vec\bx)=\bar f_N(\bs,\vec\bx,H_N(\cdot))$ for some deterministic rotationally invariant $\bar f_N(\bs,\vec\bx,\varphi)$ which satisfies the continuity property of \eqref{eq:flim2} and that $f_N(\bs,\vec\bx(\bs))$ is $\P$-a.s.\  a measurable function on $\SN$.  If for some measurable $A\subset\R$,
	\begin{equation}
		\varlimsup_{\epsilon\to0}\varlimsup_{N\to\infty}\sup_{B(\vec\bx^{\be}_k,\epsilon)\times V(\epsilon)}\frac{1}{N}\log\P\left(\, f_{N}(\bs,\vec\bx^{\be}_k)\in  A\,\Big|\,\cpt(\bs,\vec{\bx}^{\be}_k,E,\vec E,\vec R)\,\right)<0,\label{eq:bd_Gaussian_average-PS}
	\end{equation}
	then 
	\begin{equation*}
		\lim_{N\to\infty}\E G_{N,\beta}\Big(\big\{ \bs\in \mathscr{B}(\epsilon_{N}):\,f_{N}(\bs,\vec\bx(\bs))\in A\big\}\Big)=0.\label{eq:GAverageLowT-PS}
	\end{equation*}
\end{cor}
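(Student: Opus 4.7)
The plan is to reduce Corollary \ref{cor:low temp PS} to (essentially the proof of) Theorem \ref{thm:low temp 1} by absorbing the random selector $\vec\bx(\cdot)$ into a path-indexed statistic via a single pointwise union bound. The key observation is that for every $\bs \in \mathscr{B}(\epsilon_N)$ we have $\vec\bx(\bs) \in \mathscr{C}_k(\epsilon_N)$ with $\bs \in B(\vec\bx(\bs), \epsilon_N)$, so for any fixed $\epsilon>0$ and all $N$ large enough that $\epsilon_N<\epsilon$, the choice $\vec\bx := \vec\bx(\bs)$ certifies the deterministic inclusion
$$
\{\bs \in \mathscr{B}(\epsilon_N) : f_N(\bs, \vec\bx(\bs)) \in A\}\ \subseteq\ \bigcup_{\vec\bx \in \mathscr{C}_k(\epsilon)}\{\bs \in B(\vec\bx,\epsilon) : f_N(\bs,\vec\bx) \in A\}.
$$
This decouples $\vec\bx(\cdot)$ entirely from the analysis that follows.

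Next, I would introduce the $[0,1]$-valued path-indexed statistic
$$
\alpha_k^{\epsilon}(\vec\bx)\ :=\ \frac{G_{N,\beta}\big(\{\bs \in B(\vec\bx,\epsilon) : f_N(\bs,\vec\bx)\in A\}\big)}{G_{N,\beta}(B(\vec\bx,\epsilon))}\ =\ \bar\alpha_{N,k}(\vec\bx, H_N(\cdot)),
$$
with $\bar\alpha_{N,k}$ deterministic. A change of variables inside the band integral shows that $\bar\alpha_{N,k}$ inherits rotational invariance from $\bar f_N$, while bounded convergence combined with the two continuity properties in \eqref{eq:flim2} (and the fact that $\mu_N(\partial B(\vec\bx,\epsilon)) = 0$) yields both the a.s.\ continuity of $\vec\bx \mapsto \alpha_k^{\epsilon}(\vec\bx)$ on $\AA_k$ and the perturbative continuity \eqref{eq:alphalimits}. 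Using $G_{N,\beta}(B(\vec\bx,\epsilon)) \le 1$ and that $G_{N,\beta}$ is a probability measure, the inclusion above upgrades to
$$
G_{N,\beta}\big(\{\bs \in \mathscr{B}(\epsilon_N) : f_N(\bs,\vec\bx(\bs)) \in A\}\big)\ \le\ 1 \wedge \sum_{\vec\bx \in \mathscr{C}_k(\epsilon)}\alpha_k^{\epsilon}(\vec\bx),
$$
which in the notation of Section \ref{sec:multiLvlKR}, with $\alpha_k^{\epsilon}$ playing the role of $\alpha_k$, equals $\alpha_{0,k}^{\epsilon}(\mathbf{0}) \wedge 1$. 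It therefore suffices to prove that $\varlimsup_{\epsilon \to 0}\varlimsup_N \frac{1}{N}\log \E[\alpha_{0,k}^{\epsilon}(\mathbf{0}) \wedge 1] < 0$, from which \eqref{eq:GAverageLowT-PS} follows by exponential decay.

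For this final step I would apply Proposition \ref{prop:multilvlKR} at $n=k, M=0$ to bound the rate by $\sum_{m=0}^{k-1}\sup_{I_m(\epsilon)\times I'_m(\epsilon)}\Theta_{\xim}(E,R)$, which vanishes as $\epsilon \to 0$ by Corollary \ref{cor:Thetastar}, plus a conditional-expectation term for $\alpha_k^{\epsilon}(\vec\bx_k^{\be})$ given $\cpt(\vec\bx_k^{\be},\vec E,\vec R)$. The latter I would handle exactly as in the reduction from \eqref{eq:1ptE3} to \eqref{eq:conditional_prob_lowtemp} in Section \ref{sec:LowTempPf}: Lemma \ref{lem:Hhat} applies verbatim (the Gaussian conditioning is unchanged), and together with the identity $F'(\beta,\xi) = \Es(q_k) + \beta\,\bxik(1)$ and the high-temperature Gaussian-conditioning Theorem \ref{thm:high-temp}, it reduces the required negativity to an estimate of the form \eqref{eq:HighTempCondUnif} on $\P\big(\bar f_N(\bs,\vec\bx_k^{\be},H_N) \in A \,\big|\, \cpt(\bs,\vec\bx_k^{\be},E,\vec E,\vec R)\big)$, which is precisely the hypothesis \eqref{eq:bd_Gaussian_average-PS}. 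The only non-routine step is the opening decoupling by the union bound; once the statistic carries the summation variable $\vec\bx$ in place of the selector $\vec\bx(\bs)$, the remainder is an essentially verbatim repetition of Section \ref{sec:LowTempPf}.
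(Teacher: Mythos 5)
Your proposal is correct and follows essentially the same route as the paper: the opening pointwise union bound over $\mathscr{C}_k(\epsilon)$ reducing the Gibbs probability to $\alpha_{0,k}^{\epsilon}(\mathbf{0})\wedge 1$ (with $\alpha_k$ the band-conditional Gibbs probability), then Proposition \ref{prop:multilvlKR} and Corollary \ref{cor:Thetastar} to kill the complexity term, and finally the reduction to \eqref{eq:bd_Gaussian_average-PS} via Lemma \ref{lem:Hhat}, the identity $F'(\beta,\xi)=\Es(q_k)+\beta\bxik(1)$, and the high-temperature argument, exactly as in Section \ref{sec:LowTempPf}. The only cosmetic difference is that you keep $\epsilon$ fixed and send $\epsilon\to 0$ after $N\to\infty$, whereas the paper works directly with $\epsilon_N\to 0$ and invokes the same diagonalization used for \eqref{eq:1ptE3}; both are fine.
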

\begin{proof}
	On $\mathscr{B}(\epsilon_{N})$,
	\[
	\indic\{f(\bs,\vec\bx(\bs))\in A\}\leq \sum_{\vec\bx\in\mathscr{C}_k(\epsilon_{N})} \indic\{f(\bs,\vec\bx)\in A,\,\bs\in B(\vec\bx,\epsilon_{N}) \}
	\]
	and therefore
	\begin{align*}
	G_{N,\beta}\Big(\big\{ \bs\in \mathscr{B}(\epsilon_{N}):\,f_{N}(\bs,\vec\bx(\bs))\in A\big\}\Big)
	&\leq 
	\sum_{\vec\bx\in\mathscr{C}_k(\epsilon_{N})} G_{N,\beta}\Big(\big\{ \bs\in B(\vec\bx,\epsilon_{N}):\,f_{N}(\bs,\vec\bx)\in A\big\}\Big)
	\\
	&\leq 
	\sum_{\vec\bx\in\mathscr{C}_k(\epsilon_{N})} G_{N,\beta}\Big(  f_{N}(\bs,\vec\bx)\in A\,\Big|\,B(\vec\bx,\epsilon_{N})\Big).
	\end{align*}

	Defining,  analogously to \eqref{dfn:alpha-k}, 
	\[
	\alpha_{k}(\vec\bx) = \alpha_{k,\epsilon}(\vec\bx)= G_{N,\beta}\Big(  f_{N}(\bs,\vec\bx)\in A\,\Big|\,B(\vec\bx,\epsilon)\Big),
	\]
	and using the notation of Section \ref{sec:multiLvlKR},  we thus complete the proof 
	by showing that for any $\epsilon_N \to 0$, 
	\begin{equation*}
		\varlimsup_{N\to\infty}\frac1N\log \E \Big[ \alpha^{\epsilon_N}_{0,k}(0)\wedge 1\Big] 
		<0.
	\end{equation*}
Similarly to the proof of Theorem \ref{thm:low temp 1},  the preceding can be shown from \eqref{eq:bd_Gaussian_average-PS} using Proposition \ref{prop:multilvlKR}. We note that the second assumption in \eqref{eq:flim2} is required here for the continuity of $\vec\bx\mapsto \alpha_{k,\epsilon}(\vec\bx)$.
\end{proof}

\bibliographystyle{plain}
\bibliography{master}
\end{document}